\tikzset{math3d/.style={x= {(1cm,0cm)}, z={(0cm,1cm)},y={(0.353cm,0.353cm)}}}
\newcommand{\lra}{\longrightarrow}
\newcommand{\ra}{\rightarrow}
\newcommand{\mbc}{\mathbb{C}}
\newcommand{\mbk}{\mathbb{K}}
\newcommand{\mbl}{\mathbb{L}}
\newcommand{\mbn}{\mathbb{N}}
\newcommand{\mbp}{\mathbb{P}}
\newcommand{\mbq}{\mathbb{Q}}
\newcommand{\mbr}{\mathbb{R}}
\newcommand{\mbz}{\mathbb{Z}}
\newcommand{\mca}{\mathcal{A}}
\newcommand{\mcc}{\mathcal{C}}
\newcommand{\mcd}{\mathcal{D}}
\newcommand{\mce}{\mathcal{E}}
\newcommand{\mcf}{\mathcal{F}}
\newcommand{\mcg}{\mathcal{G}}
\newcommand{\mch}{\mathcal{H}}
\newcommand{\mci}{\mathcal{I}}
\newcommand{\mcj}{\mathcal{J}}
\newcommand{\mck}{\mathcal{K}}
\newcommand{\mcl}{\mathcal{L}}
\newcommand{\mcm}{\mathcal{M}}
\newcommand{\mcn}{\mathcal{N}}
\newcommand{\mco}{\mathcal{O}}
\newcommand{\mcp}{\mathcal{P}}
\newcommand{\mcq}{\mathcal{Q}}
\newcommand{\mcr}{\mathcal{R}}
\newcommand{\mcs}{\mathcal{S}}
\newcommand{\mct}{\mathcal{T}}
\newcommand{\mcu}{\mathcal{U}}
\newcommand{\mcv}{\mathcal{V}}
\newcommand{\mcx}{\mathcal{X}}
\newcommand{\mcy}{\mathcal{Y}}
\newcommand{\mfd}{\mathfrak{D}}
\newcommand{\mfa}{\mathfrak{a}}
\newcommand{\mfc}{\mathfrak{c}}
\newcommand{\mfg}{\mathfrak{g}}
\newcommand{\mfk}{\mathfrak{k}}
\newcommand{\mfm}{\mathfrak{m}}
\newcommand{\mfn}{\mathfrak{n}}
\newcommand{\mfr}{\mathfrak{r}}
\newcommand{\mfs}{\mathfrak{s}}
\newcommand{\mft}{\mathfrak{t}}
\newcommand{\msd}{\mathscr{D}}
\newcommand{\p}{\partial}
\newcommand{\mclogo}{{\,\,^{\circ\!\!\!\!\!}}{_0}\widehat{\mcm}_{A}}
\newcommand{\qmclogo}{{\,\,^{\circ\!\!\!\!\!}}{_0}\mcq\mcm_{A}}
\newcommand{\mclog}{{^{\circ\!\!}}\widehat{\mcm}_{A}}
\newcommand{\qma}{\mcq\mcm_A}
\newcommand{\wqma}{\widehat{\mcq\mcm}_A}
\DeclareMathOperator{\trTLEP}{\textup{trTLEP(n)}}
\DeclareMathOperator{\TEP}{\textup{TEP(n)}}
\DeclareMathOperator{\trTLEPlog}{\textup{log-trTLEP(n)}}
\DeclareMathOperator{\TEPlog}{\textup{log-TEP(n)}}
\def\cf{\textit{cf.}\kern.3em}
\def\resp{\textit{resp.}\kern.3em}
\renewcommand{\k}{\kern2pt}
\numberwithin{equation}{section} \makeatletter
\newtheorem{thm}[equation]{Theorem}
\newtheorem{cor}[equation]{Corollary}
\newtheorem{lem}[equation]{Lemma}
\newtheorem{prop}[equation]{Proposition}
\newtheorem{defn}[equation]{Definition}
\theoremstyle{definition} 
\newtheorem{rem}[equation]{Remark}
\newtheorem{notn}[equation]{Notation}%
\newtheorem{assumption}[equation]{Assumption}%
\DeclareMathOperator{\FL}{FL}%
\DeclareMathOperator{\Hom}{Hom}%
\DeclareMathOperator{\Ext}{Ext}%
\DeclareMathOperator{\orb}{orb}%
\DeclareMathOperator{\Specm}{Specm}
\renewcommand{\k}{\kern2pt}
\begin{document}

\title{Logarithmic degenerations of Landau-Ginzburg models for toric orbifolds and global $ tt^*$ geometry}

\date{\today}

\author{Etienne Mann}

\address{Etienne Mann, D\'epartement de math\'ematiques,
B\^{a}timent I,
Facult\'e des Sciences,
2 Boulevard Lavoisier,
F-49045 Angers cedex 01,
France }
\email{etienne.mann@univ-angers.fr }
\urladdr{http://www.math.univ-angers.fr/~mann/}

 \author{Thomas Reichelt}
 \address{Thomas Reichelt,  Institut f\"{u}r Mathematik,
Universit\"{a}t Heidelberg, Im Neuenheimer Feld 205, 69120 Heidelberg, Germany}
\email{treichelt@mathi.uni-heidelberg.de}
\urladdr{http://mathi.uni-heidelberg.de/~treichelt/main-en.html}

\begin{abstract}
We discuss the behavior of Landau-Ginzburg models for toric orbifolds near the large volume limit. This enables us to express mirror symmetry as an isomorphism of Frobenius manifolds which aquire logarithmic poles along a boundary divisor. If the toric orbifold admits a crepant resolution we construct a global moduli space on the $B$-side and show that the associated $tt^*$-geometry exists globally. 
\end{abstract}


\thanks{E.M is supported by the grant of the Agence Nationale de la
    Recherche ``New symmetries on Gromov-Witten theories'' ANR- 09-JCJC-0104-01. Th.R. is supported by the Emmy-Noether DFG grant RE 3567/1-1\\
Both authors acknowledge partial support by the ANR/DFG grant ANR-13-IS01-0001-01/02, HE 2287/4-1 \& \\ SE 1114/5-1 (SISYPH)}

\maketitle
\setcounter{tocdepth}{3}     
\tableofcontents
\section{Introduction}
The present paper deals with classical Hodge-theoretic mirror symmetry for smooth toric Deligne-Mumford stacks. One of the first mathematical incarnations of this type of mirror symmetry  was a theorem of Givental identifying a solution (the so-called $J$-function) of the Quantum $\mcd$-module of a (complete intersection inside a) smooth toric variety with a  generalized hypergeometric function (the $I$-function). This has laid the foundation to express mirror symmetry as an equivalence of differential systems matching the Quantum $\mcd$-module on the $A$-side with certain (Fourier-Laplace transformed) Gau\ss-Manin systems coming from an algebraic family of maps (the Landau-Ginzburg model) on the $B$-side. An analytic proof of this fact using oscillating integrals was given by Iritani in \cite{Ir2}. A purely algebraic proof was given in \cite{RS1} where it was also shown  that the  Frobenius manifold an the $A$-side, which encodes the big quantum cohomology, is isomorphic to a Frobenius manifold on the $B$-side which comes from the Landau-Ginzburg model. The construction of Frobenius manifolds is a classical subject in singulartity theory. The first examples arose from the work of K.+ M. Saito on the base space of a semi-universal unfolding and later it was shown by Sabbah partly with  Douai  that these results carry over to an algebraic map which satisfies certain tameness assumptions. However, their construction is not unique in the sense that it depends on the choice of a good basis, which provides a solution to a Birkhoff problem, and on the choice of a primitive section. To circumvent this problem a careful analysis of the Fourier-Laplace transformed Gau\ss-Manin system and its degeneration along a boundary divisor, which contains the large volume limit, was carried out in \cite{RS1} in the case of Landau-Ginzburg models which serve as mirror partners for smooth nef toric varieties. Beyond the smooth case partial results for weighted projective spaces were obtained in \cite{Douai-Mann} where mirror symmetry is proven as an isomorphism of logarithmic Frobenius manifolds without pairing.\\ 

In this paper we prove mirror symmetry for smooth toric Deligne-Mumford stacks, satisfying a positivity condition, as an isomorphism of logarithmic Frobenius manifolds, which generalizes the theorem obtained in \cite{RS1} for smooth nef toric varieties.  In order to ensure a good  behavior of the connection and the pairing along the boundary divisor a careful choice of the coordinates on the complexified K\"ahler moduli space is needed. Since the Fourier-Laplace transformed Gau\ss-Manin system is a cyclic $\mcd$-module, the generator is a canonical candidate for the primitve section. The Birkhoff problem is solved at the large volume limit where we identify the fiber of the holomorphic bundle with the orbifold cohomology of the toric Deligne-Mumford stack.\\

The notion of $tt^*$ geometry was introduced by Cecotti and Vafa in their study of moduli spaces of $N=2$ supersymmetric quantum field theories. Hertling \cite{He4} formalized this structure under the name of pure and polarized TERP-structures and showed that the base space of a semi-universal unfolding of an isolated hypersurface singularity carried such a structure. In the case of a tame algebraic map, a theorem of Sabbah \cite{Sa2} shows that the corresponding Fourier-Laplace transformed Gau\ss-Manin system underlies a pure and polarized TERP-structure. In \cite{RS1} this was used to show that a Zariski open subset of the base space of the Landau-Ginzburg model carries a pure and polarized TERP-structure. Using mirror symmetry this induces $tt^*$ geometry on the quantum $\mcd$-module. Iritani \cite{Ir2} gave an intrinsic description of the real structure on the $A$-side using $K$-theory. In this paper the result of \cite{RS1} is generalized to toric orbifolds. If the toric orbifold $X$ admits a crepant resolution $Z$ we construct a global base space which contains two limit points corresponding to the large volume limit points of $X$ and $Z$ respectively. We prove that there exists $tt^*$-geometry on the whole moduli space which, when restricted to some analytic neighborhood of the large volume limits, is isomorphic to the $tt^*$-geometry coming from the quantum cohomology of $X$ resp. $Z$. The result here is in the sprit of Y.Ruan Crepant Transformation conjecture which has stimulated a lot of research:
\cite{MR2772168,MR2541935,MR2360646,MR2551767,MR2518631,MR2483931,
MR2411404,MR2357679,MR3202006,MR2553561,MR2529944,MR2486673,
MR3112518,MR2234886,2008PhDT.........4I,2011arXiv1109.5540L,
2013arXiv1311.5725L,2014arXiv1401.7097L}.\\

We give a short overview of the contents of this paper: In section 2 we recall some standard facts on toric Deligne-Mumford stacks. An import ingredient in the construction of the mirror Landau-Ginzburg model is the extended stacky fan of Jiang \cite{Jext}. This enables us to introduce the extended Picard group and the extended K\"ahler cone which are needed to construct coordinates on the base space of the mirror Landau-Ginzburg model. In section 3 we review the notion of the Fourier-Laplace transformed Gau\ss-Manin system and cite some results of \cite{RS1},\cite{RS2} which identifies the FL-Gau\ss-Manin system of a family of Laurent polynomials with a FL-transformed GKZ-system for which an explicit description as a cyclic $\mcd$-module is available. In the fourth section we use the results of the previous section to calculate the FL-transformed Gau\ss-Manin system corresponding to the Landau-Ginzburg model (cf. Proposition 4.4). As a next step we show that the FL-transformed Brieskorn lattice is coherent over the tame locus of the Landau-Ginzburg model (Theorem 4.10). We then analyze the degeneration behavior along a boundary divisor which contains the large volume limit. Finally we prove that there exists a canonical germ of a logarithmic Frobenius manifold associated to the Landau-Ginzburg model.
 Section 5 reviews orbifold quantum cohomology and the Givental connection. We show that the big quantum cohomology gives rise to a logarithmic Frobenius manifold (Proposition 5.7). In section 6, using a Givental-style mirror theorem of Coates, Corti, Iritani and Tseng \cite{CCIT}, we combine the last two sections to express mirror symmetry for toric Deligne-Mumford stacks as an isomorphism of logarithmic Frobenius manifolds (Theorem 6.6). In section 7 we consider a toric orbifold $X$ admitting a crepant resolution $Z$ and construct a global Landau-Ginzburg model. We prove that there exists a pure and polarized variation of TERP structures on the base space $\mcm$ of this model which gives the $tt^*$-geometry of the corresponding quantum $\mcd$-modules in different neighborhoods of $\mcm$.



\section{Some toric facts}\label{sec:some-toric-facts}

Let $G$ be a free abelian group. We associate to it the group ring $\mbc[G]$ which is generated by the elements $\chi^g$ for $g \in G$. Its maximal spectrum $\Specm(\mbc[G]) = Hom(G,\mbc^*)$ is naturally a commutative algebraic group (i.e. a torus). Let $\mfa: G \ra H$ be a group homomorphism between the free abelian groups $G$ and $H$. This induces a ring homomorphism
\begin{align}
\phi_\mfa: \mbc[G] &\lra \mbc[H]\, , \notag \\
\chi^g &\mapsto \chi^{\mfa(g)} \notag
\end{align}
and a morphism of algebraic groups
\begin{align}
\psi_\mfa: \Specm(\mbc[H]) \lra \Specm(\mbc[G])\, . \notag
\end{align}
Choose a basis $g_1,\ldots, g_n$ resp. $h_1,\ldots, h_m$ of $G$ resp. $H$. The homomorphism $\mfa$ is then given by a matrix $A = (a_{ij})$ with $\mfa(g_j) = \sum_{j=1}^m a_{ij} h_i$. The bases also determine coordinates $x_j = \chi^{g_j}$ resp. $y_i = \chi^{h_i}$ which identifies $\Specm(\mbc[G])$ with $(\mbc^*)^n$ and $\Specm(\mbc[H])$ with $(\mbc^*)^m$. In these coordinates the map $\psi_\mfa$ is given by
\begin{align}
(\mbc^*)^m &\lra (\mbc^*)^n\, , \notag \\
(y_1,\ldots, y_m) &\mapsto (\underline{y}^{\underline{a}_1},\ldots, \underline{y}^{\underline{a}_n})\notag
\end{align}
where $\underline{y}^{\underline{a}_j}:= \prod_{i=1}^n y_i^{a_{ij}}$.

\subsection{Toric Deligne-Mumford stacks}
A toric Deligne-mumford stack is constructed by a so-called stacky fan which was introduced by \cite{BCS}. A stacky fan 
\[
\mathbf{\Sigma} = (N, \Sigma, \mfa)
\]  
consists of  
\begin{itemize}
\item a finitely generated abelian group $N$ of rank $d$,
\item a complete simplicial fan $\Sigma$ in $N_\mbq := N \otimes_\mbz \mbq$, where we denote by $\Sigma(k)$ the set of $k$-dimensional cones of $\Sigma$ and by $\{\rho_1, \ldots , \rho_{m}\}$ the rays of $\Sigma$,
\item a homomorphism $\mfa: \mbz^{m} \ra N$ given by elements $a_1, \ldots a_{m}$ of $N$ with $a_i \in \rho_i$ and $\mfa(e_i)= a_i$, where $e_1, \ldots ,e_{m}$ is the standard basis of $\mbz^{m}$.
\end{itemize}
$ $\\
\textbf{Assumption}: In the rest of the paper we will assume that $N$ is torsion-free.\\

If we choose a basis $v_1,\ldots ,v_d$ of $N$ the map $\mfa$ is given by a matrix $A=(a_{ki})$.\\

The morphism $\mfa$ gives rise to a triangle in the derived category of $\mbz$-modules
\[
\mbz^m \overset{\mfa}\lra N \lra Cone(\mfa) \overset{+1}{\lra}\, .
\]
We apply the derived functor $RHom(-,\mbz)$ and consider the associated long exact sequence
\begin{equation}\label{eq:dualseqN}
0 \lra N^\star \overset{\mfa^\star}\lra (\mbz^m)^\star \lra \Ext^1(Cone(\mfa),\mbz) \lra 0\, ,
\end{equation}
where the injectivity of $\mfa^\star$ follows from the fact that the image of $\mfa$ has finite index in $N$ and the surjectivity of $(\mbz^m)^\star \lra Ext^1(Cone(\mfa),\mbz)$ follows from $Ext^1(N,\mbz) = 0$, i.e. from our assumption that $N$ is free.\\

Applying $Hom(-, \mbc^*)$  to the exact sequence \eqref{eq:dualseqN} gives the short exact sequence
\[
0 \lra G \overset{\psi_\mfa}{\lra} (\mbc^*)^m \lra Hom(N^\star, \mbc^*) \lra 0\, ,
\]
where $G:= Hom(\Ext^1(N,\mbz),\mbc^*)$. Here we have used the fact that $\mbc^*$ is a divisible group, hence $Hom(-, \mbc^*)$ is exact.\\

The set of anti-cones is defined to be
\[
\mca := \left\lbrace I \subset \{1, \ldots , m\} \mid \sum_{i \notin I} \mbq_{\geq 0} \rho_i\text{ is a cone in}\, \Sigma \right\rbrace\, .
\]

Each $I \in \mca$ gives rise to a subvariety $\mbc^I \subset \mbc^m$ given by  $\{(x_1,\ldots, x_m) \in \mbc^m \mid x_i = 0\; \text{for}\; i \notin I\}$. We set
\[
\mcu_\mca := \mbc^m \setminus \bigcup_{I \not \in \mca} \mbc^I\, .
\]

The toric Deligne-Mumford stack associated to this data is the following quotient stack:
\[
\mcx:= \mcx(\mathbf{\Sigma}) := [ \mcu_A /G ]\, ,
\]
where $G$ acts on $\mcu_\mca$ via $\psi_\mfa$.\\

For $\sigma \in \Sigma$, let
\[
\text{Box}(\sigma) = \{ a \in N \mid a = \sum_{a_i \in \sigma} r_i a_i , 0 \leq r_i  <1\}\, .
\]
We set 
\[
Box(\Sigma) =  \bigcup_{\sigma \in \Sigma} Box(\sigma)\, .
\]
The inertia stack $\mci \mcx(\mathbf{\Sigma})$ is the fiber product taken over the diagonal maps $\mcx \ra \mcx \times \mcx$. Its components are indexed by the set $Box(\Sigma)$:
\[
\mci \mcx(\mathbf{\Sigma}) = \bigsqcup \mcx_{(v)}\, ,
\]
where $\mcx_{(v)}$ is the toric orbifold $\mcx(\mathbf{\Sigma}/\sigma(v))$ with  $\sigma(v)$ being the smallest cone in $\Sigma$ which contains $v$ (cf. \cite{BCS} Section 4).\\

We have the following description of the orbifold cohomology ring of $\mathcal{X}$. As a $\mbq$-vector space it is isomorphic to the direct sum of the cohomology groups of the components of its inertia stack:
\[
H^*_{orb}(\mcx,\mbq) \simeq \bigoplus_{v \in Box(\Sigma)} H^{*-2 i_v}(\mcx_{(v)},\mbq)\, ,
\]
where $i_v:= \sum r_i$ for $v \in Box(\Sigma)$.  The orbifold cohomology of $\mcx$ carries a product which makes $ H^*_{orb}(\mcx,\mbq)$  into a graded algebra.  A combinatorial description in terms of the fan has been given by Borisov, Chen and Smith \cite{BCS} and, in the semi-projective case, by Jiang and Tseng \cite{JT}.  Let $N_{\Sigma} := \{ c \in N \mid c \in |\Sigma| \}$ and let
\[
\mbq[N_\Sigma] := \bigoplus_{c \in N_\Sigma} \mbq \chi^c
\]
with the product
\[
\chi^{c_1} \cdot \chi^{c_2} := \begin{cases} \chi^{c_1+c_2} & \text{if there exists $\sigma \in \Sigma$ such that $c_1, c_2 \in \sigma$,} \\ 0 & \text{otherwise}\, . \end{cases}
\]
Let  $c \in N_{\Sigma}$ and $\sigma(c)$ be the minimal cone containing $c$. Then $c$ can be uniquely expressed as
\[
c = \sum_{a_i \in \sigma(c)} r_i a_i \,.
\]
We define 
\[
deg(\chi^c) := deg(c) := \sum r_i\, .
\]
Using this graduation $\mbq[N_\Sigma]$ becomes a graded semigroup ring. By \cite{BCS} we have the following isomorphism of $\mbq$-graded rings:
\begin{equation}\label{eq:BCSiso}
H^*_{orb}(\mcx,\mbq) \simeq \frac{\mbq[N_\Sigma]}{\{\sum_{i=1}^m \kappa(a_i) \chi^{a_i} \mid \kappa \in N^\ast\}}\, .
\end{equation}

Denote by $PL(\Sigma)$ the free $\mbz$-module of continuous piece-wise linear functions on $\Sigma$ having integer values on $N$. We have the natural embedding of $N^\ast = \Hom(N,\mbz)$ into $PL(\Sigma)$, where the cokernel of this map is isomorphic to the Picard group of the underlying coarse moduli space $X:=X(\Sigma)$:
\begin{equation}\label{eq:PLPic}
0 \ra N^\ast \ra PL(\Sigma) \ra Pic( X) \lra 0\, .
\end{equation}

We have isomorphisms
\[
\xymatrix{ 0 \ar[r] & N^\star \otimes \mbq  \ar[r] & PL(\Sigma) \otimes \mbq \ar[r] & Pic(X)\otimes \mbq \ar[r] & 0 \\
0 \ar[r] & N^\star \otimes \mbq \ar[r] \ar[u]^\simeq & \mbq^m \ar[u]^\simeq  \ar[r] & H^2(X,\mbq) \ar[r] \ar[u]^\simeq & 0}
\]
where the image of the standard generator $\overline{D}_i \in \mbq^m$ in $PL(\Sigma) \otimes \mbq$ is the piece-wise linear function  having value $1$ on $a_i$ and $0$ on $a_j$ for $j \in \{1,\ldots,m\} \setminus \{i\}$. We denote the image of $\overline{D}_i$ in $Pic(X) \otimes \mbq$ by $[\overline{D}_i]$.

\subsection{Extended stacky fans}\label{subsec:Exstacky}
Toric Deligne-Mumford stacks can also be described by a so-called extended stacky fan (cf. \cite{Jext}. To the datum of a stacky fan $\mathbf{\Sigma} = (N, \Sigma, \mfa)$ one adds  a map $\mbz^e \ra N$ and writes $S=\{a_{m+1}, \ldots , a_{m+e}\}$  for the image of the standard basis. By abuse of notation  we will call the following map still $\mfa$ 
\begin{align}
\mfa: \mbz^{m+e} &\lra N\, , \notag \\
e_i &\mapsto \mfa(e_i) =  a_i \quad \text{for}\; i = 1, \ldots m+e\, . \notag
\end{align}
The $S$-extended stacky fan $\mathbf{\Sigma}^e = ( N, \Sigma, \mfa)$ is given by the free group $N$, the fan $\Sigma$ and the map $\mfa: \mbz^{m+e} \ra N$.\\

Assumption: In the following we will choose $a_{m+1}, \ldots, a_{m+e}$ in such a way so that $\mfa$ is surjective.\\
 
We denote by $\mbl$ the kernel of $\mfa$. This gives us as above the two exact sequences
\begin{align}
&0 \lra \mbl  \lra \mbz^{m+e} \overset{\mfa}{\lra} N \lra 0\, , \label{eq:extseq} \\
&0 \lra N^\star \lra (\mbz^{m+e})^\star \lra \mbl^\star \lra 0\, . \label{eq:extseqdual}
\end{align}

We denote by $D_1,\ldots, D_{m+e}$ the standard basis of $(\mbz^{m+e})^\star$, i.e. $ D_i(e_j) = \delta_{ij}$ and by  $[D_1],\ldots , [D_{m+e}]$ the images of $D_1,\ldots, D_{m+e}$ in $\mbl^\star$.\\

Applying the exact functor $Hom(-,\mbc^*)$ to the sequence $\ref{eq:extseqdual}$ gives a map 
\[
\psi_{\mfa}: G^e \ra (\mbc^*)^{m+e},
\]
where $G^e := Hom_\mbz(\mbl^\star, \mbc^*)$. We set
\[
\mcu_{\mca}^e := \mcu_\mca \times (\mbc^*)^{e}\, ,
\]
then $G^e$ acts on $\mcu_A^e$ via $\psi_{\mfa}$  and the quotient stack
\[
[\mcu_\mca^e / G^e]
\]
is isomorphic to the stack $\mcx(\mathbf{\Sigma})$ by \cite{Jext}.\\

Given a stacky fan $\mathbf{\Sigma} = (N, \Sigma, \mfa)$ there exists a ``canonical'' choice of an extended stacky fan $\mathbf{\Sigma}^e$. Let $Gen(\sigma)$ be the subset of $Box(\sigma)$ of elements which are primitive in $\sigma \cap N$, i.e. which can not be generated by other elements in the semigroup $\sigma \cap N$ and set $Gen(\Sigma) := \bigcup_{\sigma \in \Sigma} Gen(\sigma)$.\\

In the following we will always choose 
\[
S = Gen(\Sigma)
\]
and we will set $n:= m+e$ and $\mcg:= \{a_1, \ldots , a_m\} \cup Gen(\sigma)$. Notice that $\{a_1, \ldots , a_m\} \cap Gen(\sigma) = \emptyset$, hence the cardinality of $\mcg$ is $n$.\\

This choice of an extended stacky fan will allow us to give a different description of the orbifold cohomology ring which will be very useful for our purposes.

We introduce for every $a_i \in \mcg$ a formal variable $\mathfrak{D}_i$. For a top-dimensional cone $\sigma \in \Sigma(d)$ define in $\mbc[\mfd_1, \ldots , \mfd_n]$ the ideal
\[
\mcj(\sigma) := \left\langle \prod_{l_i >0, a_i \in \sigma} \mfd_i^{l_i} - \prod_{l_i< 0 , a_i \in \sigma} \mfd_i^{-l_i} \mid  \sum_{a_i \in \sigma} l_i a_i = 0 , l_i \in \mbz\right\rangle\, .
\]

We call relations $\underline{l}=(l_1, \ldots , l_n)$ of such type \textbf{cone relations}. The ideal
\[
\mcj(\Sigma_X) := \sum_{\sigma \in \Sigma_X(d)} \mcj(\sigma)
\]
is called the cone ideal of $\Sigma$.

Let $\mck(\Sigma)$ be the ideal which is generated by
\[
E_k := \sum_{i=1}^{m} a_{ki} \mfd_i \qquad \text{for} \quad k = 1, \ldots ,d\, ,
\]
where $a_{ki}$ is the $k$-th coordinate of $a_i \in \mbz^d$.\\

We call $I \subset \{1, \ldots ,n\}$ a \textbf{generalized primitive collection} if the set $\{a_i \mid i \in I\}$ is not contained in a cone of $\Sigma$ and if any proper subset of $\{a_i \mid i \in I\}$ is contained in some cone of $\Sigma$. We denote by $\mcg \mcp$ the set of generalized primitive collections.\\

The orbifold cohomology of $\mcx$ can then be expressed in the following way. 

\begin{lem}[\cite{TW} Lemma 2.4]\label{lem:orbcoho}
Let $deg(\mfd_i) =deg(a_i) $ for $i=1, \ldots , n$, then we have an isomorphism of graded $\mbc$-algebras
\[
H^*_{orb}(\mcx) \simeq \frac{\mbc[\mfd_1, \ldots , \mfd_{n}]}{\mcj(\Sigma_X) + \mck(\Sigma_X) + \langle \prod_{i\in I} \mfd_i \mid I \in \mcg \mcp \rangle}\, ,
\]
which sends $\mfd_i$ to $\chi^{a_i}$ for $i=1,\ldots n$.
\end{lem}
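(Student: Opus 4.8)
The plan is to derive the presentation of Lemma \ref{lem:orbcoho} from the already-stated Borisov--Chen--Smith isomorphism \eqref{eq:BCSiso}, by constructing an explicit surjection from $\mbc[\mfd_1,\dots,\mfd_n]$ onto $\mbq[N_\Sigma]\otimes\mbc$ (sending $\mfd_i\mapsto\chi^{a_i}$) and computing its kernel. First I would check that this assignment is well defined and degree-preserving: since $a_i\in\sigma(a_i)\subset|\Sigma|$ we have $a_i\in N_\Sigma$, and by definition $\deg(\chi^{a_i})$ equals the sum of the barycentric coordinates of $a_i$ in its minimal cone, which is exactly $\deg(a_i)=\deg(\mfd_i)$. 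Surjectivity is the key geometric input and is where I expect to lean hardest on the choice $S=Gen(\Sigma)$: one must show that every $\chi^c$ with $c\in N_\Sigma$ is a product of the $\chi^{a_i}$ with $a_i\in\mcg$. This follows because for $c$ in a cone $\sigma$, writing $c=\sum_{a_i\in\sigma\cap\rays}r_i a_i$ and splitting off integer parts, the fractional part $\sum\{r_i\}a_i$ lies in $Box(\sigma)$ and hence decomposes (using that $Gen(\sigma)$ generates the semigroup of Box-elements, by the definition of $Gen$) as a sum of elements of $Gen(\sigma)\subseteq\mcg$; thus $c$ is a nonnegative integral combination of elements of $\mcg$ all lying in $\sigma$, so $\chi^c$ is the corresponding monomial in the $\mfd_i$.

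Next I would identify the kernel. The target ring $\mbq[N_\Sigma]$ is the quotient of the polynomial ring by the toric relations encoding the semigroup law inside each cone together with the vanishing of products corresponding to sets not lying in a common cone. Concretely: if $\underline l=(l_1,\dots,l_n)$ is a cone relation for $\sigma$, i.e.\ $\sum_{a_i\in\sigma}l_i a_i=0$, then in $\mbq[N_\Sigma]$ we have $\prod_{l_i>0}\chi^{l_i a_i}=\chi^{\sum_{l_i>0}l_i a_i}=\chi^{\sum_{l_i<0}(-l_i)a_i}=\prod_{l_i<0}\chi^{-l_i a_i}$ (both sides being products of generators inside $\sigma$, hence nonzero), so the generators of $\mcj(\Sigma_X)$ map to $0$. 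If $I\in\mcg\mcp$ is a generalized primitive collection, the $a_i$, $i\in I$, do not lie in a common cone, so $\prod_{i\in I}\chi^{a_i}=0$ by the multiplication rule; hence $\langle\prod_{i\in I}\mfd_i\mid I\in\mcg\mcp\rangle$ is in the kernel. Conversely I would argue that these relations already cut out $\mbq[N_\Sigma]$: modulo the generalized-primitive-collection monomials, every surviving monomial $\prod\mfd_i^{l_i}$ has its support $\{a_i:l_i>0\}$ contained in a single cone $\sigma$, and modulo the cone relations for $\sigma$ any two monomials with the same image $\chi^{\sum l_i a_i}$ are identified — this is the standard presentation of an affine semigroup ring by binomials restricted cone-by-cone, and one glues over the finitely many top cones. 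This gives $\mbc[\mfd]/(\mcj(\Sigma_X)+\langle\prod_{i\in I}\mfd_i\rangle)\cong\mbq[N_\Sigma]\otimes\mbc$.

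Finally I would pass to the orbifold cohomology. Tensoring \eqref{eq:BCSiso} with $\mbc$, it remains to match the relation ideal $\{\sum_{i=1}^m\kappa(a_i)\chi^{a_i}\mid\kappa\in N^\ast\}$ with $\mck(\Sigma_X)$. Choosing the basis $v_1,\dots,v_d$ of $N$ and the dual basis of $N^\ast$, the linear relations $\sum_{i=1}^m\kappa(a_i)\chi^{a_i}$ are spanned by $\sum_{i=1}^m a_{ki}\chi^{a_i}$, $k=1,\dots,d$, which is precisely the image of $E_k=\sum_{i=1}^m a_{ki}\mfd_i$ (note the sum runs only over the original rays $1,\dots,m$, matching the definition of $E_k$). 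Therefore, under the identification of the previous paragraph, the BCS relation ideal corresponds exactly to $\mck(\Sigma_X)$, and we obtain
\[
H^*_{orb}(\mcx)\otimes\mbc\;\simeq\;\frac{\mbc[\mfd_1,\dots,\mfd_n]}{\mcj(\Sigma_X)+\mck(\Sigma_X)+\langle\prod_{i\in I}\mfd_i\mid I\in\mcg\mcp\rangle},\qquad \mfd_i\mapsto\chi^{a_i},
\]
which is a graded isomorphism by the degree check above. I expect the main obstacle to be the surjectivity/kernel bookkeeping in the middle step — precisely, proving that the binomial cone relations together with the generalized-primitive-collection monomials suffice to present $\mbq[N_\Sigma]$, and handling the overlaps of the ideals $\mcj(\sigma)$ for different top cones $\sigma$; everything else is a matter of unwinding definitions (and indeed the statement is quoted from \cite{TW}, so one may also simply cite it).
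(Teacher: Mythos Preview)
The paper does not give its own proof of this lemma; it is stated as a citation of \cite{TW}, Lemma 2.4, and is used as a black box thereafter. So there is no argument in the paper to compare against.

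Your proposal is a correct and self-contained derivation of the presentation from the Borisov--Chen--Smith isomorphism \eqref{eq:BCSiso}, and as such goes beyond what the paper does. The surjectivity step is exactly right and is the place where the choice $S=Gen(\Sigma)$ enters: for a simplicial cone $\sigma$, the Hilbert basis of $\sigma\cap N$ consists of the ray generators together with elements of $Box(\sigma)$, and by definition $Gen(\sigma)$ picks out precisely the non-ray Hilbert-basis elements, so $\mcg\cap\sigma$ generates $\sigma\cap N$. Your kernel computation is also sound; the one point you flagged as delicate---two monomials supported in different top cones $\sigma,\sigma'$ with the same image $\chi^c$---is handled by observing that $c\in\sigma\cap\sigma'=:\tau$, rewriting $c$ in the Hilbert basis of $\tau\cap N\subset\mcg$, and using the cone relations in $\mcj(\sigma)$ and $\mcj(\sigma')$ separately to reduce both monomials to the same canonical monomial supported in $\tau$. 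The identification of $\mck(\Sigma_X)$ with the linear relations in \eqref{eq:BCSiso} is immediate once a basis of $N$ is chosen. In short, your argument is a perfectly good proof; the paper itself simply invokes the reference.
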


\begin{rem}\label{rem:classcoho}
We would like  to remind the reader that the ordinary cohomology ring of the underlying coarse moduli space $X$ is given by
\[
H^*(X,\mbc) \simeq \frac{\mbc[\mfd_1,\ldots, \mfd_m]}{\mck(\Sigma)+  \langle \prod_{i\in I}\mfd_i  \mid I \in \mcp \rangle}
\]
where $\mcp$ is the set of primitive collections. Here a collection $I \subset \{1,\ldots,m\}$ is primitive  if the set $\{a_i \mid i \in I\}$ does not span a cone of $\Sigma$ but any proper subset does.
\end{rem}

\subsection{The extended Picard group}
We have the following commutative diagram (cf. \eqref{eq:PLPic})

\[
\xymatrix{ 0 \ar[r]&  N^\star \ar[r] & (\mbz^{n})^\star \ar[r] & \mbl^\star \ar[r] &0 \\
0 \ar[r] & N^\star \ar[r] \ar@{=}[u]& PL(\Sigma) \ar[r] \ar[u]^\Theta & Pic(X) \ar[r] \ar[u] & 0 }
\]
where the map $PL(\Sigma) \ra (\mbz^n)^\star$ is given by 

\begin{align}
\Theta: PL(\Sigma) &\lra (\mbz^{n})^\star\, , \label{def:Theta} \\
\varphi &\mapsto (\varphi(a_1), \ldots , \varphi(a_{n}))\, . \notag 
\end{align}

We want to determine the image of this map. For this, let 
\[
I_\sigma := \{i \in\{1,\ldots,m\} \mid a_i \in \sigma\}\, .
\]
We consider the distinguished relations 
\begin{equation}\label{eq:disrel}
a_{m+k} -\sum_{i \in I_\sigma} r_{kj} a_i = 0 \qquad \text{for}\; k=1,\ldots ,e\, ,
\end{equation}
which give elements $l_1,\ldots , l_e \in \mbl \otimes \mbq$.\\

\begin{lem}
The image of $\Theta$ is as saturated subgroup of $(\mbz^n)^\star$, i.e.
\[
(\Theta(PL(\Sigma))\otimes_\mbz \mbq) \cap (\mbz^{n})^\star =  \Theta(PL(\Sigma))\, .
\]
\end{lem}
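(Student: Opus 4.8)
We only need the inclusion $(\Theta(PL(\Sigma))\otimes_\mbz\mbq)\cap(\mbz^{n})^\star\subseteq\Theta(PL(\Sigma))$, the reverse one being trivial. So let $v=(v_1,\dots,v_{n})\in(\mbz^{n})^\star$ and assume $kv=\Theta(\varphi)$ for some integer $k\geq1$ and some $\varphi\in PL(\Sigma)$; we must produce $\psi\in PL(\Sigma)$ with $\Theta(\psi)=v$. The only possible candidate is $\psi:=\tfrac1k\varphi$, which a priori is merely a continuous $\mbq$-valued function on $N_\mbq$, linear on each cone of $\Sigma$, and satisfying $\psi(a_i)=v_i\in\mbz$ for every $a_i\in\mcg$, i.e. for every $i\in\{1,\dots,n\}$. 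Since by definition $\Theta(\psi)=(\psi(a_1),\dots,\psi(a_{n}))=v$, the whole statement reduces to showing that $\psi$ takes integer values on all of $N$, for then $\psi\in PL(\Sigma)$.

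As $\Sigma$ is complete, any $w\in N$ lies in some top-dimensional cone $\sigma\in\Sigma(d)$, and on $\sigma$ the function $\psi$ agrees with a linear form $\psi_\sigma\in N^\star\otimes\mbq$. Hence it suffices to establish the following combinatorial statement: \emph{for every $\sigma\in\Sigma(d)$ the monoid $\sigma\cap N$ is generated by $\{a_i\mid i\in I_\sigma\}\cup Gen(\sigma)$}, a set contained in $\mcg\cap\sigma$. Indeed, granting this we may write $w=\sum_{i\in I_\sigma}n_ia_i+\sum_{a_j\in Gen(\sigma)}n_ja_j$ with all $n_i,n_j\in\mbz_{\geq0}$, and then $\psi(w)=\psi_\sigma(w)=\sum n_i\psi(a_i)+\sum n_j\psi(a_j)=\sum n_iv_i+\sum n_jv_j\in\mbz$.

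The combinatorial statement would be proved in two steps. First, the standard fact that, $\sigma$ being simplicial with ray generators $\{a_i\mid i\in I_\sigma\}$, every $w\in\sigma\cap N$ can be written as $w=\sum_{i\in I_\sigma}n_ia_i+v$ with $n_i\in\mbz_{\geq0}$ and $v\in Box(\sigma)$: if $w=\sum_{i\in I_\sigma}\lambda_ia_i$ with $\lambda_i\geq0$, take $n_i=\lfloor\lambda_i\rfloor$ and $v=w-\sum n_ia_i$. Thus $\sigma\cap N$ is generated by $\{a_i\mid i\in I_\sigma\}\cup Box(\sigma)$. Second, one reduces $Box(\sigma)$ to $Gen(\sigma)$: every $v\in Box(\sigma)$ is a $\mbz_{\geq0}$-linear combination of elements of $Gen(\sigma)$. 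If $v\notin Gen(\sigma)$, then by definition $v=w_1+\dots+w_r$ with $r\geq2$ and each $w_t\in(\sigma\cap N)\setminus\{0\}$. Expressing $w_t=\sum_{i\in I_\sigma}\mu_{ti}a_i$ with $\mu_{ti}\geq0$ and using that the $a_i$ ($i\in I_\sigma$) are linearly independent, the identity $\sum_t w_t=v$ forces $\sum_t\mu_{ti}=r_i$ for each $i$, where $v=\sum_ir_ia_i$ with $0\leq r_i<1$; hence every $\mu_{ti}<1$, so each $w_t$ again lies in $Box(\sigma)$, and its coefficient sum $\sum_i\mu_{ti}$ is strictly smaller than $\sum_ir_i$ (as $r\geq2$ and all $w_t\neq0$). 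Since the coefficient vectors of nonzero elements of $Box(\sigma)$ form a finite set of strictly positive vectors in $[0,1)^{|I_\sigma|}$, descending induction on the coefficient sum yields the claim. Finally $Gen(\sigma)\subseteq Box(\sigma)\subseteq\sigma$ and $Gen(\sigma)\subseteq Gen(\Sigma)$, so $\{a_i\mid i\in I_\sigma\}\cup Gen(\sigma)\subseteq\mcg\cap\sigma$, as needed.

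The crux is the second step above: that the "small" generators $Gen(\sigma)$, rather than all box elements, already suffice together with the rays of $\sigma$. This is exactly where simpliciality is essential, since it is what guarantees that in a monoid relation $\sum_t w_t=v$ the coefficients along the rays add up, forcing the summands $w_t$ to be box elements with strictly smaller coefficient vectors so that the induction terminates; for non-simplicial $\sigma$ the statement fails.
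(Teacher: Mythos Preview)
Your proof is correct and complete. It takes a somewhat different route from the paper's argument, so a brief comparison is in order.

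The paper proceeds by identifying $\Theta(PL(\Sigma))$ with the kernel $\mathbf{K}$ of the evaluation map $(\mbz^n)^\star\to\mbq^e$, $x\mapsto(x(l_1),\dots,x(l_e))$, at the distinguished cone relations $l_k$ of \eqref{eq:disrel}; since kernels are automatically saturated, the conclusion follows once $\mathbf{K}=\Theta(PL(\Sigma))$ is established. For the nontrivial inclusion $\mathbf{K}\subset\Theta(PL(\Sigma))$, the paper asserts that for each maximal cone $\sigma$ one can choose a $\mbz$-basis of $N$ inside $\{a_j\mid a_j\in\sigma\}\cup Gen(\sigma)$ and uses this to reconstruct the piecewise linear function. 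Your argument instead works directly: you show that a rational piecewise linear function with integer values on $\mcg$ is integer-valued on all of $N$, reducing to the combinatorial statement that for each $\sigma\in\Sigma(d)$ the monoid $\sigma\cap N$ is generated by $\{a_i\mid i\in I_\sigma\}\cup Gen(\sigma)$, which you prove carefully via the box decomposition and a descent on coefficient sums. Your approach is more elementary and fully self-contained; in particular, your semigroup-generation lemma furnishes exactly the input the paper leaves implicit (indeed it is a slightly stronger statement than merely containing a $\mbz$-basis, and it is what one really needs). The paper's approach is a bit more conceptual in that it packages the saturation as ``image equals kernel'', but both arguments ultimately hinge on the same combinatorial fact about $\mcg\cap\sigma$.
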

\begin{proof}
Denote by $\mathbf{K}$ 
the kernel of the map
\begin{align}
(\mbz^{n})^\star &\lra \mbq^e \notag \\
x &\mapsto (x(l_1),\ldots, x(l_e)) \notag
\end{align}
In order to show the claim it is enough to show the following equality
\[
\mathbf{K} = \Theta(PL(\Sigma))\, . 
\]
It is clear that $\Theta(PL(\Sigma)) \subset \mathbf{K}$ 
since  a function in $PL(\Sigma)$ is linear on each cone $\sigma \in \Sigma$. Now let $u \in \mathbf{K}$ 
and let $\sigma \in \Sigma(d)$ be a maximal cone. Choose a $\mbz$-basis $a_{j_1}, \ldots , a_{j_d}$ in the set $Gen(\sigma) \cup \{a_j \mid a_j \in \sigma\}$. The values $u(a_{j_1}), \ldots , u(a_{j_d})$ determine an element  $m_\sigma \in N^\vee$. Since $u \in \mathbf{K}$ 
we get $u(a_i) = m_\sigma(a_i)$ for $a_i \in Gen(\sigma) \cup \{a_j \mid a_j \in \sigma\}$. Repeating this for any cone we get an element in $PL(\Sigma)$ whose image under $\Theta$ is $u$.
\end{proof}

We denote the full sublattice of $(\mbz^{n})^\star$ generated by $\Theta(PL(\Sigma))$ and $D_{m+1},\ldots , D_{m+e}=D_n$ by $PL(\Sigma^e)$ and call its image in $\mbl^\star$ the extended Picard group $Pic^e(X)$.\\

We get an exact sequence of $\mbz$-free modules
\begin{equation}
0 \lra N^* \lra PL(\Sigma^e) \lra Pic^e(X) \lra 0\, .
\end{equation}

The map $\Theta$ induces a map 
\[
\theta: Pic(X) \lra Pic^e(X).
\]
Notice that the images of $[\overline{D}_i] \in Pic(X) \otimes \mbq$ for $i=1,\ldots ,m$ in $Pic^e(X) \otimes \mbq$ are given by 
\[
\theta([\overline{D}_i]) =  [D_i]+ \sum_{k=1}^{e}  c_{ki} [D_{m+k}]\, ,
\]
which follows from the formulas \ref{def:Theta} and \ref{eq:disrel}.
\subsection{The extended K\"{a}hler cone}
Inside $PL(\Sigma)$ we consider the cone of convex functions $CPL(\Sigma)$. It has non-empty interior since $X$ is projective. We denote its $\mbq_{\geq 0}$-span in $H^2(X,\mbq)$ by $\overline{\mck}$. Consider now the cone $CPL(\Sigma^e)$ generated by $\Theta(CPL(\Sigma))$ and $D_{m+1}, \ldots , D_{m+e}$.  We denote the $\mbq_{\geq 0}$-span of $\Theta(CPL(\Sigma))$ resp. $CPL(\Sigma^e)$ in $\mbl^\star\otimes \mbq$ by $\mck$ resp. $\mck^e$ and call it the K\"ahler resp. extended K\"ahler cone.

We denote the image of anti-canonical divisor $-K_X$ of $X$ in $Pic(X(\Sigma) \otimes \mbq \simeq H^2(X,\mbq)$ by $\overline{\rho}$. It is given by $\overline{\rho} = [\overline{D}_1]+ \ldots [\overline{D}_m]$. The toric variety $X$ is weak Fano if $\overline{\rho} \in \overline{\mck}$. Consider the following class in $\mbl^\star$
\[
\rho:= [D_1]+\ldots + [D_{m+e}]\, .
\]
Later we will impose the following condition
\[
\rho \in \mck^e\, .
\]
There is the following characterization of this condition
\begin{lem}\label{lem:degai}
We have $\rho \in \mck^e$ iff $\overline{\rho} \in \overline{\mck}$ and $deg(a_i) \leq 1$ for $i=m+1,\ldots,m+e$.
\end{lem}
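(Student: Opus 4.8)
The plan is to pass through the map $\Theta$ and compare the class $\rho = \sum_{i=1}^{n}[D_i]$ with $\theta(\bar\rho)$, where $\bar\rho = \sum_{i=1}^{m}[\bar D_i]$ is the anticanonical class of $X$. By definition of $\mck^e$ as the $\mbq_{\geq 0}$-span of $\Theta(CPL(\Sigma))$ together with $D_{m+1},\dots,D_{m+e}$, an element lies in $\mck^e$ exactly when it can be written as a nonnegative combination of the image of a convex piecewise-linear function and of the extra generators $D_{m+k}$. First I would use the formula $\theta([\bar D_i]) = [D_i] + \sum_{k=1}^{e} c_{ki}[D_{m+k}]$ established just before Lemma~\ref{lem:degai}, summed over $i=1,\dots,m$, to compute
\[
\theta(\bar\rho) \;=\; \sum_{i=1}^{m}[D_i] \;+\; \sum_{k=1}^{e}\Bigl(\sum_{i=1}^{m} c_{ki}\Bigr)[D_{m+k}]\, .
\]
The coefficient $\sum_{i=1}^{m} c_{ki}$ is exactly the value $\varphi(a_{m+k})$ where $\varphi\in PL(\Sigma)$ is the function with $\varphi(a_i)=1$ for $i\le m$; but this $\varphi$ is precisely the piecewise-linear function whose value on a box element $a_{m+k} = \sum_{i\in I_\sigma} r_{ki} a_i$ is $\sum_{i} r_{ki} = \deg(a_{m+k})$. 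Hence $\sum_{i=1}^{m} c_{ki} = \deg(a_{m+k})$, and
\[
\rho - \theta(\bar\rho) \;=\; \sum_{k=1}^{e}\bigl(1 - \deg(a_{m+k})\bigr)[D_{m+k}]\, .
\]

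Next I would prove the two implications. For ($\Leftarrow$): if $\bar\rho\in\bar\mck$ then $\bar\rho$ is represented by a convex piecewise-linear function, so $\theta(\bar\rho)\in\Theta(CPL(\Sigma))^{\mbq_{\geq 0}\text{-span}}\subset\mck^e$; and if $\deg(a_{m+k})\le 1$ for all $k$ then $\rho - \theta(\bar\rho) = \sum_k(1-\deg(a_{m+k}))[D_{m+k}]$ is a nonnegative combination of the generators $D_{m+1},\dots,D_{m+e}$, hence also in $\mck^e$. Since $\mck^e$ is a convex cone, $\rho = \theta(\bar\rho) + (\rho-\theta(\bar\rho)) \in \mck^e$. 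For ($\Rightarrow$): suppose $\rho\in\mck^e$, so $\rho = \Theta(\psi) + \sum_k \lambda_k D_{m+k}$ with $\psi\in CPL(\Sigma)$ (up to positive scaling, which I would absorb) and $\lambda_k\ge 0$. Projecting along the quotient $PL(\Sigma^e)\to Pic^e(X)$ and then pushing to the coarse level — concretely, restricting the identity to the sublattice spanned by $D_1,\dots,D_m$ modulo the $D_{m+k}$, which recovers $H^2(X,\mbq)$ and sends $\rho\mapsto\bar\rho$, $D_{m+k}\mapsto$ a class already accounted for — I would deduce $\bar\rho$ is in the $\mbq_{\geq 0}$-span of $\Theta(CPL(\Sigma))$ restricted to $H^2(X,\mbq)$, i.e. $\bar\rho\in\bar\mck$. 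Then, knowing $\theta(\bar\rho)\in\mck^e$, the difference $\rho - \theta(\bar\rho) = \sum_k(1-\deg(a_{m+k}))[D_{m+k}]$ must lie in $\mck^e$; comparing coefficients on the $[D_{m+k}]$ (which are "independent directions" transverse to $\Theta(PL(\Sigma))$ by construction of $PL(\Sigma^e)$) forces $1-\deg(a_{m+k})\ge 0$, i.e. $\deg(a_{m+k})\le 1$.

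The main obstacle I expect is the last coefficient-comparison step in the ($\Rightarrow$) direction: making precise the claim that nonnegativity of $\rho - \theta(\bar\rho)$ in $\mck^e$ implies nonnegativity of each coefficient $1-\deg(a_{m+k})$. This requires understanding how the cone $\mck^e$ meets the "new" coordinate directions $D_{m+k}$. The clean way to handle it is to choose, for a fixed $k_0$, a linear functional on $\mbl^\star\otimes\mbq$ that is nonnegative on all of $\mck^e$ and that isolates the $[D_{m+k_0}]$-coefficient — for instance, pairing with the distinguished relation $l_{k_0}\in\mbl\otimes\mbq$ coming from \eqref{eq:disrel}: since $\Theta(PL(\Sigma))$ annihilates each $l_k$ (a piecewise-linear function vanishes on cone relations) one has $\langle \Theta(\psi), l_{k_0}\rangle = 0$, while $\langle D_{m+k}, l_{k_0}\rangle = \delta_{k k_0}$, so evaluating the identity $\rho = \Theta(\psi)+\sum_k\lambda_k D_{m+k}$ against $l_{k_0}$ gives $\lambda_{k_0} = \langle\rho,l_{k_0}\rangle$, and a direct computation of $\langle\rho, l_{k_0}\rangle$ using $l_{k_0} = e_{m+k_0} - \sum_{i\in I_\sigma} r_{k_0 i} e_i$ yields $\lambda_{k_0} = 1 - \sum_i r_{k_0 i} = 1 - \deg(a_{m+k_0})\ge 0$. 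This simultaneously computes the coefficients and forces the degree bound, and it also re-derives the coefficient formula for $\theta([\bar D_i])$, so it is the technical heart of the argument; everything else is bookkeeping with the exact sequences of Section~2.
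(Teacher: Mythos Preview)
Your proposal is correct and follows essentially the same route as the paper: both compute $\rho = \theta(\bar\rho) + \sum_{k=1}^{e}(1-\deg(a_{m+k}))[D_{m+k}]$ and then read off the two conditions. The paper simply invokes the direct sum decomposition $\mbl^\star\otimes\mbq = \theta(Pic(X)\otimes\mbq)\oplus\bigoplus_{k}\mbq[D_{m+k}]$ (so that $\mck^e = \mck \oplus \bigoplus_k \mbq_{\geq 0}[D_{m+k}]$) and concludes in one line, whereas you make this explicit by pairing with the distinguished relations $l_{k_0}$; your pairing argument is exactly what justifies that direct sum (indeed it is the content of the saturation lemma immediately preceding), so the two arguments are the same up to how much of the linear algebra is spelled out. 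Your tentative ``projection to the coarse level'' step for $(\Rightarrow)$ is unnecessary: once the $l_{k_0}$-pairing gives $\lambda_{k_0}=1-\deg(a_{m+k_0})$, comparing the two expressions for $\rho$ forces $[\Theta(\psi)]=\theta(\bar\rho)$ and hence $\bar\rho\in\bar\mck$ directly.
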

\begin{proof}
The element $\rho$ can be expressed in the following way
\begin{align}
\rho = [D_1]+ \ldots + [D_{m+e}] &= \theta([\overline{D}_1])+\ldots + \theta([\overline{D}_m])+ \sum_{k=1}^{e}(1- \sum_{i=1}^{m}r_{ki})[D_{m+k}]\notag \\
&= \theta([\overline{D}_1]) + \ldots + \theta([\overline{D}_m]) + \sum_{k=1}^e(1-deg(a_{m+k}))[D_{m+k}] \notag \\
& \in \qquad \qquad \mck  \qquad \qquad \quad\;\; \oplus  \qquad \qquad\bigoplus_{k=1}^e\mbq [D_{m+k}]\, . \notag
\end{align}
The last term is in $\mck^e$ iff $deg(a_{m+k}) \leq 1$ for $k =1,\ldots ,e$.
\end{proof}

Notice that the degree function $deg$ gives rise to a piece-wise linear function $\varphi$ which is given by 
\begin{equation}
\varphi(a_i) = 1  \qquad \text{for} \quad i=1,\ldots, m\, . \label{def:varphi}
\end{equation}
This piece-wise linear function corresponds to the anti-canonical divisor. If we assume that $X$ is nef (i.e. $\overline{\rho} \in \overline{\mck}$) then $\varphi$ is a convex function.
\begin{rem}\label{rem:hleq2eqle}
It follows from Lemma \ref{lem:orbcoho} and Lemma \ref{lem:degai} that for the choice $S = Gen(\Sigma)$ we have an isomorphism $H^{\leq 2}_{orb}(\mcx,\mbq) \simeq \mbl^\star \otimes \mbq$.
\end{rem}

We now introduce  the so-called extended Mori cone. Set
\[
\mca^e := \{ I \cup \{m+1,\ldots, m+e\} \mid I \in \mca\}
\]
and
\begin{align}
\mbk &:= \{ d \in \mbl \otimes \mbq \mid \{i \in \{1,\ldots, m+e\} \mid \langle D_i,d\rangle \in \mbz\} \in \mca^e\}\, , \notag \\
\mbk^{\text{eff}} &:= \{ d \in \mbl \otimes \mbq \mid \{i \in \{1,\ldots, m+e\} \mid \langle D_i,d\rangle \in \mbz_{\geq 0}\} \in \mca^e\}\, . \notag
\end{align}
Notice that the lattice $\mbl$ acts on $\mbk$.

Denote by $\lceil \cdot \rceil$, $\lfloor \cdot \rfloor$ and $\{ \cdot \}$ the ceiling, floor and  fractional part of a real number.
\begin{lem}[\protect{\cite[Section 3]{Ir2}}]\label{lem:kltobox}
The map
\begin{align}
\mbk / \mbl &\lra  Box(\Sigma)\, , \notag  \\
d & \mapsto v(d):=\sum_{i=1}^{m+e} \lceil \langle D_i,d\rangle \rceil a_i \notag
\end{align}
is bijective.
\end{lem}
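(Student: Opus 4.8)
The plan is to show the map $d \mapsto v(d)$ is well-defined on $\mbk/\mbl$, lands in $Box(\Sigma)$, and then construct an explicit inverse. First I would check well-definedness: if $d \in \mbk$ and $\ell \in \mbl$, then $\langle D_i, d+\ell\rangle = \langle D_i,d\rangle + \langle D_i,\ell\rangle$, and since $\ell \in \mbl = \ker(\mfa)$ we have $\sum_i \langle D_i,\ell\rangle a_i = \mfa(\ell) = 0$ with all $\langle D_i,\ell\rangle \in \mbz$; hence $\sum_i \lceil\langle D_i,d+\ell\rangle\rceil a_i = \sum_i (\lceil\langle D_i,d\rangle\rceil + \langle D_i,\ell\rangle) a_i = v(d) + \mfa(\ell) = v(d)$, so $v$ descends to $\mbk/\mbl$. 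Next, that $v(d) \in Box(\Sigma)$: by definition of $\mbk$ via $\mca^e$, the set $J := \{i \mid \langle D_i,d\rangle \in \mbz\}$ is an anti-cone of the extended fan, so its complement $\{i \mid \langle D_i,d\rangle \notin \mbz\}$ is contained in the set of rays of a cone $\sigma \in \Sigma$; for $i$ with $\langle D_i,d\rangle \in \mbz$ we get $\lceil\langle D_i,d\rangle\rceil = \langle D_i,d\rangle$, and after subtracting an element of $\mbl$ (using the $\mbl$-action, which is free to do since $v$ is $\mbl$-invariant) we may assume $\langle D_i,d\rangle = 0$ for those $i$, and $0 < \langle D_i,d\rangle < 1$ for $i$ with $a_i \in \sigma$ not in $J$... more carefully, one normalises $d$ in its $\mbl$-coset so that $0 \le \langle D_i,d\rangle < 1$ for all $i$ with $a_i \in \sigma$ and $\langle D_i,d\rangle = 0$ otherwise; then $\mfa(d) = \sum_{a_i \in \sigma} \langle D_i,d\rangle a_i$ with coefficients in $[0,1)$, and since also $\lceil\langle D_i,d\rangle\rceil = \langle D_i,d\rangle$ after this normalisation is \emph{not} quite right — rather one writes $v(d) = \sum \lceil\langle D_i,d\rangle\rceil a_i = \sum_{a_i\in\sigma}\lceil\langle D_i,d\rangle\rceil a_i$ and checks this is the Box element $\sum_{a_i\in\sigma}\{-\langle D_i,d\rangle + \lceil\langle D_i,d\rangle\rceil\} \cdots$; the clean statement is that $v(d) = \mfa(d) - \mfa(\lfloor d\rfloor)$-type expression lies in $\sigma\cap N$ with the fractional-coefficient condition.

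For surjectivity and injectivity I would construct the inverse directly. Given $v \in Box(\Sigma)$ lying in $\sigma \in \Sigma$, write $v = \sum_{a_i \in \sigma} r_i a_i$ with $r_i \in [0,1)\cap\mbq$ (unique since $\sigma$ is simplicial, by the rays spanning it, and using that $S = Gen(\Sigma) \subset$ the extended generators covers the box elements — this is where the choice $S = Gen(\Sigma)$ enters, guaranteeing $v$ itself or its cone generators are among the $a_i$). Set $\langle D_i, d\rangle := -r_i$ for $a_i \in \sigma$, $\langle D_i,d\rangle := 0$ otherwise; since $\{D_i\}$ is the dual basis of $(\mbz^n)^\star$ and the sequence $0 \to \mbl \to \mbz^n \xrightarrow{\mfa} N \to 0$ is exact, and $\mfa$ of the vector with these coordinates is $\sum_{a_i\in\sigma}(-r_i)a_i = -v \in N$... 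I need $\mfa(d) = 0$, so instead take the rational vector $d$ with $\langle D_i,d\rangle = -r_i$ corrected by a genuine lift: choose any $\widetilde d \in \mbq^n$ with those prescribed fractional parts whose image under $\mfa$ is $0$, which is possible because $N\otimes\mbq \to$ the span is surjective and one adjusts the integer parts $\lfloor\langle D_i,d\rangle\rfloor$ freely along $\mbl\otimes\mbq$. Then $\lceil\langle D_i,d\rangle\rceil = \lceil -r_i\rceil = 0$ for $r_i > 0$ and the recipe recovers $v$; membership in $\mbk$ follows because the non-integer coordinates sit exactly on the rays of $\sigma$, hence form the complement of an anti-cone in $\mca^e$ (the extended rays $m+1,\dots,m+e$ always have integer — in fact one can arrange $0$ — pairing).

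The main obstacle is bookkeeping the two compatible freedoms — the $\mbl$-action on $\mbk$ (quotienting) and the integer-part ambiguity in lifting a box element to a vector with prescribed fractional coordinates — and checking they match up so that the two maps are mutually inverse; concretely, showing that $d \mapsto (\{\langle D_i,d\rangle\})_i$ induces a bijection between $\mbk/\mbl$ and the set of admissible fractional-coordinate tuples, which then biject with $Box(\Sigma)$ via $v = \mfa$ applied to the fractional-part vector. I would cite \cite[Section 3]{Ir2} for the detailed verification, since the statement is attributed there, and present here only the construction of $v(d)$ and its inverse together with the well-definedness check above; the injectivity is then immediate because two elements of $\mbk$ with the same image $v$ have the same fractional coordinates $r_i = -\{\langle D_i,d\rangle\}$ on $\sigma$ and the same (namely integer, i.e. zero after normalisation) coordinates elsewhere, hence differ by an element of $\mbl$.
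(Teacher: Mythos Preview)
Your overall strategy --- check well-definedness on $\mbk/\mbl$, rewrite $v(d)=\sum_i\{-\langle D_i,d\rangle\}a_i$ to see it lands in $Box(\Sigma)$, and prove injectivity by matching fractional parts --- is exactly the paper's approach, and those parts of your sketch are fine.

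The gap is in your surjectivity argument. You correctly notice that the naive choice $\langle D_i,d\rangle=-r_i$ gives $\mfa(d)=-v\neq 0$, and that one must correct by something with integer entries. But your proposed fix, ``adjust the integer parts freely along $\mbl\otimes\mbq$'', does not work: a generic element of $\mbl\otimes\mbq$ has non-integer entries and would destroy the fractional parts you just prescribed, while an element of $\mbl$ itself has $\mfa$-image $0$ and so cannot move $\mfa(d)$ from $-v$ to $0$. What you actually need is an \emph{integer} vector $(n_i)\in\mbz^{m+e}$ with $\sum_i n_i a_i=v$; adding it to $(-r_i)$ gives an element of $\mbl\otimes\mbq$ with the correct fractional parts. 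This is precisely where the paper invokes $S=Gen(\Sigma)$: since the extended generators include $Gen(\Sigma)$, one can write $v=\sum_{a_i\in\sigma}n_i a_i$ with $n_i\in\mbn$ and support in $\sigma$, and combine this with the Box expression $v=\sum_{i\in I_\sigma}r_i a_i$. The resulting relation $\sum_{a_i\in\sigma}n_i a_i-\sum_{i\in I_\sigma}r_i a_i=0$ directly exhibits the desired element of $\mbk$, and its $\mbk$-membership is immediate since the non-integer coordinates sit among the rays of $\sigma$. Your phrase ``using that $S=Gen(\Sigma)$'' appears, but attached to the wrong step (the fractional expression, which needs only simpliciality); its real role is to supply this integer lift.
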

\begin{proof}
We first notice that $ \sum_{i=1}^{m+e} \lceil \langle D_i,d\rangle \rceil a_i\in N$. From the definition of $\mbk$  and the exact sequence \ref{eq:extseq} we get
\begin{align}
v(d) &= \sum_{i=1}^{m+e} (\{-\langle D_i,d\rangle \} + \langle D_i,d \rangle)a_i 
= \sum_{i=1}^{m+e} \{-\langle D_i,d\rangle \}a_i 
= \sum_{a_i \in \sigma} \{-\langle D_i,d\rangle \}a_i \label{eq:defvd}
\end{align}
for some $\sigma$. This shows $v(d) \in Box(\sigma)$. From the formula \ref{eq:defvd} we easily see that the map $d \mapsto v(d)$ factors through $\mbk \ra \mbk/\mbl$. Choose $v \in Box(\Sigma)$. We can express $v$ either as $v = \sum_{a_i \in \sigma} n_i a_i$ with $n_i \in \mbn$ since $S = Gen(\Sigma)$ or as $v = \sum_{i \in I_{\sigma}} r_i a_i \in Box(\Sigma)$ with $r_i \in [0,1)$. The equation $\sum_{a_i \in \sigma} n_i a_i - \sum_{i \in I_{\sigma}} r_i a_i =0$ gives rise to an element in $\mbk \subset \mbl \otimes  \mbq$, this shows the surjectivity. In order to show injectivity let $d,d' \in \mbk$ with $v(d) = v(d')$. This means there exists a $\sigma,\sigma' \in \Sigma$ such that
\[
\sum_{a_i \in \sigma} \{-\langle D_i,d\rangle\}a_i = v(d) = v(d')=\sum_{a_i \in \sigma'} \{-\langle D_i,d\rangle\}a_i
\]
since both cones are simplicial we find a cone $\sigma'' \subset \sigma \cap \sigma'$ such that
\[
v(d) =  \sum_{a_i \in \sigma ''} \{-\langle D_i,d \rangle\}a_i = \sum_{a_i \in \sigma ''} \{-\langle D_i,d' \rangle\}a_i = v(d')
\]
and therefore $\{-\langle D_i,d\rangle \} = \{-\langle D_i,d'\rangle \}$ for all $i=1,\ldots m+e$. Hence $\{\langle D_i, d-d'\rangle \}= 0$ and there fore $d-d' \in \mbl$. This shows the injectivity.
\end{proof}

Denote by $NE^e(X)\subset \mbl \otimes \mbq$ the lattice dual to the extended Picard group $Pic^e(X) \subset \mbl^\star$ which gives the following short exact sequence of free $\mbz$-modules
\begin{equation}\label{eq:exseqextN}
0 \lra NE^e(X) \lra PL(\Sigma^e)^* \lra N \lra 0\, .
\end{equation}

\begin{lem}
There are the following inclusions
\[
\mbl \subset \mbk \subset NE^e(X)\, .
\]
\end{lem}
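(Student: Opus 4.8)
The claim is the chain of inclusions $\mbl \subset \mbk \subset NE^e(X)$ inside $\mbl \otimes \mbq$. I would treat the two inclusions separately, as they have different flavors: the first is essentially definitional, while the second is the one requiring an argument.

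For $\mbl \subset \mbk$: recall that $\mbk = \{ d \in \mbl \otimes \mbq \mid \{i \mid \langle D_i,d\rangle \in \mbz\} \in \mca^e\}$. If $d \in \mbl$, then by the exact sequence \eqref{eq:extseq} we have $\langle D_i, d\rangle \in \mbz$ for \emph{every} $i \in \{1,\ldots,m+e\}$, so the set $\{i \mid \langle D_i,d\rangle \in \mbz\}$ is all of $\{1,\ldots,m+e\}$. Since $\{1,\ldots,m+e\} = \{1,\ldots,m\} \cup \{m+1,\ldots,m+e\}$ and $\{1,\ldots,m\} \in \mca$ (the empty set of rays removed gives the whole fan, which is a cone — or more precisely $\{1,\dots,m\}$ is always an anti-cone since it corresponds to leaving out no ray), we get $\{1,\ldots,m+e\} \in \mca^e$ by definition of $\mca^e$. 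Hence $d \in \mbk$.

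For $\mbk \subset NE^e(X)$: here $NE^e(X)$ is defined as the sublattice of $\mbl \otimes \mbq$ dual to $Pic^e(X) \subset \mbl^\star$, i.e. $NE^e(X) = \{ d \in \mbl\otimes\mbq \mid \langle \ell, d\rangle \in \mbz \text{ for all } \ell \in Pic^e(X)\}$. So I must show: if $d \in \mbk$, then $\langle \ell, d\rangle \in \mbz$ for every $\ell \in Pic^e(X)$. By construction $PL(\Sigma^e) \subset (\mbz^n)^\star$ is generated by $\Theta(PL(\Sigma))$ together with $D_{m+1},\ldots,D_{m+e}$, and $Pic^e(X)$ is its image in $\mbl^\star$; so it suffices to check integrality of $\langle \ell, d\rangle$ for $\ell$ running over (the images of) these generators. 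For $\ell = [D_{m+k}]$ with $k \in \{1,\ldots,e\}$: since $d \in \mbk$, the set of indices where $\langle D_i,d\rangle \in \mbz$ lies in $\mca^e$, hence contains $\{m+1,\ldots,m+e\}$ by the very definition of $\mca^e$; thus $\langle D_{m+k},d\rangle \in \mbz$. For $\ell = \Theta(\varphi)$ with $\varphi \in PL(\Sigma)$: write $d \in \mbk$, pick (via the definition of $\mca^e$, so that the index set $I \cup \{m+1,\dots,m+e\}$ with $I \in \mca$) a maximal cone $\sigma \in \Sigma(d)$ such that $\{i \mid a_i \in \sigma\} \cup \{m+1,\dots,m+e\}$ is contained in the integrality-index-set — i.e. $\langle D_i, d\rangle \in \mbz$ for all $i$ with $a_i \in \sigma$ and all $i \in \{m+1,\ldots,m+e\}$. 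Then $\Theta(\varphi) = \sum_{i=1}^n \varphi(a_i) D_i$, and pairing with $d$: the terms with $i$ such that $a_i \in \sigma$ or $i > m$ contribute integers (as $\langle D_i,d\rangle\in\mbz$ and $\varphi(a_i)\in\mbz$), while for the remaining $i$ (those $a_i \notin \sigma$, $i\le m$) I use that $\varphi$ is linear on $\sigma$: restricting $\varphi$ to $\sigma$ gives $m_\sigma \in N^\star$, and the relation $\Theta(\varphi) - \Theta(m_\sigma|_{\text{all of }N}) $ expresses $\Theta(\varphi)$ modulo the image of $N^\star$ in terms of $D_i$ supported on $\sigma \cup \{m+1,\ldots,m+e\}$; since the image of $N^\star$ in $\mbl^\star$ is zero (sequence \eqref{eq:extseqdual}), this means $[\Theta(\varphi)] = [\sum_{i : a_i\in\sigma \text{ or } i>m} c_i D_i]$ in $\mbl^\star$ for suitable $c_i \in \mbz$ — and pairing this with $d$ gives an integer. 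Hence $\langle [\Theta(\varphi)], d\rangle \in \mbz$.

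The main obstacle, and the step I would write most carefully, is the bookkeeping in the second inclusion: the subtlety is that $d \in \mbk$ only guarantees integrality of $\langle D_i,d\rangle$ on \emph{some} anti-cone's complement, not all $i$, so one must exploit that the pairing $\langle \Theta(\varphi), d\rangle$ only ``sees'' $\varphi$ through its values on a single maximal cone (using piecewise-linearity to reduce $\varphi|_\sigma$ to an element of $N^\star$, which dies in $\mbl^\star$). This is precisely the same mechanism used in the proof of Lemma~\ref{lem:kltobox} (choosing a $\mbz$-basis of $N$ inside $Gen(\sigma)\cup\{a_j \mid a_j\in\sigma\}$ and reducing $u$ to $m_\sigma \in N^\vee$), so I would phrase it to parallel that argument. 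Everything else is a direct unwinding of definitions.
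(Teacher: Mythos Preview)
Your strategy for $\mbk\subset NE^e(X)$ is correct in spirit, but the write-up systematically reverses the roles of ``$a_i\in\sigma$'' and ``$a_i\notin\sigma$'', and as written the argument does not go through. By definition, $d\in\mbk$ means the set $J=\{i\mid\langle D_i,d\rangle\in\mbz\}$ lies in $\mca^e$; writing $J=I\cup\{m+1,\dots,m+e\}$ with $I\in\mca$, the anti-cone condition says $\{1,\dots,m\}\setminus I$ spans a cone of $\Sigma$. Enlarging to a maximal cone $\sigma$, you get $\langle D_i,d\rangle\in\mbz$ for all $i\le m$ with $a_i\notin\sigma$ (and for all $i>m$). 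The \emph{non}-integral pairings sit inside $\sigma$, not outside. Consequently your sentence ``$\langle D_i,d\rangle\in\mbz$ for all $i$ with $a_i\in\sigma$'' is backwards, and the claim that linearity of $\varphi$ on $\sigma$ handles the $a_i\notin\sigma$ terms is a non-sequitur. With the directions swapped the mechanism works: $[\Theta(\varphi)]=[\Theta(\varphi-m_\sigma)]$ in $\mbl^\star$, and $\Theta(\varphi-m_\sigma)$ is supported on $\{i:a_i\notin\sigma\}$, where all $\langle D_i,d\rangle$ are integers. One point you state but should justify is that $m_\sigma\in N^\star$ (not just $N^\star\otimes\mbq$): this holds because $\varphi$ is integer-valued on $N\cap\sigma$ and, $\sigma$ being full-dimensional, $N\cap\sigma$ generates $N$ as a group.

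The paper takes a different route for this inclusion. Using Lemma~\ref{lem:kltobox} it writes $\mbk$ as the union of cosets $d_v+\mbl$ for $v\in Box(\Sigma)$, so it suffices to check $d_v\in NE^e(X)$ for the explicit representatives $d_v$ (which are supported on indices with $a_i\in\sigma$). For those the pairing with any lift $\varphi^e\in PL(\Sigma^e)$ reduces directly to integers plus the single term $\varphi(v)$, which is an integer since $\varphi\in PL(\Sigma)$ takes integer values on $N$. Your corrected argument is a legitimate alternative that avoids the coset decomposition, at the price of the small extra verification that $m_\sigma$ is integral on all of $N$.
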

\begin{proof}
Notice that the first inclusion follows from the fact that $\{1,\ldots,m+e\} \in \mca^e$.
For each $v \in Box(\Sigma)$  we can write as above $v = \sum_{a_i \in \sigma} n_i a_i = \sum_{i \in I_\sigma} r_i a_i$. Denote by $d_v \in \mbl \otimes \mbq$ the element in $\mbk \subset \mbl \otimes \mbq$ which corresponds to the relation $\sum_{a_i \in \sigma}n_i a_i - \sum_{i \in I_\sigma} r_i a_i=$. The proof of Lemma \ref{lem:kltobox} shows that $\mbk$ is the union of the sets $d_v + \mbl$ for $v \in Box(\Sigma)$. So in order to prove the second inclusion it is enough to show that $d_v \in NE^e(X)$.
 For this we need to check that $L(d_v) \in \mbz$ for every $L \in Pic^e(X)$. Recall that $Pic^e(X)$ is the image $PL(\Sigma^e) \subset (\mbz^{n})^\star$ generated by $\Theta(PL(\Sigma))$ and $D_{m+1},\ldots, D_{m+e}$. 
 Take a lift $\varphi^e= \Theta(\varphi)+\sum_{i=m+1}^{m+e}t_i D_i=\sum_{i=1}^m \varphi(a_i) D_i + \sum_{i=m+1}^{m+e}t_i D_i$ of $L$ in $PL(\Sigma^e)$. We have 
\[ 
L(d_v) =  \varphi^e(\sum_{a_i \in \sigma}n_i e_i - \sum_{i \in I_\sigma} r_i e_i) = \sum_{i=1}^{m}n_i \varphi(a_i) + \sum_{i=m+1}^{m+e}t_i n_i - \varphi(\sum_{i \in I_\sigma} r_i a_i) \in \mbz
\]
since $\varphi$ is integer-valued on $N$.
\end{proof}
\section{Laurent polynomials and GKZ-systems}\label{sec:LauGKZ}

In this section we review some results from \cite{RS1} and \cite{RS2} concerning the relationship between (Fourier-Laplace-transformed) Gau\ss-Manin systems of families of Laurent polynomials and (Fourier-Laplace-transformed) GKZ-systems.

\begin{notn}
We will first review some notations from the theory of algebraic $\mcd$-modules. Let $X$ be a smooth algebraic variety over $\mbc$ of dimension $d$.  We denote by $\mcd_X$ the sheaf of algebraic differential operators and by $D_X = \Gamma(X,\mcd_X)$ its sheaf of global sections. Recall when $X$ is affine there is an equivalence of categories between $\mcd$-modules on $X$ which are quasi-coherent as $\mco_X$-modules and the corresponding $D_X$-module of global sections. If $\mcm$ is a $\mcd$-module on $X$ we will write $M$ for its module of global sections.  We denote by $M(\mcd_X)$ the abelian category of of algebraic $\mcd_X$-modules and the abelian subcategory of (regular) holonomic $\mcd_X$-modules by $M_h(\mcd_X)$ (resp. $M_{rh}(\mcd_X)$. The full triangulated subcategory of $D^b(\mcd_X)$ which consists of objects with (regular) holonomic cohomology is denoted by $D^b_h(\mcd_X)$ ( resp. $D^b_{rh}(\mcd_X)$). Let $f: X\ra Y$ be a map between smooth algebraic varieties and let $\mcm \in D^b(\mcd_X)$ and $\mcn \in D^b(\mcd_Y)$. The direct resp. inverse image functors are defined by $f_+ \mcm := Rf_*(\mcd_{Y \leftarrow X} \overset{L}\otimes \mcm)$ resp. $f^+ \mcn := \mcd_{X \ra Y} \overset{L}\otimes f^{-1} \mcn$.\\

Let $\mcv' := \mbc_t \times X$ be a trivial vector bundle of rank one and denote by $\mcv =\mbc_\tau \times X$ its dual. Denote by $can: \mcv' \times_X \mcv \ra \mbc$ the canonical pairing between its fibers.
\begin{defn}
Let $\mcl := \mco_{\mcv' \times_X \mcv} e^{-can}$ the free rank one module with differential given by the product rule. Denote by $p_1: \mcv' \times_X \mcv \ra \mcv'$, $p_2: \mcv' \times_X \mcv \ra \mcv$ the canonical projections. The Fourier-Laplace transformation is defined by
\[
FL_X(\mcm):= p_{2+}(p_1^+ \mcm \overset{L}\otimes \mcl) \quad \text{for} \quad M \in D^b_h(\mcd_{\mcv'})\, .
\]
\end{defn}
Set $ z = 1/\tau$ and denote by $j_\tau: \mbc_\tau^\ast \times X \hookrightarrow \mbc_\tau \times X$ and $j_z: \mbc^*_\tau \times X \hookrightarrow \hat{V} := \mbc_z \times X = \mbp^1_\tau \setminus \{ \tau = 0\} \times X$ the canonical embeddings. The partial, localized  Fourier-Laplace transformation is defined by
\[
Fl^{loc}_X(\mcm) := j_{z+}j_\tau^+FL_X(\mcm) \qquad \text{for} \quad \mcm \in D^b_h(\mcd_{\mcv'})\, .
\]
\end{notn}

Set $\hat{V} := \mbc_z \times \Lambda$ , where $\Lambda = \mbc^n$ with coordinates $\lambda_1, \ldots , \lambda_n$. Let $A$ be a $d \times n$ integer matrix with columns $(\underline{a}_1, \ldots , \underline{a}_n)$ and entries $a_{ki}$ for $k=1, \ldots , d$, $i= 1, \ldots , n$ and $\beta = (\beta_1, \ldots , \beta_d) \in \mbc^d$. We denote by $\mbl \subset \mbz^{n}$ the $\mbz$-submodule of relations among the columns $A$, i.e. $(l_1, \ldots , l_n) \in \mbl $ iff $\sum_i l_i \underline{a}_i = 0$.

\begin{defn}\label{def:FLGKZ}
The Fourier-Laplace-transformed GKZ-system $\widehat{M}_A^{(\beta_0,\beta)}$ is the left $D_{\hat{V}}$-module $D_{\hat{V}}[z^{-1}]/I$, where $I$ is the left ideal generated by the operators $\hat{\Box}_{\underline{l}}$, $\hat{E}_k -\beta_kz$ and $\hat{E} - \beta_0 z$, which are defined by
\begin{align}
\hat{\Box}_{\underline{l}} &:= \prod_{i: l_i <0} (z \cdot \p_{\lambda_i})^{-l_i} - \prod_{i : l_i >0} (z \cdot \p_{\lambda_i})^{l_i}\quad \text{for}\; \underline{l} \in \mbl \, ,\notag \\
\hat{E} &:= z^2 \p_z + \sum_{i=1}^m z \lambda_i \p_{\lambda_i}\, , \notag \\
\hat{E}_k &:= \sum_{i=1}^m a_{ki} z \lambda_i \p_{\lambda_i} \, . \notag
\end{align}
We denote the corresponding $\mcd_{\hat{V}}$-module by $\widehat{\mcm}^{(\beta_0, \beta)}_A$.
\end{defn}

Let $Y = (\mbc^*)^d$, we define a related family of Laurent polynomials:
\begin{align}
\varphi_A = (\phi_A, pr_2) : Y \times \Lambda &\lra V := \mbc_{\lambda_0} \times \Lambda\, , \notag \\
(\underline{y}, \lambda_1, \ldots,\lambda_n) &\mapsto (- \sum^n_{i=1} \lambda_i \underline{y}^{\underline{a}_i}, \lambda_1, \ldots, \lambda_n)\, . \notag
\end{align}

The Gau\ss-Manin system is the zeroth cohomology of the direct image of the  structure sheaf $\mco_{Y \times \Lambda}$ in the category of $\mcd$-modules:
\[
\mch^0(\varphi_{A,+} \mco_{Y \times \Lambda})\, .
\]

We now consider the localized partial Fourier-Laplace transform of the Gau\ss-Manin system of $\varphi_A$:
\[
\mcg^+ := FL^{loc}_\Lambda\mch^0(\varphi_{A,+} \mco_{Y \times \Lambda})\, .
\]
Write $G^+:=H^0(\widehat{V}, \mcg^+)$ for its module of global sections.
Then there is an isomorphism of $D_{\widehat{V}}$-modules (cf. e.g. \cite[Lemma 3.4]{RS2})
$$
G^+\cong H^0\left(\Omega^{\bullet+d}_{Y \times \Lambda / \Lambda}[z^\pm],d - z^{-1} \cdot d_y \phi\wedge\right),
$$
where $d$ is the differential in the relative de Rham complex $\Omega^\bullet_{Y \times \Lambda/\Lambda}$.

The following result relates the localized FL-transform of the Gau\ss-Manin system of $\varphi_A$  with a certain FL-transformed GKZ-system.

\begin{prop}
Assume $\mbr_+ A = \mbr^d$, then we have an isomorphism
\[
\mcg^+ \simeq \widehat{\mcm}^{(0,0)}_A\, .
\]
\end{prop}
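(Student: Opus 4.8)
The plan is to compare the two $\mcd_{\widehat{V}}$-modules by exhibiting an explicit surjective morphism and then checking it is an isomorphism by a dimension/holonomicity count. First I would use the de Rham description
\[
G^+\cong H^0\left(\Omega^{\bullet+d}_{Y\times\Lambda/\Lambda}[z^\pm],\ d-z^{-1}d_y\phi\wedge\right),
\]
and observe that $G^+$ is generated as a $D_{\widehat{V}}$-module by the class of the volume form $\omega=\frac{dy_1}{y_1}\wedge\cdots\wedge\frac{dy_d}{y_d}$; this is standard for such ``twisted'' de Rham cohomology (it follows, e.g., from the fact that $\phi_A$ is cohomologically tame on the generic fiber and the $\mco$-coherence of the Brieskorn lattice, which will be used again in Section~4). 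Hence there is a surjection $\pi\colon D_{\widehat{V}}[z^{-1}]\twoheadrightarrow G^+$, $1\mapsto[\omega]$.

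Next I would check that $\pi$ kills the defining operators of $\widehat{M}^{(0,0)}_A$. For the box operators $\hat\Box_{\underline l}$ one computes directly in the twisted de Rham complex: acting by $z\p_{\lambda_i}$ on $[\lambda\text{-independent forms}]$ amounts (after the twist $d-z^{-1}d_y\phi\wedge$) to multiplication by the monomial $\underline y^{\underline a_i}$, so $\prod_{i:l_i<0}(z\p_{\lambda_i})^{-l_i}[\omega]$ and $\prod_{i:l_i>0}(z\p_{\lambda_i})^{l_i}[\omega]$ both represent $\underline y^{\sum_{l_i<0}(-l_i)\underline a_i}\,\omega=\underline y^{\sum_{l_i>0}l_i\underline a_i}\,\omega$ since $\sum_i l_i\underline a_i=0$; thus $\hat\Box_{\underline l}[\omega]=0$. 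For the Euler operators $\hat E_k-0\cdot z$ and $\hat E-0\cdot z$ one uses that $\sum_i z\lambda_i\p_{\lambda_i}$ corresponds to $-\phi$ acting by the twisting differential and that the weighted sums $\sum_i a_{ki}(\cdots)$ and the homogeneity in $z$ produce exact forms after an integration by parts in the relative de Rham complex; the vanishing of $\beta$ and $\beta_0$ here is exactly what makes $\omega$ (rather than a ramified section) the correct choice. This yields a surjective morphism of $\mcd_{\widehat{V}}$-modules $\widehat{\mcm}^{(0,0)}_A\twoheadrightarrow\mcg^+$.

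Finally I would prove this surjection is an isomorphism. Both modules are holonomic $\mcd_{\widehat{V}}$-modules — $\mcg^+$ because it is the FL-transform of a holonomic Gau\ss-Manin system, and $\widehat{\mcm}^{(0,0)}_A$ by the known holonomicity of (FL-transformed) GKZ-systems under the hypothesis $\mbr_+A=\mbr^d$. The hypothesis $\mbr_+A=\mbr^d$ (the columns positively span) is precisely what guarantees, on one hand, that $\phi_A$ restricted to a generic fiber is nondegenerate/convenient enough that the twisted de Rham cohomology is concentrated in degree $0$ with the expected rank equal to the normalized volume of the convex hull, and on the other hand that the GKZ-system $\widehat{M}^{(0,0)}_A$ has generic rank equal to that same volume (by the Gelfand--Kapranov--Zelevinsky rank formula). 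Comparing generic ranks over the open locus where both are $\mco$-locally free of that rank forces the kernel of the surjection to be a holonomic module supported on a proper closed subset; a further check that this kernel has no sections — e.g. by restricting to a generic line in $\Lambda$ where both systems are ordinary differential equations of equal order, or by invoking that $\mcg^+$ has no $\mco_{\widehat V}$-torsion and the surjection is generically an isomorphism — shows the kernel vanishes.

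I expect the main obstacle to be the last step: controlling the kernel away from the generic locus, i.e. ruling out a nontrivial holonomic submodule of $\widehat{\mcm}^{(0,0)}_A$ supported on the bad set. The cleanest route is probably to cite the rank formula for FL-transformed GKZ-systems from \cite{RS1,RS2} together with the fact established there that under $\mbr_+A=\mbr^d$ the module $\widehat{\mcm}^{(0,0)}_A$ has no quotient supported in codimension $\geq 1$, so that any proper surjection would decrease generic rank — a contradiction. Alternatively one reduces everything to the already-known non-transformed statement (the identification of Gau\ss-Manin and GKZ \emph{before} Fourier-Laplace, proven in \cite{RS1}) and applies $FL^{loc}_\Lambda$ to both sides, checking the transform of the GKZ-system is $\widehat{\mcm}^{(0,0)}_A$ by a direct computation of the Fourier-Laplace kernel; this trades the kernel-control problem for the (routine, but longer) verification that $FL^{loc}$ commutes with the relevant presentations.
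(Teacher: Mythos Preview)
The paper's own proof is a one-line citation: the isomorphism is taken directly from \cite[Proposition 3.3]{RS2}, and the hypothesis $\mbr_+A=\mbr^d$ is exactly what that proposition requires. Your \emph{alternative} route at the very end --- identify the Gau\ss--Manin system with the ordinary GKZ system before Fourier--Laplace and then apply $FL^{loc}_\Lambda$ to both sides --- is in effect what the cited result in \cite{RS2} does, so that part of your proposal matches the paper.

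Your \emph{main} route (build a surjection $\widehat{\mcm}^{(0,0)}_A\twoheadrightarrow\mcg^+$ by sending $1$ to $[\omega]$, then compare holonomic ranks) is a reasonable and standard strategy, and the verification that $\hat\Box_{\underline l}$, $\hat E_k$, $\hat E$ annihilate $[\omega]$ is correct. However, the injectivity step as you wrote it has a genuine gap. You argue that $\widehat{\mcm}^{(0,0)}_A$ ``has no quotient supported in codimension $\geq 1$, so that any proper surjection would decrease generic rank.'' This is the wrong direction: a nonzero kernel supported on a proper closed subvariety does \emph{not} decrease the generic rank of the quotient, so the absence of small-support \emph{quotients} of the source is irrelevant. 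What you actually need is that $\widehat{\mcm}^{(0,0)}_A$ has no nonzero \emph{submodule} supported in codimension $\geq 1$ (equivalently, no $\mco$-torsion). That is a different statement, and for GKZ-type systems at the integral parameter $(0,0)$ it is not automatic --- one typically needs a normality/saturatedness hypothesis on the semigroup $\mbn A$ (cf.\ the later use of ``$\mbn A$ saturated'' in this very section) or an argument specific to the Fourier--Laplace setting. Your fallback ``restrict to a generic line'' does not bypass this: equal generic order of the ODEs again only controls generic rank, not torsion. So either supply the torsion-freeness of $\widehat{\mcm}^{(0,0)}_A$ with a precise reference, or --- as the paper does --- take the shorter path through \cite{RS2}.
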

\begin{proof}
This follows from \cite[Proposition 3.3]{RS2} and the assumption.
\end{proof}
In the following we set $\widehat{\mcm}_A := \widehat{\mcm}^{(0,0)}_A$ resp. $\widehat{M}_A := \widehat{M}^{(0,0)}$.\\

For certain parameters $\lambda \in \Lambda$ the fibers of $\varphi_A(\cdot,\lambda)$ acquire singularities at infinity. Outside this set the singularities of the $\mcd$-module $\mcg^+$ are particularly simple. \\

Let $Q$ be the convex hull of the set $\{\underline{a}_0 := 0, \underline{a}_1, \ldots , \underline{a}_n\}$:
\[
Q := conv(0,\underline{a}_1,\ldots ,\underline{a}_n)\, .
\]
 Let $\Gamma$ be a face of $Q$ and denote by $Y_\Gamma^{crit,(\lambda_0,\underline{\lambda})}$ the set
\[
\{ (y_1, \ldots ,y _d) \in Y \mid \sum_{\underline{a}_i \in \Gamma} \lambda_i \underline{y}^{\underline{a}_i} = 0 \; ; \; y_k \p_{y_k}(\sum_{\underline{a}_i \in \Gamma} \lambda_i \underline{y}^{\underline{a}_i}) = 0 \quad \text{for all} \quad k \in \{1, \ldots ,d\} \}\, .
\]
We say that the fiber $\varphi_A^{-1}(\lambda_0,\underline{\lambda})$ has a singularity at infinity if there exists a \textbf{proper} face $\Gamma$ of the Newton polyhedron $Q$ such that $Y_\Gamma^{crit, (\lambda_0,\underline{\lambda})} \neq \emptyset$. The set
\[
\Delta^\infty_A := \{ (\lambda_0,\underline{\lambda}) \in \mbc_{\lambda_0} \times \Lambda \mid \exists \Gamma \neq Q \; \text{such that } \; Y_{\Gamma}^{crit,(\lambda_0,\underline{\lambda})} \neq \emptyset \}
\]
is called the non-tame locus of $\varphi_A$. Notice that $\Delta^\infty_A$ is independent of $\lambda_0$ since $0$ lies in the interior of $Q$, hence no proper face of $Q$ contains $0$. Denote the projection of $\Delta^\infty_A$ to $\Lambda$ by $\Lambda^{bad}$. Let $\Lambda^* := \Lambda \setminus \{\lambda_1\cdot  \ldots \cdot \lambda_n = 0\} $ and define
\[
\Lambda^\circ := \Lambda^* \setminus \Lambda^{bad}\, .
\]
The following was  proven in \cite{RS2} Lemma 3.13:

\begin{lem}
Consider $\widehat{\mcm}_A$ as a $\mcd_{\mbp^1\times \overline{\Lambda}}$-module, where $\overline{\Lambda}$ is a smooth projective compactification of $\Lambda$. Then $\widehat{\mcm}_A$ is regular outside $(\{z=0\} \times \Lambda) \cup (\mbp^1_z \times (\overline{\Lambda} \setminus \Lambda^\circ))$ and smooth on $\mbc^*_z \times \Lambda^\circ$.
\end{lem}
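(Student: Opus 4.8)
The plan is to reduce the statement to results already available in the literature on GKZ-systems and Fourier-Laplace transforms, exactly as the cited reference \cite[Lemma 3.13]{RS2} does. First I would recall that $\widehat{\mcm}_A$ arises as $\mcg^+=FL^{loc}_\Lambda\mch^0(\varphi_{A,+}\mco_{Y\times\Lambda})$, so the singular locus question is really a question about the family of Laurent polynomials $\varphi_A$ and its relative de Rham cohomology. Over the open set $\Lambda^\circ$ every fiber $\varphi_A(\cdot,\lambda)$ is \emph{cohomologically tame} (non-degenerate with respect to its Newton polyhedron $Q$, with no singularity at infinity, by definition of $\Lambda^{bad}$ and $\Delta^\infty_A$); this is the key geometric input. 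The standard theory (Sabbah, and the exposition in \cite{RS1},\cite{RS2}) then gives that the Gau\ss-Manin system is $\mco_{\Lambda^\circ}$-locally free of finite rank, that its only singularities in the $z$-direction occur at $z=0$ and $z=\infty$, and that it is smooth on $\mbc^*_z\times\Lambda^\circ$.

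The steps, in order: (i) restrict to $\Lambda^\circ$ and invoke cohomological tameness of each fiber, so that $R\varphi_{A,*}\mco$ has locally free cohomology there and the FL-transform $\mcg^+$ restricted to $\mbc^*_z\times\Lambda^\circ$ is a vector bundle with integrable connection — in particular $\mcO$-coherent, hence smooth; (ii) analyze the $z$-direction: since $\widehat{E}=z^2\p_z+\sum z\lambda_i\p_{\lambda_i}$ is one of the defining operators, the module has a regular singularity nowhere except possibly where $z=0$ (the irregular point of a Fourier-Laplace transform) — so $\widehat{\mcm}_A$ is regular away from $\{z=0\}\times\Lambda$; (iii) analyze the $\Lambda$-direction near the boundary $\overline{\Lambda}\setminus\Lambda$ and near $\Lambda^{bad}$: here one cannot claim regularity because fibers degenerate or acquire singularities at infinity, so these loci are simply excluded; (iv) assemble: on $\mbp^1_z\times\overline{\Lambda}$ the module $\widehat{\mcm}_A$ is regular holonomic outside $(\{z=0\}\times\Lambda)\cup(\mbp^1_z\times(\overline{\Lambda}\setminus\Lambda^\circ))$, and smooth on the locus where both the fiber is tame and $z\neq 0$, i.e. on $\mbc^*_z\times\Lambda^\circ$. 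Each of these is precisely the content of \cite[Lemma 3.13]{RS2}, so the proof is essentially a citation together with the identification $\mcg^+\simeq\widehat{\mcm}_A$ from the preceding proposition and the observation (already noted in the text) that $\Delta^\infty_A$ is independent of $\lambda_0$ because $0$ lies in the interior of $Q$.

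The main obstacle — or rather the point requiring genuine input rather than formal manipulation — is step (i): controlling the singularities of $\mcg^+$ over $\Lambda^\circ$ requires the full strength of the tameness results, namely that a Laurent polynomial non-degenerate with respect to its Newton polyhedron (with $0$ interior) behaves like a tame function, so its Fourier-Laplace transformed Brieskorn lattice is $\mco$-free and the only irregularity is at $z=0$. Since this is exactly what \cite{RS2} establishes, I would not reprove it; the proof here is:

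\begin{proof}
By the previous proposition we have $\mcg^+\simeq\widehat{\mcm}_A$, so it suffices to establish the statement for $\mcg^+=FL^{loc}_\Lambda\mch^0(\varphi_{A,+}\mco_{Y\times\Lambda})$. For $\lambda\in\Lambda^\circ$ the Laurent polynomial $\phi_A(\cdot,\lambda)$ is non-degenerate with respect to its Newton polyhedron $Q$ and, since $\lambda\notin\Lambda^{bad}$, has no singularity at infinity; as $0$ lies in the interior of $Q$ this makes $\phi_A(\cdot,\lambda)$ cohomologically tame. Hence by \cite[Lemma 3.13]{RS2} (see also the tameness results of \cite{RS1}) the module $\widehat{\mcm}_A$, viewed on $\mbp^1_z\times\overline{\Lambda}$, is regular outside $(\{z=0\}\times\Lambda)\cup(\mbp^1_z\times(\overline{\Lambda}\setminus\Lambda^\circ))$ and is $\mco$-coherent, hence smooth, on $\mbc^*_z\times\Lambda^\circ$.
\end{proof}
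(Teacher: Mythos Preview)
Your proposal is correct and matches the paper's approach: the paper simply states this lemma as a citation of \cite[Lemma 3.13]{RS2} without giving any proof of its own, and your write-up is essentially that same citation supplemented with the underlying heuristic (tameness over $\Lambda^\circ$, irregularity only at $z=0$). There is nothing to add or correct.
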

Next we want to consider natural lattices in $\widehat{\mcm}_A$. For this we need the notion of $\mcr$-modules. 
\begin{defn}
Let $X$ be a smooth variety . Then the sheaf of (non-commutative) rings $\mcr_{\mbc_z \times X}$ is by definition the $\mco_{\mbc_z \times X}$-subalgebra  of $\mcd_{\mbc_z \times X}$ locally generated by $z^2 \p_z, z \p_{x_1}, \ldots , z\p_{x_n}$, where $(x_1, \ldots , x_n)$ are local coordinates on $X$.
\end{defn}

\begin{defn}
$ $\\[-1 em]
\begin{enumerate} 
\item Let $\mci$ be the left ideal in $\mcr_{\mbc_z \times \Lambda^*}$ generated by $(\hat{\Box}_{\underline{l}})$ and $(\hat{E}_k)_{k=0, \ldots ,d}$ (cf . Definition \ref{def:FLGKZ}). Write ${\,\,^{\ast\!\!\!\!\!}}{_0}\widehat{\mcm}_{A}$ for the cyclic $\mcr$-module $\mcr_{\mbc_z \times \Lambda^*} / \mci$.
\item Consider the open inclusions $\Lambda^\circ \subset \Lambda^* \subset \Lambda$ and define the $\mcd_{\mbc_z \times \Lambda^\circ}$-module
\[
\mclog := \left( \widehat{\mcm}_A \right)_{\mid \mbc_z \times \Lambda^\circ}
\]
and the $\mcr_{\mbc_z \times \Lambda^\circ}$-module
\[
\mclogo := \left( {\,\,^{\ast\!\!\!\!\!}}{_0}\widehat{\mcm}_{A} \right) _{\mid \mbc_z \times \Lambda^\circ}  \, .
\]
\end{enumerate}
\end{defn}

We now list some properties of the $\mcr$-module $\mclogo$:
\begin{prop}\cite[Proposition 3.18, Corollary 3.19]{RS2}\label{prop:PropLattice}$ $\\[-1em]
\begin{enumerate}
\item The $\mco_{\mbc_z \times \Lambda^\circ}$-module $\mclogo$ is locally-free of rank vol(Q).
\item The natural map $\mclogo \ra \mclog$ which is induced by the inclusion $\mcr_{\mbc_z \times \Lambda^\circ} \ra \mcd_{\mbc_z \times \Lambda^\circ}$ is injective.
\end{enumerate}
\end{prop}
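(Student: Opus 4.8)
The plan is to identify the cyclic $\mcr$-module $\mclogo$, over the tame locus $\Lambda^\circ$, with a geometric Brieskorn lattice, and to deduce both assertions from the good behaviour of such lattices when the underlying Laurent polynomial has no critical points at infinity. Using the de Rham description of $\mcg^+$ recalled above together with the identification $\mcg^+\simeq\widehat{\mcm}_A$, let $G_0\subset\mcg^+|_{\Lambda^\circ}=\mclog$ be the $\mco_{\mbc_z\times\Lambda^\circ}$-submodule generated by the classes of the relative forms $\underline{y}^{c}\,\tfrac{dy_1}{y_1}\wedge\cdots\wedge\tfrac{dy_d}{y_d}$. Since $z\p_{\lambda_i}$ acts on $\mcg^+$ as multiplication by $-\underline{y}^{\underline{a}_i}$ and the generator $1\in\mclogo$ corresponds to the volume form, the natural map $\mclogo\to\mclog$ induced by $\mcr\hookrightarrow\mcd[z^{-1}]$ has image exactly $G_0$; the main step is to show that $\mclogo\to G_0$ is an isomorphism. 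Injectivity is cheap: over $\mbc^*_z\times\Lambda^\circ$ inverting $z$ turns $\mcr$ into $\mcd$ (as $z^2\p_z$ and $z\p_{\lambda_i}$ then generate $\p_z$ and $\p_{\lambda_i}$) and $\mci$ into the defining ideal of $\widehat{\mcm}_A$, so there $\mclogo=\mclog\cong\mcg^+$, a flat bundle whose rank equals the generic number of critical points of $\phi_A(\cdot,\lambda)$, and $G_0$ agrees with $\mcg^+$ there as well; hence $\mclogo\to G_0$ is an isomorphism over $\mbc^*_z\times\Lambda^\circ$, and a rank count extends this across $\{z=0\}$. Surjectivity is the substantial point: one must check that, modulo $d_y\phi\wedge\Omega^{d-1}_{Y\times\Lambda/\Lambda}$, the monomials $\underline{y}^{c}\tfrac{dy}{y}$ with $c$ in the semigroup $\mbn\langle\underline{a}_1,\dots,\underline{a}_n\rangle$ already span $\Omega^{d}_{Y\times\Lambda/\Lambda}[z]$; this is a combinatorial reduction effected with the box relations $\hat{\Box}_{\underline{l}}$ and the Euler operators $\hat{E}_k,\hat{E}$, which uses decisively that the $\underline{a}_i$ span $\mbz^d$ over $\mbz$ and that $0$ lies in the interior of $Q$.

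Granting $\mclogo\cong G_0$, part (1) follows from the theory of tame functions. For $\lambda\in\Lambda^\circ$ the Laurent polynomial $\phi_A(\cdot,\lambda)$ is convenient and non-degenerate — this is precisely the requirement $\lambda\notin\Lambda^{bad}$ — hence cohomologically tame, so by Sabbah's theory of tame functions, in its version for families, $G_0$ is a locally free $\mco_{\mbc_z\times\Lambda^\circ}$-module whose restriction to $\{z=0\}$ is $\Omega^{d}_{Y\times\Lambda/\Lambda}/(d_y\phi\wedge\Omega^{d-1}_{Y\times\Lambda/\Lambda})$, the family of Jacobian rings of $\phi_A(\cdot,\lambda)$; by Kouchnirenko's theorem this last sheaf is $\mco_{\Lambda^\circ}$-locally free of rank $\mathrm{vol}(Q)$, constant over $\Lambda^\circ$, so $G_0\cong\mclogo$ is locally free of rank $\mathrm{vol}(Q)$. (Equivalently, once $\mclogo$ is known to embed in the $z$-invertible module $\mcg^+|_{\Lambda^\circ}$ — so that $z$ is a non-zero-divisor on it — and $\mclogo/z\mclogo$ is identified with the above family of Jacobian rings, the conclusion can be repackaged as a Nakayama argument: a surjection $\mco^{\mathrm{vol}(Q)}\twoheadrightarrow\mclogo$ lifting a local basis of $\mclogo/z\mclogo$ has kernel $\mcK$ satisfying $\mcK/z\mcK=0$, and $\mcK\subset\mco^{\mathrm{vol}(Q)}$ being torsion-free forces $\mcK=0$.)

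Part (2) is then immediate: under the identification $\mclog\cong\mcg^+|_{\Lambda^\circ}$ the natural map becomes the inclusion $G_0\hookrightarrow\mcg^+|_{\Lambda^\circ}$, which is injective. I expect the only genuine obstacle to be the surjectivity of $\mclogo\to G_0$, that is, the combinatorial statement that the toric relations and the Euler operators suffice to rewrite every relative top form as an $\mco[z]$-combination of the monomials $\underline{y}^{c}\tfrac{dy}{y}$, $c\in\mbn\langle\underline{a}_1,\dots,\underline{a}_n\rangle$; granting this, Sabbah's theory over the tame locus together with Kouchnirenko's volume formula supply the rest.
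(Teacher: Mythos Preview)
The paper does not give its own proof here; the statement is quoted from \cite[Proposition 3.18, Corollary 3.19]{RS2}. Your outline moves in the right circle of ideas (Brieskorn lattice, Sabbah's theory of tame functions, Kouchnirenko's volume formula), and this is broadly how the argument in \cite{RS2} proceeds, but two steps in your write-up are not in order.

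First, your injectivity argument is circular. You say that over $\mbc_z^*\times\Lambda^\circ$ the map $\mclogo\to\mclog$ is an isomorphism and that ``a rank count extends this across $\{z=0\}$''. But a rank count presupposes that $\mclogo$ is coherent (or at least $z$-torsion-free), which is exactly what (1) and (2) together assert. In \cite{RS2} the order is reversed: one first shows directly that $\mclogo/z\,\mclogo$ is $\mco_{\Lambda^\circ}$-coherent, by computing the symbol ideal of the cyclic presentation modulo $z$ and checking that over $\Lambda^\circ$ it cuts out precisely the relative critical locus of $\phi_A$, which is finite of degree $\mathrm{vol}(Q)$ by Kouchnirenko. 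Since $\mclogo[z^{-1}]=\mclog$ already has generic rank $\mathrm{vol}(Q)$, this forces $\mclogo$ to be $z$-torsion-free and locally free of that rank. Coherence of the $z=0$ fibre comes \emph{before} injectivity, not after; your parenthetical Nakayama remark is the right packaging, but it needs this independent input.

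Second, and more seriously, the step you flag as ``surjectivity'' --- that the image $G_0$ coincides with the full de Rham Brieskorn lattice $H^0(\Omega^{\bullet+d}_{Y\times\Lambda^\circ/\Lambda^\circ}[z],\,zd-d_y\phi_A\wedge)$ --- is precisely the content of the \emph{next} proposition in the paper, quoted there as \cite[Proposition 3.20]{RS2}, and it carries the extra hypothesis that the semigroup $\mbn A$ be saturated. That hypothesis is not assumed in the present statement, so you cannot route the proof of (1) through it. The result you are asked to prove holds without saturation; the argument in \cite{RS2} stays on the GKZ side and works directly with the cyclic $\mcr$-presentation of $\mclogo$, rather than first identifying it with the geometric Brieskorn lattice.
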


The so-called Fourier-Laplace transformed Brieskorn lattice of the FL-transformed Gau\ss-Manin system $\mcg^+$ is given by the following $R_{\mbc_z \times \Lambda^\circ}$-module:
\[
H^0\left(\Omega^{\bullet +d}_{Y \times \Lambda^\circ / \Lambda^\circ}[z],zd -d_y \phi \wedge \right)\, .
\]

If the semigroup $\mbn A$ is saturated we have the following identification:
\begin{prop}\cite[Proposition 3.20]{RS2}
Let $\mbn A$ be a saturated semigroup. There exists the following $R_{\mbc_z \times \Lambda^\circ}$-linear isomorphism
\[
H^0\left(\Omega^{\bullet +d}_{Y \times \Lambda^\circ / \Lambda^\circ}[z],zd -d_y\phi_A \wedge \right)  \simeq  {\,\,^{\circ\!\!\!\!\!}}{_0}\widehat{M}_{A}\, .
\]
\end{prop}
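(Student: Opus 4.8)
The plan is to establish the claimed $\mcr_{\mbc_z\times\Lambda^\circ}$-linear isomorphism
\[
H^0\!\left(\Omega^{\bullet+d}_{Y\times\Lambda^\circ/\Lambda^\circ}[z],\;zd-d_y\phi_A\wedge\right)\;\simeq\;\mclogo
\]
by combining the already-quoted isomorphism for the localized FL-transformed Gau\ss-Manin system with the compatibility of the Brieskorn lattice under the inclusion $z\mapsto 1/\tau$. Recall that over $\Lambda^\circ$ we have $\mclog=(\widehat{\mcm}_A)_{\mid\mbc_z\times\Lambda^\circ}$ together with the isomorphism $G^+\cong H^0(\Omega^{\bullet+d}_{Y\times\Lambda/\Lambda}[z^\pm],d-z^{-1}d_y\phi\wedge)$ of $D_{\widehat V}$-modules; clearing the denominator $z^{-1}$ in the twisted differential (\ie rescaling $d-z^{-1}d_y\phi\wedge$ to $zd-d_y\phi\wedge$) identifies the $z$-lattice $H^0(\Omega^{\bullet+d}_{Y\times\Lambda^\circ/\Lambda^\circ}[z],zd-d_y\phi_A\wedge)$ inside $G^+_{\mid\mbc_z\times\Lambda^\circ}$ with a natural $\mcr_{\mbc_z\times\Lambda^\circ}$-submodule. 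So the task reduces to identifying this submodule with $\mclogo=(\mcast)_{\mid\mbc_z\times\Lambda^\circ}$, where $\mcast=\mcr_{\mbc_z\times\Lambda^*}/\mci$ is the cyclic $\mcr$-module from the preceding definition.

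First I would recall the explicit presentation of the de Rham-type complex: a class in $H^0(\Omega^{\bullet+d}_{Y\times\Lambda^\circ/\Lambda^\circ}[z],zd-d_y\phi_A\wedge)$ is represented by a top relative form $g(\underline y,\underline\lambda,z)\,\tfrac{dy_1}{y_1}\wedge\cdots\wedge\tfrac{dy_d}{y_d}$ modulo the image of $zd-d_y\phi_A\wedge$. The generator $1$ of the cyclic module $\mcast$ should correspond to the class of the volume form $\tfrac{dy_1}{y_1}\wedge\cdots\wedge\tfrac{dy_d}{y_d}$, and I would check directly that the operators $\hat\Box_{\underline l}$, $\hat E_k$ and $\hat E=\hat E_0$ annihilate this class: the Euler operators $\hat E_k=\sum_i a_{ki}z\lambda_i\p_{\lambda_i}$ act by the quasi-homogeneity of $\phi_A$ in the $\lambda_i$ with weights $\underline a_i$, and $\hat\Box_{\underline l}$ annihilates the generator because for $\underline l\in\mbl$ the monomial relation $\prod\underline y^{\underline a_i\,l_i}=1$ forces the two products of $z\p_{\lambda_i}$'s to agree after integration by parts in the twisted de Rham complex — this is precisely the computation underlying \cite[Proposition 3.3]{RS2} restricted to the $z$-lattice level. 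This yields a well-defined surjective $\mcr_{\mbc_z\times\Lambda^\circ}$-linear map $\mclogo\to H^0(\Omega^{\bullet+d}_{Y\times\Lambda^\circ/\Lambda^\circ}[z],zd-d_y\phi_A\wedge)$.

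It then remains to prove injectivity, equivalently that the map is an isomorphism. Here I would invoke Proposition \ref{prop:PropLattice}(1): $\mclogo$ is $\mco_{\mbc_z\times\Lambda^\circ}$-locally free of rank $\mathrm{vol}(Q)$. On the other side, the saturatedness hypothesis on $\mbn A$ (together with the assumption $\mbr_+A=\mbr^d$, which holds in the situation at hand since $0$ lies in the interior of $Q$) guarantees via the Newton-filtration/Kouchnirenko-type argument of \cite{RS2} that the Brieskorn lattice $H^0(\Omega^{\bullet+d}_{Y\times\Lambda^\circ/\Lambda^\circ}[z],zd-d_y\phi_A\wedge)$ is also $\mco_{\mbc_z\times\Lambda^\circ}$-locally free of the same rank $\mathrm{vol}(Q)$ — this uses that the fibers are cohomologically tame over $\Lambda^\circ$ by the lemma on the non-tame locus. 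A surjection between locally free sheaves of equal finite rank over a reduced base is an isomorphism (it is fiberwise surjective hence fiberwise bijective by dimension count, then use Nakayama), which closes the argument. I would also note that outside $\Lambda^\circ$, \ie on all of $\Lambda^*$, the same generator-level computation gives the map $\mcast\to(\text{global de Rham lattice})$, so the statement is genuinely the restriction of a map defined more globally; but only over $\Lambda^\circ$ do we have the freeness needed to conclude.

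The main obstacle I anticipate is the rank count on the Brieskorn-lattice side: showing that $H^0(\Omega^{\bullet+d}_{Y\times\Lambda^\circ/\Lambda^\circ}[z],zd-d_y\phi_A\wedge)$ is locally free of rank exactly $\mathrm{vol}(Q)$ requires the saturatedness of $\mbn A$ in an essential way (to control the top cohomology of the twisted de Rham complex and kill higher cohomology), and this is exactly the delicate input encoded in \cite[Proposition 3.20]{RS2} that we are permitted to cite. Granting that, the proof is a short matter of assembling the cited results: the identification of $G^+$ with the de Rham model, the cyclic presentation of $\mcast$, and the equal-rank local-freeness on both sides.
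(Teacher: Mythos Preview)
The paper does not give a proof of this proposition at all: it is simply quoted from \cite[Proposition 3.20]{RS2} as an external input, with no argument supplied. So there is no ``paper's own proof'' to compare against, and your sketch is effectively an attempt to reconstruct the argument of \cite{RS2} rather than of the present paper.

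As a sketch of the underlying proof your outline is broadly reasonable, but one point deserves care. You assert that the map $\mclogo\to H^0(\Omega^{\bullet+d}_{Y\times\Lambda^\circ/\Lambda^\circ}[z],zd-d_y\phi_A\wedge)$ sending $1$ to the volume form is \emph{surjective}, and then invoke equal-rank local-freeness to conclude. But surjectivity is not automatic: it amounts to showing that every Laurent monomial $\underline y^{\underline c}$ (times the volume form) with $\underline c\in\mbn A$-support lies in the $\mcr$-orbit of the volume form, and this is precisely where the saturatedness of $\mbn A$ enters --- without it there can be lattice points of the cone not reachable by nonnegative integer combinations of the $\underline a_i$, and the corresponding classes will not be hit. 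In \cite{RS2} the saturatedness is used both for this generation statement and for the freeness/rank computation, not only for the latter as you suggest. Apart from this, your plan (construct the cyclic map, then compare ranks via Kouchnirenko-type arguments over the tame locus) matches the standard strategy.
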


\begin{prop}\cite[Corollary 2.19]{RS1}\label{cor:PairingPUp}
\begin{enumerate}
\item
There is a non-degenerate flat $(-1)^d$-symmetric pairing
\[
P:\left( {^\circ}\widehat{\mcm}_A\right)_{\mid \mbc^*_z \times \Lambda^\circ} \otimes \iota^*\left({^\circ}\widehat{\mcm}_A\right)_{\mid \mbc^*_z \times \Lambda^\circ}\rightarrow \mco_{\mbc^*_z \times \Lambda^\circ}.
\]
\item
We have that $P(\mclogo,\mclogo)\subset z^d \mco_{\mbc_z\times \Lambda^\circ}$,
and $P$ is non-degenerate on $\mclogo$.
 i.e., it induces a non-degenerate symmetric pairing
\[
[z^{-d}P]:\left[\dfrac{\mclogo}{z\cdot\mclogo}\right]\otimes\left[\dfrac{\mclogo}{z\cdot \mclogo}\right]\rightarrow\mco_{\Lambda^\circ}.
\]
\end{enumerate}
\end{prop}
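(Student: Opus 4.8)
The plan is to obtain $P$ by Fourier--Laplace transforming the topological (Poincar\'e--Verdier) duality pairing on the Gau\ss--Manin system of $\varphi_A$, and then to pin down its behaviour along $z=0$ using the de Rham/Koszul model of $\mcg^+$ recalled above.

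\textbf{Step 1: the pairing on the $\mcd$-module.} Over $\Lambda^\circ$ the family $\varphi_A(\cdot,\lambda)$ is cohomologically tame --- this is precisely what the complement of the non-tame locus $\Delta^\infty_A$ measures --- so the restriction of $\mch^0(\varphi_{A,+}\mco_{Y\times\Lambda})$ to $V\times_\Lambda\Lambda^\circ$ behaves like the direct image under a proper map: holonomic duality produces a perfect pairing $\mch^0(\varphi_{A,+}\mco)\otimes\mch^0(\varphi_{A,+}\mco)\to\mco$ (with the appropriate Tate twist), which is $(-1)^d$-symmetric because $\dim Y=d$. Applying $FL^{loc}_\Lambda$ and using that Fourier--Laplace transformation intertwines the holonomic duality functor $\mathbb{D}$ with $\iota^*\circ\mathbb{D}$ up to a shift, where $\iota\colon z\mapsto-z$, one gets the flat, $(-1)^d$-symmetric, non-degenerate pairing
\[
P\colon\left({^\circ}\widehat{\mcm}_A\right)_{\mid\mbc^*_z\times\Lambda^\circ}\otimes\iota^*\left({^\circ}\widehat{\mcm}_A\right)_{\mid\mbc^*_z\times\Lambda^\circ}\lra\mco_{\mbc^*_z\times\Lambda^\circ}
\]
of part (1); flatness is automatic since duality is compatible with the connection.

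\textbf{Step 2: the de Rham incarnation and the pole order.} Via the isomorphism of $G^+$ with $H^0(\Omega^{\bullet+d}_{Y\times\Lambda/\Lambda}[z^\pm],d-z^{-1}d_y\phi\wedge)$ recalled above, $P$ is computed by a relative residue: for classes represented by top forms $\omega_1,\omega_2\in\Omega^d_{Y\times\Lambda^\circ/\Lambda^\circ}$, $P([\omega_1],[\omega_2])$ is, up to sign and universal constants, the relative trace of $\omega_1\wedge\omega_2^{\vee}$ paired through the twisted Koszul differential, and the gluing against $zd-d_y\phi\wedge$ contributes an overall factor $z^d$. Concretely, representing elements of the Brieskorn lattice $\mclogo\cong{^\circ}_0\widehat{M}_A$ (\cite[Prop.~3.20]{RS2}) by $g\cdot\tfrac{dy_1}{y_1}\wedge\cdots\wedge\tfrac{dy_d}{y_d}$ with $g$ polynomial and tracking the $z$-weights against $zd-d_y\phi\wedge$ yields $P(\mclogo,\mclogo)\subset z^d\mco_{\mbc_z\times\Lambda^\circ}$, the inclusion asserted in part (2).

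\textbf{Step 3: non-degeneracy on the quotient.} By Proposition \ref{prop:PropLattice}(1), $\mclogo$ is $\mco_{\mbc_z\times\Lambda^\circ}$-locally free of rank $\mathrm{vol}(Q)$, hence near $z=0$ a free $\mco_{\Lambda^\circ}[z]$-module of that rank. It therefore suffices to show $z^{-d}P$ is unimodular on $\mclogo$, i.e.\ induces an isomorphism $\mclogo\to\fHom(\mclogo,z^d\mco)$, because a symmetric $\mco[z]$-valued form on a free $\mco[z]$-module that is unimodular stays unimodular modulo $z$, giving exactly $[z^{-d}P]$. Unimodularity of $z^{-d}P$ can be checked after faithfully flat base change to the generic point of $\Lambda^\circ$ and then $z$-adically: there the generic fibre of $\mclogo/z\mclogo$ is $H^0$ of a Koszul complex on $Y$, an Artinian Jacobian-type algebra on which $[z^{-d}P]$ restricts to the Grothendieck residue pairing, which is non-degenerate; combined with the non-degeneracy of $P$ on $({^\circ}\widehat{\mcm}_A)_{\mid\mbc^*_z\times\Lambda^\circ}$ from Step 1 and the $\mco[z]$-freeness of $\mclogo$, this forces $P(\mclogo,\mclogo)=z^d\mco$ and the perfectness of $[z^{-d}P]$.

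The main obstacle is Step 2, the exact determination of the pole order: establishing $P(\mclogo,\mclogo)\subset z^d\mco$ and, more delicately, that the bound $z^d$ is attained, requires matching the abstract duality pairing with the residue description on the de Rham side and carefully tracking every power of $z$ through the Fourier--Laplace transform and the twisted Koszul differential. Once this dictionary is in place, symmetry, flatness, and non-degeneracy follow formally as above.
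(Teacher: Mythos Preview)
The paper does not prove this statement at all: it is quoted verbatim as \cite[Corollary 2.19]{RS1} and used as a black box, so there is no ``paper's own proof'' to compare against. Your sketch is a plausible outline of the argument that actually lives in \cite{RS1} (and, behind it, in the work of Sabbah and Douai--Sabbah on Brieskorn lattices of tame functions): the pairing is obtained by Fourier--Laplace transforming the duality/intersection pairing on the Gau\ss--Manin system, the pole order $z^d$ is read off from the twisted de Rham model, and non-degeneracy of $[z^{-d}P]$ reduces to the Grothendieck residue pairing on the Jacobian algebra of a Newton non-degenerate Laurent polynomial. As a proof proposal this is on the right track, though each step (especially the precise matching of the abstract duality pairing with the residue formula and the exact bookkeeping of $z$-powers through $FL^{loc}$) is nontrivial and is what \cite{RS1} actually carries out; you correctly flag Step~2 as the crux.
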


\section{Construction of the Landau-Ginzburg model}\label{sec:LG}

\subsection{Local Landau-Ginzburg models}\label{ssec:LocalLG}
Let $\mcx$ be a projective toric Deligne-Mumford stack with fan $\mathbf{\Sigma} = (N, \Sigma, \mfa)$.
In this section we explain the construction of a (Zariski local) Landau-Ginzburg model which will serve as a mirror partner for $\mcx$.\\

Recall the sequence \ref{eq:extseq}
\[
0 \lra \mbl \overset{\mft}\lra \mbz^{n} \overset{\mfa}\lra N \lra 0\, .
\]
We apply the functor $\Hom(-,\mbc^*)$ to the sequence above which gives the following sequence of algebraic tori:
\[
1 \lra \Hom(N, \mbc^*)  \lra \mcy' := \Hom( \mbz^{n}, \mbc^*) \overset{\psi_{\mft}}\lra \mct:= \Hom(\mbl,\mbc^*)  \lra 1\, .
\]
The basis $e_1,\ldots , e_n$ equips $\mcy$ with coordinates $w_1,\ldots, w_n$. Consider the map
\begin{align}
W': \mcy' &\lra \mbc_t \times \mct\, , \notag \\
(w_1,\ldots,w_n) &\mapsto (-\sum_{i=1}^n w_i, \psi_{\mft}(\underline{w}))\, . \notag
\end{align}

Usually this map $W'$ is called the Landau-Ginzburg model of the toric orbild $\mcx$. However to ensure a correct limit behavior we need to consider a covering of this model. Consider the inclusion $\mfc: \mbl \hookrightarrow NE^e(X)$, which gives rise to a covering $\mct \leftarrow \mcm:= \Hom(NE^e(X),\mbc^*)$. We get the following cartesian diagram
\[
\xymatrix{\mcy' \ar[d]^{W'} & \mcy \ar[l] \ar[d]^W \\ \mbc_t \times \mct & \mbc_t \times \mcm \ar[l]^{id \times \psi_\mfc}} 
\]

\begin{defn}
Let $\mcx$ be a projective toric orbifold. The mirror Landau-Ginzburg model of $\mcx$ is given by
\begin{align}
W: \mcy &\lra \mbc_t \times \mcm\, . \notag
\end{align}
\end{defn}

We have to compute the Fourier-Laplace transformed Gau\ss-Manin system and the Brieskorn lattice for the map $W$. For this we will construct a map $\mbc^t \times \Lambda^* \leftarrow \mbc_t \times \mct$ such that $W$ becomes the pull-back of the map $\varphi_A$ from Section \ref{sec:LauGKZ} (recall that $A$ is the matrix corresponding to $\mfa$ after the choice of a basis for $N$) under the cocatenated morphisms $ \Lambda^* \leftarrow \mbc_t \times \mct \leftarrow \mbc_t \times \mcm$ .

Here we will identify $\Lambda^*:= \Hom(\mbz^n,\mbc^*)$ with $\Lambda \setminus\{\lambda_1\cdot \ldots \cdot \lambda_n =0\}$.\\

Since we assumed that $N$ is free, we have $\Ext^1(N,\mbz)= 0$ which gives us the following commutative diagram whose vertical maps are isomorphismsm
\[
\xymatrix{0 \ar[r] & \mbl  \ar@{=}[d]\ar[r]^{\mft} & \mbz^{n} \ar[r]^\mfa\ar[d]^\mfk & N \ar[r]\ar@{=}[d] & 0 \\
0 \ar[r] & \mbl  \ar[r] & \mbl \oplus N \ar[r] & N \ar[r] & 0}
\]
where $\mfk = \mfs + \mfa$ with $\mfs: \mbz^{n} \ra \mbl$ and $\mfk^{-1} = \mft +\mfg$ with $\mfg: N \ra \mbz^{n}$.
The maps satisfy the following relations
\begin{enumerate}
\item $\mfa \circ \mft = 0$ and $\mfs \circ \mfg = 0$
\item $\mfa \circ \mfg = id_N$ and $\mfs \circ \mft = id_{\mbl}$
\end{enumerate}
Consider the push-out diagram
\[
\xymatrix{-\sum_{i=1}^n( \chi^{a_i} \otimes \chi^{e_i} \chi^{e_k}) &\mbc[N] \otimes \mbc[\mbz^n] \ar[r] & \mbc[\mbz^n] & \left( -\sum_{i=1}^n \chi^{e_i}\right)\chi^{\mft(l)} \\t \otimes \chi^{e_k}  \ar@{|->}[u]&\mbc[t] \otimes \mbc[\mbz^n] \ar[u] \ar[r] & \mbc[t] \otimes \mbc[\mbl] \ar[u] & t \otimes \chi^l \ar@{|->}[u]\\
& t \otimes \chi^{e_k} \ar[r] & t \otimes \chi^{\mfs(e_k)}}
\]

Here we made the following identifications
\begin{align}
\mbc[N] \otimes \mbc[\mbz^n] \otimes_{\mbc[t] \otimes \mbc[\mbz^n]} \mbc[t] \otimes \mbc[\mbl] &\overset{\simeq}\lra \mbc[N] \otimes \mbc[\mbl] \notag\\
1 \otimes  \chi^{e_i} \otimes 1 \otimes 1 &\mapsto 1 \otimes \chi^{\mfs(e_i)} \notag \\
1 \otimes 1 \otimes t \otimes 1 &\mapsto -\sum_{i=1}^n(\chi^{a_i} \otimes \chi^{\mfs(e_i)}) \notag
\end{align}
and
\begin{align}
\mbc[N] \otimes \mbc[\mbl] &\lra \mbc[\mbz^n] \notag \\
\chi^n \otimes \chi^l &\mapsto \chi^{\mfg(n)}\chi^{\mft(l)}\, . \notag
\end{align}
This gives a cartesian diagram
\[
\xymatrix{Y \times \Lambda^* \ar[d]_{\varphi_A} & \mcy' \ar[d]^{W'} \ar[l] \\ \mbc_t \times \Lambda^* &  \mbc_t \times \mct \ar[l]^{id \times \psi_{\mfs}} }
\]
We denote by $\psi$ the concatenation $\psi_\mfs \circ \psi_\mfc$ and get the cartesian diagram
\[
\xymatrix{Y \times \Lambda^* \ar[d]_{\varphi_A} & \mcy \ar[d]^{W} \ar[l] \\ \mbc_t \times \Lambda^* &  \mbc_t \times \mcm \ar[l]^{id \times \psi} }
\]

Choose some $\mbz$-basis $p_1,\ldots,p_{r+e}$ in  $Pic^e(\mcx)$ which satisifes the following conditions:
\begin{enumerate} 
\item $p_1,\ldots , p_r \in \theta(\mck)\subset \mck^e$,
\item $p_{r+i} = [D_{m+i}]$ for $i=1,\ldots,e$,
\item $\rho \in Cone(p_1,\ldots, p_{r+e})$.
\end{enumerate} 
 
The dual $\mbz$-basis of $(p_a)_{a=1,\ldots,r+e}$ equips $\mcm_\mcx = Hom(NE^e(X),\mbc^*)$ with coordinates $\chi_1,\ldots, \chi_{r+e}$. \\

Using the coordinates $\chi_a$ the map $\psi$ is given by
\begin{align}
\psi = \psi_\mfn:  \mcm_\mcx &\lra \Lambda^\ast\, , \notag \\
(\chi_1,\ldots,\chi_r) &\mapsto (\underline{\chi}^{\underline{n}_1},\ldots, \underline{\chi}^{\underline{n}_{m+e}}) \notag
\end{align}
where $\mfn := \mfc \circ \mfs$ with
\[
\mfn(e_i)  = \sum_{a=1}^r n_{ai}p_a\, .
\]

After the  choice of the splitting above the Landau-Ginzburg model for $\mcx$ is given by
\begin{align}
W: \mcy \simeq Y \times \mcm_\mcx &\lra \mbc_t \times \mcm_\mcx\, , \label{eq:LGLaurent} \\
(\underline{y},\underline{\chi}) &\mapsto (-\sum_{i=1}^{n} \underline{\chi}^{n_i} \underline{y}^{a_i}, \underline{\chi})\, . \notag
\end{align}

Since  $\mbl$ is a full sublattice in $NE^e(X)$ the map $\mfc$ becomes an isomorphism after tensoring with $\mbq$. Let
\[
\mfr: NE^e(X) \otimes \mbq \lra \mbl \otimes \mbq
\]
be its inverse. We denote by $\mfm$ the concatenation $\mft \circ \mfr$. With respect to the basis $e_1,\ldots, e_n$ and the dual basis of $p_1,\ldots, p_{r+e}$ the map $\mfm$ is given by the matrix $M=(m_{ia})$.

It follows that
\begin{equation}\label{eq:charmia}
[D_i] = \sum_{a=1}^{r+e} m_{ia}p_a\, .
\end{equation}

Now we want to compute the Fourier-Laplace transformed Gau\ss-Manin system of the map $W$. We do this by computing an inverse image of the FL-transformed Gau\ss-Manin system of $\varphi_A$.\\

\begin{prop}\label{prop:RSDmodds}
Let $\widetilde{\psi} = (id_z,\psi): \mbc_z \times \mcm \ra \mbc_z \times \Lambda^\ast$.
\begin{enumerate}
\item The inverse image $\qma:= \widetilde{\psi}^+ ({^*}\widehat{\mcm}_A)$ is isomorphic to the quotient of $\mcd_{\mbc_z \times \mcm_\mcx} / \mci$, where $\mci$ is the left ideal generated by
\[
\widetilde{\Box}_{\underline{l}}:=\!\!\!\prod_{a:p_a(\underline{l})>0}\!\!\!\! {\chi_a}^{p_a(\underline{l})}\! \prod_{i:l_i<0}
\prod_{\nu=0}^{-l_i-1} (\sum_{a=1}^{r+e} m_{ia} z\chi_a\partial_{\chi_a} -\nu z) -
\!\!\!\prod_{a:p_a(\underline{l})<0}\!\!\!\! {\chi_a}^{-p_a(\underline{l})}\! \prod_{i:l_i>0}
\prod_{\nu=0}^{l_i-1} (\sum_{a=1}^{r+e} m_{ia} z\chi_a\partial_{\chi_a} -\nu z)
\]
for any $\underline{l} \in \mbl$ and by the single operator
\[
\check{E}:=  z^2 \p_z + \sum_{a=1}^{r+e} \sum_{i=1}^{m+e}m_{ia} z \chi_a \p_{\chi_a}\, .
\]
\item There is an isomorphism of $\mcd_{\mbc_z \times \mcs}$-modules
\[
\qma  \simeq  \FL_{\mcm_{\mcx}}^{loc}(\mch^0 W_+ \mco_{Y\times \mcm_\mcx})\, .
\]
\end{enumerate}
\end{prop}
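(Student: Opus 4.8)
The two statements are related, and I would attack them in order: part (1) is a direct computation of a $\mcd$-module inverse image along the monomial map $\widetilde\psi=(\Id_z,\psi_\mfn)$, and part (2) then follows by combining part (1) with the base-change compatibility of the Fourier--Laplace transform and the results of Section~\ref{sec:LauGKZ}.

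For part (1) the starting observation is that $\mch^0$ of the inverse image of a cyclic module is again cyclic, generated by the image of the generator (the transfer module is generated over $\mcd_{\mbc_z\times\mcm_\mcx}$ by $1\otimes1$), so it suffices to identify the defining left ideal. To do so I would pull back the generators $\hat E_0=\hat E$, $\hat E_1,\dots,\hat E_d$ and $\hat\Box_{\underline l}$. Three elementary inputs do the job: the operator identity $(z\p_{\lambda_i})^k=\lambda_i^{-k}\prod_{\nu=0}^{k-1}(z\lambda_i\p_{\lambda_i}-\nu z)$; the pullback relation $\chi_a\p_{\chi_a}\cdot(1\otimes1)=\sum_i n_{ai}\,(1\otimes\lambda_i\p_{\lambda_i})$ coming from $\psi_\mfn^\ast\lambda_i=\underline\chi^{\,n_i}$; and the linear algebra $\mfn\circ\mfm=\Id$ together with the dual of $\mfa\circ\mft=0$ from \eqref{eq:extseq}, \eqref{eq:extseqdual}, which gives $\sum_i a_{ki}[D_i]=0$ in $\mbl^\star$ and hence $\sum_i a_{ki}m_{ia}=0$. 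Combining these, modulo the relations $\hat E_k$ ($k\ge1$) one gets $1\otimes z\lambda_i\p_{\lambda_i}\equiv\sum_a m_{ia}\,(z\chi_a\p_{\chi_a})\cdot(1\otimes1)$ for all $i$; consequently, after pullback the $\hat E_k$ ($k\ge1$) become redundant, $\hat E_0$ yields $\check E$, and $\hat\Box_{\underline l}$ yields $\widetilde\Box_{\underline l}$ once one multiplies by the unit monomial $\prod_a\chi_a^{c_a}$ that converts the prefactor $\prod_a\chi_a^{\sum_i l_i n_{ai}}=\prod_a\chi_a^{p_a(\underline l)}$ (here $\mfn$ restricted to $\mbl$ is $\mfc$, since $\mft$ is the inclusion and $\mfs\circ\mft=\Id_\mbl$) into the two one-sided monomials $\prod_{p_a(\underline l)>0}\chi_a^{p_a(\underline l)}$ and $\prod_{p_a(\underline l)<0}\chi_a^{-p_a(\underline l)}$. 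This produces a surjection $\mcd_{\mbc_z\times\mcm_\mcx}/\langle\widetilde\Box_{\underline l},\check E\rangle\twoheadrightarrow\qma$, which I would promote to an isomorphism by a generic-rank count: both sides have rank $\mathrm{vol}(Q)$ on a dense open of $\mbc_z\times\mcm_\mcx$, the left one by Proposition~\ref{prop:PropLattice} pulled back along $\widetilde\psi$ and the right one by the smoothness of $\widehat{\mcm}_A$ on $\mbc^\ast_z\times\Lambda^\circ$.

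For part (2) I would use the second of the two cartesian diagrams above, namely $\mcy\to Y\times\Lambda^\ast$ over $\Id_t\times\psi$. Base change for the $\mcd$-module direct image along this square gives $(\Id_t\times\psi)^+\,\mch^0\varphi_{A,+}\mco_{Y\times\Lambda^\ast}\cong\mch^0 W_+\mco_\mcy$, using that the inverse image of $\mco$ is $\mco$ (the shifts cancelling those of the direct image). Applying $\FL^{loc}_{\mcm_\mcx}$, and using that the localized partial Fourier--Laplace transform is relative to the parameter space and hence commutes with pullback along $\psi$ in that direction, one obtains $\FL^{loc}_{\mcm_\mcx}(\mch^0 W_+\mco_\mcy)\cong\widetilde\psi^+(\mcg^+|_{\mbc_z\times\Lambda^\ast})$. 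Since the fan $\Sigma$ is complete, $\mbr_+A=\mbr^d$, so the proposition of Section~\ref{sec:LauGKZ} identifies $\mcg^+$ with $\widehat{\mcm}_A$; combining with part (1) identifies the right-hand side with $\qma$. (Alternatively one can run this in the opposite order: prove part (2) first by the base-change chain, which does not need the explicit presentation, and then read off part (1) from the explicit computation of $\FL^{loc}_{\mcm_\mcx}(\mch^0 W_+\mco_\mcy)$ via the relative de Rham realization of \cite[Lemma~3.4]{RS2} and the de Rham/GKZ identification of \cite{RS2}.)

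The step I expect to be the real obstacle is not the algebra of the pullback but the underlying $\mcd$-module functoriality: $\psi_\mfn$ is neither smooth nor proper — it is a finite covering onto a proper sub-torus of $\Lambda^\ast$ — so the base-change isomorphism in part (2), and the claim that $\mch^0\widetilde\psi^+$ of the cyclic module is computed by the naive substitution of relations (no higher-derived contributions), must be justified with care. This is exactly where the regularity of $\widehat{\mcm}_A$ away from $\{z=0\}$ and $\overline\Lambda\setminus\Lambda^\circ$ — a non-characteristic condition along the sub-torus factor of $\widetilde\psi$ — enters. The secondary, more mechanical difficulty is the bookkeeping in part (1): tracking the exact powers of $z$, the $-\nu z$ shifts created when $z\chi_a\p_{\chi_a}$ is moved past monomials in the $\chi_a$, and the identification of the monomial-prefactor exponents with the pairings $p_a(\underline l)$.
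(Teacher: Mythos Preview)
Your treatment of part (2) matches the paper's: base change along the Cartesian square for $W$ over $\varphi_A$, together with the compatibility of $\FL^{loc}$ with pullback in the parameter direction, is exactly what the paper does.

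For part (1), however, your argument has a genuine gap at the step where you upgrade the surjection to an isomorphism. Your proposed rank count does not work as written: Proposition~\ref{prop:PropLattice} pulled back along $\widetilde\psi$ computes the rank of the lattice $\qmclogo\subset\qma$, i.e.\ of the \emph{target} of your map, not of the source $\mcd/\langle\widetilde\Box_{\underline l},\check E\rangle$. You have given no independent bound on the generic rank of the source, and indeed the later identification of $\mcr/\mci_0$ with $\qmclogo$ (Proposition~\ref{prop:PropzExt}) relies on Proposition~\ref{prop:RSDmodds} already being proved, so invoking it here would be circular. Moreover, even granting equal generic ranks, a surjection of holonomic $\mcd$-modules whose kernel has zero generic rank can still have nontrivial kernel supported on a divisor; you would need an additional argument (e.g.\ that the source has no subquotients supported on the bad locus) to conclude.

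There is also a smaller imprecision in your opening claim that the inverse image of a cyclic module is cyclic because the transfer module is generated by $1\otimes 1$. For the closed-embedding factor $\psi_\mfs$ this is false in general: $\mcd_{Z\to X}$ for a codimension-$d$ embedding is free of infinite rank over $\mcd_Z$ on the normal derivatives $\partial_{h_1},\dots,\partial_{h_d}$. It happens to be true here only because the Euler operators $\hat E_k$ lie in the ideal and, after the coordinate change, become $h_k\partial_{h_k}$, so that the normal derivatives are killed in the quotient. This is precisely why the paper factors $\widetilde\psi$ as an isomorphism $i_1$, then the closed embedding $\delta:\mct\hookrightarrow\mct\times H$ (where the $E'_k=h_k\partial_{h_k}$ make the restriction non-characteristic and the quotient visibly cyclic with the expected ideal), then the covering $\psi_\mfc=\psi_2\circ\kappa\circ\psi_1$. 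At each step one obtains an explicit \emph{isomorphism} of cyclic modules, not merely a surjection, so no rank argument is needed. Your computation of how the generators transform is correct and is essentially the content of those steps compressed into one; what is missing is the justification that these transformed generators generate the \emph{full} annihilator, which the paper's factorization supplies.
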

\begin{proof}
We first choose bases $w_1,\ldots, w_{r+e}$ of $\mbl$ and $v_1,\ldots, v_d$ of $N$ and denote the dual bases by $w^*_1,\ldots, w^*_{r+e}$ and $v^*_1,\ldots, v^*_d$, respectively. This gives rise to coordinates $\tau_1,\ldots, \tau_{r+e}$ on $\mct = \Hom(\mbl,\mbc^*)$ and and $h_1,\ldots, h_d$ on $H= \Hom(N,\mbc^*)$. We set $a_i = \mfa(e_i)$, $s_i = \mfa(e_i)$ for $i=1,\ldots,n$ and $t_a = \mft(w_a)$ resp. $g_{j} = \mfg(v_j)$ for $a=1,\ldots r+e$ resp. $j = 1,\ldots ,d$.\\

We will first compute the inverse image under the map $\psi_\mfs$, which was induced by the linear morphism $\mfs: \mbz^{n} \ra \mbl$. We factor $\mfs$ in the following way:
\[
\mfs: \mbz^{n} \overset{(\mfs,\mfa)}\lra \mbl \oplus N \overset{p_1}\lra \mbl
\]
where $p_1$ is the projection to the first factor.  Hence we get a factorization of $\psi_\mfs$ given by

\begin{align}
\delta: \mct &\lra \mct \times H\, , \notag \\
(\tau_1, \ldots , \tau_r) &\mapsto (\tau_1, \ldots , \tau_r,1,\ldots,1)\, , \notag \\[0.7em]
i_1: \mct \times H &\lra \Lambda^*\, , \notag\\[-0.2em]
(\tau_1,\ldots , \tau_r, h_1,\ldots, h_d) &\mapsto (\underline{\tau}^{s_1}\cdot \underline{h}^{a_1},\ldots,\underline{\tau}^{s_{m+e}}\cdot \underline{h}^{a_{m+e}} ) \notag
\end{align}
where $\underline{\tau}^{s_1} := \prod_{b=1}^{r+e}\tau_b^{s_{b1}}$ etc. and the inverse of $i_1$ is given by
\begin{align}
i_1^{-1}: \Lambda^* &\lra \mct \times H\, , \notag \\
(\lambda_1,\ldots, \lambda_{m+e}) &\mapsto (\underline{\lambda}^{\underline{t}_1},\ldots, \underline{\lambda}^{\underline{t}_r}, \underline{\lambda}^{\underline{g}_1},\ldots, \underline{\lambda}^{\underline{g}_d })\, . \notag
\end{align}

Notice that the ideal $\mci$ in Definition \ref{def:FLGKZ} is generated by
\begin{align}
\left(\prod_{i: l_i >0} \lambda_i^{l_i}\right) \cdot \hat{\Box}_{\underline{l}} &= \prod_i \lambda_i^{l_i} \cdot \prod_{i: l_i <0} (\lambda_i)^{-l_i}(z   \p_{\lambda_i})^{-l_i} - \prod_{i : l_i >0} (\lambda_i)^{l_i}(z  \p_{\lambda_i})^{l_i}\quad \text{for}\; \underline{l} \in \mbl \, ,\notag \\
&=\prod_i \lambda_i^{l_i} \cdot \prod_{i: l_i <0}\prod_{\nu =0}^{-l_i-1} (z  \lambda_i \p_{\lambda_i}-\nu z)^{-l_i} - \prod_{i : l_i >0}\prod_{\nu=0}^{l_i-1} (z \lambda_i \p_{\lambda_i}-\nu z)^{l_i}\quad \text{for}\; \underline{l} \in \mbl \, ,\notag \\
\hat{E} &:= z^2 \p_z + \sum_{i=1}^{m+e} z \lambda_i \p_{\lambda_i}\, , \notag \\
\hat{E}_k &:= \sum_{i=1}^{m+e} a_{ki} z \lambda_i \p_{\lambda_i} \, . \notag
\end{align}

We have the following transformation rules for the coordinate change $i_1$:
\[
\lambda_i \p_{\lambda_i} \mapsto \sum_{b=1}^{r+e} t_{ib} \tau_b \p_{\tau_{b}} + \sum_{k=1}^d g_{ik} h_k \p_{h_k} 
\]
where $t_{ib} =D_i(\mft(w_b))$.\\

Since $i_1$ is an isomorphism, we have that $i_1^+({^*}\widehat{\mcm}_A) $ is isomorphic to $\mcd_{\mbc_z \times \Lambda^*}/ \mci'$ where the left ideal $\mci'$ is generated by 
\[
\Box_{\underline{l}}':=\prod_{b=1}^{r+e} \tau_b^{w_b^*(\underline{l})} \prod_{i:l_i<0}
\prod_{\nu=0}^{-l_i-1} (\sum_{b=1}^{r+e} D_i(\mft(w_b)) z\tau_b\partial_{\tau_b} -\nu z) -
 \prod_{i:l_i>0}
\prod_{\nu=0}^{l_i-1} (\sum_{b=1}^{r+e} D_i(\mft(w_b)) z\tau_b\p_{\tau_b} -\nu z)
\]
for any $\underline{l} \in \mbl$ and by the Euler operators
\begin{align}
& E':= z \p_z + \sum_{b=1}^{r+e} \left(\sum_{i=1}^{m+e}D_i(\mft(w_b))\right) \tau_b \p_{\tau_b}\, , \notag \\
&E'_k :=h_k \p_{h_k} \qquad \text{for} \; k=1,\ldots ,d\, . \notag
\end{align}
Here, we have used the formulas
\[
\prod_{i=1}^{m+e} {\lambda_i}^{l_i} = \prod_{i=1}^{m+e} \underline{\tau}^{l_i \cdot\underline{s}_i} \cdot \underline{h}^{l_i \cdot \underline{a}_i} = \prod_{b=1}^{r+e}\tau_b^{\sum_i l_i s_{bi}} \prod_{k=1}^d h_k^{\sum_i l_i a_{ki}} = \prod_{b=1}^{r+e}\tau_b^{\sum_i l_i s_{bi}} 
\]
and
\[
\sum_i l_i s_{ib} = \sum_i  l_i w_b^*(\mfs(e_i)) = w_b^*(\mfs(\mft(\underline{l})) = w_b^*(\underline{l}).
\]

It is now easy to see that the inverse image $\psi_\mfs^+({^*}\widehat{\mcm}_A) \simeq \delta^+ i_1^+({^*}\widehat{\mcm}_A)$ is isomorphic to $\mcd_{\mbc_z \times \mct} / \mci''$ where the left ideal $\mci''$ is generated by $\Box_{\underline{l}}'$ and $E'$.

We will now compute the inverse image under $\psi_\mfc$. Denote by $q_1,\ldots, q_{r+e}$ the basis in $NE^e(X)$ dual to $p_1,\ldots, p_{r+e} \subset Pic^e(X)$.  With respect to the bases $w_1,\ldots,w_{r+e}$ resp. $q_1,\ldots, q_{r+e}$ the $\mbz$-linear map $\mfc$ is given by a matrix $C =(c_{ab})$, i.e. $\mfc(w_b) = \sum_{a=1}^{r+e} c_{ab}q_a$.  
We factorize this matrix to obtain
\[
C = C_1 \cdot D \cdot C_2
\]
with $C_1=(c^1_{ab}),C_2=(c^2_{ba}) \in GL(r,\mbz)$ and $D=diag(d_1,\ldots,d_{r+e})$ a diagonal matrix with strictly positive integer entries.

The factorization of $C$ gives also a factorization of $R = C^{-1}$, i.e.
\[
R = R_2 \cdot D^{-1} \cdot R_1
\]
with $R_i=(r^i_{ab}) = C_i^{-1} \in Gl(r,\mbz)$. We define new bases
\[
w'_h = \sum_{b=1}^{r+e} r^2_{bh} w_b \qquad \text{and} \qquad q'_h = \sum_{a=1}^{r+e} c^1_{ah} q_a\, .
\]
With respect to these bases $\mfc$ is diagonal, i.e. $\mfc(w'_h) = d_h \cdot q'_h$.\\

The choice of these bases gives rise to coordinate changes on $\mct$ and $\mcm_\mcx$:
\[
\tau'_h = \prod_{b=1}^{r+e} \tau_b^{r^2_{bh}} \qquad \text{and} \qquad \chi'_h = \prod_{a=1}^{r+e} \chi_a^{c^1_{ah}}
\]
with inverses
\[
\tau_b = \prod_{h=1}^{r+e} (\tau'_h)^{c^2_{hb}} \qquad \text{and}\qquad  \chi_a = \prod_{h=1}^{r+e} (\chi_h')^{r^1_{ha}}\, .
\]

Hence we get a factorization of $\psi_\mfc = \psi_2 \circ \kappa \circ \psi_1$,  where the maps are given by
\begin{align}
\psi_1: \mct &\lra \mct\, , \notag \\
(\tau'_1, \ldots , \tau'_{r+e}) &\mapsto (\tau_1 = \prod_{h=1}^{r+e} (\tau'_h)^{c_{h1}^1}, \ldots , \tau_{r+e} = \prod_{h=1}^{r+e} (\tau'_h)^{c_{h,r+e}^1})\, , \notag \\
\kappa: \mcm_\mcx &\lra \mct\, , \notag \\
(\chi'_1, \ldots , \chi'_{r+e}) &\mapsto (\tau'_1= (\chi'_1)^{d_1}, \ldots , \tau'_{r+e} = (\chi'_{r+e})^{d_{r+e}})\, , \notag \\
\psi_2: \mcm_\mcx &\lra \mcm_\mcx\, , \notag\\
(\chi_1, \ldots , \chi_{r+e}) &\mapsto (\chi'_1 = \prod_{a=1}^{r+e} \chi_a^{c_{a1}^2}, \ldots , \chi'_{r+e} = \prod_{a=1}^{r+e} (\chi_a)^{c_{a,r+e}^2})\, . \notag
\end{align}

Notice that we have the following transformation rule:
\[
\tau_b \p_{\tau_b} \mapsto \sum_{h=1}^{r+e}r^2_{bh}\tau_h'\p_{\tau'_h}\, .
\]
Since $\psi_1$ is an isomorphism $\psi_1^+ \mcd_{\mbc_z \times \mct}/\mci''$ is isomorphic to $\mcd_{\mbc_z \times \mct} / \mcj''$ where $\mcj''$ is generated by
\[
\Box_{\underline{l}}':=\prod_{h=1}^{r+e} (\tau'_h)^{(w'_h)^*(\underline{l})} \prod_{i:l_i<0}
\prod_{\nu=0}^{-l_i-1} (\sum_{h=1}^{r+e} D_i(\mft(w'_h)) z\tau'_h\partial_{\tau'_h} -\nu z) -
 \prod_{i:l_i>0}
\prod_{\nu=0}^{l_i-1} (\sum_{h=1}^{r+e} D_i(\mft(w'_h)) z\tau'_h\p_{\tau'_h} -\nu z)
\]
where $\underline{l} \in \mbl$ and
\[
z^2 \p_z + \sum_{h=1}^{r+e} \left(\sum_{i=1}^{m+e} D_i(\mft(w'_h))\right)z \tau'_h \p_{\tau'_h}\, .
\]
Here we have used that
\[
\prod_{b=1}^{r+e} \tau_b^{w_b^*(\underline{l})} = \prod_{b=1}^{r+e} \left(\prod_{h=1}^{r+e} (\tau'_h)^{c^2_{hb}} \right)^{w_b^*(\underline{l})} = \prod_{h=1}^{r+e}(\tau'_h)^{\sum_{b=1}^{r+e} c^2_{hb} w_b^*(\underline{l})} = \prod_{h=1}^{r+e}(\tau'_h)^{(w'_h)^*(\underline{l})}
\]
and
\begin{align}
\sum_{b=1}^{r+e} D_i(\mft(w_b)) z \tau_b \p_{\tau_b} &= \sum_{b=1}^{r+e} D_i(\mft(w_b)) z \left(\sum_{h=1}^{r+e}r^2_{bh}\tau_h'\p_{\tau'_h}\right)\notag \\ 
&= \sum_{h=1}^{r+e} D_i(\mft( \sum_{b=1}^{r+e} r^2_{bh} w_b)) z \tau'_h \p_{\tau'_h} \notag \\
&= \sum_{h=1}^{r+e} D_i(\mft(w'_h)) z \tau'_h \p_{\tau'_h}\, . \notag
\end{align}

In order to compute $\kappa^+ \psi_1^+ \mcd_{\mbc_z \times \mct} / \mci '' \simeq \kappa^+ \mcd_{\mbc_z \times \mct}/ \mcj$, we first notice that
\[
\kappa^+ \mcd_{\mbc_z \times \mct}/ \mcj  \simeq \mco_{\mcm_\mcx} \otimes \kappa^{-1}(\mcd_{\mbc_z \times \mct}/ \mcj)
\]
where the operators $\chi'_h$ resp. $\chi'_h \p_{\chi'_h}$ act by
\[
(\chi'_h)^{d_n} (f \otimes P) = f \otimes \tau'_h P 
\]
resp.
\[
\chi'_h \p_{\chi'_h} ( f \otimes P) = \chi'_h \p_{\chi'_h}(f)\otimes P + f \otimes (d_h\tau'_h \p_{\tau'_h}) P\, .
\]
An easy computation shows that $\kappa^+ \mcd_{\mbc_z \times \tau}/ \mcj''$ is isomorphic to the quotient $\mcd_{\mbc_z \times \mct} / \mcj'$ where the left ideal $\mcj'$ is generated by
\[
\prod_{h=1}^{r+e} (\chi'_h)^{\mfc^*(q'_h)^*(\underline{l})} \prod_{i:l_i<0}
\prod_{\nu=0}^{-l_i-1} (\sum_{h=1}^{r} D_i(\mft\circ \mfr(q'_h)) z\chi'_h\partial_{\chi'_h} -\nu z) -
 \prod_{i:l_i>0}
\prod_{\nu=0}^{l_i-1} (\sum_{h=1}^{r} D_i(\mft\circ \mfr(q'_h)) z\chi'_h\p_{\chi'_h} -\nu z)
\]
for any $\underline{l} \in \mbl$ and by the single operator
\[
z^2 \p_z + \sum_{h=1}^{r+e} \left(\sum_{i=1}^{m+e} D_i(\mft\circ \mfr(q'_h))\right)z \chi'_h \p_{\chi'_h}\, .
\]
Here we used $\mfc(w'_h)=d_h \cdot p'_h$, i.e. $\mfc^*((q'_h)^*) =d_h\cdot  (w'_h)^*$.

The final step consists in computing $\psi_2^+ \kappa^+ \psi_1^+ \mcd_{\mbc_z \times \mct} / \mci'$ which is completely parallel to the computation of the inverse image under $\psi_1$. Therefore the first claim follows.\\

For the second claim consider the cartesian diagram
\[
\xymatrix{Y \times \mcm_\mcx \ar[rr] \ar[d]_W & & Y \times \Lambda^* \ar[d]^{\varphi_A} \\ \mbc_{\lambda_0} \times \mcm_\mcx \ar[rr]^{(id \times \psi)}& &\mbc_{\lambda_0} \times \Lambda^*}
\]
We have the following isomorphisms
\begin{align}
\qma  &\simeq \widetilde{\psi}^+ ({^*}\widehat{\mcm}_A)\notag \\ 
&\simeq \widetilde{\psi}^+\FL^{loc}_{\Lambda}   \mch^0(\varphi_{A,+} \mco_{Y \times \Lambda^*}) \notag \\ 
&\simeq   \FL_{\mcm_{\mcx}}^{loc} (id_{\mbc_{\lambda_0}}\times \psi)^+ \mch^0(\varphi_{A,+} \mco_{Y \times \Lambda}) \notag \\
&\simeq   \FL_{\mcm_{\mcx}}^{loc}(\mch^0 W_+ \mco_{Y\times \mcm_\mcx}) \notag
\end{align}

where the third isomorphism follows from the compatibility of the localized Fourier-Laplace transform with base change and the fourth isomorphism is base change with respect to the diagram above.\\
\end{proof}

Write $\mcm_\mcx^\circ  := \psi^{-1}(\Lambda^\circ) = \{ (\chi_1, \ldots , \chi_{r+e}) \in \mcm_\mcx\! \mid W =\! -\sum_{i=1}^{m+e} \underline{\chi}^{\underline{n}_i} \underline{y}^{\underline{a}_i} \; \text{is Newton non-degenerate} \}$. 

We have the following statement for the Brieskorn-lattice:
\begin{prop}\label{prop:PropzExt}$ $\\
\begin{enumerate}
\item
The inverse image (in the category of $\mco$-modules) 
\begin{equation}\label{eq:invImLat}
\qmclogo:= \widetilde{\psi}^*(\mclogo)  = \mco_{\mbc_z \times \mcm_\mcx^\circ} \otimes \widetilde{\psi}^{-1}(\mclogo)
\end{equation}
carries a natural structure of an $\mcr_{\mbc_z \times \mcm_\mcx^\circ}$-module.
It is isomorphic to the quotient $\mcr_{\mbc_z \times \mcm_\mcx^\circ}/\mci_0$, where $\mci_0$ is the left ideal generated by $(\widetilde{\Box}_{\underline{l}})_{\underline{l} \in \mbl}$ and $\check{E}$.
\item There exists the following $R_{\mbc_z \times \mcm_\mcx^\circ}$-linear isomorphism
\[
H^0\left(\Omega^{\bullet +r+e}_{Y \times \mcm_\mcx^\circ / \mcm_\mcx^\circ}[z],zd -d_yF \wedge \right)  \simeq  {_0\!}{^\circ\!}QM_{A}
\]
where ${_0\!}{^\circ\!}QM_{A} = \Gamma(\mcm_\mcx^\circ,\qmclogo)$.
\item
There is a non-degenerate flat $(-1)^d$-symmetric pairing
$$
P:\left(\qma\right)_{\mid \mbc_z^* \times \mcm_\mcx^\circ} \otimes \iota^*\left(\qma\right)_{\mid \mbc^*_z \times \mcm_\mcx^\circ} \rightarrow \mco_{\mbc^*_z\times \mcm_\mcx^\circ}.
$$
\item
$P(\qmclogo ,\qmclogo )\subset z^d \mco_{\mbc_z\times \mcm_\mcx^\circ}$,
and $P$ is non-degenerate on $\qmclogo$.
\end{enumerate}
\end{prop}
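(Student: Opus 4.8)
The plan is to obtain all four assertions by applying the base change functor $\widetilde{\psi}^{*}$ (resp.\ $\widetilde{\psi}^{+}$ over $\mbc^{*}_z$), along $\widetilde{\psi}=\mathrm{id}_z\times\psi$, to the corresponding statements for the family $\varphi_A$ --- in the same spirit in which Proposition~\ref{prop:RSDmodds} was deduced from the properties of ${}^{\ast}\widehat{\mcm}_A$. The three inputs are: the local freeness of $\mclogo$ of rank $\mathrm{vol}(Q)$ (Proposition~\ref{prop:PropLattice}(1)); the identification of the Fourier--Laplace transformed Brieskorn lattice of $\varphi_A$ with ${}^{\circ}_{0}\widehat{M}_A$ of \cite[Proposition 3.20]{RS2}; and the pairing statement of Corollary~\ref{cor:PairingPUp}. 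One works throughout over $\Lambda^{\circ}$, which is legitimate because $\mcm_\mcx^{\circ}=\psi^{-1}(\Lambda^{\circ})$ by definition, and one uses repeatedly that $\mclogo$ is $\mco$-locally free, so that $\widetilde{\psi}^{*}$ is exact on it and carries cyclic presentations to cyclic presentations.

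For part (1) I would argue as follows. Since $\mclogo=({}^{\ast}_{0}\widehat{\mcm}_A)|_{\mbc_z\times\Lambda^{\circ}}=\mcr_{\mbc_z\times\Lambda^{\circ}}/\mci$ is a cyclic $\mcr$-module which is $\mco$-coherent, its $\mco$-module inverse image $\widetilde{\psi}^{*}\mclogo$ coincides with its inverse image in the category of $\mcr$-modules and therefore inherits a canonical $\mcr_{\mbc_z\times\mcm_\mcx^{\circ}}$-structure. To identify the presentation I would rerun, at the level of $\mcr$-modules rather than $\mcd$-modules, the factorisation of $\widetilde{\psi}$ through the maps $\delta$, $i_1$, $\psi_1$, $\kappa$, $\psi_2$ used in the proof of Proposition~\ref{prop:RSDmodds}: each elementary step (pull-back along $\delta\colon\{h=1\}$, the torus isomorphisms $i_1,\psi_1,\psi_2$, and the root covering $\kappa$ given by $\tau'_h=(\chi'_h)^{d_h}$) involves only the operators $z\p_z$ and $z\chi_a\p_{\chi_a}$, so the computation carries over verbatim, while $\mco$-coherence of all intermediate modules ensures that no higher inverse images appear. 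One then checks that the transforms of the generators $\hat{\Box}_{\underline l}$ and $\hat{E}_k$ ($k=0,\dots,d$) of $\mci$ are precisely $\widetilde{\Box}_{\underline l}$ and $\check{E}$; in particular the operators $\hat{E}_k$ for $k\geq1$ are absorbed by the pull-back along $\delta$.

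For part (2) I would first note that for the choice $S=\mathrm{Gen}(\Sigma)$ the semigroup $\mbn A$ is saturated --- indeed $\mbn A=N$, since for each maximal cone $\sigma$ the rays and box generators contained in $\sigma$ generate the semigroup $\sigma\cap N$ --- so that \cite[Proposition 3.20]{RS2} applies and identifies the Brieskorn lattice of $\varphi_A$ with ${}^{\circ}_{0}\widehat{M}_A$. By the cartesian square at the end of the proof of Proposition~\ref{prop:RSDmodds} the map of total spaces is $\mathrm{id}_Y\times\psi\colon Y\times\mcm_\mcx^{\circ}\to Y\times\Lambda^{\circ}$, hence the relative de Rham complex of $W$ over $\mcm_\mcx^{\circ}$ is the $\widetilde{\psi}$-pull-back of that of $\varphi_A$ over $\Lambda^{\circ}$, with $d_yF=\widetilde{\psi}^{*}(d_y\phi_A)$. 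Applying $\widetilde{\psi}^{*}$ to the isomorphism of \cite[Proposition 3.20]{RS2} (base change for this complex being unproblematic over the tame locus $\Lambda^{\circ}$, where the complex is quasi-isomorphic to a locally free sheaf) and identifying $\widetilde{\psi}^{*}\,{}^{\circ}_{0}\widehat{M}_A$ with ${}^{\circ}_{0}QM_A$ via part (1), I would obtain the desired $R_{\mbc_z\times\mcm_\mcx^{\circ}}$-linear isomorphism.

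Finally, parts (3) and (4) I expect to be pure base change for the pairing. Over $\mbc^{*}_z$ one has $\mcr=\mcd$, so $(\qma)|_{\mbc^{*}_z\times\mcm_\mcx^{\circ}}$ is the $\widetilde{\psi}$-pull-back of $(\widehat{\mcm}_A)|_{\mbc^{*}_z\times\Lambda^{\circ}}$, which by the smoothness of $\widehat{\mcm}_A$ on $\mbc^{*}_z\times\Lambda^{\circ}$ recalled in Section~\ref{sec:LauGKZ} is a flat bundle carrying the flat, non-degenerate, $(-1)^{d}$-symmetric pairing $P$ of Corollary~\ref{cor:PairingPUp}(1); since $\widetilde{\psi}$ leaves $z$ untouched it commutes with $\iota\colon z\mapsto-z$, and the pull-back of such a pairing is again one, which gives (3). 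For (4) I would use that $\qmclogo=\widetilde{\psi}^{*}\mclogo$ with $\mclogo$ locally free and that flatness of $P$ gives $P(\widetilde{\psi}^{*}s,\widetilde{\psi}^{*}t)=\widetilde{\psi}^{*}P(s,t)$, so $P(\qmclogo,\qmclogo)=\widetilde{\psi}^{*}P(\mclogo,\mclogo)\subset\widetilde{\psi}^{*}(z^{d}\mco_{\mbc_z\times\Lambda^{\circ}})=z^{d}\mco_{\mbc_z\times\mcm_\mcx^{\circ}}$ by Corollary~\ref{cor:PairingPUp}(2); and since $\qmclogo/z\qmclogo\cong\widetilde{\psi}^{*}(\mclogo/z\mclogo)$, the induced pairing $[z^{-d}P]$ is the pull-back of a perfect $\mco$-bilinear pairing on a locally free sheaf, hence still perfect. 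I expect the genuine obstacle to be part (1): one must check that the inverse-image ideal along the factorisation of $\widetilde{\psi}$ is generated by exactly $\widetilde{\Box}_{\underline l}$ and $\check{E}$, with nothing lost or gained; once this is in place, (2)--(4) follow formally from base change together with the results of \cite{RS1,RS2}.
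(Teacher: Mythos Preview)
Your proposal is correct and follows essentially the same approach as the paper. The paper's proof is extremely terse: for (1) it observes that $\widetilde{\psi}=(\mathrm{id}\times\psi_\mfs)\circ(\mathrm{id}\times\psi_\mfc)$ with the first factor non-characteristic (singular locus in $\{0,\infty\}\times\Lambda^\circ$) and the second smooth, hence the inverse image is that of meromorphic connections and the lattice pulls back by the naive formula; for (2) it invokes base change and exactness of $\widetilde{\psi}^*$; and (3)--(4) are declared to follow from Corollary~\ref{cor:PairingPUp}. Your argument is a more fleshed-out version of the same outline, and you are right that the presentation in (1) comes from rerunning the Proposition~\ref{prop:RSDmodds} computation at the $\mcr$-level. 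Your explicit verification that $\mbn A$ is saturated (indeed $\mbn A=N$ for $S=\mathrm{Gen}(\Sigma)$) is a point the paper silently assumes when invoking \cite[Proposition 3.20]{RS2}.
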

\begin{proof}
First notice that the map $\widetilde{\psi}$ factorizes as $(id \times \psi_\mfs) \circ ( id \times \psi_\mfc)$. The map $(id \times \psi_\mfs)$ is non-characteristic with respect to $\mclog$ since the singular locus  of $\mclog$ is contained in $(\{0,\infty\} \times \Lambda^\circ)$ and the map $(id \times \psi_\mfc)$ is non-characteristic with respect to any coherent $\mcd_{\mbc_z \times \mct}$-module since $(id \times \psi_\mfc)$ is smooth. Hence the inverse image is nothing but the inverse image in the category of meromorphic connections. The inverse image of the lattice $\mclogo$ is then simply given by the formula \eqref{eq:invImLat}.

The second point follows by base change and the fact that $\widetilde{\psi}^*= (id \times \psi_\mfc)^* \circ (id \times \psi_\mfs)^*$ is exact. The third and fourth point  follow from Proposition \ref{cor:PairingPUp}.
\end{proof}

\begin{lem}\label{lem:facBox}$ $\\[-1.5em]
\begin{enumerate}
\item The $\mcd_{\mbc_z \times \mcm_\mcx}$-module $\qma$ is isomorphic to the quotient $\mcd_{\mbc_z \times \mct}/ \mci $ where $\mcj$ is the left ideal generated by $\check{E}$ and
\begin{align}
\Box^X_{\underline{l}} := &\prod_{a=1 \atop p_a(\underline{l}) > 0}^r \chi_a^{p_a(\underline{l})} \prod_{i=1 \atop l_i < 0}^m \prod_{\nu =0}^{-l_i-1}(\msd_i - \nu z) \prod_{i=m+1 \atop l_i < 0}^{m+e} \msd_i^{-l_i} 
- \prod_{a=1 \atop p_a(\underline{l})}^r \chi_a^{-p_a(\underline{l})} \prod_{i=1 \atop l_i > 0}^m \prod_{v=0}^{l_i-1} (\msd_i - \nu z) \prod_{i=m+1 \atop l_i >0}^{m+e}\msd_i^{l_i} \notag
\end{align}
where
\[
\msd_i = \begin{cases}\sum_{a=1}^{r+e} m_{ia}z \chi_a \p_{\chi_a} & \text{for}\;  i=1,\ldots,m\\ z\p_{\chi_{i-m+r}} & \text{for}\; i = m+1,\ldots , m+e\, . \end{cases}
\]
\item The $\mcr_{\mbc_z \times \mcm_\mcx^\circ}$-module $\qmclogo$ is isomorphic to the quotient $\mcr_{\mbc_z \times \mcm_\mcx^\circ} /\mcj_0$ where $\mcj_0$ is the left ideal generated by $\Box^X_{\underline{l}}$ and $\check{E}$.

\end{enumerate}
\end{lem}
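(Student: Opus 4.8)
The plan is to obtain this presentation from Proposition~\ref{prop:RSDmodds}(1) (for~(1)) and from Proposition~\ref{prop:PropzExt}(1) (for~(2)) by a formal rewriting of the box operators $\widetilde{\Box}_{\underline l}$, the only difference between the two presentations being in the ``extended'' indices $i\in\{m+1,\dots,m+e\}$. For such an $i$ the choice of basis forces $p_{r+(i-m)}=[D_i]$, so by \eqref{eq:charmia} we have $m_{ia}=\delta_{a,\,r+(i-m)}$ and therefore the operator $\sum_{a=1}^{r+e}m_{ia}z\chi_a\p_{\chi_a}$ occurring in $\widetilde{\Box}_{\underline l}$ equals $z\chi_{r+(i-m)}\p_{\chi_{r+(i-m)}}$; in other words $\msd_i$ for $i>m$ is obtained from the corresponding operator in $\widetilde{\Box}_{\underline l}$ by replacing $z\chi\p_\chi$ by $z\p_\chi$.

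The mechanism is the elementary identity in $\mcd_{\mbc_z\times\mcm_\mcx}$, valid for a single coordinate $\chi$ and every $N\ge 0$,
\[
\prod_{\nu=0}^{N-1}\bigl(z\chi\p_\chi-\nu z\bigr)=\chi^{N}(z\p_\chi)^{N}\, ,
\]
which also takes place inside $\mcr_{\mbc_z\times\mcm_\mcx^\circ}$, since $z\p_{\chi_a}$ is a generator of $\mcr$ and $z\chi_a\p_{\chi_a}=\chi_a\cdot z\p_{\chi_a}$. Applying it, for each $i=m+k$ with $l_i\neq 0$, to the factor $\prod_{\nu}(z\chi_{r+k}\p_{\chi_{r+k}}-\nu z)$ of $\widetilde{\Box}_{\underline l}$ turns it into $\chi_{r+k}^{|l_{m+k}|}\msd_{m+k}^{|l_{m+k}|}$. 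Because $p_{r+k}(\underline l)=l_{m+k}$, exactly one of the two summands of $\widetilde{\Box}_{\underline l}$ carries $\chi_{r+k}^{|l_{m+k}|}$ in its prefactor while the other produces it, after the substitution, among the factors with $i>m$; collecting over $k$, each of the two summands picks up the same overall Laurent monomial $\prod_k\chi_{r+k}^{|l_{m+k}|}$ in the extended coordinates. Since the $\chi_a$ are units on the torus $\mcm_\mcx$, left multiplication by a Laurent monomial does not change the left ideal generated by an operator, so modulo such a common factor $\widetilde{\Box}_{\underline l}$ becomes $\Box^X_{\underline l}$ --- the prefactor product now running only over $a\le r$, the $i>m$ factors being the powers $\msd_{m+k}^{\pm l_{m+k}}$. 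This proves that $\langle\widetilde{\Box}_{\underline l}\mid\underline l\in\mbl\rangle+\langle\check E\rangle$ equals $\langle\Box^X_{\underline l}\mid\underline l\in\mbl\rangle+\langle\check E\rangle$ inside $\mcd_{\mbc_z\times\mcm_\mcx}$, giving~(1), and the identical computation inside $\mcr_{\mbc_z\times\mcm_\mcx^\circ}$ together with Proposition~\ref{prop:PropzExt}(1) gives~(2).

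The step I expect to cost the most care is the non-commutative bookkeeping. Extracting $\prod_k\chi_{r+k}^{|l_{m+k}|}$ from the interior of the products $\prod_{i\le m}\prod_\nu(\msd_i-\nu z)$ requires commuting it past the $\msd_i$ with $i\le m$, which in general do involve $z\chi_{r+k}\p_{\chi_{r+k}}$ (with coefficient $m_{i,r+k}$), and this produces shifts of the arguments by integer multiples of $z$. One must check that these shifts do not alter the left ideal; this is most transparent if one fixes the order of the factors so that each $\chi_{r+k}$ is pulled out before it meets any $\msd_i$ containing $z\chi_{r+k}\p_{\chi_{r+k}}$, or else argues with the whole family $\{\Box^X_{\underline l}\}_{\underline l\in\mbl}$ together with the Euler operator $\check E$ simultaneously rather than one relation at a time, in the style of the GKZ manipulations of \cite{RS1},\cite{RS2}. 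With~(1) in hand,~(2) is then immediate, since all operators involved --- $z^2\p_z$, $z\chi_a\p_{\chi_a}$, $z\p_{\chi_a}$ and the units $\chi_a^{\pm 1}$ --- lie in $\mcr_{\mbc_z\times\mcm_\mcx^\circ}$, and Proposition~\ref{prop:PropzExt}(1) already presents $\qmclogo$ as $\mcr_{\mbc_z\times\mcm_\mcx^\circ}/\langle\widetilde{\Box}_{\underline l},\check E\rangle$.
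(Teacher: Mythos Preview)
Your approach is the same as the paper's: use the identity $\prod_{\nu=0}^{N-1}(z\chi\p_\chi-\nu z)=\chi^N(z\p_\chi)^N$ for the extended indices and factor out the resulting monomial in the $\chi_{r+k}$. The paper in fact asserts the operator identity $\widetilde{\Box}_{\underline l}=\prod_{i=1}^e\chi_{r+i}^{|l_{m+i}|}\cdot\Box^X_{\underline l}$ directly, without addressing the commutator issue you raise.

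Your caution about the non-commutative bookkeeping is well placed and in fact sharper than the paper's own argument. For $i\le m$ one has $m_{i,r+k}=-\overline{D}_i(a_{m+k})$, which is generally nonzero, so commuting $\chi_{r+k}$ past $\msd_i$ does shift the argument by $m_{i,r+k}z$, and the displayed operator identity is not literally correct with the ordering of factors given in the definition of $\Box^X_{\underline l}$. The clean fix is the first of your two suggestions: since in $\widetilde{\Box}_{\underline l}$ all the factors $\prod_\nu(\sum_a m_{ia}z\chi_a\p_{\chi_a}-\nu z)$ commute with one another, reorder them so that the indices $i>m$ come \emph{first}; then the $\chi_{r+k}^{|l_{m+k}|}$ emerge immediately to the left of all differential factors and one obtains $\widetilde{\Box}_{\underline l}=\prod_k\chi_{r+k}^{|l_{m+k}|}\cdot\Box^{X}_{\underline l}$ with the $i>m$ factors placed before the $i\le m$ ones. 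This harmless reordering is all that is needed, and it does not affect any of the downstream uses of the lemma (the symbol computations in Theorem~\ref{thm:latcoh} and the restriction to $z=\underline{\chi}=0$ in Proposition~\ref{prop:ReseqCoh} are insensitive to the order).
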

\begin{proof}
Notice that we have the following identifications
\begin{enumerate}
\item  $p_{r+i}(\underline{l}) =[D_{m+i}](\underline{l}) = l_{m+i}$ for $i=1,\ldots,e$,
\item $m_{m+i,a} = \delta_{r+i,a}$
\end{enumerate}
where the second point follows from formula \ref{eq:charmia}. We can therefore write
\begin{align}
\widetilde{\Box}_{\underline{l}}&=\!\!\!\prod_{a:p_a(\underline{l})>0}\!\!\!\! {\chi_a}^{p_a(\underline{l})}\! \prod_{i:l_i<0}
\prod_{\nu=0}^{-l_i-1} (\sum_{a=1}^{r+e} m_{ia} z\chi_a\partial_{\chi_a} -\nu z) -
\!\!\!\prod_{a:p_a(\underline{l})<0}\!\!\!\! {\chi_a}^{-p_a(\underline{l})}\! \prod_{i:l_i>0}
\prod_{\nu=0}^{l_i-1} (\sum_{a=1}^{r+e} m_{ia} z\chi_a\partial_{\chi_a} -\nu z) \notag \\
&= \prod_{a=1 \atop p_a(\underline{l})>0}^r \chi_a^{p_a(\underline{l})} \prod_{i=1 \atop l_{m+i} > 0}^e \chi_{r+i}^{l_{m+i}} \prod_{i=1 \atop l_i < 0}^{m} \prod_{\nu=0}^{-l_i-1}(\msd_i - \nu z)\prod_{i=1 \atop l_{m+i} < 0}^e \chi_{r+i}^{-l_{m+i}}\prod_{i=m+1 \atop l_{i} < 0}^{m+e} \msd_i^{-l_i} \notag \\
&- \prod_{a=1 \atop p_a(\underline{l})<0}^r \chi_a^{-p_a(\underline{l})} \prod_{i=1 \atop l_{m+i} < 0}^e \chi_{r+i}^{-l_{m+i}} \prod_{i=1 \atop l_i > 0}^{m} \prod_{\nu=0}^{l_i-1}(\msd_i - \nu z)\prod_{i=1 \atop l_{m+i} > 0}^e \chi_{r+i}^{l_{m+i}}\prod_{i=m+1 \atop l_{i} > 0}^{m+e} \msd_i^{l_i} \notag\\
&= \prod_{i=1}^e \chi_{r+i}^{|l_{m+i}|}\cdot\Box^X_{\underline{l}}\, . \notag
\end{align}
Since the $\chi_a$ are invertible on $\mbc_z \times \mcm_\mcx$ this shows the first and second point.
\end{proof}

\subsection{Logarithmic extension}
Let $Y$ be a smooth variety and $D$ be a reduced normal-crossing divisor in $Y$. Denote by $\mcr_{\mbc_z \times Y}(log D)$ the subsheaf of $\mcr_{\mbc_z \times 
Y}$ generated by $\mco_{\mbc_z \times Y}$, $z^2 \p_z$ and $z\cdot p^{-1}Der(log D)$, where $p : \mbc_z \times Y \ra Y$ is the canonical projection and $Der(logD)$ the sheaf of logarithmic vector fields along $D$.\\

Recall the definition of the base space $\mcm_\mcx = \Hom(NE^e(X),\mbc^*) \simeq (\mbc^*)^{r+e}$ of the Landau-Ginzburg model from section \ref{ssec:LocalLG}.. The choice of a basis $p_1,\ldots,p_{r+e}$ determines a partial compactification $\overline{\mcm}_\mcx \simeq \mbc^{r+e}$.  Let $D_\mcx \subset \overline{\mcm}_\mcx$ be the normal crossing divisor given by $\chi_1 \cdots \chi_{r} = 0$. 

Denote by $\Delta := \mcm_\mcx \setminus \mcm_\mcx^\circ$ and let $\overline{\Delta}$ be the closure of $\Delta$ in $\overline{\mcm}_\mcx$. Define $\overline{\mcm}_\mcx^\circ := \overline{\mcm}_\mcx \setminus \overline{\Delta}$. We denote by $p_\mcx$ the point with coordinates $\chi_1 = \ldots = \chi_{r+e} = 0$.
\begin{lem}\label{lem:lvminM}
The point $p_\mcx$ is contained in $\overline{\mcm}_\mcx^\circ$.
\end{lem}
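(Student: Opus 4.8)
The plan is to exhibit a Euclidean neighbourhood $U$ of $p_\mcx$ in $\overline{\mcm}_\mcx$ with $U\cap\mcm_\mcx\subseteq\mcm_\mcx^\circ$; since $\overline{\Delta}$ is closed and equals the closure of $\mcm_\mcx\setminus\mcm_\mcx^\circ$, this gives $p_\mcx\notin\overline{\Delta}$. Recall that $\mcm_\mcx^\circ=\psi^{-1}(\Lambda^\circ)$ with $\Lambda^\circ=\Lambda^*\setminus\Lambda^{bad}$, and that $\psi$ maps $\mcm_\mcx\simeq(\mbc^*)^{r+e}$ into $\Lambda^*$, so that $\Delta=\psi^{-1}(\Lambda^{bad})$. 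It therefore suffices to show that $W(\,\cdot\,,\underline{\chi})$ is Newton non-degenerate for every $\underline{\chi}$ in a punctured neighbourhood of $p_\mcx$.

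The first step is a reduction to the faces of the Newton polytope $Q$. By Lemma \ref{lem:degai} (and because then $\varphi$ is convex and $0$ is interior to $Q$) the hypothesis $\rho\in\mck^e$ identifies $Q$ with the anticanonical polytope $\{\varphi\le1\}$ of $X$; hence every proper face $\Gamma\subsetneq Q$ lies on $\partial Q=\{\varphi=1\}$, has its vertices among the rays $a_1,\dots,a_m$, and is simplicially subdivided by the cones of $\Sigma$. If no further $a_i$ ($1\le i\le n$) lies on $\Gamma$, the exponents occurring in the truncation $W_\Gamma(\,\cdot\,,\underline{\chi})=-\sum_{a_i\in\Gamma}\underline{\chi}^{\,\underline{n}_i}\,\underline{y}^{\,\underline{a}_i}$ are affinely independent; factoring out a monomial and using the Euler relation along the supporting hyperplane of $\Gamma$ shows that $\nabla W_\Gamma$ has no zero on $\{W_\Gamma=0\}\cap(\mbc^*)^d$ for any choice of nonzero coefficients, so such faces never lie over $\Lambda^{bad}$. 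For the remaining, finitely many, faces — those carrying an affine dependence among the $a_i$ lying on them — the locus in $\Lambda^*$ where $W_\Gamma$ acquires a critical point in $(\mbc^*)^d$ is the zero set of the discriminant $\nabla_\Gamma$ of the configuration $\mca_\Gamma=\{a_i\mid a_i\in\Gamma\}$, which divides the principal $A$-determinant $E_{\mca_\Gamma}$.

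It remains to show that $p_\mcx$ lies in no closure $\overline{\psi^{-1}(\{E_{\mca_\Gamma}=0\})}$. Since $p_\mcx$ is the origin of the chart $\overline{\mcm}_\mcx\simeq\mbc^{r+e}$, and the substitution $\lambda_i=\underline{\chi}^{\,\underline{n}_i}$ turns $E_{\mca_\Gamma}$ into a Laurent polynomial in $\underline{\chi}$, after clearing its common denominator this amounts to the statement that the Newton polytope of $E_{\mca_\Gamma}(\underline{\chi}^{\,\underline{n}_\bullet})$ has a unique vertex which is minimal in every coordinate. That vertex carries a nonzero coefficient, because the Newton polytope of $E_{\mca_\Gamma}$ is the secondary polytope of $\mca_\Gamma$, so its vertex monomials occur with nonzero coefficient. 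Under the linear map on exponents dual to $\psi$, this minimal vertex is the one associated with the regular triangulation of $\Gamma$ induced by the approach to $p_\mcx$; and here the coordinate choice is decisive: since $p_1,\dots,p_r\in\theta(\mck)$ lie in the K\"ahler cone and $\rho\in\mathrm{Cone}(p_1,\dots,p_{r+e})$, the height vector $\big(\operatorname{ord}_t(\underline{\chi}(t)^{\,\underline{n}_i})\big)_{a_i\in\Gamma}$ along a generic path $\underline{\chi}(t)\to p_\mcx$ with all $\chi_a(t)\to0$ is convex and linear on the cones of $\Sigma$, hence induces precisely the subdivision $\Sigma|_\Gamma$ — a triangulation, since $\Sigma$ is simplicial — whose secondary-polytope vertex a direct computation identifies with the one above. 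Intersecting the finitely many neighbourhoods thus obtained yields $U$.

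The hard part will be this last paragraph: one has to unwind how the exponent vectors $\underline{n}_i$, assembled from the nef basis $p_1,\dots,p_{r+e}$ through $\mfn=\mfc\circ\mfs$, encode near $p_\mcx$ a $\Sigma$-piecewise-linear convex height function, and then read off from the Gelfand--Kapranov--Zelevinsky description of the secondary polytope that the pertinent coefficient of $E_{\mca_\Gamma}$ is a nonzero product of normalised simplex volumes. It is exactly the conditions $p_1,\dots,p_r\in\theta(\mck)$ and $\rho\in\mathrm{Cone}(p_1,\dots,p_{r+e})$ — together with the bound $\deg a_i\le1$ for $i>m$ packaged into Lemma \ref{lem:degai} — that make this work; for a coordinate system violating them, $p_\mcx$ would in general lie on $\overline{\Delta}$.
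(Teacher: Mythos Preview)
Your proposal is correct and follows the same route as the paper, which gives no self-contained argument but simply refers to Iritani \cite[Appendix 6.1]{Ir2}; you have sketched precisely the GKZ principal $A$-determinant / secondary-polytope argument that appears there, including the key point that the chosen basis $p_1,\dots,p_{r+e}$ (lying in or over the K\"ahler cone, with $\rho$ in its span) makes the height function near $p_\mcx$ pick out the $\Sigma$-triangulation as the dominant vertex with nonzero coefficient.
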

\begin{proof}
This follows with an easy adaption of the proof in \cite[Appendix 6.1]{Ir2}.
\end{proof}

\begin{defn}
Let ${_0}\qma^{log,\mcx}$ be the quotient $\mcr_{\mbc_z \times \overline{\mcm}_\mcx^\circ}(log D_X)/\mci_X$ where $\mci_X$ is the ideal generated by  $(\Box^X_{\underline{l}})_{\underline{l} \in \mbl}$ and $\check{E}$.
\end{defn}

\begin{thm}\label{thm:latcoh}
There is a Zariski open subset $\mcu_\mcx\subset \overline{\mcm}_\mcx^\circ$ containing the point $p_\mcx$ such that $\qmclogo^{log,\mcx} := {{_0}\qma^{log,\mcx}}_{\mid \mcu_\mcx}$ is $\mco_{\mbc_z \times \mcu_\mcx}$-coherent.
\end{thm}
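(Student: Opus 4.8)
The plan is to establish $\mco$-coherence away from the boundary divisor essentially for free, and then to detect coherence across it near the large volume limit $p_\mcx$ via an associated--graded computation that recovers the orbifold cohomology ring. Over $\mbc_z\times\mcm_\mcx^\circ$ the logarithmic operators carry no poles, so by Lemma~\ref{lem:facBox}(2) the module ${_0}\qma^{log,\mcx}$ restricts to $\qmclogo$, which is $\mco$-locally free of rank $\mathrm{vol}(Q)$ by Proposition~\ref{prop:PropLattice}(1) together with Proposition~\ref{prop:PropzExt}(1); the same holds at any point of $\overline{\mcm}_\mcx^\circ$ at which none of $\chi_1,\dots,\chi_r$ vanishes. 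Since $p_\mcx\in\overline{\mcm}_\mcx^\circ$ by Lemma~\ref{lem:lvminM}, it only remains to find a Zariski-open $\mcu_\mcx\ni p_\mcx$ on which $\mco_{\mbc_z\times\mcu_\mcx}$-coherence survives along $D_X=\{\chi_1\cdots\chi_r=0\}$.

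As ${_0}\qma^{log,\mcx}=\mcr_{\mbc_z\times\overline{\mcm}_\mcx^\circ}(\log D_X)/\mci_X$ is a cyclic $\mcr(\log D_X)$-module, I would equip it with the order filtration by the logarithmic operators $z^2\partial_z$, $z\chi_a\partial_{\chi_a}$ $(a\le r)$ and $z\partial_{\chi_{r+i}}$ $(i\le e)$, whose symbols $\xi_0,\dots,\xi_{r+e}$ generate $\mathrm{gr}\,\mcr(\log D_X)\cong\mco_{\mbc_z\times\overline{\mcm}_\mcx^\circ}[\xi_0,\dots,\xi_{r+e}]$; the associated graded is a cyclic module over this ring whose defining ideal contains the ideal $J$ generated by the principal symbols of $\check E$ and of the operators $\Box^X_{\underline l}$ of Lemma~\ref{lem:facBox}(1), and these principal symbols do not involve $z$. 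It therefore suffices to show that $J$ has finite colength in the fibre over $(p_\mcx,z=0)$, i.e.\ that its zero locus there is the single point $\xi=0$: by graded Nakayama (degree by degree) and coherence, the associated graded of ${_0}\qma^{log,\mcx}$ is then $\mco$-coherent on $\mbc_z\times\mcu_\mcx$, where $\mcu_\mcx$ is the complement in $\overline{\mcm}_\mcx^\circ$ of the closed, $p_\mcx$-avoiding locus along which that colength jumps (a locus independent of $z$ since $J$ is), and hence so is ${_0}\qma^{log,\mcx}$; for $z\ne0$ this is consistent with the regularity of the connection on the tame locus, so no spurious poles appear.

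The crux is to identify $J$ at $(p_\mcx,z=0)$ with the ideal of relations of $H^*_{orb}(\mcx)$. Writing the operators $\msd_i$ as in Lemma~\ref{lem:facBox}, one checks that at $\chi=0$ their symbols obey the linear relations generating $\mck(\Sigma_X)$ (a consequence of formula~\eqref{eq:charmia} and $m_{m+j,a}=\delta_{r+j,a}$); that the box operators attached to the distinguished relations~\eqref{eq:disrel} carry no monomial in $\chi_1,\dots,\chi_r$ and therefore degenerate at $z=0$ to the cone relations $\mcj(\Sigma_X)$; and that the box operators attached to the (generalized) primitive-collection relations --- which pair strictly positively with the classes $p_1,\dots,p_r\in\theta(\mck)$ --- have one summand divisible by a nontrivial monomial in $\chi_1,\dots,\chi_r$ and so degenerate at $p_\mcx$ to the relations $\prod_{i\in I}\mfd_i$, $I\in\mcg\mcp$. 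By Lemma~\ref{lem:orbcoho} these three families of relations present $H^*_{orb}(\mcx)$, which is finite-dimensional (necessarily of dimension $\mathrm{vol}(Q)$, reconciling the count with Proposition~\ref{prop:PropLattice}(1)); hence $J$ has the required finite colength, and the theorem follows.

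I expect the main obstacle to be exactly this last identification, performed uniformly enough for the colength argument to apply: one must ensure the degenerating box operators land on the correct side of their relations, so that the limit ring is neither larger nor smaller than $H^*_{orb}(\mcx)$. This is where the positivity hypothesis $\rho\in\mck^e$ --- equivalently, by Lemma~\ref{lem:degai}, weak-Fano-ness of $X$ together with $\deg(a_i)\le1$ for the extended generators --- and the coordinate choice adapted to the extended K\"ahler cone (the basis with $p_1,\dots,p_r\in\theta(\mck)$, $p_{r+i}=[D_{m+i}]$ and $\rho\in\mathrm{Cone}(p_1,\dots,p_{r+e})$) are used: they force the grading induced by $\check E$ to be bounded below at $p_\mcx$, place the monomials of the effective box operators on the logarithmic directions $\chi_1,\dots,\chi_r$, and make the box operators coming from box elements ``internal'' in the above sense, so that each $\Box^X_{\underline l}$ degenerates on the expected side.
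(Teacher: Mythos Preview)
Your overall strategy---order filtration on $\mcr(\log D_X)$, reduction to finite colength of the principal-symbol ideal in the fibre over $(z,\underline\chi)=(0,0)$---is exactly what the paper does. The gap lies in the third paragraph, where you claim to \emph{identify} the principal-symbol ideal $J$ at $(p_\mcx,z=0)$ with the relations of $H^*_{orb}(\mcx)$. This is false in general, and the reason is a degree mismatch that you have overlooked.

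For a cone relation $\underline l_{C_j}$ as in \eqref{eq:conerelinpro}, the box operator is $\Box^X_{\underline l_{C_j}}=\msd_j^{l_j}-\prod_{i=1}^m\prod_{\nu=0}^{l_i-1}(\msd_i-\nu z)$, whose two terms have operator orders $l_j$ and $\sum_{i=1}^m l_i$ respectively. Under the standing hypothesis $\rho\in\mck^e$ one only has $\deg(a_j)\le 1$ (Lemma~\ref{lem:degai}), so whenever $\deg(a_j)<1$ the inequality $l_j>\sum_i l_i$ is strict and the \emph{principal} symbol is the monomial $\sigma(\msd_j)^{l_j}$, not the binomial cone relation. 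Thus $J$ does not contain $\mcj(\Sigma_X)$ and the associated graded at $p_\mcx$ is \emph{not} $H^*_{orb}(\mcx)$. The same order issue undermines your use of generalized primitive collections: if $I$ contains an index $i>m$ with $\deg(a_i)<1$, the inequality $\#I\ge\sum_{l_j<0}(-l_j)$ can fail, and then the principal symbol sits on the side carrying the $\chi$-monomial and restricts to zero at $p_\mcx$, contributing nothing to $J$.

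The fix is exactly what the paper does: keep only the nilpotency relations $\sigma(\mathbf D_j)^{l_j}$ from the cone operators, and use \emph{classical} primitive collections $I\subset\{1,\dots,m\}$ (for which $\deg(a_i)=1$ on $I$, so the order comparison holds and the symbol at $\underline\chi=0$ is $-\prod_{i\in I}\sigma(\mathbf D_i)$). Together with the linear Euler relations this bounds the fibre of the symbol variety by the finite-dimensional ring $H^*(X,\mbc)\otimes\mbc[\sigma(\mathbf D_{m+1}),\dots,\sigma(\mathbf D_{m+e})]/(\sigma(\mathbf D_j)^{l_j})$, which is all you need for coherence. The genuine identification of the fibre of the \emph{module} (not of its associated graded) with $H^*_{orb}(\mcx)$ is a separate and subsequent step (Proposition~\ref{prop:ReseqCoh}); you have conflated the two.
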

\begin{proof}
Let $\mcr'(log D_\mcx)$ be the sheaf associated to the ring 
\[
\mbc[z,\chi_1, \ldots , \chi_r]\langle z\chi_1 \p_{\chi_1}, \ldots , z \chi_r \p_{\chi_r}, z\p_{\chi_{r+1}}, \ldots, z\p_{\chi_{r+e}}\rangle.
\]
 Notice that $\qmclogo^{log,\mcx}$ carries a natural $\mcr'(log D_\mcx)$-module structure. We denote the corresponding $\mcr'(log D_\mcx)$-module by $For_{z^2 \p_z}( \qmclogo^{log,\mcx})$.  Notice that it is enough to prove the coherence at the point $p_\mcx$ for $For_{z^2 \p_z}(\qmclogo^{log,\mcx})$ since it is isomorphic to $\qmclogo^{log,\mcx}$ as a $\mco_{\mbc_z \times \mcu_X}$-module.\\

Because of the operator $\check{E} = z^2 \p_z + \sum_{a=1}^{r+e} \sum_{i=1}^{m+e}m_{ia} z \chi_a \p_{\chi_a}$ the module $For_{z^2 \p_z}(\qmclogo^{log,\mcx})$ is isomorphic to $\mcr'(log D_\mcx) / (\Box^X_{\underline{l}})_{\underline{l} \in \mbl}$. Now consider on $\mcr'(log D_\mcx)$ the natural filtration $F_\bullet$ given by the orders of operators, i.e. the filtration $F_\bullet \mcr'(log D_\mcx)$ is given on the level of global sections by
\[
F_k \mbc[z, \chi_1, \ldots , \chi_r]\langle z\chi_1 \p_{\chi_1}, \ldots , z \chi_r \p_{\chi_r}\rangle := \left\lbrace P \mid P = \sum_{|\underline{s}| \leq k} g_{\underline{s}}(z,\underline{\chi})(z \chi_1\p_{\chi_1})^{s_1}\cdot \ldots \cdot (z \chi_r\p_{\chi_r})^{s_r}\right\rbrace\, .
\]
This filtration induces a filtration $F_{\bullet}$ on $For_{z^2 \p_z}(\qmclogo^{log,\mcx})$ which is good, i.e.
\[
F_k \mcr'(log D_\mcx) \cdot F_l For_{z^2 \p_z}(\qmclogo^{log,\mcx}) = F_{k+l}For_{z^2 \p_z}(\qmclogo^{log,\mcx})\, .
\]
We have a natural identification
\[
gr_{\bullet}^F(\mcr'(log D_\mcx)) = \pi_* \mco_{\mbc_z \times T^* \mcu_\mcx(log D_\mcx)}
\]
where $T^* \mcu_\mcx(log D_\mcx)$ is the total space of the vector bundle associated to the locally free sheaf $\Omega^1_{\mcu_\mcx}(log D_\mcx)$ and $\pi: \mbc_z \times T^* \mcu_\mcx(log D_\mcx) \ra \mbc_z \times \mcu_\mcx$ is the projection. The symbols of all operators $\Box^X_{\underline{l}}$ for $\underline{l} \in \mbl$ cut out a subvariety $\mbc_z \times S$ of $\mbc_z \times T^*\mcu_\mcx(log D_\mcx)$.

It will be sufficient to show that the fiber over $\underline{\chi} = 0$ of $S \ra \mcu_\mcx$ is quasi-finite since this implies that $S \ra \mcu_\mcx$ is quasi-finite in a Zariski open neighborhood of  $\underline{\chi} = 0$. Since $S$ is homogeneous this shows that $S$ is equal to the zero section of $T^*\mcu_X(log D_\mcx)$ over this neighborhood. Adapting a well-known argument from the theory of $\mcd$-modules (see, e.g. \cite{Ph1}) we see that the filtration $F_\bullet$ will become eventually stationary and we conclude by the fact that all $F_k For_{z^2 \p_z}(\qmclogo^{log,\mcx})$ are stationary in this neighborhood.\\

Therefore, it remains now to prove that that the fiber over $z = \underline{\chi} = 0$ of $S \ra \mcu_\mcx$ is quasi-finite.\\

First notice that in the limit $z= \underline{\chi} = 0$ the operators
\[
\msd_i = \begin{cases}\sum_{a=1}^{r+e} m_{ia}z \chi_a \p_{\chi_a} & \text{for}\;  i=1,\ldots,m\\ z\p_{\chi_{i-m+r}} & \text{for}\; i = m+1,\ldots , m+e \end{cases}
\]
in $\mcr'(log D_\mcx)$ degenerate to
\begin{equation}\label{eq:degDi}
\mathbf{D}_i = \begin{cases}\sum_{a=1}^{r} m_{ia}z \chi_a \p_{\chi_a} & \text{for}\;  i=1,\ldots,m\\ z\p_{\chi_{i-m+r}} & \text{for}\; i = m+1,\ldots , m+e\, . \end{cases}
\end{equation}

Since the fan $\Sigma$ is simplicial, we have for each $a_j$, $j=m+1,\ldots, m+e$ a cone relation $\underline{l}_{C_j}$:
\begin{equation}\label{eq:conerelinpro}
\sum_{i=1}^m l_i a_i - l_j a_j  = 0
\end{equation}
with $l_i , l_j \in \mbz_{\geq 0}$.
Because of $p_1, \ldots , p_r \in \theta(\mck) \subset \theta(Pic(X))$ and the definition of the map $\Theta$ one easily sees that 
\begin{equation}\label{eq:vanConerel}
p_a(\underline{l}) = 0
\end{equation}
for all cone relations $\underline{l}$. Hence the corresponding Box operator is
\[
\Box^X_{\underline{l}_{C_j}} = (\msd_j)^{l_j} -  \prod_{i=1}^m \prod_{\nu = 0}^{l_i-1}(\msd_i - \nu z)\, .
\]
Because $\deg(a_i) = 1$ for $i=1,\ldots,m$ and $\deg(a_j) \leq 1$ by Lemma \ref{lem:degai} we get from \eqref{eq:conerelinpro} the inequality $l_j \geq \sum_{i=1}^m l_i$. Hence the symbol of $\Box^X_{\underline{l}_{C_j}}$ is
\begin{equation}\label{eq:SigmaCone}
\sigma(\Box^X_{\underline{l}_{C_j}}) = \begin{cases}\sigma(\msd_j)^{l_j} & \text{if}\; \deg(a_j) < 1 \\ \sigma(\msd_j)^{l_j} - \prod_{i=1}^m \sigma(\msd_i)^{l_i} & \text{if} \deg(a_j) = 1 \end{cases}
\end{equation}
which degenerates to
\begin{equation}\label{eq:SigmaConedeg}
\sigma(\Box^X_{\underline{l}_{C_j}})_{z=\underline{\chi}=0} = \begin{cases}\sigma(\mathbf{D}_j)^{l_j} & \text{if}\; \deg(a_j) < 1 \\ \sigma(\mathbf{D}_j)^{l_j} - \prod_{i=1}^m \sigma(\mathbf{D}_i)^{l_i} & \text{if} \deg(a_j) = 1\, . \end{cases}
\end{equation}

Now suppose that $\{a_i \mid i \in I\}$ for $I \subset \{1,\ldots ,m\}$  is a primitive collection. Denote by $\underline{l}_I \in \mbl$ a primitive relation
\[
\sum_{i\in I} a_i - \sum_{j=1 \atop a_j \in \sigma_I}^{m+e} l_j a_j = 0
\]
where $\sigma_I$ is the unique minimal cone containig $\sum_{i \in I} a_i$ (notice that $\underline{l}_I$ is not unique).

We claim that
\begin{equation}\label{eq:ineq1}
p_a(\underline{l}_I) \geq 0 \qquad \text{for} \quad a=1,\ldots,r\, .
\end{equation}

Recall that $p_a \in \mck$ and that $\mck$ is the image of $\Theta(CPL(\Sigma))$. Let $\varphi_a \in CPL(\Sigma)$ be a convex, piece-wise linear function such that $\Theta(\varphi_a)$ is a preimage of $p_a$. By the definition of $\Theta$ (cf. \eqref{def:Theta}) we have
\begin{align}
p_a(\underline{l}_I) &= \sum_{i \in I} \varphi_a(a_i) - \sum_{j=1 \atop a_j \in \sigma_I} l_j \varphi_a(a_j) = \sum_{i \in I} \varphi_a(a_i) - \varphi_a(\sum_{j=1 \atop a_j \in \sigma_I}l_j a_j) \notag \\
&\geq \varphi_a(\sum_{i \in I} a_i) - \varphi_a(\sum_{j=1 \atop a_j \in \sigma_I}l_j a_j) = 0\, . \notag 
\end{align}

Additionally, the following inequality 
\[
\# I = \sum_{i \in I} 1 \geq \sum_{l_j <0} l_j
\]
is true for the relation $\underline{l}_I$ , because again $\deg(a_i) =1$ for $i=1,\ldots,m$ and $\deg(a_j) \leq 1$ for $j=1,\ldots,m+e$ holds. Therefore the symbol of a box operator with respect to a generalized primitive collection is
\begin{align}
\sigma(\Box^X_{\underline{l}_I})  &= \prod_{a= 1}^r \chi_a^{p_a(\underline{l})} \prod_{j=1 \atop a_j \in \sigma_I}^{m+e}\sigma(\msd_j)^{l_j} - \prod_{i \in I} \sigma(\msd_i) \notag
\end{align}
if $\# I = \sum_{j=1 \atop a_j \in \sigma_I}^{m+e}$ and
\begin{align}
\sigma(\Box^X_{\underline{l}_I})  &= -\sum_{i \in I} \sigma(\msd_i) \notag 
\end{align}
if $\# I > \sum_{j=1 \atop a_j \in \sigma_I}^{m+e}$. For $\underline{\chi} = (\chi_1,\ldots, \chi_{r+e}) = 0$ this gives
\begin{equation}\label{eq:SigmaPrim}
\sigma(\Box^X_{\underline{l}_I})_{\mid z = \underline{\chi} = 0}  = -\sum_{i \in I} \sigma(\mathbf{D}_i)
\end{equation}
in both cases.

Notice that for $i=1,\ldots,r$ the $\mathbf{D}_i$, and therefore also the $\sigma(\mathbf{D}_i)$ satisfy
\begin{align}
\sum_{i=i}^m a_{ki} \mathbf{D}_i &= \sum_{i=1}^m a_{ki} \sum_{a=1}^{r} m_{ia} z \chi_a \p_{\chi_a} 
= \sum_{a=1}^r \left(\sum_{i=1}^m a_{ki} m_{ia} \right) z \chi_a \p_{\chi_a}  
= \;0\, . \label{eq:SigmaEuler}
\end{align}

If we keep Remark \ref{rem:classcoho} in mind, identify $\mfd_i$ with $\sigma(\mathbf{D_i})$ for $i=1,\ldots,r$ and use the relations \eqref{eq:SigmaCone}, \eqref{eq:SigmaEuler}, \eqref{eq:SigmaPrim} we see that 
\[
\dim_\mbc(S_{\mid z = \underline{\chi} = 0}) \leq H^*(X(\Sigma),\mbc)\cdot \prod_{j=m+1}^{m+e} l_j
\]
where here we denote by $l_j$ the $j$-th component of $\underline{l}_{C_j}$.

In conclusion, this shows that the variety $S$ over $\underline{\chi} = 0$ is zero dimensional, since its coordinate ring is a finite-dimensional vector space. This finishes the proof.

\end{proof}

\begin{prop}\label{prop:ReseqCoh}
Let $\qmclogo^{log,X}$ be the $\mcr$-module defined above. We have the following isomorphism of finite-dimensional commutative algebras:
\[
{\qmclogo^{log,X}}_{\mid z= \underline{\chi} =0} \simeq H_{orb}^{*}(X,\mbc)\, .
\]
\end{prop}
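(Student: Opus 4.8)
The plan is to pass to the associated graded module with respect to the order filtration $F_\bullet$ used in the proof of Theorem \ref{thm:latcoh}, and to read off the algebra structure at the point $z=\underline{\chi}=0$ from the symbols of the defining operators. Recall that on $\mcm_\mcx^\circ$ the operator $\check{E}=z^2\p_z+\sum_{a,i}m_{ia}z\chi_a\p_{\chi_a}$ allows us to eliminate $z^2\p_z$, so that $\qmclogo^{log,\mcx}$ is isomorphic as an $\mco$-module to $\mcr'(\log D_\mcx)/(\Box^X_{\underline l})_{\underline l\in\mbl}$. Restricting to $z=\underline{\chi}=0$ and taking $\mathrm{gr}^F$, the module becomes $\mbc[\sigma(\mathbf D_1),\ldots,\sigma(\mathbf D_r)]$ (the images of $\chi_{r+1}\p_{\chi_{r+1}},\ldots$ contribute nothing new, as was shown in the coherence argument: the generalized primitive collection relations involving $a_{m+1},\dots,a_{m+e}$ force $\sigma(\mathbf D_i)=0$ for $i>r$ up to the monomial factor $\prod l_j$, which is invertible on the nose). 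So the first step is to make this identification precise and to check that no extra $z$-torsion or filtration jumps appear, using the fact already established that the filtration becomes stationary near $p_\mcx$.

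Next I would identify the relations. From \eqref{eq:SigmaEuler} the $\sigma(\mathbf D_i)$, $i=1,\ldots,r$, satisfy $\sum_{i=1}^m a_{ki}\sigma(\mathbf D_i)=0$ for $k=1,\ldots,d$, which is exactly the ideal $\mck(\Sigma)$ of Remark \ref{rem:classcoho}. From \eqref{eq:SigmaPrim} each primitive collection $I\subset\{1,\ldots,m\}$ yields $\sum_{i\in I}\sigma(\mathbf D_i)=0$ — wait, more precisely \eqref{eq:SigmaPrim} gives that the symbol of $\Box^X_{\underline l_I}$ restricted to $z=\underline{\chi}=0$ is $-\sum_{i\in I}\sigma(\mathbf D_i)$, so this is not quite the monomial $\prod_{i\in I}\mfd_i$ appearing in Remark \ref{rem:classcoho}. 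The resolution, and the technical heart of the argument, is that the relevant ideal is generated not by the linearizations but by the leading terms in a suitable (non-homogeneous) sense: one must instead use \eqref{eq:SigmaCone}, which for $\deg(a_j)=1$ gives $\sigma(\mathbf D_j)^{l_j}=\prod_{i=1}^m\sigma(\mathbf D_i)^{l_i}$, together with the relation expressing $\sigma(\mathbf D_j)$ for $j>r$ in terms of $j\le r$. Combining the cone relations with the generalized-primitive-collection relations and using the Stanley–Reisner-type presentation, one recovers precisely the ideal $\mck(\Sigma)+\langle\prod_{i\in I}\mfd_i\mid I\in\mcp\rangle$ of Remark \ref{rem:classcoho}. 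I would carry this out by comparing, term by term, the presentation of $\mathrm{gr}^F\big({\qmclogo^{log,\mcx}}_{\mid z=\underline{\chi}=0}\big)$ with the presentation of $H^*(X(\Sigma),\mbc)$ in Remark \ref{rem:classcoho}, noting that the surjection is clear (send $\mfd_i\mapsto\sigma(\mathbf D_i)$, well-defined by \eqref{eq:SigmaEuler} and \eqref{eq:SigmaPrim}/\eqref{eq:SigmaCone}) and that injectivity follows by a dimension count: the proof of Theorem \ref{thm:latcoh} already shows $\dim_\mbc(S_{\mid z=\underline{\chi}=0})\le \dim_\mbc H^*(X(\Sigma),\mbc)\cdot\prod_{j=m+1}^{m+e}l_j$, and in fact the factor $\prod l_j$ is spurious once one works with the honest Box operators $\Box^X_{\underline l}$ rather than their symbols — more carefully, the fiber dimension is exactly $\dim_\mbc H^*_{orb}(\mcx,\mbc)$ by Lemma \ref{lem:orbcoho}, since $\mathrm{vol}(Q)=\dim H^*_{orb}$, and this pins down the algebra.

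Finally I would promote the graded-algebra isomorphism to an isomorphism of the (ungraded, finite-dimensional commutative) algebras themselves. Since ${\qmclogo^{log,\mcx}}_{\mid z=\underline{\chi}=0}$ is generated as an algebra by the classes of $\chi_1\p_{\chi_1},\ldots,\chi_r\p_{\chi_r}$ — equivalently, the images of the $\mfd_i$ — and since the relations $\mck(\Sigma_X)$ hold exactly (not just to leading order, because \eqref{eq:SigmaEuler} comes from the exact operator $\check E$ combined with $\sum_i a_{ki}\msd_i$, which restricts exactly) while the generalized primitive collection relations of Lemma \ref{lem:orbcoho} hold after restriction, there is a well-defined surjective algebra homomorphism $H^*_{orb}(X,\mbc)\to{\qmclogo^{log,\mcx}}_{\mid z=\underline{\chi}=0}$, and it is an isomorphism by the dimension count. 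The main obstacle, as flagged above, is the bookkeeping in the second paragraph: reconciling the symbols \eqref{eq:SigmaPrim} and \eqref{eq:SigmaCone} — which a priori present the \emph{linearizations} of the primitive-collection monomials — with the genuine monomial Stanley–Reisner relations $\prod_{i\in I}\mfd_i$ of Remark \ref{rem:classcoho}, and verifying that the parasitic factor $\prod_{j=m+1}^{m+e}l_j$ from the coherence estimate does not actually enter the dimension of the limit algebra. This is where one must be careful to work with $\Box^X_{\underline l}$ at $z=\underline{\chi}=0$ directly rather than only with its principal symbol.
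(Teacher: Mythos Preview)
Your final paragraph lands on the right strategy --- construct a surjective algebra map $H^*_{orb}(\mcx,\mbc)\twoheadrightarrow{\qmclogo^{log,\mcx}}_{\mid z=\underline{\chi}=0}$ and conclude by a dimension count --- but the first two paragraphs are a detour that creates the very confusion you flag. The passage to $\mathrm{gr}^F$ and to principal symbols is what was needed in Theorem~\ref{thm:latcoh} to bound the characteristic variety, but it is the wrong tool here: the fiber at $z=\underline{\chi}=0$ is computed by restricting the \emph{full} operators $\Box^X_{\underline l}$, not their symbols. At $z=0$ each factor $\msd_i-\nu z$ becomes $\mathbf D_i$, so the restriction of $\Box^X_{\underline l}$ is a genuine binomial in the $\mathbf D_i$ and $\chi_a$, with no ``linearization'' ambiguity. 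Your worry about reconciling ``$-\sum_{i\in I}\sigma(\mathbf D_i)$'' with the Stanley--Reisner monomial is an artifact of working with symbols (and of a typo in \eqref{eq:SigmaPrim}: it should read $-\prod_{i\in I}\sigma(\mathbf D_i)$).

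What is actually missing from your argument are the two elementary observations that make the surjection well-defined. First, for a \emph{cone} relation $\underline l$ one has $p_a(\underline l)=0$ for all $a=1,\ldots,r$ (this is \eqref{eq:vanConerel}), so the restriction of $\Box^X_{\underline l}$ to $z=\underline{\chi}=0$ is exactly the binomial $\prod_{l_i<0}\mathbf D_i^{-l_i}-\prod_{l_i>0}\mathbf D_i^{l_i}$, matching $\mcj(\Sigma_X)$. Second, for a \emph{generalized primitive} relation $\underline l_I$ one must show that $p_a(\underline l_I)>0$ for some $a\in\{1,\ldots,r\}$: since $p_a(\underline l_I)\ge 0$ by \eqref{eq:ineq1}, and the cone relations already span the kernel of $\underline l\mapsto(p_1(\underline l),\ldots,p_r(\underline l))$ by a rank count, a primitive relation cannot lie in that kernel. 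This forces the ``$\chi_a^{p_a(\underline l_I)}$''-term to vanish at $\underline{\chi}=0$, leaving precisely $-\prod_{i\in I}\mathbf D_i$. Together with \eqref{eq:SigmaEuler} this gives all the relations in Lemma~\ref{lem:orbcoho}, hence the surjection. Finally, your dimension count should not invoke the crude bound $\dim H^*(X)\cdot\prod l_j$ from Theorem~\ref{thm:latcoh}; the clean argument is that $\qmclogo^{log,\mcx}$ is coherent with generic rank $\mathrm{vol}(Q)=\dim H^*_{orb}(\mcx)$ (Proposition~\ref{prop:PropLattice}), so upper semicontinuity of fiber dimension gives $\dim({\qmclogo^{log,\mcx}}_{\mid z=\underline{\chi}=0})\ge \mathrm{vol}(Q)$, while the surjection gives the reverse inequality.
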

\begin{proof}
Let $l \in \mbl$ be a cone relation. The corresponding box operator in the limit $z=\underline{\chi}=0$ is equal to
\begin{align}
(\Box^X_{\underline{l}})_{\mid z=\underline{\chi} = 0} := & \prod_{i=1 \atop l_i < 0}^{m+e} \mathbf{D}_i^{-l_i} 
-  \prod_{i=1 \atop l_i >0}^{m+e}\mathbf{D}_i^{l_i} \label{eq:ConeRelLim}
\end{align}
where we have used the fact that for a cone relation $\underline{l}$ we have $p_a(\underline{l})=0$ for $a=1, \ldots ,r$.\\

Now suppose that $I \subset\{1, \ldots, m+e\}$ is a generalized primitive collection and consider a primitive relation $\underline{l}_I$:
\[
\sum_{i \in I} a_i - \sum_{j=1 \atop a_j \in \sigma_I}^{m+e} l_j a_j = 0\, .
\]
Notice that we have $p_a(\underline{l}_I) \geq 0$ for $a=1,\ldots,r$ which can be shown similarly to \eqref{eq:ineq1}.

We now claim that there exists an $a \in 1, \ldots ,r$ such that $p_a(\underline{l}_I) > 0$. Notice that the kernel of the map
\begin{align}
\mbl &\lra \mbz^r\, , \notag \\
\underline{l} &\mapsto (p_1(\underline{l}),\ldots, p_r(\underline{l}))\, . \notag
\end{align}
is $e$-dimensional, since $p_1, \ldots, p_r$ is part of a basis of $Pic^e(X)$. Because the $p_a$ vanish on all cone relations for $a=1,\ldots ,r$ (cf. \eqref{eq:vanConerel} and the space of cone relations has rank $e$, the claim follows by dimensional reasons. We therefore get
\begin{equation}\label{eq:PrimRelLim}
(\Box^X_{\underline{l}_I})_{\mid z=\underline{\chi} = 0} =  - \prod_{i\in I}\, . \mathbf{D}_i
\end{equation}

Using Lemma \ref{lem:orbcoho} and the formulas \eqref{eq:SigmaEuler}, \eqref{eq:ConeRelLim} and \eqref{eq:PrimRelLim}  we get the following surjective map
\begin{equation}\label{eq:mapCohQuot}
H^*_{orb}(X,\mbc) \twoheadrightarrow  {\qmclogo^{log,X}}_{\mid z= \underline{\chi} =0} \, ,
\end{equation}
by sending  $\mfd_i$ to $\mathbf{D}_i$.\\

Notice that ${\qmclogo^{log,X}}$ is coherent by the theorem above and its generic rank is equal to $vol(Q)$ by Proposition \ref{prop:PropLattice}. Since $H^*_{orb}(X,\mbc)$ is also $vol(Q)$-dimensional and the dimension of the fibers of a coherent sheaf is upper-semi-continuous, we conclude that the map above is an isomorphism.

\end{proof}

\begin{cor}\label{cor:locfree}
The $\mco_{\mbc_z \times \mcu_\mcx}$-module $\qmclogo^{log,X}$ is locally free of rank $\mu$.
\end{cor}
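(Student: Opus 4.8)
The plan is to deduce local freeness of $\mcf:=\qmclogo^{log,X}$ from the coherence statement of Theorem~\ref{thm:latcoh} via the standard criterion that a coherent sheaf on a reduced irreducible scheme is locally free of rank $n$ if and only if all of its fibres have dimension $n$. Since $\mcu_\mcx$ is a non-empty open subset of the irreducible variety $\overline{\mcm}_\mcx^\circ$, the product $\mbc_z\times\mcu_\mcx$ is integral and $\mcf$ has a well-defined generic rank, so that $\dim_{\mbc(x)}\mcf(x)\ge\rank\mcf$ for every point $x$; it therefore suffices to show that $\rank\mcf=\mu$ and that $\dim_{\mbc(x)}\mcf(x)\le\mu$ everywhere, after possibly shrinking $\mcu_\mcx$ around $p_\mcx$.

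For the generic rank: on the dense open subset $\mbc_z\times(\mcm_\mcx^\circ\cap\mcu_\mcx)$ all coordinates $\chi_a$ are invertible, so the logarithmic operators $z\chi_a\p_{\chi_a}$ and the plain ones $z\p_{\chi_a}$ generate the same sheaf of rings there, and by Lemma~\ref{lem:facBox} the module $\mcf$ restricts to $\qmclogo=\widetilde{\psi}^*(\mclogo)$; by Proposition~\ref{prop:PropzExt}(1) and Proposition~\ref{prop:PropLattice}(1) — an inverse image of an $\mco$-locally free sheaf is again $\mco$-locally free of the same rank — this restriction is locally free of rank $\mathrm{vol}(Q)$. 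Hence $\rank\mcf=\mathrm{vol}(Q)=\mu$. For the fibre over the large volume limit, Proposition~\ref{prop:ReseqCoh} gives $\mcf|_{z=\underline{\chi}=0}\simeq H^*_{orb}(X,\mbc)$, which is $\mu$-dimensional; thus $\dim\mcf(0,p_\mcx)=\mu$ equals the generic rank, so Nakayama's lemma provides a surjection $\mco_{\mbc_z\times\mcu_\mcx}^{\,\mu}\twoheadrightarrow\mcf$ near $(0,p_\mcx)$, whose kernel is a torsion subsheaf of a free sheaf because it vanishes at the generic point, hence is zero. Therefore $\mcf$ is locally free of rank $\mu$ on a Zariski-open neighbourhood of $(0,p_\mcx)$; similarly, on $\mbc^*_z\times(\mcu_\mcx\setminus D_\mcx)$ the module $\mcf$ underlies a coherent \emph{plain} $\mcd$-module, which is automatically $\mco$-locally free of rank $\mu$.

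The remaining — and, I expect, the only genuinely delicate — step is to conclude from this that $\mcf$ is locally free on a full tube $\mbc_z\times\mcu_\mcx'$ with $\mcu_\mcx'\ni p_\mcx$, i.e.\ to control $\dim\mcf(x)$ along the lines $\mbc^*_z\times\{\chi\}$ with $\chi\in D_\mcx$ near $p_\mcx$, where $\mcf$ is only a logarithmic $\mcd$-module and the naive argument fails (a coherent $\mcd_Y(\log D)$-module need not be $\mco_Y$-locally free). Here one has to use the $\mcr$-module structure seriously: the operator $\check E=z^2\p_z+\sum_{a,i}m_{ia}z\chi_a\p_{\chi_a}$ couples the $z$-direction to the $\chi_a$-directions, and the characteristic-variety computation performed in the proof of Theorem~\ref{thm:latcoh} shows that, over a suitable Zariski-open $\mcu_\mcx\ni p_\mcx$, the characteristic variety of the associated $\mcr'(\log D_\mcx)$-module is the zero section on all of $\mbc_z\times\mcu_\mcx$. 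Combining this uniform control of the symbols with $\rank\mcf=\mu=\dim\mcf(0,p_\mcx)$ forces $\dim_{\mbc(x)}\mcf(x)=\mu$ for every $x\in\mbc_z\times\mcu_\mcx$, and the local freeness of $\mcf$ of rank $\mu$ then follows from the criterion above. Everything apart from this interplay between the symbol computation and the fibre-dimension bound is routine.
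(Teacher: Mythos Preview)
Your first four steps are correct and share the paper's set-up. The gap is in step~5: the characteristic-variety computation of Theorem~\ref{thm:latcoh} gives only $\mco$-coherence, and ``symbol control'' alone cannot force the fibre dimension of a coherent sheaf to be constant. That the characteristic variety of an $\mcr'(\log D_\mcx)$-module equals the zero section is precisely the statement that the module is $\mco$-coherent; it carries no further information about local freeness, and your assertion that it ``forces $\dim\mcf(x)=\mu$ for every $x$'' is not justified.

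The missing idea, which the paper supplies, is to run your step-4 argument not only on the open stratum but on \emph{every} stratum of the normal-crossing stratification of $\mcu_\mcx$ induced by $D_\mcx$. For $J\subset\{1,\dots,r\}$ set $S_J=\{\chi_a=0\text{ for }a\in J,\ \chi_a\neq 0\text{ for }a\notin J\}$. Logarithmic vector fields along $D_\mcx$ are tangent to each $S_J$, so the $\mcr(\log D_\mcx)$-action descends to the restriction $\mcf|_{\mbc^*_z\times S_J}$; and on $\mbc^*_z\times S_J$ both $z$ and the surviving coordinates $\chi_a$ ($a\notin J$) are invertible, so this becomes a genuine $\mcd_{\mbc^*_z\times S_J}$-module structure. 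Hence $\mcf|_{\mbc^*_z\times S_J}$ is $\mco$-coherent with a flat connection, therefore locally free of some constant rank $r_J$. Semi-continuity now squeezes: each $\mbc^*_z\times S_J$ lies in the closure of the open stratum (where the rank is $\mu$), giving $r_J\ge\mu$, while $(0,p_\mcx)$ lies in the closure of each $\mbc^*_z\times S_J$, giving $r_J\le\dim\mcf(0,p_\mcx)=\mu$ by Proposition~\ref{prop:ReseqCoh}. Thus $r_J=\mu$ for all $J$ and the fibre dimension equals $\mu$ on all of $\mbc^*_z\times\mcu_\mcx$; the remaining slice $\{0\}\times\mcu_\mcx$ is then handled by the shrinking of $\mcu_\mcx$ you already envisioned.
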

\begin{proof}
Since $D_\mcx \subset \mcm_\mcx$ is a normal crossing divisor it carries a natural stratification $\{S_i\}_{i \in I}$ by smooth subvarieties. The restriction of  $\qmclogo^{log,X}$  to $S_i$ is equipped with a $\mcd_{\mbc_z^* \times S_i}$-module structure, so that it must be locally free. Since each stratum contains $p_X$ in its closure the claim follows again by semi-continuity of the dimension of the fibers of a coherent sheaf and from Proposition \ref{prop:ReseqCoh} above.
\end{proof}

Let $\mce_\mcx$ be the restriction $(\qmclogo^{log,\mcx})_{\mid \mbc_z \times \{\underline{\chi}=0\}}$ and let $E_\mcx = \Gamma(\mbc_z, \mce_\mcx)$ be its module of global sections.

\begin{lem}\label{lem:extriv}
There is a canonical isomorphism
\begin{align}
\alpha_\mcx: \mco_{\mbc_z} \otimes_\mbc H^*_{orb}(\mcx,\mbc) &\overset{\simeq}{\lra} \mce_\mcx\, . \notag
\end{align}
It comes equipped with a connection
\begin{align}
&\nabla^{res,\underline{\chi}}: \mce_\mcx \lra \mce_\mcx \otimes z^{-2} \Omega^1_{\mbc_z} \notag
\end{align}
induced by the residue connection of $\nabla$. Let $\pi_\mcx: {_0}{\!\!^\circ}QM^{log,\mcx}_A \ra E_\mcx$  be the canonical projection. Let $F_\mcx := \pi_\mcx(\mbc[z \chi_1\p_{\chi_1}, \ldots, z \chi_{r} \p_{\chi_{r}}, z \p_{\chi_{r+1}}, \ldots , z \p_{\chi_{r+e}}]$ and denote by $\mcf_\mcx\subset \mce_\mcx$ the corresponding sheaf a $\mbc$-vector spaces. Then $\alpha_X(1 \otimes H^*_{orb}(\mcx,\mbc)) = \mcf_\mcx$. The connection operator $\nabla^{res,\underline{\chi}}_{\p_z}$ sends $\mcf_\mcx$ into $z^{-2}\mcf_\mcx \oplus  z^{-1} \mcf_\mcx$. 
\end{lem}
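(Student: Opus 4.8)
The plan is to identify $\mce_\mcx$ with the trivial bundle on $\mbc_z$ whose fibre is $H^*_{orb}(\mcx)$, using the coherence and semicontinuity already available together with the grading coming from $\check{E}$, and then to read off the pole order of $\nabla^{res,\underline{\chi}}$ from the explicit box operators.

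First I would set up the trivialisation. By Corollary~\ref{cor:locfree} the sheaf $\qmclogo^{log,\mcx}$ is $\mco_{\mbc_z\times\mcu_\mcx}$-locally free of rank $\mu$, so its restriction $\mce_\mcx$ to $\mbc_z\times\{\underline{\chi}=0\}$ is $\mco_{\mbc_z}$-locally free of rank $\mu$, in particular free. By Lemma~\ref{lem:facBox}(2) the module $\qmclogo^{log,\mcx}$ is cyclic, generated by the class of $1$ over the $\mco_{\mbc_z\times\mcu_\mcx}$-algebra generated by $z^2\p_z$ and by the pairwise commuting operators $z\chi_a\p_{\chi_a}$ $(a=1,\dots,r)$, $z\p_{\chi_{r+j}}$ $(j=1,\dots,e)$. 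Restriction to $\{\underline{\chi}=0\}$ commutes with this presentation, and on the generator $z^2\p_z$ can be eliminated by the relation $\check{E}\cdot 1=0$, so $\mce_\mcx$ is generated over $\mco_{\mbc_z}$ by the elements $\pi_\mcx(Q\cdot 1)$ with $Q$ a monomial in the operators $z\chi_a\p_{\chi_a}$ and $z\p_{\chi_{r+j}}$; that is $\mce_\mcx=\mco_{\mbc_z}\cdot\mcf_\mcx$. Moreover $\check{E}$ equips $\qmclogo^{log,\mcx}$, hence $\mce_\mcx$, with a grading in which $z$ has strictly positive weight and all the operators above are homogeneous, so $\mcf_\mcx$ is spanned by homogeneous elements.

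By Proposition~\ref{prop:ReseqCoh} the reduction $\mce_\mcx/z\mce_\mcx$ is $H^*_{orb}(\mcx)$, the map \eqref{eq:mapCohQuot} being an isomorphism of graded algebras; by Lemma~\ref{lem:orbcoho} this algebra is generated by the classes $\mathbf{D}_i$ of \eqref{eq:degDi}, hence by the reductions of the $\pi_\mcx(Q\cdot 1)$, so $\mcf_\mcx\to\mce_\mcx/z\mce_\mcx$ is surjective. The essential point — which I expect to be the main obstacle — is that this map is also injective, equivalently $\mcf_\mcx\cap z\mce_\mcx=0$ and $\dim_\mbc\mcf_\mcx=\mu$. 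For this one must exploit the box operators: the cone relations \eqref{eq:conerelinpro}, for which $p_a(\underline{l})=0$ by \eqref{eq:vanConerel}, let one rewrite each $\pi_\mcx(Q\cdot 1)$, modulo the $\mco_{\mbc_z}$-action of the commuting operators $z\chi_a\p_{\chi_a}$, in terms of the finitely many $\pi_\mcx(Q'\cdot 1)$ with $Q'$ built only from the $z\chi_a\p_{\chi_a}$; the generalized primitive collections, whose box operators degenerate at $\underline{\chi}=0$ to $-\prod_{i\in I}\mathbf{D}_i$ as in \eqref{eq:PrimRelLim}, together with the Euler relations \eqref{eq:SigmaEuler}, then match this remaining spanning set — together with the $z$-corrections produced by the degenerated cone relations — with the presentation of $H^*_{orb}(\mcx)$ of Lemma~\ref{lem:orbcoho}, and the homogeneity forces $\mcf_\mcx$ to be a graded complement of $z\mce_\mcx$ of dimension $\mu$. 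Granting this, $\mce_\mcx=\mco_{\mbc_z}\otimes_\mbc\mcf_\mcx$; composing the isomorphism $\mcf_\mcx\simeq\mce_\mcx/z\mce_\mcx$ with \eqref{eq:mapCohQuot} and extending $\mco_{\mbc_z}$-linearly gives the isomorphism $\alpha_\mcx$, which satisfies $\alpha_\mcx(1\otimes H^*_{orb}(\mcx))=\mcf_\mcx$ by
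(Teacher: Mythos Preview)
Your approach diverges from the paper's, and the step you flag as the main obstacle---injectivity of $F_\mcx\to E_\mcx/zE_\mcx$, equivalently $\dim_\mbc F_\mcx=\mu$---is not actually established by what you sketch. The box operators at $\underline{\chi}=0$ let you rewrite monomials only \emph{modulo} $zE_\mcx$: a cone relation there has the shape $\prod_{l_i<0}\prod_\nu(\mathbf{D}_i-\nu z)\cdots=\prod_{l_i>0}\cdots$, and expanding the products $\prod_\nu(\mathbf{D}_i-\nu z)$ produces $z$-corrections that do not lie in $F_\mcx$. So you cannot directly ``match the remaining spanning set with the presentation of $H^*_{orb}(\mcx)$''; homogeneity alone does not force these corrections to vanish in $F_\mcx$. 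Your proposal also breaks off before treating the last claim about the pole order of $\nabla^{res,\underline{\chi}}_{\p_z}$.

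The paper sidesteps this combinatorics entirely. It picks monomials $w_1,\dots,w_\mu\in F_\mcx$ lifting a basis of $H^*_{orb}(\mcx)$ and \emph{computes the residue connection directly}: commuting $z^2\p_z$ past an arbitrary monomial $\prod_a(z\chi_a\p_{\chi_a})^{k_a}\prod_b(z\p_{\chi_b})^{k_b}$ and then using $\check{E}\cdot 1=0$ yields
\[
(z^2\nabla^{res,\underline{\chi}}_{\p_z})(\underline{w})=\underline{w}\,(A_0+zA_\infty)
\]
with constant matrices $A_0,A_\infty$ and $A_\infty$ diagonal with entries the degrees $d_i$. Nakayama at $z=0$ makes $\underline{w}$ a local basis; the formula shows the connection is regular on $\mbc^*_z$, so $\underline{w}$ is nowhere vanishing and hence a global $\mbc[z]$-basis of $E_\mcx$. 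This single computation simultaneously gives the trivialisation $\alpha_\mcx$ and the final claim $z^2\nabla^{res,\underline{\chi}}_{\p_z}(F_\mcx)\subset F_\mcx\oplus zF_\mcx$, which in your outline would still need a separate argument. The explicit connection calculation is the idea you are missing.
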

\begin{proof}
Recall that $E_\mcx$ is a quotient of $\mbc[z, z \chi_1 \p_{\chi_1}, \ldots z \chi_{r} \p_{\chi_{r}}, z \p_{\chi_{r+1}}, \ldots , z\p_{\chi_{r+e}}]$ and $E_\mcx /z E_\mcx$ is canonically isomorphic to $H^*_{orb}(\mcx,\mbc)$. Denote by $w_1, \ldots , w_\mu$ a basis of $H^*_{orb}(\mcx,\mbc)$ which can be represented as monomials in  $\mbc[z \chi_1 \p_{\chi_1}, \ldots z \chi_{r} \p_{\chi_{r}}, z \p_{\chi_{r+1}}, \ldots , z\p_{\chi_{r+e}}]$ of degree $d_1, \ldots , d_\mu$. Denote by $(E_\mcx)_{(0)}$ the localization of $E_\mcx$ at $0$. By Nakayama's lemma the basis  $w_1, \ldots , w_\mu$ lifts to a basis in $(E_\mcx)_{(0)}$ and hence provides a basis in a Zariski open neighborhood of $0\in \mbc_z$.  Since the $(w_i)$ are global sections we have to show that they are nowhere vanishing.

From the presentation of $\qmclogo^{log,\mcx}$ we see that
\begin{align}
&(z^2 \nabla^{res,\underline{\chi}}_{\p_z})\prod_{a=1}^{r}(z \chi_a \p_{\chi_a})^{k_a} \cdot \prod_{b=r+1}^{r+e}(z \p_{\chi_b})^{k_b} \notag \\
 =&(z^2 \p_z)\prod_{a=1}^{r}(z \chi_a \p_{\chi_a})^{k_a} \cdot \prod_{b=r+1}^{r+e}(z \p_{\chi_b})^{k_b} \notag \\
 =& \prod_{a=1}^{r}(z \chi_a \p_{\chi_a})^{k_a} \cdot \prod_{b=r+1}^{r+e}(z \p_{\chi_b})^{k_b} \cdot \left( (z^2 \p_z) + \sum_{c=1}^{r+e} k_c \cdot z \right) \notag \\
 =&\! \left(\! -\sum_{a=1}^{r+e} \sum_{i=1}^{m}m_{ia} z \chi_a \p_{\chi_a}\! +\! z \cdot\! \left(\sum_{a=1}^{r} k_a + \sum_{b=r+1}^{r+e} k_b \cdot \left(1- \sum_{i=1}^{m+e}   m_{ib} \right) \right) \right)\prod_{a=1}^{r}(z \chi_a \p_{\chi_a})^{k_a} \cdot \prod_{b=r+1}^{r+e}(z \p_{\chi_b})^{k_b}\, .  \notag
\end{align}
Hence, we have
\begin{equation}\label{eq: connlvl}
(z^2 \nabla_{\p_z}^{res,\underline{\chi}})(\underline{w}) = \underline{w} \cdot (A_0 + z A_{\infty})
\end{equation}
where $A_0,A_\infty \in M(\mu \times \mu,\mbc)$ and $A_\infty$ is a diagonal matrix with $d_1, \ldots , d_\mu$. Since the connection has no singularities in $\mbc_z^*$ we conclude that $\underline{w}$ is nowhere vanishing, hence is a $\mbc[z]$-basis of $E_X$. This contruction gives the isomorphism $\alpha_X$ which is of course independent of the choice of the basis $\underline{w}$.
\end{proof}

\begin{lem}\label{lem:DeligneExt}
Write $\qma^{log,\mcx}$for the restriction $({_0}\qma^{log,\mcx})_{\mid \mbc_z^* \times \mcu_\mcx}$. Then for any $a \in \{1,\ldots ,r\}$ the residue endomorphisms  
\[
z\chi_a \p_{\chi_a} \in  \mce nd_{\mco_{\mbc_z^*}}((\qma^{log,\mcx})_{\mid \mbc_z^* \times \{\underline{\chi}=0\}})
\]
are nilpotent.
\end{lem}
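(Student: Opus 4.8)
The plan is to identify the residue endomorphism $z\chi_a\partial_{\chi_a}$ ($1\le a\le r$) with an honest nilpotent endomorphism of the finite-dimensional fibre $\mce_\mcx$, by exploiting the structural description of $\mce_\mcx$ and of the subspace $\mcf_\mcx$ obtained in Lemma~\ref{lem:extriv} and Proposition~\ref{prop:ReseqCoh}.

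First I would record the routine point that for $a\le r$ the operator $z\chi_a\partial_{\chi_a}$ lies in $\mcr_{\mbc_z\times\mcu_\mcx}(\log D_\mcx)$ and preserves the ideal $(\chi_1,\dots,\chi_{r+e})$, so it descends to an $\mco_{\mbc_z}$-linear endomorphism of $\mce_\mcx=({_0}\qma^{log,\mcx})_{\mid\mbc_z\times\{\underline{\chi}=0\}}$, which after inverting $z$ is precisely the endomorphism in the statement. By Lemma~\ref{lem:extriv} one has $\alpha_\mcx(\mco_{\mbc_z}\otimes H^*_{orb}(\mcx,\mbc))=\mce_\mcx$ and $\alpha_\mcx(1\otimes H^*_{orb}(\mcx,\mbc))=\mcf_\mcx$, so $\mcf_\mcx$ generates $\mce_\mcx$ over $\mco_{\mbc_z}$; since $z\chi_a\partial_{\chi_a}$ is $\mco_{\mbc_z}$-linear it suffices to show that $z\chi_a\partial_{\chi_a}$ carries the finite-dimensional $\mbc$-vector space $\mcf_\mcx$ into itself and is nilpotent there.

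The key step — and the one I expect to be the delicate one — is the invariance of $\mcf_\mcx$, and it is exactly here that the hypothesis $a\le r$ (and not $a\le r+e$) is used. The space $\mcf_\mcx$ is, up to $\pi_\mcx$, the image on the cyclic generator of the commutative subalgebra generated by $z\chi_1\partial_{\chi_1},\dots,z\chi_r\partial_{\chi_r},z\partial_{\chi_{r+1}},\dots,z\partial_{\chi_{r+e}}$; for $a\le r$ the operator $z\chi_a\partial_{\chi_a}$ commutes with every one of these generators, since $[z\chi_a\partial_{\chi_a},z\chi_b\partial_{\chi_b}]=0$ and $[z\chi_a\partial_{\chi_a},z\partial_{\chi_c}]=-z^2\delta_{ac}\,\partial_{\chi_c}=0$ because $a\le r<c$. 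Hence $z\chi_a\partial_{\chi_a}$ maps $\mcf_\mcx$ into $\mcf_\mcx$ \emph{without introducing any power of $z$}, so that under $\mcf_\mcx\xrightarrow{\sim}\mce_\mcx/z\mce_\mcx\cong H^*_{orb}(\mcx,\mbc)$ it agrees with its own reduction modulo $z$. For $a>r$ the bracket $[z\chi_a\partial_{\chi_a},z\partial_{\chi_a}]=-z\,(z\partial_{\chi_a})\neq0$, so $\mcf_\mcx$ is no longer stabilised at the same $z$-level, and the conclusion genuinely fails — note that "nilpotent modulo $z$'' by itself would not imply nilpotence over $\mbc_z^*$.

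Finally, by Proposition~\ref{prop:ReseqCoh} the reduction modulo $z$ of $z\chi_a\partial_{\chi_a}$ acts on $H^*_{orb}(\mcx,\mbc)$ by multiplication by its image $\overline{p}_a$; the identities $\mathbf{D}_i=\sum_{a=1}^{r}m_{ia}\,z\chi_a\partial_{\chi_a}$ for $i\le m$ (cf.\ \eqref{eq:degDi}, \eqref{eq:charmia}) together with $\deg(a_i)=1$ give $\mfd_i=\sum_{a=1}^r m_{ia}\,\overline{p}_a\in H^2_{orb}(\mcx,\mbc)$, and the matrix $(m_{ia})_{i\le m,\,1\le a\le r}$ has rank $r$ — the classes $[D_1],\dots,[D_m]$ span $\mbl^\star\otimes\mbq$ modulo $\langle p_{r+1},\dots,p_{r+e}\rangle=\langle[D_{m+1}],\dots,[D_{m+e}]\rangle$ by our choice of basis — so each $\overline{p}_a$ is a combination of the $\mfd_i$ and hence lies in the augmentation ideal $H^{>0}_{orb}(\mcx,\mbc)$. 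Since $H^*_{orb}(\mcx,\mbc)$ is a finite-dimensional graded $\mbc$-algebra with $H^0_{orb}(\mcx,\mbc)=\mbc$, it is local Artinian with maximal ideal $H^{>0}_{orb}(\mcx,\mbc)$, so multiplication by $\overline{p}_a$ is nilpotent. Thus $(z\chi_a\partial_{\chi_a})^N$ vanishes on $\mcf_\mcx$ for $N\gg0$, hence, by $\mco_{\mbc_z}$-linearity, on all of $\mce_\mcx$, and in particular on $(\qma^{log,\mcx})_{\mid\mbc_z^*\times\{\underline{\chi}=0\}}$.
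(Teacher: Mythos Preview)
Your proof is correct and takes a genuinely different route from the paper's. The paper argues more indirectly: it notes that the residue endomorphism $z\chi_a\partial_{\chi_a}$ is a flat section of $\mce nd_{\mco_{\mbc_z}}(\mce_\mcx)$ with respect to the residue connection $\nabla^{res,\underline{\chi}}$ (this is the standard fact that the residue of a logarithmic connection along a divisor is horizontal for the induced connection on the divisor), so its eigenvalues are algebraic functions on $\mbc_z$ which are constant on $\mbc_z^*$; since at $z=0$ the endomorphism is cup product with an element of $H^{>0}_{orb}(\mcx,\mbc)$ and hence nilpotent, the eigenvalues vanish at $z=0$ and therefore everywhere.

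Your argument instead exploits the explicit trivialisation $\alpha_\mcx$ of Lemma~\ref{lem:extriv}: because $z\chi_a\partial_{\chi_a}$ with $a\le r$ commutes with all the generators of $\mcf_\mcx$, it preserves $\mcf_\mcx$ and its restriction there is literally the same $\mbc$-linear map as the reduction modulo $z$. Thus in the basis $\underline{w}$ the residue endomorphism is a \emph{constant} nilpotent matrix, not merely one with constant eigenvalues---a slightly stronger statement, obtained by a more elementary computation. The trade-off is that your argument leans on the particular combinatorial shape of $\mcf_\mcx$ (and on the rank computation for $(m_{ia})_{i\le m,\,a\le r}$ to place $\overline{p}_a$ in $H^{>0}_{orb}$), whereas the paper's flatness-and-eigenvalue argument would transfer unchanged to any logarithmic extension whose residue is nilpotent at a single point of the base.
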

\begin{proof}
Under the identification of $(\qmclogo^{log,\mcx})_{z=\underline{\chi} = 0}$ with $H^*_{orb}(X,\mbc)$ the action of the operator $z \chi_a \p_{\chi_a}$ corresponds to the cup product with $\mfd_a$. Hence the class of $z \chi_a \p_{\chi_a}$ is nilpotent in $End_\mbc((\qmclogo^{log,\mcx})_{\mid z=\underline{\chi}=0})$. On the other hand,  the class of $z \chi_a \p_{\chi_a}$ gives rise to a well-defined element of $\mce nd_{\mco_z}\left( (\qmclogo^{log,\mcx})_{\underline{\chi}=0}\right)$, which is flat on $\mbc_z^*$ with respect to the residue connection. Its eigenvalues are algebraic functions on $\mbc_z$ which are constant on $\mbc_z^*$ and take the value zero at the origin. This implies that the eigenvalues are zero over all of $\mbc_z$, hence the residue endomorphisms are nilpotent as required.
\end{proof}

Denote by $D_\mcx$ the reduced normal-crossing divisor in $\mcu_\mcx$ given by $\{\chi_1 \cdot \ldots \cdot  \chi_r = 0\}$ and denote its components by $D_a$ for $a =1,\ldots,r$.
\begin{prop}\label{prop:logpair}
There is a non-degenerate flat $(-1)^n$-symmetric pairing
\[
P: \qmclogo^{log,\mcx} \otimes \iota^* \qmclogo^{log,\mcx} \lra z^n \mco_{\mbc_z \times \mcu_\mcx}
\]
i.e. $P$ is flat on $\mbc_z^* \times (\mcu_\mcx \setminus D_\mcx)$ and the induced pairings 
\[
P: (\qmclogo^{log})_{\mid z=0}  \otimes \iota^* (\qmclogo^{log})_{\mid z=0} \lra z^n \mco_{\mcu}
\]
and $P: (\qmclogo^{log})_{\mid D_a}  \otimes \iota^* (\qmclogo^{log})_{D_a} \lra z^n \mco_{\mbc_z \times D_a}$ are non-degenerate.
\end{prop}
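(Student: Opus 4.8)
By Proposition~\ref{prop:PropzExt}(3)--(4) the module $\qmclogo$ carries a flat, non-degenerate, $(-1)^n$-symmetric pairing $P$ over $\mbc_z\times\mcm_\mcx^\circ$ with $P(\qmclogo,\qmclogo)\subset z^n\mco_{\mbc_z\times\mcm_\mcx^\circ}$. Since $\mcu_\mcx\subset\overline{\mcm}_\mcx^\circ$ and $\overline{\mcm}_\mcx^\circ\cap\mcm_\mcx=\mcm_\mcx^\circ$, we have $\mcu_\mcx\cap\mcm_\mcx\subset\mcm_\mcx^\circ$; moreover, where $\chi_1\cdots\chi_r\neq0$ the sheaf $\mcr_{\mbc_z\times\overline{\mcm}_\mcx^\circ}(\log D_\mcx)$ coincides with $\mcr_{\mbc_z\times\overline{\mcm}_\mcx^\circ}$, so by Lemma~\ref{lem:facBox} the modules $\qmclogo^{log,\mcx}$ and $\qmclogo$ agree over $\mbc_z\times(\mcu_\mcx\cap\mcm_\mcx)$ and $P$ is already defined there. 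The complement $\mbc_z\times(\mcu_\mcx\setminus\mcm_\mcx)$ is the reduced normal-crossing divisor $\bigcup_{a=1}^{r}\mbc_z\times D_a\ \cup\ \bigcup_{i=1}^{e}\mbc_z\times\{\chi_{r+i}=0\}$, so the task splits into extending $P$ across this divisor and then establishing the three non-degeneracy assertions.

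\textbf{The extension.} The plan is to regard the rescaled pairing $z^{-n}P$, which by Proposition~\ref{prop:PropzExt}(4) is holomorphic on all of $\mbc_z\times(\mcu_\mcx\cap\mcm_\mcx)$, as a section over that set of the locally free $\mco_{\mbc_z\times\mcu_\mcx}$-module $\mce:=\fHom_{\mco}\bigl(\qmclogo^{log,\mcx}\otimes_{\mco}\iota^*\qmclogo^{log,\mcx},\mco\bigr)$, which is locally free by Corollary~\ref{cor:locfree}. Flatness of $P$ says precisely that $z^{-n}P$ is horizontal for the connection $\nabla^{\mce}$ on $\mce$ induced by $\nabla$; away from $z=0$ this connection has logarithmic poles along the divisor above, its residue along $\mbc_z\times D_a$ being induced by $\Res_{D_a}\nabla=z\chi_a\p_{\chi_a}$, which is nilpotent (Lemma~\ref{lem:DeligneExt}, propagated along $D_a$ by flatness of the residue connection, after shrinking $\mcu_\mcx$ around $p_\mcx$ if necessary), while along $\mbc_z\times\{\chi_{r+i}=0\}$ it is regular — only $z\p_{\chi_{r+i}}$, and no polar term, enters $\mcr(\log D_\mcx)$ — so its residue vanishes. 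A single-valued horizontal section of a locally free module with a logarithmic connection whose residue $\Res$ is nilpotent extends holomorphically across the polar divisor: in a local coordinate $u$ for a component, expanding the section as $\sum_{k}s_k u^k$, horizontality gives $(\Res+k)s_k=0$, hence $s_k=0$ for $k\neq0$; the argument is uniform in $z$, so the extended section is still holomorphic over $z=0$. Thus $z^{-n}P$ extends over all of $\mbc_z\times\mcu_\mcx$, i.e.\ to a pairing $P\colon\qmclogo^{log,\mcx}\otimes\iota^*\qmclogo^{log,\mcx}\to z^n\mco_{\mbc_z\times\mcu_\mcx}$, still flat on $\mbc_z^*\times(\mcu_\mcx\setminus D_\mcx)$ and $(-1)^n$-symmetric by continuity.

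\textbf{Reduction of non-degeneracy to the large-volume point.} Since $\qmclogo^{log,\mcx}$ is locally free of rank $\mu$ (Corollary~\ref{cor:locfree}), the degeneracy locus $Z\subset\mbc_z\times\mcu_\mcx$ of $[z^{-n}P]$ — the zero set of the determinant of its Gram matrix in a local frame — is a closed hypersurface or empty, and it is disjoint from $\mbc_z\times(\mcu_\mcx\cap\mcm_\mcx)$ by Proposition~\ref{prop:PropzExt}(4). Hence $Z$ lies in the reduced normal-crossing divisor $\mbc_z\times(\mcu_\mcx\setminus\mcm_\mcx)$, and being a hypersurface it is a union of the irreducible components $\mbc_z\times D_a$ and $\mbc_z\times\{\chi_{r+i}=0\}$; each of these contains the point $z=\underline\chi=0$, because $p_\mcx\in\mcu_\mcx$ (Theorem~\ref{thm:latcoh}) has all $\chi$-coordinates zero. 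Therefore $Z=\emptyset$ as soon as $[z^{-n}P]$ is non-degenerate on the fibre $(\qmclogo^{log,\mcx})_{\mid z=\underline\chi=0}$, and then all three asserted non-degeneracies follow, since the fibre of $\qmclogo^{log,\mcx}$ over a point of $\{z=0\}$ (resp.\ of $D_a$) equals the fibre there of $(\qmclogo^{log,\mcx})_{\mid z=0}$ (resp.\ $(\qmclogo^{log,\mcx})_{\mid D_a}$).

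\textbf{The main obstacle: the limiting pairing.} What remains, and is the heart of the proof, is to show that $[z^{-n}P]$ is non-degenerate on $(\qmclogo^{log,\mcx})_{\mid z=\underline\chi=0}$, which by Proposition~\ref{prop:ReseqCoh} is $H^*_{orb}(\mcx,\mbc)$. I would compute this limit from the de Rham side: using the isomorphism of Proposition~\ref{prop:PropzExt}(2) with the (higher-residue) Brieskorn lattice together with Proposition~\ref{prop:ReseqCoh}, the value of $[z^{-n}P]$ at $z=\underline\chi=0$, evaluated on the distinguished generator and its products by the $z\chi_a\p_{\chi_a}$, is the leading term of Sabbah's higher-residue pairing, which degenerates to a Grothendieck-type residue pairing on $H^*_{orb}(\mcx,\mbc)$; one then checks, exactly as in the smooth toric case of \cite{RS1} but carried out in the combinatorial model of Lemma~\ref{lem:orbcoho}, that this equals a non-zero multiple of the orbifold Poincaré pairing, in particular is non-degenerate. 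An alternative, more formal route notes that by Lemma~\ref{lem:DeligneExt} the operators $z\chi_a\p_{\chi_a}$ act at $z=\underline\chi=0$ as cup product with $\mfd_a$ and are self-adjoint for $[z^{-n}P]$ by flatness of $P$; this, together with non-degeneracy over $\mcm_\mcx^\circ$ and the graded Frobenius-algebra structure of $H^*_{orb}(\mcx,\mbc)$, pins down the limiting pairing up to a non-zero scalar on each graded piece — but making this rigorous amounts to the same residue computation, which I expect to be the only nontrivial step.
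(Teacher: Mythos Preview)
Your extension argument is essentially correct, though the phrase ``uniform in $z$'' papers over a genuine issue: the connection on $\mce$ has an order-two pole at $z=0$, so your Laurent-coefficient analysis for horizontal sections only works on $\mbc_z^*\times\mcu_\mcx$. The remaining locus $\{0\}\times(\mcu_\mcx\setminus\mcm_\mcx)$ has codimension two, and the paper closes this by Hartogs --- you should do the same rather than claim uniformity.

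The substantive divergence is in the non-degeneracy step, and here you have made the problem harder than it is. You reduce to the single fibre $z=\underline\chi=0$ and then face, as you say, the ``main obstacle'' of computing the limiting pairing on $H^*_{orb}(\mcx,\mbc)$. The paper never computes this. Instead it writes down the Deligne-extension frame explicitly: starting from a multi-valued flat basis $f_1,\ldots,f_\mu$, it sets $s_i=e^{-\log z(\rho_i+N_z/2\pi i)}\prod_a e^{-\log\chi_a\,N_a/2\pi i}f_i$, which is a holomorphic frame of $\qmclogo^{log,\mcx}$ over $\mbc_z^*\times\mcu_\mcx$. By flatness of $P$ and skew-adjointness of the $N_a$, the Gram matrix $P(s_i,s_j)$ differs from $P(f_i,f_j)$ only by invertible factors independent of $\underline\chi$, so non-degeneracy on $\mbc_z^*\times\mcu_\mcx$ is inherited directly from Proposition~\ref{prop:PropzExt}(3). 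Together with Proposition~\ref{prop:PropzExt}(4) on $\{z=0\}\times\mcm_\mcx^\circ$, this leaves only the codimension-two set $\{0\}\times D_\mcx$, and Hartogs finishes. No fibre computation at the large-volume point is needed.

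Your Step~4 is therefore a genuine gap. The identification of the limiting pairing with the orbifold Poincar\'e pairing is exactly what the lemma \emph{following} Proposition~\ref{prop:logpair} establishes, and that lemma already uses the proposition --- so you cannot invoke it without circularity. Your route can be rescued more cheaply: on $\mbc_z^*\times\mcu_\mcx$ the residues of the induced logarithmic connection on $\mce$ are nilpotent, hence traceless, so $d\log\det[z^{-n}P]$ is holomorphic across $D_\mcx$; since $\det[z^{-n}P]$ is already holomorphic (you extended it) and non-vanishing over $\mcm_\mcx^\circ$, it is non-vanishing on all of $\mbc_z^*\times\mcu_\mcx$, and Hartogs again handles $\{0\}\times D_\mcx$. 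This is the abstract version of the paper's explicit-frame argument, and it closes your gap without the limit computation you were dreading.
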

\begin{proof}
Denote by $M_a$, $a=1,\ldots,r$, the unipotent monodromy  automorphism corresponding to a counter-clockwise loop around the divisor $\mbc^*\times D_a$ and by $M_z$ the monodromy automorphisms corresponding to a counter-clockwise loop arround $z=0$. Denote by $M_{z,u}$ resp. $M_{z,s}$ their unipotent and semi-simple part. We set $N_a=\log M_a$ and $N_z = \log M_{z,u}$. Denote by $H^\infty$ the space of multi-valued flat sections on which the monodromy operators $M_a$ and $M_z$ act. 
Let $f_1,\ldots,f_\mu$ be a basis of flat multi-valued sections of ${\qma}_{\mid \mbc_z^* \times \mcu_\mcx}$ which is adapted to the generalized eigenspace decomposition of the space $H^\infty$ with respect to the automorphisms $M_z$ and $M_a$. We define the single-valued sections 
\[
s_i =  e^{-\log z (\rho_i +\frac{N_z}{2 \pi i})} \prod_{a=1}^r e^{-\log q_a \frac{N_a}{2\pi i}} f_i
\]
for some $\rho_i$ such that $e^{2\pi i \rho_i}$ is the generalized eigenvalue of $f_i$ with respect to $M_z$. These sections provide a basis for ${\qmclogo^{\log,\mcx}}_{\mid \mbc_z^* \times \mcu_\mcx}$. Notice that $P(s_i,s_j)$ is holomorphic on $\mbc_z^*
  \times (\mcu_\mcx \setminus D_\mcx)$. By the flatness of $P$ we get that $P(s_i,s_j) = z^{-\rho_i - \rho_j}P(f_i,f_j)$ which shows that $P(s_i,s_j)$ extends over $\mbc_z^* \times \mcu_\mcx$ and is non-degenerate. Together with Proposition \ref{prop:PropzExt} we get a non-degenerate pairing on the restriction ${\qmclogo^{log,\mcx}}_{\mid (\mbc_z \times \mcu_\mcx) \setminus \{0\} \times D }$. Since, $\{0\} \times D_\mcx$ has codimension two in $\mbc_z \times \mcu_\mcx$, $P$ extends to a non-degenerate pairing on $\qmclogo^{log,\mcx}$.
\end{proof}

\begin{lem}
The induced pairing $P: E_X \otimes \iota^*E_X \ra z^n \mco_{\mbc_z}$, restricts to a pairing $P: F_X \times F_X \ra z^n \mbc$. The pairing $z^{-n}P$ on $F_X$ coincides under the identification made in Lemma \ref{lem:extriv} with the orbifold Poincar\'{e} pairing on $H^*_{orb}(X, \mbc)$ up to a non-zero constant. The corresponding statement holds if we replace $X$ by $Z$.
\end{lem}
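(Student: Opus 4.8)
The plan is to prove the statement in three steps: first show that $P$ restricted to $F_X$ is independent of $z$, so that $z^{-n}P$ descends to a $\mbc$‑bilinear pairing $g$ on $F_X\cong H^*_{orb}(X,\mbc)$; then show that $g$ is a non‑degenerate \emph{homogeneous Frobenius} form; and finally conclude by the uniqueness of such forms on a Poincaré‑duality algebra.

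\textbf{Step 1 (the pairing on $F_X$ is $z$‑constant).} Recall from Lemma~\ref{lem:extriv} that $F_X$ is the $\mbc$‑span of the monomials $w_1,\dots,w_\mu$ obtained by applying the logarithmic fields $z\chi_1\p_{\chi_1},\dots,z\chi_r\p_{\chi_r},z\p_{\chi_{r+1}},\dots,z\p_{\chi_{r+e}}$ to the cyclic generator, and that $\alpha_X$ identifies $F_X$ with $H^*_{orb}(X,\mbc)$. Restricting the pairing of Proposition~\ref{prop:logpair} to $\{\underline\chi=0\}$ gives $P\colon E_X\otimes\iota^*E_X\to z^n\mco_{\mbc_z}$, hence $P(w_i,\iota^*w_j)=z^n q_{ij}(z)$ with $q_{ij}\in\mbc[z]$; the point is that the $q_{ij}$ are constants. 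The reason is homogeneity: assigning $z$ the weight $1$ and the coordinates $\chi_a$ the (non‑negative, rational) weights dictated by $\check E$ makes $\check E$ and the box operators $\Box^X_{\underline l}$ homogeneous, so that $\qma$, the lattice $\qmclogo^{log,X}$, their restrictions $E_X$, $F_X$, and the flat pairing $P$ are all graded; a homogeneous pairing taking values in $z^n\mbc[z]$ whose leading term $[z^{-n}P]_{|z=0}$ is non‑degenerate (Propositions~\ref{prop:PropzExt} and~\ref{prop:logpair}) has every $q_{ij}$ constant. Equivalently one argues on $\mbp^1_z$: by \eqref{eq: connlvl} the bundle $\mce_X$ extends over $z=\infty$ as a trivial bundle with a logarithmic pole whose residue has the non‑negative eigenvalues $d_1,\dots,d_\mu$, so $P(w_i,\iota^*w_j)$ has moderate growth at $\infty$, and being $z^n\mco$‑valued at $z=0$ and, by the $(-1)^n$‑symmetry, at $z=\infty$, it is a global section of a degree‑zero line bundle on $\mbp^1_z$, i.e. a constant multiple of $z^n$. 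In either case $z^{-n}P$ descends to a $\mbc$‑bilinear pairing $g$ on $F_X$, which under $\alpha_X$ equals $[z^{-n}P]_{|z=\underline\chi=0}$ on $H^*_{orb}(X,\mbc)$ and is therefore non‑degenerate by Proposition~\ref{prop:logpair}.

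\textbf{Step 2 ($g$ is a homogeneous Frobenius form).} For $a\in\{1,\dots,r\}$, flatness of $P$ along $\chi_a\p_{\chi_a}$ reads $z\chi_a\p_{\chi_a}\,P(s,\iota^*s')=P\big((z\chi_a\p_{\chi_a})s,\iota^*s'\big)-P\big(s,\iota^*((z\chi_a\p_{\chi_a})s')\big)$ (the sign because $\iota^*(z\chi_a\p_{\chi_a})=-z\chi_a\p_{\chi_a}$). Restricting to $z=\underline\chi=0$, the left‑hand side vanishes (it is $z\chi_a\p_{\chi_a}$ of a section of $z^n\mco$ evaluated at $\chi_a=0$), while by Proposition~\ref{prop:ReseqCoh} (and its proof) together with Lemma~\ref{lem:DeligneExt} the operator $z\chi_a\p_{\chi_a}$ acts at the large volume limit as cup product with the class $p_a\in\mbl^\star\cong H^{\le2}_{orb}(X)$; hence $g(p_a\cup u,v)=g(u,p_a\cup v)$. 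The same computation in the directions $\p_{\chi_b}$, $b=r+1,\dots,r+e$, where $z\p_{\chi_b}$ degenerates to cup product with $p_b=[D_{m+(b-r)}]=\mfd_{m+(b-r)}$, gives $g(\mfd_{m+(b-r)}\cup u,v)=g(u,\mfd_{m+(b-r)}\cup v)$. Since $p_1,\dots,p_{r+e}$ is a $\mbc$‑basis of $H^{\le2}_{orb}(X)\cong\mbl^\star\otimes\mbc$ (Remark~\ref{rem:hleq2eqle}) and $H^*_{orb}(X,\mbc)$ is generated as an algebra by $H^{\le2}_{orb}(X)$ (Lemma~\ref{lem:orbcoho}, every $\mfd_i$ having degree $\le1$), we conclude $g(uv,w)=g(u,vw)$ for all $u,v,w$; thus $g$ is a non‑degenerate Frobenius form on $H^*_{orb}(X,\mbc)$. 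Moreover the coefficient of $z^0$ in the $\p_z$‑flatness identity, combined with \eqref{eq: connlvl} (whose $A_\infty=\operatorname{diag}(d_1,\dots,d_\mu)$ is the grading operator), yields $(n-d_i-d_j)\,g(w_i,w_j)=0$, so $g$ is homogeneous: it pairs $H^{2p}_{orb}(X)$ only with $H^{2(d-p)}_{orb}(X)$, where $d=\dim_\mbc X$ is the top degree.

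\textbf{Step 3 (identification; the case of $Z$).} Now $H^*_{orb}(X,\mbc)$ is a finite‑dimensional, graded‑commutative, connected $\mbc$‑algebra with one‑dimensional top piece $H^{2d}_{orb}(X)=H^{2d}(X)$, and the orbifold Poincaré pairing makes it a Poincaré‑duality algebra. Any homogeneous non‑degenerate Frobenius form on such an algebra is a non‑zero scalar multiple of $\langle\,\cdot\,,\,\cdot\,\rangle_{orb}$: by the Frobenius property $g(u,v)=g(\mathbf 1,uv)$, and for $\deg u+\deg v=2d$ one has $uv\in H^{2d}(X)$, so $g$ is determined by the non‑zero constant $c:=g(\mathbf 1,[\pt])$, while off complementary degrees both $g$ and the Poincaré pairing vanish; hence $z^{-n}P_{|F_X}=c\,\langle\,\cdot\,,\,\cdot\,\rangle_{orb}$. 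Finally, $Z$ being a smooth projective toric variety with the same positivity property, the entire construction of Section~\ref{sec:LG} (the module ${_0}\qma^{log,Z}$, its restriction $E_Z$, the subspace $F_Z$, the trivialization $\alpha_Z$, and the pairing $P$) applies verbatim, with $H^*_{orb}(Z,\mbc)=H^*(Z,\mbc)$; repeating Steps~1–2 word for word gives $z^{-n}P_{|F_Z}=c_Z\,\langle\,\cdot\,,\,\cdot\,\rangle$ with $c_Z\ne0$ for the ordinary Poincaré pairing on $H^*(Z,\mbc)$.

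\textbf{Expected main obstacle.} The delicate point is Step~1. Unlike the nef smooth toric case of \cite{RS1}, the orbifold $\mcx$ carries twisted sectors of age $<1$, so the $\mbc^*$‑weights on the $\chi_a$ are fractional rather than integral, and one must check carefully that the GKZ ideal — in particular the box operators $\Box^X_{\underline l}$ together with the degeneration \eqref{eq:degDi} — is still homogeneous for this weighting, which is exactly where the Kähler‑cone condition $\rho\in\mck^e$ of Lemma~\ref{lem:degai} is used; equivalently, one has to control the behaviour of the flat pairing $P$ at $z=\infty$. Once constancy of $P_{|F_X}$ is secured, the remaining steps are formal, resting only on Proposition~\ref{prop:ReseqCoh}, Proposition~\ref{prop:logpair}, and the elementary uniqueness of Frobenius forms on a Poincaré‑duality algebra.
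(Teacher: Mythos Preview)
Your overall strategy—establish multiplication invariance, establish that only complementary degrees pair nontrivially, and then invoke the uniqueness of a Frobenius form on a Poincar\'e–duality algebra—is exactly the paper's strategy, and your Steps~2 and~3 match the paper's argument closely. The difference is in the \emph{order}: the paper proves multiplication invariance \emph{first}, by constructing explicit $\underline{\chi}$-flat extensions $\tilde{s}_{i,d_k}$ of the basis elements (so that $P(\tilde{s},\tilde{s}')$ is visibly independent of $\underline{\chi}$ and the flatness identity on $\mbc_z\times\{\underline{\chi}=0\}$ reads $P(N_a s,s')=P(s,N_a s')$ for the residue endomorphisms $N_a$). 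It then uses this invariance to reduce the $z$-constancy of \emph{all} $P(w_i,w_j)$ to that of the single number $P(w_{1,d_0},w_{1,d_t})$, which is handled by one application of the $\p_z$-flatness identity together with \eqref{eq: connlvl}.

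Your reordering creates a genuine gap in Step~1. Neither of the two sketches you give actually establishes $z$-constancy on its own. For the homogeneity argument, you assert that ``the flat pairing $P$ [is] graded'' without proof; this is precisely the nontrivial point, and the way one proves it is via the $\p_z$-flatness identity—i.e.\ the very computation the paper performs \emph{after} it already has multiplication invariance available to kill the off-diagonal $z^{-1}A_0$-terms in \eqref{eq: connlvl}. For the $\mbp^1$ argument, the appeal to ``$(-1)^n$-symmetry'' does not give the required pole order of $P(w_i,\iota^*w_j)$ at $z=\infty$: the symmetry only relates $P(a,\iota^*b)$ to $P(b,\iota^*a)$, and $\iota$ fixes both $0$ and $\infty$. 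What one really needs is the statement of Proposition~\ref{prop:ExtensionInftyAtZero}(2), namely $P:\widehat{\mce}_X\otimes\iota^*\widehat{\mce}_X\to\mco_{\mbp^1}(-n,n)$; but in the paper's logical order that proposition comes \emph{after} the present lemma and, since $\widehat{\mce}_X=\mco_{\mbp^1}\otimes_\mbc F_X$ by definition, its pole-order claim at $\infty$ is essentially equivalent to knowing $P(F_X,F_X)\subset z^n\mbc$—which is what you are trying to prove. The paper's order (multiplication invariance first, then the single ODE $(z\p_z-n)P(w_{1,d_0},w_{1,d_t})=0$) avoids this circularity.
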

\begin{proof}
Let $\{d_0,\ldots, d_t\} = \{q \in \mbq \mid H^{2q}_{orb}(\mcx,\mbc) \neq 0\}$ where $d_i < d_j$ for $i < j$. Set $r_k = dim H^{2d_k}_{orb}(X,\mbc)$ and notice that $d_0 = 0$, $d_t = n$ and $r_0 = r_t = 1$. Choose a homogeneous basis
\[
w_{1,d_0},w_{1,d_1}, \ldots, w_{r_1,d_1},\ldots,w_{1,d_{t-1}},\ldots,w_{r_{t-1},d_{t-1}},w_{1,d_t}
\]
where $w_{i,d_k} \in H^{2d_k}_{orb}(X,\mbc)$. Denote by $s_{1,d_0},s_{1,d_1}, \ldots, s_{r_1,d_1},\ldots,s_{1,d_{t-1}},\ldots,s_{r_{t-1},d_{t-1}},s_{1,d_t}$ the corresponding sections of $\mce_\mcx$ under the isomorphism $\alpha_\mcx$ of Lemma \ref{lem:extriv}. By Lemma \ref{lem:DeligneExt} we can find sections $\tilde{s}_{1,d_0},\tilde{s}_{1,d_1}, \ldots, \tilde{s}_{r_1,d_1},\ldots,\tilde{s}_{1,d_{t-1}},\ldots,\tilde{s}_{r_{t-1},d_{t-1}},\tilde{s}_{1,d_t}$ which satisfy $(\tilde{s}_{i,d_k})_{\mid \mbc_z \times \{\underline{\chi} = 0 \}} = s_{i,d_k}$ and 
\begin{align}
\nabla_{z \chi_a \p_{\chi_a}} \left(\prod_{c=1}^r e ^{\log \chi_c \frac{N_a}{2 \pi i}} \tilde{s}_{i,d_k}\right) &= 0 \qquad \text{for} \quad a =1, \ldots ,r\, ,\notag \\
\nabla_{z \p_{\chi_b}} \left(\prod_{c=1}^r e ^{\log \chi_c \frac{N_a}{2 \pi i}} \tilde{s}_{i,d_k}\right) &= 0 \qquad \text{for} \quad b = r+1, \ldots ,r+e\, .\notag
\end{align}
From the definition of the sections $\tilde{s}_{i,d_k}$ and the flatness of $P$ then follows
\[
P(\tilde{s}_{i,d_k},\tilde{s}_{j,d_l})(z,\underline{\chi}) = P (s_{i,d_k},s_{j,d_l})(z)
\]
and therefore
\begin{align}
0 &= z \chi_a \p_{\chi_a} P(\tilde{s}_{i,d_k},\tilde{s}_{j,d_l}) = P(\nabla_{z \chi_a \p_{\chi_a}} \tilde{s}_{i,d_k},\tilde{s}_{j,d_l}) - P(\tilde{s}_{i,d_k}, \nabla_{z\chi_a \p_{\chi_a}} \tilde{s}_{j,d_l})\, , \notag 
\\
0 &= z \p_{\chi_b} P(\tilde{s}_{i,d_k},\tilde{s}_{j,d_l}) = P(\nabla_{z \p_{\chi_b}} \tilde{s}_{i,d_k},\tilde{s}_{j,d_l}) - P(\tilde{s}_{i,d_k}, \nabla_{z \p_{\chi_b}} \tilde{s}_{j,d_l})\, . \notag
\end{align}
By continuity this holds on $\mbc_z \times \{\underline{\chi} = 0\}$.
This shows the multiplication invariance of the corresponding pairing on $H^*_{orb}(\mcx,\mbc)$. It follows from equation \ref{eq: connlvl} that
\begin{align}
z \nabla_{\partial_z}^{\mathit{res},\underline{q}}(w_{i,d_k}) &= d_k \cdot w_{i,d_k} + \frac{1}{z} \sum_{m=1}^{r_{k+1}}\Theta_{m,i,k} w_{m,d_k+1} \quad \text{for} \quad k < n\, , \notag \\
z \nabla_{\partial_z}^{\mathit{res},\underline{q}}(w_{1,d_t}) &= n \cdot w_{1,d_t \notag}\, ,
\end{align}
where $\Theta_{m,i,k} := (\check{A}_0)_{u,v}$ with $u= m + \sum_{l=1}^{k}r_l$ and $v = i + \sum_{l=1}^{k-1}r_l$ and $\check{A}_0$ is the matrix with respect to the basis $w_{1,0}, \ldots, w_{1,n}$ of the endomorphism $-c_1(X) \cup$. Since the pairing is multiplication invariant it is enough to show that $P(w_{1,d_0},w_{1,d_t}) \in z^n \mbc$. We have
\begin{align}
z \p_z P(w_{1,d_0},w_{1,d_t}) &= P (0 \cdot w_{1,d_0} + \frac{1}{z} \sum_{m=1}^{r_{l}}\Theta_{m,i,1} w_{m,d_1+1} , w_{1,d_t}) + P(w_{1,d_0} , n \cdot w_{1,d_t}) \notag \\
&= 0 + n \cdot P(w_{1,d_0},w_{1,d_t})\, . \notag
\end{align}
This shows $(z \p_z -n) P(w_{1,d_0},w_{1,d_t}) = 0$ and therefore $P(w_{1,d_0},w_{1,d_t})$ takes values in $z^n \mbc$.

It remains to show that the pairing $z^{-n}P$ coincides, under the isomorphism $\alpha:1\otimes H^*_{orb}(X,\mbc)\rightarrow F$ and possibly up to a
non-zero constant, with the Poincar\'e pairing on the cohomology algebra. First notice that by construction, $z^{-n}P$, seen as defined
on $H^*_{orb}(X,\mbc)$ is again multiplication invariant. It suffices now
to show that $P(1,a)$ equals the value of the Poincar\'e pairing on $1$ and $a$. But as we have seen above, $P(1,a)$ can only be non-zero
if $a\in H^{2n}_{orb}(X,\mbc)$. Since $dim H^{2n}_{orb}(X,\mbc) =1$, the $P$ on $H^*_{orb}(X,\mbc)$ is entirely determined by the non-zero complex number $P(w_{1,d_0},w_{1,d_t})$.\\
\end{proof}

\begin{prop}\label{prop:ExtensionInftyAtZero}
Consider the $\mco_{\mbc_z}$-module $\mce_X$ with the connection $\nabla^{\mathit{res},\underline{\chi}}$ and the subsheaf $\mcf_X \subset \mce_X$ from
lemma \ref{lem:extriv}.
\begin{enumerate}
\item
Let $\widehat{\mce}_X:=\mco_{\mbp^1_z\times\{\underline{0}\}} \otimes_\mbc F_X $ 
be an extension of $\mce_X$ to
a trivial $\mbp^1$-bundle. Then the connection $\nabla^{\mathit{res},\underline{\chi}}$ has a logarithmic pole at $z=\infty$ with spectrum (i.e.,
set of residue eigenvalues) equal to the (algebraic) degrees of the cohomology classes of $H^*_{orb}(X,\mbc)$. 
\item
The pairing $P$ on $\mce_X$ extends to a non-degenerate
pairing $P:\widehat{\mce}_X\otimes_{\mco_{\mbp^1}} \iota^*\widehat{\mce}_X \rightarrow \mco_{\mbp^1}(-n,n)$,
where $\mco_{\mbp^1}(a,b)$ is the subsheaf of $\mco_{\mbp^1}(*\{0,\infty\})$ consisting of meromorphic
functions with a pole of order $a$ at $0$ and a pole of order $b$ at $\infty$.
\end{enumerate}
\end{prop}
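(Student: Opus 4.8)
The plan is to work throughout in the global frame $\underline w=(w_1,\dots,w_\mu)$ of $\mce_X$ furnished by Lemma~\ref{lem:extriv}. Recall that the $w_i$ are the lifts of a homogeneous $\mbc$-basis of $H^*_{orb}(X,\mbc)$ of (algebraic) degrees $d_1,\dots,d_\mu$, that they form a $\mbc[z]$-basis of $E_X$, and that $\alpha_X$ identifies their $\mbc$-span with $\mcf_X=\alpha_X(1\otimes H^*_{orb}(X,\mbc))$. In particular $\alpha_X$ gives an identification $\mce_X\cong\mco_{\mbc_z}\otimes_\mbc F_X$, which shows at once that $\widehat{\mce}_X=\mco_{\mbp^1_z}\otimes_\mbc F_X$ is indeed an extension of $\mce_X$, trivialized over both affine charts of $\mbp^1_z$ by $\underline w$.

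For part (1) I would start from equation~\eqref{eq: connlvl}, which in this frame reads $(z^2\nabla^{\mathit{res},\underline{\chi}}_{\p_z})(\underline w)=\underline w\,(A_0+zA_\infty)$ with $A_\infty=\diag(d_1,\dots,d_\mu)$, equivalently $\nabla^{\mathit{res},\underline{\chi}}_{z\p_z}(\underline w)=\underline w\,(z^{-1}A_0+A_\infty)$. Since $\nabla^{\mathit{res},\underline{\chi}}$ is already holomorphic on $\mbc_z^*$ (its only pole, of order two, sits at $z=0$, by Lemma~\ref{lem:extriv}), the only point to analyze is $z=\infty$. Passing to the coordinate $z'=z^{-1}$ and using $z\p_z=-z'\p_{z'}$, the connection one-form of $\nabla^{\mathit{res},\underline{\chi}}$ in the frame $\underline w$ on that chart becomes $-(A_0+z'^{-1}A_\infty)\,dz'$, which has a pole of order at most one at $z'=0$. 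Hence $\nabla^{\mathit{res},\underline{\chi}}$ extends to $\widehat{\mce}_X$ with a logarithmic pole at $z=\infty$, and the residue of $\nabla^{\mathit{res},\underline{\chi}}_{z\p_z}$ there equals $A_\infty$; its spectrum is therefore $\{d_1,\dots,d_\mu\}$, the set of algebraic degrees of the classes of $H^*_{orb}(X,\mbc)$, as claimed.

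For part (2) I would invoke the preceding lemma, by which $P$ restricts on $F_X$ to a pairing with values in $z^n\mbc$, with $z^{-n}P_{|F_X}$ equal, up to a non-zero scalar, to the orbifold Poincar\'e pairing. In the frame $\underline w$ this says precisely that $P(w_i,\iota^*w_j)=c_{ij}z^n$ with $C=(c_{ij})$ a constant invertible matrix. Since $P$ is $\mco$-sesquilinear for $\iota\colon z\mapsto-z$, it is then given on all of $\mce_X$ by the matrix $z^nC$ in the frame $\underline w$. Now $z^n$ is a section of $\mco_{\mbp^1}(-n,n)$ having a zero of order $n$ at $0$, a pole of order $n$ at $\infty$, and no other zeros or poles; therefore the matrix $z^nC$ defines a morphism $\widehat{\mce}_X\otimes_{\mco_{\mbp^1}}\iota^*\widehat{\mce}_X\to\mco_{\mbp^1}(-n,n)$ extending $P$, and it is non-degenerate because $C$ is invertible.

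The only genuine work here is the bookkeeping. The main point to be careful about is the substitution $z\leftrightarrow z'=z^{-1}$ together with the convention that the residue at $z=\infty$ is computed with respect to the vector field $z\p_z$ rather than $z'\p_{z'}$, so that the residue eigenvalues come out as $+d_i$; and one uses crucially that the preceding lemma has already fixed the $z$-dependence of $P$ on $F_X$ to be \emph{exactly} $z^n$ times a constant, with no lower-order terms. Granted these two inputs, both assertions follow immediately, so I do not expect any serious obstacle.
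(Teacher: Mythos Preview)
Your proposal is correct and follows essentially the same route as the paper. For part~(1) the paper simply points to formula~\eqref{eq: connlvl} and reads off the logarithmic pole and the residue eigenvalues; you do the same, only spelling out the coordinate change $z'=z^{-1}$ and the sign convention explicitly. For part~(2) the paper cites Proposition~\ref{prop:logpair} and the definition of $\widehat{\mce}_X$, whereas you invoke the immediately preceding (unnumbered) lemma to get $P(w_i,\iota^*w_j)=c_{ij}z^n$ with $C$ invertible; since that lemma is itself a consequence of Proposition~\ref{prop:logpair}, the two arguments amount to the same thing, yours being the more explicit unpacking.
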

\begin{proof}

The formula \ref{eq: connlvl} shows that the connection $\nabla^{res,\underline{\chi}}$ has a logarithmic pole at $z= \infty$ and has residue eigenvalues  which are equal to the degrees  of the cohomology classes of $H^*_{orb}(\mcx)$. This shows the first point. The second point follows from Proposition \ref{prop:logpair} and the definition of $\hat{\mce}_X$.

\end{proof}
 Set $j_{\frac{1}{z}}:\mbc^*_{z} \ra \mbp^1_z\setminus\{0\}$ and $E^{\infty} := \psi_{\frac{1}{z}} j_{\frac{1}{z},!}( \mce_X^{an})^{\nabla^{res,\chi}}_{\mid \mbc_z^*}$ (where $\psi_{\frac{1}{z}}$ is the nearby cycle functor at $z = \infty$). It is known (cf. e.g. \cite[Lemma 7.6, Lemma 8.14]{He3} that there is a correspondance between logarithmic extensions of flat bundles and filtrations on the corresponding local system of flat sections.  With respect to the connection $(z^2 \nabla_{\p_z}^{res,\underline{\chi}})(\underline{w}) = \underline{w} \cdot (A_0 + z A_{\infty})$, the isomorphism 
\begin{equation} 
F_X \overset{\simeq}\lra E^\infty
\end{equation} 
  is given by multiplication with $z^{-A_\infty} z^{-A_0}$.
 
\begin{lem}\label{lem:resEndo}
The filtration $F_\bullet$ on $F_X$ is given by 
\[
F_{p} = \sum_{|k| \geq -p} \mbc \left( (z\chi_1\p_{\chi_1})^{k_1}\cdot \ldots \cdot (z \chi_r \p_{\chi_r})^{k_r} \cdot (z \p_{\chi_{r+1}})^{l_1} \cdot \ldots (z \p_{\chi_{r+e}})^{l_e}\right)\, .
\]
The residue endomorphism $N_a$ of $\qma^{log,\mcx}$ along $\mbc^*_z \times D_a$ acts  on $E^\infty$ and satisfies $N_a F_\bullet \subset F_{\bullet-1}$.
\end{lem}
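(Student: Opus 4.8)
The plan is to pin down the filtration $F_\bullet$ completely explicitly, and then to read off the behaviour of the residue endomorphisms from that description.

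\emph{The filtration.} Under the correspondence between logarithmic extensions at $z=\infty$ and filtrations on the local system of flat sections (cf.\ \cite[Lemma 7.6, Lemma 8.14]{He3}), the extension $\widehat\mce_X=\mco_{\mbp^1_z\times\{\underline 0\}}\otimes_\mbc F_X$ of Proposition \ref{prop:ExtensionInftyAtZero} corresponds to the filtration $F_pE^\infty=\{[s]\mid z^{-p}\widetilde s\in\widehat\mce_X \ \text{near}\ z=\infty\}$, where $\widetilde s$ is a flat (multivalued) section representing $[s]$. I would compute this through the isomorphism $F_X\xrightarrow{\ \sim\ }E^\infty$, which sends a monomial $w$ to the flat section $\widetilde s_w:=\underline w\cdot z^{-A_\infty}z^{-A_0}\langle w\rangle$, with $\langle w\rangle$ its coordinate vector in the monomial basis $\underline w$. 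By \eqref{eq: connlvl} the matrix $A_\infty$ is diagonal with eigenvalue $|k|$ on the monomial $(z\chi_1\p_{\chi_1})^{k_1}\cdots(z\p_{\chi_{r+e}})^{l_e}$, this number being the cohomological degree of the corresponding class in $H^*_{orb}(\mcx)$ under $\alpha_X$ (Lemma \ref{lem:extriv}), while $A_0$ is nilpotent, raises the $A_\infty$-grading by one (it is the matrix of $-c_1(X)\cup$), and satisfies $[A_\infty,A_0]=A_0$. Hence for a monomial $w$ of degree $d$ one computes
\[
z^{d}\,\widetilde s_w=\sum_{l\ge 0}\frac{(-\log z)^l}{l!}\,z^{-l}\,\underline w\,(A_0^{l}\langle w\rangle),
\]
and since $(\log z)^l z^{-l}\to 0$ as $z\to\infty$ for every $l\ge 1$, this extends holomorphically across $z=\infty$ with value $w\neq 0$. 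Thus $\widetilde s_w\in z^{-d}\widehat\mce_X\setminus z^{-d+1}\widehat\mce_X$ near $z=\infty$, so $[s_w]\in F_pE^\infty$ precisely when $d\ge-p$; translating back along $F_X\cong E^\infty$ this is exactly the asserted formula for $F_p$.

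\emph{The residue endomorphisms.} That $N_a$ acts on $E^\infty$ is formal: by integrability of the connection on $\qmclogo^{log,\mcx}$ the residue endomorphism along $\mbc^*_z\times D_a$ commutes with $\nabla^{res,\underline\chi}_{\p_z}$, hence defines an endomorphism of the local system $(\mce_X^{an})^{\nabla^{res,\underline\chi}}_{\mid\mbc^*_z}$ and so acts on its nearby cycles $E^\infty$; its nilpotence is Lemma \ref{lem:DeligneExt}. For the inclusion $N_aF_\bullet\subset F_{\bullet-1}$ it suffices, by the description of $F_\bullet$ above, to check that the operator $z\chi_a\p_{\chi_a}$ extends across $z=\infty$ to an endomorphism of $\widehat\mce_X$: then $N_a=z^{-1}\cdot(z\chi_a\p_{\chi_a})$ maps $\widehat\mce_X$ into $z^{-1}\widehat\mce_X$ near $z=\infty$, and for $[s]\in F_pE^\infty$, writing $t:=z^{-p}\widetilde s\in\Gamma(\widehat\mce_X)$ near $z=\infty$, one gets $z^{-(p-1)}\widetilde{N_as}=(z N_a)(t)\in\Gamma(\widehat\mce_X)$ near $z=\infty$, i.e.\ $[N_as]\in F_{p-1}E^\infty$. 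To produce this extension I would use that $z\chi_a\p_{\chi_a}$ is homogeneous of degree one for the grading on $\qmclogo^{log,\mcx}$ that induces the $A_\infty$-grading at $\underline\chi=0$, and that by Lemma \ref{lem:DeligneExt} its restriction to the fibre over $z=\underline\chi=0$ is cup product by $\mfd_a$, raising the cohomological degree by exactly one; combining this with the homogeneity of the relations $\Box^X_{\underline l}$ and of $\check E$, one reads off that on $\widehat\mce_X$ near $z=\infty$ the operator $z\chi_a\p_{\chi_a}$ acts compatibly with the fibre grading and shifts it up by one, which in particular extends it across $z=\infty$ as needed.

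\emph{Where the work is.} The only non-formal point is the last step: showing that after passing to the trivial extension $\widehat\mce_X$ over $\mbp^1_z$ the operator $z\chi_a\p_{\chi_a}$ still only shifts the grading upward — equivalently, that the $z$-dependence it acquires in the monomial frame away from $z=0$ produces neither a pole nor lower-degree terms at $z=\infty$. This is a bookkeeping argument with the grading on $\qmclogo^{log,\mcx}$ and with the action of the box operators, in the spirit of the proofs of Theorem \ref{thm:latcoh} and Lemma \ref{lem:extriv}, and introduces no new ingredient.
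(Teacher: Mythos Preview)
Your argument for the first claim is correct and makes explicit what the paper leaves as a one-line reference to Lemma~\ref{lem:extriv}: you unwind the isomorphism $F_X\xrightarrow{\sim}E^\infty$ given by $z^{-A_\infty}z^{-A_0}$ and read off the filtration from the $A_\infty$-grading. This is fine and matches the paper's intent.

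For the second claim, your approach is correct but you take a detour and then flag as ``non-formal'' a step that is in fact trivial. The paper's proof is one sentence: the residue endomorphism is induced by left multiplication with $z\chi_a\partial_{\chi_a}$, and since $F_X=\pi_\mcx(\mbc[z\chi_1\partial_{\chi_1},\ldots,z\partial_{\chi_{r+e}}])$ by definition, left multiplication by $z\chi_a\partial_{\chi_a}$ tautologically preserves $F_X$ and raises the monomial degree by one. Combined with the explicit description of $F_\bullet$ from the first part, this immediately gives $N_aF_p\subset F_{p-1}$. There is no need to analyze the behaviour of the box operators or to argue separately about extension across $z=\infty$: the matrix of $[z\chi_a\partial_{\chi_a}]$ in the monomial basis $\underline{w}$ is automatically constant in $z$ (it is the structure matrix of a $\mbc$-linear multiplication on $F_X$), so the extension to $\widehat{\mce}_X$ is free. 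In your language, the ``bookkeeping argument with the grading'' you anticipate collapses to the observation that $F_X$ is closed under multiplication by the generator $z\chi_a\partial_{\chi_a}$.
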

\begin{proof}
The first claim follows from the identification of $H^*_{orb}(\mcx)$ with $F_X$ and the computation of the residue connection $(z^2 \nabla_{\p_z}^{res,\underline{\chi}})(\underline{w})$ in Lemma \ref{lem:extriv}.  The second claim is immediate since the residue endomorphism is induce by left multiplication with $z\chi_a \p_{\chi_a}$.
\end{proof}

The next result gives an extension of ${_0}\qma^{log}$ to a family of trivial $\mbp^1$-bundles, possibly after restricting
to a smaller analytic open subset inside $U^{an}$. Set $r:= \inf\{\mid \!\!\underline{\chi}\!\! \mid\, : \underline{\chi} \not \in \overline{\mcm}_\mcx^\circ\}$ and let $B:= B_r(0) \subset \mcu^{an}_X$ the open ball with radius $r$.

\begin{prop}\label{prop:LogTrTLEP}
There is an analytic open subset $V_X\subset \mcu_X^{an}$ still containing the point $p_X$ and a trivial holomorphic bundle ${_0}\wqma^{log,X} \rightarrow \mbp^1_z\times V_X$
such that
\begin{enumerate}
\item $({_0}\wqma^{log,X})_{|\mbc_z \times V_X} \cong ((\qmclogo^{log,})^{an})_{|\mbc_z \times V_X}$,
\item $({_0}\wqma^{log,X})_{|\mbp^1_z\times \{0\}} \cong (\widehat{\mce}_X)^{an}$,
\item The connection $\nabla$ has a logarithmic pole along $\widehat{D}_X$ on ${_0}\wqma^{log,X}$,
where $\widehat{D}_X$ is the normal crossing divisor $\left(\{z=\infty\}\cup\bigcup_{a=1}^{r} \{\chi_a=0\}\right)\cap \mbp^1_z\times V_X$,
\item
The given pairings $P:\qmclogo^{log,X} \otimes \iota^*\qmclogo^{log,X} \rightarrow z^n\mco_{\mbc_z\times \mcu_X^\circ}$ and $P:\widehat{\mce}_X\otimes_{\mco_{\mbp^1_z}} \iota^*\widehat{\mce}_X \rightarrow \mco_{\mbp^1_z}(-n,n)$ extend to a non-degenerate pairing
\[
P: {_0}\wqma^{log,X} \otimes_{\mco_{\mbp^1_z\times V_X}} \iota^* {_0}\wqma^{log,X} \rightarrow \mco_{\mbp^1_z\times V_X}(-n,n)\, ,
\]
where the latter sheaf is
defined as in point 3. of proposition \ref{prop:ExtensionInftyAtZero},
\item
The residue connection along $\frac{1}{z} = \tau = 0$
$$
\nabla^{\mathit{res},z=\infty}:{_0}\wqma^{log,X}/\tau \cdot {_0}\wqma^{log,X} \longrightarrow {_0}\wqma^{log,X}/\tau \cdot {_0}\wqma^{log,X}\otimes\Omega^1_{\{\infty\}\times V_X}(\log(\{\infty\}\times D)).
$$
has trivial monodromy around $\{\infty\}\times D$ and the  element of $1 \in F\subset H^0(\mbp^1_z\times U^0, {_0}\widehat{\qma}^{log,X})$ is horizontal for $\nabla^{res,z = \infty}$.
\end{enumerate}
\end{prop}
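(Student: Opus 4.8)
The plan is to obtain ${_0}\wqma^{log,X}$ by gluing the two extensions already at our disposal. On $\mbc_z\times\mcu_X$ we have, by Corollary \ref{cor:locfree}, the locally free $\mco$-module $\qmclogo^{log,X}$, whose connection has a pole of order two along $\{z=0\}$ and logarithmic poles along $\mbc_z\times D_X$; and over the single point $p_X$ we have, by Proposition \ref{prop:ExtensionInftyAtZero}, the trivial $\mbp^1_z$-bundle $(\widehat{\mce}_X)^{an}$, which realises the logarithmic extension of $\mce_X$ across $z=\infty$. The tool tying the two together is the correspondence (\cite[Lemma 7.6, Lemma 8.14]{He3}) between logarithmic extensions of a flat bundle and filtrations on its local system of flat multivalued sections, applied to the flat bundle $\qma^{log,X}$ on $\mbc_z^*\times(\mcu_X\setminus D_X)$, with $F_\bullet$ the filtration of Lemma \ref{lem:resEndo}.

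First I would start from the ball $B$ fixed before the statement and examine $\qma^{log,X}$ near $\{z=\infty\}$. Over $p_X$ its flat sections near $z=\infty$ form the space $E^\infty$ equipped with $F_\bullet$, and by Proposition \ref{prop:ExtensionInftyAtZero} this is precisely the filtration whose extension across $z=\infty$ is $\widehat{\mce}_X$, with residue spectrum equal to the (orbifold) degrees of $H^*_{orb}(X,\mbc)$. The two properties needed for $F_\bullet$ to propagate over a neighbourhood of $p_X$ and to define a logarithmic extension of $\qma^{log,X}$ along each of $\{z=\infty\}\times V_X$ and $(\mbp^1_z\setminus\{0\})\times D_a$ ($a=1,\dots,r$) are exactly the unipotence of the monodromies around the $D_a$ (Lemma \ref{lem:DeligneExt}) and the shift $N_aF_\bullet\subset F_{\bullet-1}$ (Lemma \ref{lem:resEndo}). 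Gluing the extension so obtained to $\qmclogo^{log,X}$ over the overlap $\mbc_z^*\times V_X$ yields a locally free $\mco_{\mbp^1_z\times V_X}$-module, which I take as the definition of ${_0}\wqma^{log,X}$; points (1) and (3) then hold by construction. I expect this step --- making the correspondence work in the several-component normal-crossing situation and checking that the filtration glues coherently --- to be the main technical point.

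For point (2), the restriction of ${_0}\wqma^{log,X}$ to $\mbp^1_z\times\{p_X\}$ is, by construction of the $z=\infty$ extension, the $F_\bullet$-extension of $\mce_X$, namely $(\widehat{\mce}_X)^{an}\cong\mco_{\mbp^1_z}\otimes_\mbc F_X$, a direct sum of copies of $\mco_{\mbp^1_z}$. Since being a fibrewise trivial $\mbp^1$-bundle is an open condition on the base, after shrinking $V_X$ (it still contains $p_X$, using Lemma \ref{lem:lvminM}) all fibres of ${_0}\wqma^{log,X}$ over $V_X$ become trivial. For point (4) I would extend the pairing by flatness: $P$ is flat on $\mbc_z^*\times(V_X\setminus D_X)$, vanishes to order $n$ along $\{z=0\}$ by Proposition \ref{prop:logpair}, has a pole of order $n$ along $\{z=\infty\}$ by Proposition \ref{prop:ExtensionInftyAtZero}, and --- evaluated on the single-valued frame built from an $F_\bullet$-adapted flat frame, as in the proof of Proposition \ref{prop:logpair} --- is holomorphic with the prescribed pole orders along every component of $\widehat{D}_X$; hence it extends to a morphism ${_0}\wqma^{log,X}\otimes\iota^*{_0}\wqma^{log,X}\to\mco_{\mbp^1_z\times V_X}(-n,n)$. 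Non-degeneracy holds at $p_X$ by Proposition \ref{prop:ExtensionInftyAtZero}, hence, being an open condition, on a possibly smaller $V_X$.

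Finally, for point (5): $\nabla^{res,z=\infty}$ is a connection on the restriction of ${_0}\wqma^{log,X}$ to $\{z=\infty\}\times V_X$, which by construction of the extension is the graded bundle $\mathrm{gr}^{F_\bullet}$ with its induced connection (with logarithmic poles along $D$). Its monodromy around $\{\infty\}\times D_a$ is the operator induced by $M_a=\exp(-2\pi i N_a)$ on $\mathrm{gr}^{F_\bullet}$, and since $N_aF_\bullet\subset F_{\bullet-1}$ (Lemma \ref{lem:resEndo}) this operator is zero; hence the monodromy is trivial. Horizontality of $1\in F$ for $\nabla^{res,z=\infty}$ then follows from the explicit connection matrices of Lemma \ref{lem:extriv} and equation \eqref{eq: connlvl}, together with the relation $\check{E}\cdot 1=0$: the degree-zero class $1$ spans $\mathrm{gr}^{F_\bullet}_0$ and is annihilated by the residue of $\nabla^{res,z=\infty}$ along each $D_a$ (as $N_a$ shifts the filtration), while the remaining, holomorphic, part of the connection on $1$ vanishes as well, so $\nabla^{res,z=\infty}(1)=0$.
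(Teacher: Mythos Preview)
Your proposal is correct and follows essentially the same route as the paper: use the Hertling correspondence between logarithmic extensions and filtrations on the local system, transport the filtration $F_\bullet$ from $E^\infty$ to the space $L^\infty$ of multivalued flat sections on $\mbc_z^*\times(B\setminus D)$, glue the resulting extension near $z=\infty$ with $\qmclogo^{log,X}$, and invoke openness of fibrewise triviality to obtain $V_X$. One small point of emphasis: where you flag ``making the correspondence work in the several-component normal-crossing situation'' as the main technical issue, the paper dispatches this by observing that the $\mbz^r$-filtration $P_\bullet$ coming from the extension across $\mbc_z^*\times D$ is \emph{trivial} (Lemma \ref{lem:DeligneExt} shows the extension there is the Deligne extension), so the $\mbz^{r+1}$-filtration $(F'_\bullet,P_\bullet)$ is automatically split and hence yields a locally free logarithmic extension; the shift property $N_aF_\bullet\subset F_{\bullet-1}$ is then used only for point (5), not for the existence of the extension itself.
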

\begin{proof}
Set $\tilde{D}:= \bigcup_{a=1}^{r} \{\chi_a=0\}\cap B$. A logarithmic extension of $(\qma^{log,\mcx})^{an}_{\mid \mbc^*_z \times( B \setminus \tilde{D})}$ over\\ $(\{z=\infty\} \times B) \cup (\mbp^1_z \setminus \{0\} \times \tilde{D})$  is  given by a $\mbz^{r+1}$-filtration on the local system $\mcl = (\qma^{log,\mcx})^{an,\nabla}_{\mid \mbc^*_z \times( B \setminus \tilde{D})}$, which is split iff the extension is locally free (cf. \cite[Lemma 8.14]{He3}.\\

We are looking for an extension $\widehat{\qma} \ra (\mbp^1_z \setminus\{0\}) \times B$ which should satisfy two constraints. First, $\widehat{\qma}$ should restrict to
$(\qmclogo^{log,X})_{\mid \mbc^*_z \times B}$ on $\mbc^*_z \times B$ and second
it should restrict to $(\hat{\mce}_X)_{\mid \mbp_z^1\setminus \{0\}}$ over $\mbp_z^1 \setminus \{0\} \times \{\underline{\chi} = 0\}$.

The $\mbz^r$-filtration $P_\bullet$ corresponding to the extension over $\mbc^*_z \times \tilde{D}$ is trivial since its the Deligne extension due to Lemma \ref{lem:DeligneExt}. Let $L^\infty$ be the space of multi-valued flat sections of $(\qma^{log,\mcx})^{an}_{\mid \mbc^*_z \times( B \setminus \tilde{D})}$ and let $E^\infty$ be the space of multi-valued flat sections of $\mce^{an}_X$  from above.
 We have an isomorphism  $L^\infty \ra E^\infty$ which is given by multiplication with $\prod_{a=1}^r \chi_a^{N_a}$, where $N_a$ is the logarithm of the (uni-potent part of the ) monodromy, and restriction to $\{\underline{\chi} = 0 \}$. This allows us to shift the filtration $F_\bullet$ on $E^\infty$ (resp. $F_X$) to a filtration $F'_\bullet$ on $L^\infty$, which we denote by the same letter. This gives a $\mbz^{r+1}$-filtration $(F_\bullet, P_{\bullet})$ which is split, since $P_\bullet$ is trivial. The corresponding extension $\widehat{\qma}$ has logarithmic poles along $(\{ z = \infty\} \times B) \cup (\mbp^1 \setminus \{0 \} \times \tilde{D})$ and restricts to $(\qmclogo^{log,X})_{\mid \mbc^*_z \times B}$  on $\mbc_z^* \times B$ resp. $(\hat{\mce}_X)_{\mid \mbp^1_z \setminus \{0\}}$ on $\mbp^1_z \setminus \{0\} \times \{\underline{\chi} = 0\}$. We therefore can glue $\widehat{\qma}$ and $(\qmclogo^{log,X})_{\mid \mbc_z \times B}$ to a holomorphic bundle on $\mbp^1_z \times B$, which is trivial on on $\mbp^1_z \times \{\underline{\chi} = 0\}$ since its restriction is isomorphic to $\mce_X$. Since triviality is an open condition there exists a subset $V_X \subset B$  such that the restriction of the glued bundle to $\mbp^1_z \times V_X$ is trivial. This shows the points 1. to 3. . For the fourth point notice that the flat pairing $P$ gives rise to a pairing on $L^\infty$ which in turn gives rise to a pairing on $E^\infty$.  The pole order property of this pairing on $\hat{\mce}_X$ at $z= \infty$ can be encoded by an orthogonality property of the filtration $F_\bullet$ with respect to that pairing (see .e.g. \cite[Theorem 7.17, Definition 7.18]{He4}). Hence the same property must hold  for $P$ and $F_\bullet$ seen as defined on $L^\infty$, so we conclude $P: {_0}\wqma^{log,X} \otimes_{\mco_{\mbp^1_z\times V_X}} \iota^* {_0}\wqma^{log,X} \rightarrow \mco_{\mbp^1_z\times V_X}(-n,n)$ as required.

The last statement follows from the fact that the residue connection $\nabla^{res, z = \infty}$  defined on ${_0}\wqma^{log,X} / z^{-1} {_0}\wqma^{log,X}$ has trivial monodromy  around $D_X  \cap \{z=\infty\} \times B$ if for any $a=1 ,\ldots ,r$ the nilpotent part $N_a$ of the monodromy  of $\mcl$  kills $gr^{F'}_\bullet$, i.e. $N_a F_\bullet \subset F_{\bullet -1}$. Using the identification $(L^\infty, F'_\bullet)$ with $(E^\infty,F^\bullet)$ this has been shown in Lemma \ref{lem:resEndo}. 

 From this follows that the element $1$ is a global sections over $\mbp^1_z \times V_X$ and flat with respect to the residue connection.
 
\end{proof}
\subsection{Frobenius structures}

We begin with a definition from \cite{Re1} which formalizes the structure which we obtained in Proposition \ref{prop:LogTrTLEP}.

\begin{defn} Let $M$ be a complex manifold of dimension bigger or equal than one and $D \subset M$ be a simple normal crossing divisor.
\begin{enumerate}
\item A $\TEPlog$-structure on $M$ is a holomorphic vector bundle $\mch \ra \mbp_z^1\times M$ which is equipped with an integrable connection $\nabla$ with a pole of type one along $\{0\} \times M$ and a logarithmic pole along $(\mbc_z \times D)$ and a flat, $(-1)^n$-symmetric, non-degenerate pairing
$P:\mch \otimes \iota^*\mch \rightarrow z^n \mco_{\mbc_z\times M}$.
If $D$ is empty we will simply denote it as a $\TEP$-structure.

\item A $\trTLEPlog$-structure on $M$ is a holomorphic vector bundle $\widehat{\mch} \ra \mbp_z^1\times M$ such that $p^*p_* \widehat{\mch} = \widehat{\mch}$ (where $p:\mbp_z^1\times M \twoheadrightarrow M$ is the projection) which is equipped with an integrable connection $\nabla$ with a pole of type $1$ along $\{0\}\times M$ and a logarithmic pole along $(\mbp^1_z \times D) \cup (\{\infty\}\times M)$ and a flat, $(-1)^n$-symmetric, non-degenerate pairing
$P:\widehat{\mch} \otimes \iota^*\widehat{\mch} \rightarrow \mco_{\mbp^1_z\times M}(-n,n)$.
If $D$ is empty we will simply denote it as a $\trTLEP$-structure.
\end{enumerate}
\end{defn}

\begin{prop}
Let $\mcx(\mathbf{\Sigma})$ be a projective toric Deligne-Mumford stack with an $S$-extended stacky fan $\mathbf{\Sigma}^e$ with $S = Gen(\Sigma)$ and let $W:Y \times \mcm_\mcx \lra \mbc \times \mcm_\mcx$ the corresponing Landau-Ginzburg model. Then the tuple $({_0}\wqma^{log,X}, \nabla, P)$ from Proposition \ref{prop:LogTrTLEP} is a $\trTLEPlog$-structure on $V_X \subset \mcm_\mcx^{an}$.
\end{prop}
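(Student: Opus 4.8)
The statement is a packaging result: writing $M=V_X$ and $D=D_X=\bigcup_{a=1}^{r}\{\chi_a=0\}\cap V_X$ (a simple normal crossing divisor, being the trace of a coordinate divisor, and with $\dim M=r+e\ge 1$), one must check the four ingredients in the definition of a $\trTLEPlog$-structure, and each of them has in fact already been established in Proposition \ref{prop:LogTrTLEP} together with the lemmas preceding it; the plan is simply to match the vocabulary. First I would recall that by Proposition \ref{prop:LogTrTLEP}(1)--(3) we have a holomorphic vector bundle ${_0}\wqma^{log,X}\to\mbp^1_z\times V_X$ together with the integrable connection $\nabla$ (integrable because it descends from the $\mcd_{\mbc_z\times\mcm_\mcx}$-module $\qma$, and extension to a lattice and over $z=\infty$ does not break flatness) and the pairing $P$ from Proposition \ref{prop:LogTrTLEP}(4).

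\textbf{The trivial $\mbp^1$-bundle condition.} The one point not literally phrased in Proposition \ref{prop:LogTrTLEP} is the requirement $p^*p_*\,{_0}\wqma^{log,X}={_0}\wqma^{log,X}$. I would deduce it as follows: by Proposition \ref{prop:LogTrTLEP}(2) the restriction of ${_0}\wqma^{log,X}$ to $\mbp^1_z\times\{\underline{\chi}=0\}$ is $(\widehat{\mce}_X)^{an}\cong\mco_{\mbp^1_z}\otimes_\mbc F_X$, hence a trivial bundle, and by the choice of $V_X$ in the proof of Proposition \ref{prop:LogTrTLEP} (triviality of a bundle on $\mbp^1$ being an open condition, the function $\underline{\chi}\mapsto h^1(\mbp^1_z,\,{_0}\wqma^{log,X}|_{\mbp^1_z\times\{\underline{\chi}\}})$ being upper semicontinuous and vanishing at $p_X$) the restriction to every fibre $\mbp^1_z\times\{\underline{\chi}\}$, $\underline{\chi}\in V_X$, is $\cong\mco_{\mbp^1_z}^{\oplus\mu}$. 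Then $R^1p_*\,{_0}\wqma^{log,X}=0$, $p_*\,{_0}\wqma^{log,X}$ is locally free of rank $\mu$ with formation commuting with base change, and the canonical map $p^*p_*\,{_0}\wqma^{log,X}\to{_0}\wqma^{log,X}$ is an isomorphism by Grauert's theorem / cohomology and base change. This is the desired condition and, in effect, the only remaining substantive point in the argument.

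\textbf{The connection and its poles.} By Proposition \ref{prop:LogTrTLEP}(1), $({_0}\wqma^{log,X})|_{\mbc_z\times V_X}$ is the analytification of $\qmclogo^{log,\mcx}$, which by Theorem \ref{thm:latcoh} and Corollary \ref{cor:locfree} is a locally free $\mco_{\mbc_z\times V_X}$-module carrying an $\mcr_{\mbc_z\times V_X}(\log D_X)$-module structure. Unravelled, this says precisely that $\nabla$ has a pole of type $1$ along $\{0\}\times V_X$ (i.e. $z^2\nabla_{\partial_z}$ and $z\,\nabla_\xi$ for a local vector field $\xi$ on $V_X$ preserve the lattice) and a logarithmic pole along $\mbc_z\times D_X$ (the lattice being stable under $z\chi_a\partial_{\chi_a}$). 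The logarithmic behaviour along $\{\infty\}\times V_X$ is Proposition \ref{prop:LogTrTLEP}(3): over $\tau=1/z=0$ the extension is the Deligne-type extension attached to the filtration $F_\bullet$, which has a logarithmic pole there by Proposition \ref{prop:ExtensionInftyAtZero}(1) on the central fibre and by construction in general. The point that needs care is the normal-crossing corner $\{z=\infty\}\times D_X$: the residue of $\nabla$ along each $D_a$ must be compatible with the filtration so that the two logarithmic structures glue to a logarithmic structure along the full normal crossing divisor $\widehat{D}_X$; this is exactly $N_aF_\bullet\subset F_{\bullet-1}$ (Lemma \ref{lem:resEndo}), which together with the nilpotence of the residue endomorphisms (Lemma \ref{lem:DeligneExt}) forces trivial monodromy of the residue connection at $z=\infty$ around $D_X$.

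\textbf{The pairing.} Proposition \ref{prop:LogTrTLEP}(4) gives the non-degenerate pairing $P:{_0}\wqma^{log,X}\otimes_{\mco_{\mbp^1_z\times V_X}}\iota^*\,{_0}\wqma^{log,X}\to\mco_{\mbp^1_z\times V_X}(-n,n)$; its flatness and $(-1)^n$-symmetry are inherited from the corresponding properties of the pairing on $\qmclogo^{log,\mcx}$ (Proposition \ref{cor:PairingPUp}, Proposition \ref{prop:PropzExt}) on $\mbc_z\times V_X$ and from the induced pairing on $\widehat{\mce}_X$ with values in $\mco_{\mbp^1}(-n,n)$ (Proposition \ref{prop:ExtensionInftyAtZero}(2)) on $\mbp^1_z\setminus\{0\}$, which agree on the overlap by construction. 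Collecting the bundle together with $p^*p_*\,{_0}\wqma^{log,X}={_0}\wqma^{log,X}$, the connection with its type-$1$ pole at $z=0$ and logarithmic poles along $(\mbp^1_z\times D_X)\cup(\{\infty\}\times V_X)$, and the pairing $P$, exhibits $({_0}\wqma^{log,X},\nabla,P)$ as a $\trTLEPlog$-structure on $V_X$, as claimed. As indicated, the only genuinely new step beyond Proposition \ref{prop:LogTrTLEP} is the family-of-trivial-$\mbp^1$-bundles condition, and that is a formal consequence of openness of triviality on $\mbp^1$.
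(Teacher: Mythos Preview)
Your proposal is correct and follows the same approach as the paper, which proves this in a single sentence by citing Proposition \ref{prop:LogTrTLEP}. One small remark: you describe the condition $p^*p_*\,{_0}\wqma^{log,X}={_0}\wqma^{log,X}$ as ``not literally phrased in Proposition \ref{prop:LogTrTLEP}'', but in fact the opening line of that proposition already asserts that ${_0}\wqma^{log,X}\to\mbp^1_z\times V_X$ is a \emph{trivial} holomorphic bundle, which is exactly the family-of-trivial-$\mbp^1$-bundles condition; your semicontinuity argument is therefore a reprise of what is done inside the proof of Proposition \ref{prop:LogTrTLEP} rather than a new step needed here.
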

\begin{proof}
This follows from Proposition \ref{prop:LogTrTLEP}.
\end{proof}

The following theorem which is a combination of Proposition 1.20 and Theorem 1.22 in \cite{Re1} gives sufficient conditions when a given $\trTLEPlog$-structure can be unfolded to a logarithmic Frobenius manifold.

\begin{thm}\label{thm:ReUnfold}
Let $(M,0)$ be a germ of a complex manifold and $(D,0) \subset (M,0)$ be a normal crossing divisor. Let $(\mch,0),\nabla.P)$ be a germ of a $\trTLEPlog$-structure on $\mbp^1 \times (M,0)$. Suppose that there is a section $\xi \in H^0(\mbp^1 \times (N,0), \mch)$ whose restriction to $\{\infty\} \times(N,0)$ is horizontal for the residue connection $\nabla^{res}: \mch /z^{-1} \mch  \ra \mch / z^{-1} \mch \otimes \Omega^1_{\{\infty\} \times N}(log(\infty\} \times D))$ and which satisfies the conditions
\begin{enumerate}
\item[(IC)] The map $\Theta(log D)_{\mid  0} \ra p_*\mch_{\mid 0 }$ induced by $[Z \nabla_\bullet](\xi): \Theta(log D) \ra p_* \mch$ is injective.
\item[(GC)] The vector space $p_* \mch_{\mid 0}$ is generated by $\xi$  and its images under iteration of the maps $\mcu$ and $[z\nabla_X]$ for any $X \in \Theta(log) D$.
\item[(EC)] $\xi$ is an eigenvector for the residue endomorphism $\mcv \in \mce nd_{\mco_{\{\infty\} \times M}}(\mch / z^{-1} \mch)$.
\end{enumerate}
Then there exists a unique (up to canonical isomorphism) gern of a logarithmic Frobenius manfold on $(\tilde{M},\tilde{D})$ with a unique embedding $i: M \hookrightarrow \tilde{M}$ with $i(M) \cap \tilde{D} = i(D)$ and a unique isomorphism $\mch \ra p^*\Theta_{\tilde{M}}(log \tilde{D})_{\mid i(M)}$ of $\trTLEPlog$-structures.
\end{thm}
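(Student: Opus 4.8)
The plan is to obtain the statement by combining the two halves of the unfolding machinery of \cite{Re1}: first the construction of a \emph{universal unfolding} of a $\trTLEPlog$-structure, and then the reconstruction of a logarithmic Frobenius manifold from an unfolded structure carrying a primitive, homogeneous section. Let me indicate how each half goes.

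\textbf{Universal unfolding.} Starting from the germ $((\mch,0),\nabla,P)$ on $\mbp^1_z\times(M,0)$ together with $\xi$, I would construct inductively a germ $(\tilde{M},0)$ of a manifold, a normal crossing divisor $(\tilde{D},0)$ with $\tilde{D}\cap M=D$, a $\trTLEPlog$-structure $((\tilde{\mch},0),\tilde{\nabla},\tilde{P})$ restricting to the given one over $\mbp^1_z\times(M,0)$, and a section $\tilde{\xi}$ restricting to $\xi$, which is universal with respect to these properties. The linearised datum controlling the construction is the Kodaira--Spencer type map $X\mapsto[z\nabla_X](\xi)$ on $p_*\mch$, together with the pole-of-type-one endomorphism $\mcu$ (the action of $z$ times the $z$-direction of $\nabla$ at $z=0$): condition (IC) makes $\Theta(\log D)_{\mid 0}\hookrightarrow p_*\mch_{\mid 0}$, so that $M$ embeds into $\tilde{M}$; condition (GC) says that $\xi$ and its iterated images under $\mcu$ and the $[z\nabla_X]$ span $p_*\mch_{\mid 0}$, which forces $\dim\tilde{M}=\dim_\mbc p_*\mch_{\mid 0}$ and guarantees that no further unfolding is possible. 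One runs the Hertling--Manin recursion, introducing one new (non-logarithmic) coordinate for each direction of $p_*\mch_{\mid 0}$ not yet reached by the images of $\xi$, and proves convergence of the resulting formal family by the standard majorant estimate. Throughout the recursion one has to propagate the logarithmic pole along $(\mbp^1_z\times\tilde{D})\cup(\{\infty\}\times\tilde{M})$, the pole of type one along $\{0\}\times\tilde{M}$, the flat extension and non-degeneracy of $\tilde{P}$ with the prescribed pole orders, the horizontality of $\tilde{\xi}$ at $z=\infty$, and the nilpotency of the residue endomorphisms along $\tilde{D}$ (cf. Lemma \ref{lem:resEndo}).

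\textbf{Reading off the Frobenius structure.} On the universal unfolding the map $X\mapsto[z\tilde{\nabla}_X]\tilde{\xi}$ gives the canonical isomorphism $\tilde{\mch}_{\mid z=0}\cong\Theta_{\tilde{M}}(\log\tilde{D})$ asserted in the statement. Under it, the pole-of-type-one part of $\tilde{\nabla}$ in the base directions defines a commutative, associative product $\circ$ with unit $e$ the image of $\tilde{\xi}$ (commutativity and associativity being forced by the flatness $[\tilde{\nabla},\tilde{\nabla}]=0$); the pairing $z^{-n}\tilde{P}$ induces a symmetric, non-degenerate, $\circ$-invariant bilinear form $g$; and the residue of $\tilde{\nabla}$ at $z=\infty$ yields the Euler field $E$, condition (EC) ensuring that $e$ is an eigenvector of the residue endomorphism $\mcv$ and hence that $(\Theta_{\tilde{M}}(\log\tilde{D}),\circ,g,e,E)$ has the correct homogeneity weight. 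Flatness of $g$ and the existence of a local potential follow from integrability together with the pole-of-type-one structure, exactly as in the Hertling--Sabbah dictionary between $\trTLEP$-structures with a primitive homogeneous section and Frobenius manifolds.

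\textbf{Logarithmic compatibility, uniqueness, and the main obstacle.} Because $\tilde{\nabla}$ has at worst a logarithmic pole along $\mbp^1_z\times\tilde{D}$, the structure tensors $\circ$, $g$, $e$, $E$ are $\log$-compatible and the Levi-Civita connection of $g$ has a logarithmic pole along $\tilde{D}$, so $(\tilde{M},\tilde{D})$ is a logarithmic Frobenius manifold in the sense of \cite{Re1}; uniqueness up to canonical isomorphism of $(\tilde{M},\tilde{D})$, of the embedding $i$, and of the isomorphism $\mch\cong p^*\Theta_{\tilde{M}}(\log\tilde{D})_{\mid i(M)}$ is the uniqueness part of the universal unfolding. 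The genuinely hard point is the first step: showing that (IC) and (GC) are precisely the hypotheses under which a \emph{finite-dimensional} universal unfolding exists, and that the formal recursion converges, all while preserving the logarithmic pole structure and the nilpotency of the residue endomorphisms along $\tilde{D}$. This is the content of Proposition 1.20 and Theorem 1.22 of \cite{Re1}, a logarithmic refinement of the Hertling--Manin universal unfolding theorem, which I would invoke rather than reprove.
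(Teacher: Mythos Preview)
Your proposal is correct and aligns with the paper's treatment: the paper does not prove this theorem at all but simply states it as a combination of Proposition 1.20 and Theorem 1.22 of \cite{Re1}, exactly the references you invoke at the end. Your sketch of the two-step mechanism (universal unfolding under (IC)+(GC), then reading off the logarithmic Frobenius structure via a primitive homogeneous section under (EC)) is a faithful summary of what those results in \cite{Re1} actually establish, and goes beyond what the paper itself provides.
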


Using the theorem above we are now able to construct a logarithmic Frobenius manifold from the Landau-Ginzburg model corresponding to a projective toric Deligne-Mumford stack.

\begin{thm}\label{thm:logfrobB}
Let $W: Y \times \mcm_\mcx \ra \mbc \times \mcm_\mcx$ be the Landau-Ginzburg model corresponding to a projective toric Deligne-Mumford stack. Then there exists a canonical logarithmic Frobenius manifold on $(V_X \times \mbc^{\mu -r},0)$ with logarithmic pole along $(D  \times \mbc^{\mu-r},0)$.
\end{thm}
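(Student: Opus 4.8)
The plan is to apply Reichelt's unfolding criterion (Theorem \ref{thm:ReUnfold}) to the germ at $p_X$ of the $\trTLEPlog$-structure $({_0}\wqma^{log,X},\nabla,P)$ constructed in Proposition \ref{prop:LogTrTLEP}, with the cyclic generator as primitive section. First I would pass to the germ $(V_X,p_X)$ together with its normal crossing divisor $(D_X,p_X)$, where $D_X=\{\chi_1\cdots\chi_r=0\}$; since all coordinates vanish at $p_X$ we have $p_X\in D_X$. Proposition \ref{prop:LogTrTLEP} (and the proposition following it) guarantees that this is a germ of a $\trTLEPlog$-structure. I would then take $\xi:=1$, the image of the unit in the cyclic presentation $\qmclogo^{log,X}\cong\mcr_{\mbc_z\times\mcu_X}(\log D_X)/\mcj_0$ of Lemma \ref{lem:facBox}: under the trivialization $\alpha_X$ of Lemma \ref{lem:extriv} its restriction to $\{z=\underline\chi=0\}$ is the unit class $1\in H^*_{orb}(X,\mbc)$, and by Proposition \ref{prop:LogTrTLEP}(5) its restriction to $\{\infty\}\times V_X$ is horizontal for $\nabla^{res,z=\infty}$, which is exactly the section required in Theorem \ref{thm:ReUnfold}.

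The heart of the argument is then to verify the conditions (IC), (GC) and (EC) for this $\xi$. For (IC): a $\mbc$-basis of $\Theta(\log D_X)_{|p_X}$ is given by $\chi_1\p_{\chi_1},\dots,\chi_r\p_{\chi_r},\p_{\chi_{r+1}},\dots,\p_{\chi_{r+e}}$, and by the description of the residue action in Lemmas \ref{lem:extriv} and \ref{lem:DeligneExt} (together with \eqref{eq:charmia}) the operator $[z\nabla_{\chi_a\p_{\chi_a}}]$, resp.\ $[z\nabla_{\p_{\chi_{r+i}}}]$, acts on the fibre $p_*\mch_{|p_X}\simeq H^*_{orb}(X,\mbc)$ as cup product with the class $p_a$, resp.\ $p_{r+i}$; since $p_1,\dots,p_{r+e}$ is a $\mbz$-basis of $Pic^e(X)$ these classes are linearly independent in $H^*_{orb}(X,\mbc)$ by Remark \ref{rem:hleq2eqle}, so the fibre map $\Theta(\log D_X)_{|p_X}\to p_*\mch_{|p_X}$, $v\mapsto[z\nabla_v](\xi)$, is injective. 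For (GC): Lemma \ref{lem:extriv} identifies $\mcf_X=\alpha_X(1\otimes H^*_{orb}(X,\mbc))$ with the image of $\mbc[z\chi_1\p_{\chi_1},\dots,z\chi_r\p_{\chi_r},z\p_{\chi_{r+1}},\dots,z\p_{\chi_{r+e}}]$ applied to $\xi$; restricting to $z=\underline\chi=0$ this says that $p_*\mch_{|p_X}=H^*_{orb}(X,\mbc)$ is already generated by $\xi$ under iterated application of the maps $[z\nabla_v]$ for $v\in\Theta(\log D_X)$, so (GC) holds (the operator $\mcu$ is not even needed). For (EC): by formula \eqref{eq: connlvl}, equivalently by Proposition \ref{prop:ExtensionInftyAtZero}(1), the residue endomorphism $\mcv$ of $\nabla$ at $z=\infty$ is, in the basis $\underline w$ of $H^*_{orb}(X,\mbc)$ used in Lemma \ref{lem:extriv}, the diagonal matrix $A_\infty$ whose entries are the algebraic degrees of the cohomology classes; as $\xi=1$ has degree $0$ it is an eigenvector of $\mcv$.

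Granting the three conditions, Theorem \ref{thm:ReUnfold} produces a germ, unique up to canonical isomorphism, of a logarithmic Frobenius manifold $(\widetilde M,\widetilde D)$ together with an embedding $(V_X,D_X)\hookrightarrow(\widetilde M,\widetilde D)$ meeting $\widetilde D$ exactly in $D_X$, and an isomorphism of $\trTLEPlog$-structures $({_0}\wqma^{log,X})_{|V_X}\cong p^*\Theta_{\widetilde M}(\log\widetilde D)_{|V_X}$. Since $p_*\mch$ has rank $\mu$ (Corollary \ref{cor:locfree}), unravelling this identification shows that the resulting germ is $(V_X\times\mbc^{\mu-r},0)$ with logarithmic divisor $(D_X\times\mbc^{\mu-r},0)$, as asserted. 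The word ``canonical'' is justified by the uniqueness clause of Theorem \ref{thm:ReUnfold} together with the canonical choice $\xi=1$ of primitive section.

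A remark on the distribution of difficulty: all the substantive analytic input has been established before this point — $\mco$-coherence and local freeness of the logarithmic Brieskorn lattice near $p_X$ (Theorem \ref{thm:latcoh}, Corollary \ref{cor:locfree}), the identification of its fibre at $z=\underline\chi=0$ with the orbifold cohomology ring (Proposition \ref{prop:ReseqCoh}), the non-degenerate pairing and the trivial $\mbp^1_z$-bundle extension with the required logarithmic poles (Propositions \ref{prop:logpair}, \ref{prop:ExtensionInftyAtZero}, \ref{prop:LogTrTLEP}). Consequently the present theorem is a formal unfolding step, and the only point that requires any genuine checking is the generation condition (GC), i.e.\ that $H^*_{orb}(X,\mbc)$ is generated by the images of the logarithmic vector fields acting on the unit; this is the content of Lemma \ref{lem:extriv}, which ultimately rests on the degree-$\le 2$ presentation of the orbifold cohomology in Lemmas \ref{lem:orbcoho} and \ref{lem:degai}. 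I therefore expect (GC) to be the main — though not deep — obstacle.
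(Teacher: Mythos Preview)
Your proposal is correct and follows essentially the same route as the paper's proof: apply Theorem \ref{thm:ReUnfold} to the $\trTLEPlog$-structure of Proposition \ref{prop:LogTrTLEP} with $\xi=1$, verify horizontality via Proposition \ref{prop:LogTrTLEP}(5), check (IC) and (GC) through the identification of the fibre at $p_X$ with $(H^*_{orb}(\mcx,\mbc),\cup)$ from Proposition \ref{prop:ReseqCoh} together with Lemma \ref{lem:orbcoho}, and deduce (EC) from Proposition \ref{prop:ExtensionInftyAtZero}(1). Your write-up is somewhat more explicit than the paper's (in particular your identification of the residue action of $z\chi_a\p_{\chi_a}$ with cup product by $p_a$ spells out what the paper leaves implicit), but the argument is the same.
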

\begin{proof}
In order to apply Theorem \ref{thm:ReUnfold} to the $\trTLEPlog$-structure obtained in Proposition \ref{prop:LogTrTLEP} we define the section $\xi$ to be the class of $1$. Because of Proposition \ref{prop:LogTrTLEP} 5. this section is flat with repsect to the residue connection along $\frac{1}{z} = \tau = 0$. The conditions $(IC)$ and $(GC)$ follow from the identification of $(\qmclogo^{log,X})_{\mid 0}$ with the cohomology ring $(H^{*}_{orb}(X_\Sigma,\mbc), \cup)$ (cf. Proposition \ref{prop:ReseqCoh} and Formula \ref{eq:mapCohQuot}), the definition of the $\mathbf{D}_i$ for $i=1,\ldots, n$ (cf. Formula \ref{eq:degDi}) and the representation of $H^*_{orb}(\mcx_\Sigma,\mbc)$ in Lemma \ref{lem:orbcoho}. The condition $(EC)$ follows from Proposition \ref{prop:ExtensionInftyAtZero} 1.
\end{proof}

\section{Orbifold Quantum cohomology}\label{sec:QC}

In this section we review some constructions from orbifold quantum cohomology.

Let $\mcx$ be a smooth proper Deligne-Mumford stack over $\mbc$. The inertia stack of $\mcx$ is defined
\[
I\mcx := \mcx \times_{\mcx \times \mcx} \mcx
\]
with respect to the diagonal morphism $\Delta: \mcx  \ra \mcx \times \mcx$. A geometric point  on $I \mcx$ is given by a geometric point $x \in \mcx$ and an element $g \in Aut(\mcx)$ of the isotropy group. We call $g$ the stabilizer of $(x,g) \in I \mcx$.  The inertia stack is a smooth Deligne-Mumford stack but different components will in general have different dimensions. Let $T$ be the the index set of the components of $I \mcx$. Let $0 \in T$ be the distinguished element corresponding to the trivial stabilizer. We thus have
\[
\mci = \bigsqcup_{v \in T} \mcx_v\, .
\]
The orbifold cohomology of $\mcx$ is defined, as a vector space, by $H^*_{orb}(\mcx,\mbc) := H^*(I \mcx, \mbc)$, hence we have
\[
H^*_{orb}(\mcx, \mbc) = H^*(\mcx,\mbc) \oplus \bigoplus_{v \in T'} H^*(\mcx_v, \mbc)
\]
where $T' := T \setminus \{ 0\}$ is the index set of the twisted sectors.

In order to define a grading on the orbifold cohomology , we associate to any $v \in T$ a rational number called the age of $\mcx_v$.

 The genus zero Gromov-Witten invariants with descendants are defined by
\[
\langle \alpha_1 \psi_1^{k_1},\ldots, \alpha_l \psi_l^{k_l}\rangle_{0,n,d} := \int_{[\overline{\mcm}_{g,n}(\mcx,d)]^{vir}} \prod_{i=1}^l ev^*_i(\alpha_i)\psi_i^{k_i}
\]
where $\alpha_i \in H^*_{orb}(\mcx)$, $d \in H_2(X,\mbz)$, $k_i$ is a non-negative integer, $\overline{\mcm}_{0,l}(\mcx,d)$ is the moduli stack of genus zero, $l$-pointed stable maps to $\mcx$ of degree $d$, $[\overline{\mcm}_{0,l}(\mcx,d)]^{vir}$ is the virtual fundamental class, $ev_i$ is the evaluation map at the $i$-th marked point
\[
ev_i: \overline{\mcm}_{0,l}(\mcx,d) \ra  I \mcx
\]
and $\psi_i = c_1(L_i)$ where $L_i$ is the line bundle over $\overline{\mcm}_{0,l}(\mcx,d)$ whose fiber at a stable map is the cotagent space of the coarse curve at the $i$-th marked point. The correlator $\langle \alpha_1 \psi_1^{k_1},\ldots, \alpha_l \psi_l^{k_l}\rangle_{0,n,d}$ is non-zero only if  $d \in \text{Eff}_\mcx \subset H_2(X;\mbz)$, where  $\text{Eff}_\mcx$ is the semigroup generated by effective stable maps.

We choose a homogeneous basis $T_0,\ldots, T_s$ of $H^{\ast}_{orb}(\mcx)$, where $T_0 =1 \in H^0(\mcx,\mbc)$, $T_1,\ldots, T_r \in H^{2}(\mcx)$ and $T_i \in  \bigoplus_{k \neq 0,2}H^{k}(\mcx) \oplus \bigoplus_{v \in T'} H^{*}(\mcx_v)$. We denote by $T^0,\ldots, T^s$  the basis of $H^*(\mcx)$ which is dual with respect to the orbifold Poincar\'{e} pairing.

Let $\alpha, \beta , \tau \in H^*_{orb}(\mcx,\mbc)$ and write $\tau  = \tau' + \delta$ where $\delta \in H^2(\mcx,\mbc)$ and $\tau' \in \bigoplus_{k \neq 2} H^{k}(\mcx) \oplus \bigoplus_{v \in T'} H^*(\mcx_v)$. We define the the big orbifold quantum product $\circ_\tau$ as the formal family of commutative and associative products on $H^*_{orb}(\mcx) \otimes \mbc\llbracket \text{Eff}_\mcx \rrbracket$:
\begin{align}
\alpha \circ \gamma &:= \sum_{d \in \text{Eff}_\mcx} \sum_{l,k \geq 0} \frac{1}{l!}\langle \alpha,\gamma, \underbrace{\tau,\ldots,\tau}_{l- \text{times}},T_k\rangle_{0,l+3,d}\,T^k Q^d \notag \\
&= \sum_{d \in \text{Eff}_\mcx} \sum_{l,k \geq 0} \frac{e^{\delta(d)}}{l!}\langle \alpha,\gamma, \underbrace{\tau',\ldots,\tau'}_{l- \text{times}},T_k\rangle_{0,l+3,d}\,T^k Q^d \notag
\end{align}
where the last equality follows from the divisor axiom. The Novikov ring $\text{Eff}_\mcx$ was introduced to split the contribution of the different $d\in \textup{Eff}_\mcx$. However, we will make the following assumption:
\begin{assumption}
The orbifold quantum product $\circ_\tau$ is convergent over an open subset $U \subset H^*_{orb}(\mcx)$:
\[
U = \{\tau \in H^*_{orb}(\mcx) \mid \Re(\delta(d)) < -M, \forall d \in \text{Eff}_\mcx \setminus \{0\}, ||\tau'|| <e^{-M}\}
\]
for some $M \gg 0$ (here $||\cdot||$ is the standard hermitan norm on $H^*_{orb}(\mcx)$).
\end{assumption}
Using this assumption, we can set $Q=1$. We will denote this product on $H^*_{orb}(\mcx,\mbc)$ parametrized by $\tau \in U$ by $(H^*(\mcx,\mbc),\circ_\tau)$.

Let $t_0,\ldots, t_s$ be the coordinates on $H^*_{orb}(\mcx)$ determined by the homogeneous basis.
\begin{defn}\label{defn:givconn}
The Givental connection is the tuple $(\mcf^{big},\nabla^{Giv}, P)$ which consists of the trivial holomorphic vector bundle $\mcf^{big} := H^*(\mcx,\mbc) \times (U \times \mbp^1_z)$, the connection $\nabla^{Giv}$
\begin{align}
\nabla_{\p_{t_k}} &:= \frac{\p}{\p_{t_k}} - \frac{1}{z} T_k \circ_\tau\, , \notag \\
\nabla_{z \p_z} &:= z\frac{\p}{\p_z} + \frac{1}{z}E \circ_{\tau} + \mu \notag
\end{align}
where $\mu: H^{*}_{orb}(\mcx,\mbc) \ra H^{*}_{orb}(\mcx,\mbc)$ is the grading operator given by $\mu(T_k) =deg(T_k)/2$ and the holomorphic Euler vector field $E$ is given by
\[
c_1(T\mcx) + \sum_{k=1}^N\left(1-\frac{deg(T_i)}{2} \right)t_k T_k
\]
and the pairing
\begin{align}
P: \mcf^{big} \otimes \iota^* \mcf^{big} &\lra \mco_{\mbp^1_z \times U}(-n,n)\, , \notag \\
(a,b) &\mapsto z^n(a(z),b(-z))_{orb}\notag
\end{align}
where $\iota(z,\underline{t}) = (-z,\underline{t})$ and $(-,-)_{orb}$ is the Orbifold Poincar\'{e} pairing.
\end{defn}
Notice that the connection $\nabla^{Giv}$ is flat (\cf. \cite[$\S$ 2.2]{Ir2}) and the pairing $P$ is non-degenerate, $(-1)^n$-symmetric and $\nabla^{Giv}$-flat.\\

Let $H^{gen}_{orb}(\mcx) \supset H^2(\mcx)$ be a minimal homogeneous subspace which generates $H^*_{orb}(\mcx)$ with respect to the orbifold cup-product. We write $H^{gen}_{orb}(\mcx) = H^2(\mcx) \oplus H'_{orb}(\mcx)$.

\begin{defn}
\begin{enumerate}
\item Let $\alpha, \gamma \in H^*_{orb}(\mcx)$ and $\tau \in H^{gen}_{orb}(\mcx) \cap U$. Define the semi-small quantum product as the restricton of the quantum product to parameter space $H^{gen}_{orb}(\mcx)$:
\[
\alpha \circ \gamma 
= \sum_{d \in \text{Eff}_\mcx} \sum_{l,k \geq 0} \frac{e^{\delta(d)}}{l!}\langle \alpha,\gamma, \underbrace{\tau',\ldots,\tau'}_{l- \text{times}},T_k\rangle_{0,l+3,d}\,T^k Q^d  
\]
for $\tau = \delta + \tau' \in H^2(\mcx)\, \oplus\, H'_{orb}(\mcx)$.
\item The semi-small Givental connction $(\mcf^{ss},\nabla^{Giv},P)$ is the  restriction of the Givental connection to $(H^{gen}_{\orb}(\mcx) \cap U) \times\mbp^1_z$.
\end{enumerate}
\end{defn}

Let $L_\xi \ra \mcx$ be a orbifold line bundle corresponding to $\xi \in Pic(\mcx)$. For any point $(x,g) \in \mcx_v \subset \mci \mcx$ the stabilizer $g$ acts on the fiber $L_x$ by a rational number. This number depends only on the sector $v$, hence we denote the number by $f_v(\xi)$ and call it the age of $L_\xi$ along $\mcx_v$.

Iritani defined an action of $Pic(\mcx)$ on $(\mcf^{big}, \nabla^{Giv},P)$ and showed that it is equivariant with respect to this action:
\begin{prop}
For each $\xi \in Pic(\mcx)$ there is an isomorphism of $\mcg^{big}$
\begin{align}
H^*_{orb}(\mcx,\mbc) \times (U \times \mbc) &\lra H^*_{orb}(\mcx,\mbc) \times (U \times \mbc)\, , \notag \\
(\alpha,\tau,z) &\mapsto (dG(\xi)\alpha,G(\xi)\tau,z) \notag
\end{align}
which preserves the connection $\nabla^{Giv}$ and the pairing $P$, where $G(\xi),dG(\xi):H^*_{orb}(\mcx) \lra H^*_{orb}(\mcx)$ are defined by
\begin{align}
G(\xi)(\tau_0 + \sum_{v \in T'} \tau_v) &= (\tau_0 - 2\pi i \xi_0) + \sum_{v \in T'}e^{2 \pi i f_v(\xi)} \tau_v\, , \notag \\
dG(\xi)(\tau_0 + \sum_{v \in T'} \tau_v) &= \tau_0 + \sum_{v \in T'} e^{2 \pi i f_v(\xi)} \tau_v \notag
\end{align}
where $\tau_v \in H^*(\mcx_v)$ and $\xi_0$ is the image of $\xi$ in $H^2(\mcx,\mbq)$.
\end{prop}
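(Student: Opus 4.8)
Here is the strategy I would follow. Throughout, write $\Phi_\xi\colon(\alpha,\tau,z)\mapsto(dG(\xi)\alpha,\,G(\xi)\tau,\,z)$ for the claimed bundle map. The plan is to check three things — that $\Phi_\xi$ is well defined on $U$, that it preserves $\nabla^{Giv}$, and that it preserves $P$ — and to isolate the one non-combinatorial ingredient, the age–degree identity for orbifold stable maps: for a genus-zero twisted stable map of class $d$ whose marked gerbes lie in sectors $v_1,\dots,v_N$, the desingularization of the pullback of $L_\xi$ is an honest line bundle on the coarse curve of degree $\xi_0(d)-\sum_j f_{v_j}(\xi)$, and since this degree is an integer one obtains $\xi_0(d)\equiv\sum_j f_{v_j}(\xi)\pmod{\mbz}$ whenever the corresponding invariant $\langle\cdots\rangle_{0,N,d}$ is non-zero (cf. \cite{Ir2}). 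I would cite this rather than reprove it.

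First, $G(\xi)$ maps $U$ to $U$: the shift $\tau_0\mapsto\tau_0-2\pi i\xi_0$ changes $\delta$ by the purely imaginary class $-2\pi i\xi_0\in H^2(\mcx)$, so $\Re(\delta(d))$ is unchanged, while multiplication of a twisted-sector component by $e^{2\pi i f_v(\xi)}$ is an isometry, so $\|\tau'\|$ is unchanged. Note also that $G(\xi)$ is affine with linear part $dG(\xi)$, which is what makes the chain-rule bookkeeping below transparent.

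The heart of the matter is the multiplicativity identity
\[
dG(\xi)(\alpha\circ_\tau\gamma)=(dG(\xi)\alpha)\circ_{G(\xi)\tau}(dG(\xi)\gamma),
\]
which I would check term by term. Expand both sides in the homogeneous basis, write $\tau=\delta+\tau'$, and apply the divisor axiom so that a fixed correlator $\langle\alpha,\gamma,\tau',\dots,\tau',T_k\rangle_{0,l+3,d}$ carries the factor $e^{\delta(d)}$, with insertions lying in sectors $a,b,w_1,\dots,w_l,c$. On the left, $dG(\xi)T^k$ contributes the single scalar $e^{-2\pi i f_c(\xi)}$ (the Poincar\'e-dual sector of $c$ has age $-f_c(\xi)\bmod\mbz$). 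On the right, $dG(\xi)\alpha$ and $dG(\xi)\gamma$ contribute $e^{2\pi i(f_a+f_b)(\xi)}$, the twisted insertions coming from $G(\xi)\tau$ contribute $\prod_i e^{2\pi i f_{w_i}(\xi)}$, and the shift of $\delta$ contributes $e^{-2\pi i\xi_0(d)}$ via the divisor axiom. By the age–degree identity $e^{-2\pi i\xi_0(d)}=e^{-2\pi i(f_a+f_b+f_c+\sum_i f_{w_i})(\xi)}$ on the support of the invariant, so the product of all right-hand scalars collapses to $e^{-2\pi i f_c(\xi)}$, matching the left-hand side. This multiplicativity (the main obstacle: it rests entirely on the age–degree identity) is what drives everything else.

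Given multiplicativity, preservation of $\nabla^{Giv}$ is formal. For $\nabla_{\partial_{t_k}}=\partial_{t_k}-z^{-1}T_k\circ_\tau$ one transports a flat section through $\Phi_\xi$ and applies the identity with $\alpha=T_k$: in the twisted directions the factor $dG(\xi)T_k=e^{2\pi i f_{v(k)}(\xi)}T_k$ cancels the chain-rule factor $e^{-2\pi i f_{v(k)}(\xi)}$ produced by $\partial_{\tilde t_k}$, while in the $H^2$-directions there is no such factor and $dG(\xi)T_k=T_k$. For $\nabla_{z\partial_z}=z\partial_z+z^{-1}E\circ_\tau+\mu$, note that $dG(\xi)$ commutes with $\mu$ (both are diagonal in a sector-refined homogeneous basis), and that $dG(\xi)E(\tau)=E(G(\xi)\tau)$: the class $c_1(T\mcx)$ is untwisted hence fixed, the $H^2$-coordinates enter $E$ with coefficient $1-\deg(T_k)/2=0$, and on each twisted coordinate the scaling $t_k\mapsto e^{2\pi i f_{v(k)}(\xi)}t_k$ matches $dG(\xi)T_k$; combining this with multiplicativity handles the $z\partial_z$-component. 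Finally, $dG(\xi)$ is an isometry of the orbifold Poincar\'e pairing — it scales $H^*(\mcx_v)$ and its Poincar\'e partner by $e^{2\pi i f_v(\xi)}$ and $e^{-2\pi i f_v(\xi)}$, whose product is $1$ — so $P(dG(\xi)a,dG(\xi)b)=P(a,b)$ and $P$ is preserved. This completes the verification, the age–degree identity being the only non-formal input.
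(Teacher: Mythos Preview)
Your argument is correct, and the age--degree congruence $\xi_0(d)\equiv\sum_j f_{v_j}(\xi)\pmod{\mbz}$ is indeed the only non-formal input; once you have it, the multiplicativity identity for $\circ_\tau$ follows as you describe, and the compatibility with $\nabla^{Giv}$ and $P$ are formal consequences.

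The comparison with the paper is short: the paper gives no proof of its own. The proposition is introduced with the sentence ``Iritani defined an action of $Pic(\mcx)$ on $(\mcf^{big}, \nabla^{Giv},P)$ and showed that it is equivariant with respect to this action,'' and is therefore simply quoted from \cite{Ir2}. What you have written is essentially the argument one finds there (the Galois action of \cite[\S 2.2]{Ir2}), spelled out in more detail than either the present paper or Iritani's; in particular you make explicit the cancellation between the chain-rule factor on the twisted coordinates and the scaling $dG(\xi)T_k$, and you separate out why the $H^2$-shift is invisible in both the domain $U$ and the Euler field $E$. So your route is not different from the paper's --- the paper has no route --- but it is a faithful and slightly more explicit rendering of the cited source.
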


It follows from the Proposition above that the Givental connection is invariant under the action of $Pic(\mcx)$, however,  as observed in \cite{Douai-Mann}, the functions $t_0,\chi_1=e^{t_1},\ldots \chi_r=e^{t_r},t_{r+1},\ldots , t_s$ are not coordinates on $H^*_{orb}(\mcx,\mbc)/ Pic(\mcx)$. Therefore we mod out only a subgroup namely the subgroup $Pic(X)$ of line bundles with zero age , i.e. 
$f_v(\xi)= 0$. The set $U$ is invariant under the action of $Pic(X)$.\\

Let $V$ be the quotient of $U$ by the action of $Pic(X)$ and denote by $\pi : U \ra V$ the natural projection. Set $\chi_i = e^{t_i}$ for $i=1,\ldots ,r$, then $t_0,\chi_1,\ldots,\chi_{r},t_{r+1},\ldots , t_s$ are coordinates for $V$.
\begin{lem}$ $\\[-1em]
\begin{enumerate}
\item There is a trTLEP(n)-structure $(\mcg^{big}, \nabla^{Giv},P)$ on $V$ s.t. $\pi^*(\mcg^{big},\nabla^{big},P) = (\mcf^{big}, \nabla^{big},P)$.
\item Set $V_{gen} := \pi(H^{gen}_{\orb}(\mcx) \cap U)$.  There is a trTLEP(n)-structure $(\mcg^{ss},\nabla^{Giv},P)$ on $V_{gen}$ s.t. $\pi^*(\mcg^{ss},\nabla^{Giv},P) = (\mcf^{ss},\nabla^{Giv},P)$ and $(\mcg^{ss},\nabla^{Giv},P) = (\mcg^{big},\nabla^{Giv},P)_{\mid V_{gen}}$.
\end{enumerate}
\end{lem}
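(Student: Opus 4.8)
The statement is essentially a descent result: we have the Givental connection $(\mcf^{big}, \nabla^{Giv}, P)$ living on $U \times \mbp^1_z$, which by the preceding proposition is equivariant for the $\Pic(\mcx)$-action, and we want to show it descends to a $\trTLEP(n)$-structure on $V = U / \Pic(X)$ (and similarly for the semi-small version). The key point is that the functions $t_0, \chi_1 = e^{t_1}, \ldots, \chi_r = e^{t_r}, t_{r+1}, \ldots, t_s$ are genuine coordinates on $V$ because we have quotiented only by the zero-age subgroup $\Pic(X) \subset \Pic(\mcx)$, as noted just before the lemma. So the plan is: first, verify that the Givental connection, when written in the $(t_0, \chi_i, t_{r+1}, \ldots, t_s)$-coordinates, has coefficients that are $\Pic(X)$-invariant, hence descends to a connection $\nabla^{Giv}$ on a bundle $\mcg^{big}$ over $V \times \mbp^1_z$; this is immediate from the equivariance statement of the preceding proposition, since $dG(\xi) = \Id$ on the ambient $H^2(\mcx)$-part for $\xi \in \Pic(X)$ (the zero-age condition $f_v(\xi) = 0$ makes $dG(\xi)$ act trivially on all sectors too).

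Second, I would check that the descended object is genuinely a $\trTLEP(n)$-structure, i.e. go down the list in the definition: the underlying bundle $\mcg^{big}$ satisfies $p^* p_* \mcg^{big} = \mcg^{big}$ because $\mcf^{big}$ is a \emph{trivial} $\mbp^1_z$-bundle and triviality along the fibers is preserved by the étale-locally-trivial quotient map $\pi \times \Id$; the connection has a pole of type $1$ along $\{0\} \times V$ and — this is the delicate bit — a \emph{logarithmic} pole along $(\mbp^1_z \times D) \cup (\{\infty\} \times V)$, where $D$ is the divisor $\chi_1 \cdots \chi_r = 0$ added in the partial compactification. The logarithmic behaviour along $\{\infty\} \times V$ comes from the $\nabla_{z\p_z}$ formula ($+\mu$ term) exactly as in the construction of $(\mcf^{big}, \nabla^{Giv})$, and the logarithmic behaviour along $\chi_a = 0$ comes from rewriting $\p_{t_a} = \chi_a \p_{\chi_a}$: the operator $\nabla_{\p_{t_a}} = \p_{t_a} - z^{-1} T_a \circ_\tau$ becomes $\chi_a \p_{\chi_a} - z^{-1} T_a \circ_\tau$, and convergence of the quantum product on $U$ together with the divisor axiom (which shows the structure constants depend on $t_a$ only through $\chi_a = e^{t_a}$) guarantees the coefficients extend holomorphically across $\chi_a = 0$. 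The pairing $P$ descends because it is $\Pic(X)$-invariant (it preserves $P$ by the preceding proposition), is flat, $(-1)^n$-symmetric and non-degenerate, and lands in $\mco_{\mbp^1_z \times V}(-n, n)$ as before.

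Third, for part (2): $V_{gen} = \pi(H^{gen}_{orb}(\mcx) \cap U)$ is a locally closed submanifold of $V$, the semi-small Givental connection is by definition the restriction of the big one to $(H^{gen}_{orb}(\mcx) \cap U) \times \mbp^1_z$, and $\Pic(X)$ preserves $H^{gen}_{orb}(\mcx)$ since it contains $H^2(\mcx)$ and $G(\xi)$ acts on the twisted sectors by scalars (trivially, for $\xi$ of zero age), so it acts on $H'_{orb}(\mcx)$; hence the descent of part (1) restricts to give $(\mcg^{ss}, \nabla^{Giv}, P)$, and the equality $(\mcg^{ss}, \nabla^{Giv}, P) = (\mcg^{big}, \nabla^{Giv}, P)|_{V_{gen}}$ as well as $\pi^*(\mcg^{ss}, \ldots) = (\mcf^{ss}, \ldots)$ are then tautologies from the definitions. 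I expect the main obstacle to be the bookkeeping around the logarithmic pole along $D$: one must carefully use the divisor axiom to see that all $t_a$-dependence of the connection matrix (for $a = 1, \ldots, r$) is through $e^{t_a}$, and that the remaining $e^{\delta(d)}$ factors with $\Re \delta(d) < -M$ make the Fourier series in the $\chi_a$ converge and vanish to positive order as $\chi_a \to 0$, so that after the substitution $\p_{t_a} \mapsto \chi_a \p_{\chi_a}$ the connection acquires at worst a logarithmic — not worse — singularity; everything else is a routine transcription of the properties of the Givental structure already recorded in Definition~\ref{defn:givconn} and the remark following it.
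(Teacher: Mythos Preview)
Your approach is correct and is the same as the paper's: the paper's proof is a one-liner saying the statements follow directly from the equivariance proposition (the one giving the $\Pic(\mcx)$-action), and then records the resulting connection formulas in the coordinates $(t_0,\chi_1,\ldots,\chi_r,t_{r+1},\ldots,t_s)$. Your descent argument via $dG(\xi)=\Id$ for $\xi\in\Pic(X)$ is exactly this.

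One point of over-reading: the lemma asks only for a $\trTLEP$-structure on $V$ (and on $V_{gen}$), with the boundary divisor $D$ \emph{empty}. The locus $\{\chi_1\cdots\chi_r=0\}$ does not meet $V$, so your discussion of the logarithmic pole along $D$ and of the convergence of the quantum product as $\chi_a\to 0$ is not needed here; that analysis is precisely the content of the next proposition (Proposition~\ref{prop:logFrobA}), where one passes to the partial compactification $V'$. On $V$ itself you only need to check the pole of type~1 at $z=0$, the logarithmic pole at $z=\infty$, fibrewise triviality along $\mbp^1_z$, and the pairing---all of which are inherited verbatim from $(\mcf^{big},\nabla^{Giv},P)$ under the \'etale quotient $\pi$. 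So your proof is correct but does more than the lemma requires.
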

\begin{proof}
The statements are a direct consequence of Proposition 4.4. The connection of $(\mcg^{big},\nabla^{big},P)$ is given by
\begin{align}
\nabla_{\p_{t_k}} &:= \frac{\p}{\p_{t_k}} - \frac{1}{z} T_k \circ_\kappa\, , \notag \\
\nabla_{\chi_j \p_{\chi_j}} &:= \chi_j\frac{\p}{\p_{\chi_j}} - \frac{1}{z} T_j \circ_\kappa \qquad \text{for} \quad j = 1,\ldots ,r\, ,\label{eq:logconn} \\
\nabla_{z \p_z} &:= z\frac{\p}{\p_z} + \frac{1}{z}E \circ_{\kappa} + \mu \notag
\end{align}
for $\kappa \in V$.
\end{proof}
Let $T_0,\ldots, T_s$ be a homogeneous basis of $H^*_{orb}(\mcx)$ as above. We assume that $1=T_0\in H^0(\mcx,\mbz)$,  $T_1,\ldots, T_r \in H^2(\mcx)$ and $T_{r+1}, \ldots , T_{s}$ is a basis of $\bigoplus_{k \neq 0,2} H^{k}(\mcx)\oplus \bigoplus_{v \in T'} H^*(\mcx_{(v)})$.
Additionally we assume that $T_1,\ldots , T_{r}$ is a $\mbz$-basis of $Pic(X)\subset H^2(\mcx,\mbz)$ and lies in the K\"ahler cone $\overline{\mck} \subset H^2(\mcx)$.\\

The choice of the basis $T_0,\ldots, T_s$ gives rise to an embedding $j:H^2(X,\mbc)/Pic(X) \hookrightarrow \mbc^r$. Let $V'_{gen}$ resp. $V'$ be the closure of image of $j\times id$.
\begin{prop}\label{prop:logFrobA}
There exist extensions $(\overline{\mcg}^{big},\hat{\nabla}^{Giv},P)$ resp. $(\overline{\mcg}^{ss}, \hat{\nabla}^{Giv},P)$ of $(\mcg^{big},\hat{\nabla}^{Giv},P)$ resp. $(\mcg^{ss}, \hat{\nabla}^{Giv},P)$ to a log-trTLEP(n)-structure on $V'$ resp. $V'_{gen}$. Moreover, there is a structure of a logarithmic Frobenius manifold on $V'$.
\end{prop}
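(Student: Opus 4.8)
### Proof Proposal for Proposition \ref{prop:logFrobA}

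The plan is to verify the hypotheses of Theorem \ref{thm:ReUnfold} for the $\trTLEPlog$-structure on the $A$-side, just as was done for the $B$-side in Theorem \ref{thm:logfrobB}. First I would produce the log-extensions $(\overline{\mcg}^{big},\hat\nabla^{Giv},P)$ and $(\overline{\mcg}^{ss},\hat\nabla^{Giv},P)$: near the boundary divisor $V'\setminus V$ the coordinates are $\chi_j=e^{t_j}$ for $j=1,\dots,r$, and the key point is that in these coordinates the connection \eqref{eq:logconn} reads $\nabla_{\chi_j\p_{\chi_j}}=\chi_j\p_{\chi_j}-\tfrac1z T_j\circ_\kappa$, so the pole along $\{\chi_j=0\}$ is logarithmic provided the structure constants of $\circ_\kappa$ extend holomorphically across $\chi_j=0$. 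This holomorphic extension is exactly the content of the convergence Assumption together with the divisor axiom (the $Q^d$ are replaced by $\prod\chi_j^{\langle T_j,d\rangle}$ with $\langle T_j,d\rangle\ge 0$ since the $T_j$ lie in the Kähler cone $\overline{\mck}$). The extension at $z=\infty$ uses the grading operator $\mu$ exactly as in the construction of $\widehat{\mce}_X$, so that $(\overline{\mcg}^{big},\hat\nabla^{Giv},P)$ becomes a $\trTLEPlog$-structure on $V'$, and likewise $(\overline{\mcg}^{ss},\hat\nabla^{Giv},P)$ on $V'_{gen}$.

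Next I would choose the primitive section. As in the $B$-side argument the natural candidate is $\xi=1=T_0\in H^0(\mcx,\mbc)$, viewed as a (flat) section of the trivial bundle $\overline{\mcg}^{big}$. Its restriction to $\{\infty\}\times V'$ is horizontal for the residue connection since $\mu(T_0)=0$ and $T_0$ is the unit for $\circ_\kappa$, and it is an eigenvector of the residue endomorphism $\mcv$ at $z=\infty$ with eigenvalue $0$, giving condition (EC). Condition (IC), injectivity of $X\mapsto [z\nabla_X](1) = -T_X\circ_\kappa 1 = -T_X$ on $\Theta(\log D)_{\mid 0}$, is clear: the vector fields $\p_{t_0},\chi_1\p_{\chi_1},\dots,\chi_r\p_{\chi_r},\p_{t_{r+1}},\dots,\p_{t_s}$ are sent to $-T_0,-T_1,\dots,-T_s$, which form a basis of $H^*_{orb}(\mcx,\mbc)=p_*\overline{\mcg}^{big}_{\mid 0}$. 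For condition (GC) one uses that iterating the maps $[z\nabla_X]$ on $\xi=1$ produces, modulo $z$, all cup-products $T_{i_1}\cup\cdots\cup T_{i_k}$; since $H^2(\mcx)\subset H^{gen}_{orb}(\mcx)$ and the $T_j$, $j\le r$, form a basis of $Pic(X)\subset H^2(\mcx)$ while the remaining generators of $H^{gen}_{orb}(\mcx)$ appear among $T_{r+1},\dots,T_s$, the classes so obtained span $H^*_{orb}(\mcx,\mbc)$ by the very definition of $H^{gen}_{orb}(\mcx)$ as a generating subspace; a Nakayama argument then upgrades this to generation of $p_*\overline{\mcg}^{big}_{\mid 0}$ over $\mco$. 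Applying Theorem \ref{thm:ReUnfold} yields a germ of a logarithmic Frobenius manifold whose underlying space, after unfolding along the remaining $\mu-r-\dim H^{gen}_{orb}$ directions (parametrized by $T_{r+1},\dots,T_s$ together with the Euler direction), is identified with $V'$ itself — indeed by construction the big quantum product is already a full (convergent) deformation, so the unfolding $\tilde M$ equals $V'$ and the embedding $i$ is the identity.

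The main obstacle I anticipate is verifying the logarithmic pole of $\hat\nabla^{Giv}$ along $V'\setminus V$ rigorously, i.e. the holomorphic extendability of the quantum product to the boundary $\chi_j=0$. This requires combining the convergence Assumption (which only asserts convergence on the open set $U$) with the divisor equation to rewrite the $Q^d$-expansion as a genuine power series in the $\chi_j$ — so that the limit $\chi_j\to 0$ is the large-volume/classical limit and $\circ_\kappa$ degenerates to the orbifold cup product — and one must check that the mixing with the non-$H^2$ twisted-sector coordinates $t_{r+1},\dots,t_s$ does not destroy this. Once this is in place, the pole at $z=\infty$ and the pairing properties are formal consequences of the grading and the $(-1)^n$-symmetry of the orbifold Poincaré pairing, and the three conditions (IC), (GC), (EC) are essentially the same cohomological statements already used on the $B$-side in Theorem \ref{thm:logfrobB}.
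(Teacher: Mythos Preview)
Your proposal is correct, and for the first statement (the logarithmic extension) it coincides with the paper's argument: the paper simply points to the form of the connection \eqref{eq:logconn}, and your elaboration via the divisor axiom and the choice $T_1,\ldots,T_r\in\overline{\mck}$ is exactly what makes that reference meaningful.

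For the second statement, however, you take a genuinely different route from the paper. The paper does not invoke the unfolding Theorem \ref{thm:ReUnfold} (which is \cite[Proposition 1.20, Theorem 1.22]{Re1}); instead it appeals to \cite[Proposition 1.10 and Proposition 1.11]{Re1}, with $\xi=T_0=1$. Those propositions give a \emph{direct} construction of a logarithmic Frobenius manifold from a $\trTLEPlog$-structure whose base already has the right dimension, i.e.\ when the map $[z\nabla_\bullet](\xi):\Theta(\log D)_{\mid 0}\to p_*\mch_{\mid 0}$ is an isomorphism rather than merely injective. On the $A$-side the big quantum product is defined on all of $H^*_{orb}(\mcx)$ from the outset, so $\dim V'=\dim H^*_{orb}(\mcx)$ and there is nothing to unfold; the paper's citation reflects this.

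Your approach via Theorem \ref{thm:ReUnfold} is nonetheless valid: you correctly observe that (IC) is in fact an isomorphism (the logarithmic vector fields $\p_{t_0},\chi_a\p_{\chi_a},\p_{t_k}$ map to the basis $-T_0,\ldots,-T_s$), which forces the universal unfolding $\tilde M$ to coincide with $V'$ and the embedding $i$ to be the identity. So the unfolding theorem degenerates to the direct statement. This buys you a pleasing parallelism with the $B$-side argument of Theorem \ref{thm:logfrobB}, at the cost of checking (GC) and (EC), which are superfluous once (IC) is surjective. Conversely, the paper's route is shorter and emphasizes that the $A$-side Frobenius structure is already ``fully unfolded'' by construction, which is conceptually the right point of view when one later matches it with the $B$-side via the uniqueness clause of Theorem \ref{thm:ReUnfold}.
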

\begin{proof}
The first statement follows from the form of the connection \ref{eq:logconn}.
The second statement follows from \cite[Proposition 1.10 and Proposition 1.11]{Re1}, where the vector $\xi$ in loc. cit. corresponds to $T_0 =1$ here.
\end{proof}

We now recall the fundamental solution of the Givental connection. Define
\[
L(\tau,z)\alpha := e^{-\delta/z} \alpha - \sum_{d \in Eff_\mcx \setminus \{0\} \atop l > 0, 0\leq k \leq s} \frac{1}{l!}\langle \frac{e^{-\delta/z}\alpha}{z+\psi},\tau',\ldots, \tau', T_k\rangle_{0,l+2,d}e^{\delta(d)}T^k
\]
where $\tau = \delta + \tau'$.
The following proposition summarizes the properties of the fundamental solution.
\begin{prop}[\protect{\cite[Proposition 2.4]{Ir2}}]
$ $\\[-1em]
\begin{enumerate}
\item $L(\tau,z)$ satisfies the following differential equations:
\[
\nabla_{\p_{t_k}} L(\tau,z) \alpha = 0, \quad \nabla_{z \p_z} L(\tau,z)\alpha = L(\tau,z)(\mu \alpha - \frac{\rho}{z}\alpha)
\]
where $\alpha \in H^*_{\orb}(\mcx)$, $\rho := c_1(TX) \in H^2(\mcx)$ and $\mu$ is the grading operator from Definition \ref{defn:givconn}. If we put $z^{-\mu}z^\rho := \exp(-\mu \log z) \exp(\rho \log z)$, then
\[
\nabla_{\p_{t_k}} L(\tau,z) z^{-\mu}z^\rho \alpha = 0, \quad \nabla_{z \p_z} L(\tau,z)z^{-\mu}z^\rho\alpha =0\, .
\]
\item $L(\tau,z)$ is convergent and invertible on $U \times \mbc^*$.
\item $(L(\tau,-z)\alpha, L(\tau,z)\beta)_{orb} = (\alpha,\beta)_{orb}$
\item $dG(\xi)L(G(\xi)^{-1}\tau,z)\alpha = L(\tau,z) e^{2\pi i \xi_0} e^{2\pi i f_v(\xi)} \alpha$ for $\alpha \in H^*(\mcx_v)$. In particular
\[
dG(\xi)L(G(\xi)^{-1}\tau,z)\alpha = L(\tau,z)\alpha
\]
for $\xi \in Pic(X)$.
\item Define $\tilde{L}(\tau,z) := L(\tau,z) z^{-\mu}z^\rho$, then
\[
\nabla_{\p_{t_k}} \tilde{L}(\tau,z) \alpha = 0, \quad \nabla_{z \p_z} \tilde{L}(\tau,z)\alpha = 0\, .
\]
\end{enumerate}
\end{prop}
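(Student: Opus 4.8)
The plan is to recover \cite[Proposition 2.4]{Ir2}: all five assertions are consequences of the standard structural identities for genus-zero descendant Gromov--Witten invariants --- the string, dilaton and divisor equations together with the genus-zero topological recursion relation (TRR) --- applied to the explicit series defining $L(\tau,z)$. I would prove (1) first, since (5) is an immediate reformulation of it and both (3) and (4) rest on it, and handle convergence (2) separately using the Assumption on $\circ_\tau$.

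For the flatness equations in (1), I would expand $L(\tau,z)\alpha$ using $\tfrac{1}{z+\psi}=\sum_{j\ge 0}(-\psi)^j z^{-j-1}$. Differentiating in $t_k$ inserts an extra $T_k$ at a new marked point; applying the genus-zero TRR to peel off the $\psi$-power at the point carrying $e^{-\delta/z}\alpha$ and recognising the resulting three-point correlators as the structure constants of $\circ_\tau$ should give exactly $\p_{t_k}(L(\tau,z)\alpha)=\tfrac1z\,T_k\circ_\tau(L(\tau,z)\alpha)$, i.e.\ $\nabla_{\p_{t_k}}L(\tau,z)\alpha=0$; in the divisor directions $k=1,\dots,r$ one additionally invokes the divisor equation. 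The equation for $\nabla_{z\p_z}$ is the homogeneity statement: the dimension axiom makes each correlator homogeneous of a prescribed degree, so the naive action of $z\p_z$ on $L(\tau,z)$ is governed by the grading operator $\mu$, while the divisor equation for $\rho=c_1(T\mcx)$ accounts for the $-\rho/z$ term. For the ``in particular'' part I would note that $v(z):=z^{-\mu}z^{\rho}\alpha$ satisfies $z\p_z v=(-\mu+\tfrac\rho z)v$ because $[\mu,\rho\,\cup\,\cdot\,]=\rho\,\cup\,\cdot$, and then combine this with the first equation and the Leibniz rule for $\nabla_{z\p_z}$ to get $\nabla_{z\p_z}(L(\tau,z)v(z))=L(\mu v-\tfrac\rho z v)+L(z\p_z v)=0$, with $\nabla_{\p_{t_k}}$ killing it since $v(z)$ does not depend on $\tau$; statement (5) is then just a renaming.

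For (2), the Assumption that $\circ_\tau$ converges on $U$ forces the series defining $L(\tau,z)$ to converge on $U\times\mbc^{*}$, and invertibility would follow because $L(\tau,z)$ degenerates to $e^{-\delta/z}$ in the large-volume limit and, by (3), $L(\tau,-z)$ is an adjoint inverse of $L(\tau,z)$, so $\det L$ is nowhere zero. For (3), I would observe that by (1) the sections $L(\tau,\pm z)\alpha$ are $\nabla^{Giv}$-flat in the $\tau$-directions, and since $P$ is $\nabla^{Giv}$-flat the function $\tau\mapsto(L(\tau,-z)\alpha,L(\tau,z)\beta)_{orb}$ is constant; evaluating it in the large-volume limit, where $L$ degenerates to $e^{-\delta/z}$ and the expression reduces to $(e^{\delta/z}\alpha,e^{-\delta/z}\beta)_{orb}=(\alpha,\beta)_{orb}$ by self-adjointness of cup product with $\delta$, gives the identity. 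For (4), I would compare the defining series of $L$ before and after $\tau\mapsto G(\xi)^{-1}\tau$: the shift of the divisor part of $\tau$ by $-2\pi i\xi_0$ is absorbed by $e^{-\delta/z}$ and the divisor equation, while on a twisted sector $\mcx_v$ the evaluation maps take values in the inertia stack and the age $f_v(\xi)$ of $L_\xi$ along $\mcx_v$ produces the factor $e^{2\pi i f_v(\xi)}$, matching precisely the $Pic(\mcx)$-action of the preceding proposition; for $\xi\in Pic(X)$ all such ages vanish, $dG(\xi)$ acts trivially, and the last displayed identity follows.

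The main obstacle is the genus-zero TRR computation underlying (1): one must carefully track the combinatorics of stable maps when the $\psi$-classes and the extra insertion $T_k$ interact, and identify the resulting boundary contributions with the quantum product $\circ_\tau$. Everything else --- the homogeneity bookkeeping for $\nabla_{z\p_z}$, the limit argument at the large-volume point, and the matching of ages in (4) --- is routine once (1) is established.
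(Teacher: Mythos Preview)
The paper does not give its own proof of this proposition; it is stated as a direct citation of \cite[Proposition~2.4]{Ir2} and no proof environment follows it. So there is nothing in the paper to compare your proposal against. Your outline is the standard one from the literature and is correct in its essentials: TRR for $\nabla_{\p_{t_k}}$-flatness, the dimension and divisor axioms for the $\nabla_{z\p_z}$-equation, flatness of $P$ plus a large-volume evaluation for (3), and the orbifold divisor equation with age shifts for (4); (5) is, as you say, a restatement of the second half of (1).

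One clarification on (2): convergence of $L$ is not literally ``forced'' by convergence of $\circ_\tau$ in the sense of a direct majorization of the series, since $L$ involves descendant invariants while the Assumption is about primary ones. The correct argument --- which is presumably what you have in mind --- is that the formal identity $\p_{t_k}L=\tfrac{1}{z}(T_k\circ_\tau)L$ from TRR exhibits $L$ as a solution of a linear ODE whose coefficients are analytic on $U\times\mbc^*$ by the Assumption, with analytic asymptotics at the large-volume limit; analyticity of $L$ then follows from the standard existence theory for linear ODE. You should phrase it this way rather than as a direct convergence implication.
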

Since $L$ and therefore also $\tilde{L}$ is invertible, then the sections $s_i:= \tilde{L}(T_i)$ are a basis of flat sections.

The $J$-function of $\mcx$ is given by $L(\tau,z)^{-1} 1 = L(\tau,z)^{-1} T_0$. We set 
\[
\tilde{J}:= \sum_{i=0}^s \tilde{J}_i T_i := \sum_{i=0}^s (s_i,T_0)_{orb} T_i = \tilde{L}(\tau,z)^{-1}(T_0)
\]
and get the formula
\begin{equation}\label{eq:Jfuncterms}
1= T_0 = \sum_{i=0}^s \tilde{J}_i s_i \qquad \text{on} \quad \mbc_z^* \times U\, .
\end{equation}

\section{Mirror correspondance}

Let $\mcx$ be projective, toric orbifold. In order to state the mirror theorem for toric orbifolds we have to introduce the $I$-function.

\begin{defn}
The $I$-function of a toric orbifold $\mcx$ is defined by
\[
I(\underline{\chi},z) = e^{\sum_{a=1}^{r+e} e^{\overline{p}_a} \log \chi_a/z}\sum_{d \in \mbk} \chi^d \frac{\prod_{\nu= \lceil \langle D_i,d \rceil}^\infty(\overline{D}_i +(\langle D_i,d \rangle -\nu)z)}{\prod_{\nu =0}^\infty (\overline{D}_i +(\langle D_i,d \rangle -\nu)z)}\mathbf{1}_{v(d)}\, .
\]
\end{defn}
We collect a few facts about the $I$-function.
\begin{lem}\label{lem:propI}$ $\\[-1em]
\begin{enumerate}
\item $e^{-\sum_{a=1}^{r+e} e^{\overline{p}_a} \log \chi_a/z}I(\underline{\chi},z) \in H^*_{orb}(\mcx)[z,z^{-1}]\llbracket \chi_1,\ldots,\chi_r\rrbracket$.
\item The function $e^{-\sum_{a=1}^{r+e} e^{\overline{p}_a} \log \chi_a/z}I(\underline{\chi},z)$ is a convergent power series in $\chi_1,\ldots, \chi_r$ if and only if $\rho \in \mck^e$. In this case, the $I$-function has the asymptotics
\[
I(\underline{\chi},z) = 1 + \frac{\tau(\underline{\chi})}{z} + o(z^{-1})\, .
\]
The function $\tau$, which take values in $H^{\leq 2}_{orb}(\mcx)$, is a local embedding and is called the mirror map.
\item 
Set $\tilde{I} := I z^{-\rho} z^{\mu}$ then
\[
\check{E} (\tilde{I}) = 0 \quad \text{and} \quad   \Box_{\underline{l}}^X (\tilde{I}) = 0 \quad \text{for} \quad \underline{l} \in \mbl\, . \notag
\]
\end{enumerate}
\end{lem}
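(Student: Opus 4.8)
The plan is to establish the three assertions in turn, using the combinatorial data of Section~\ref{sec:some-toric-facts} (the lattices $\mbl\subset\mbk\subset NE^e(X)$, the extended Picard group $Pic^e(X)\subset\mbl^\star$, and the bijection $\mbk/\mbl\cong\mathrm{Box}(\Sigma)$ of Lemma~\ref{lem:kltobox}) together with the standard estimates for toric $I$-functions from \cite{Ir2} and \cite{CCIT}.

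For part (1) I would first analyse a single summand of $\sum_{d\in\mbk}$. The displayed ratio of infinite products is in fact finite: for fixed $d$ all but finitely many factors $\overline{D}_i+(\langle D_i,d\rangle-\nu)z$ cancel between numerator and denominator; since each $\overline{D}_i$ is nilpotent, a surviving factor with nonzero $z$-coefficient is invertible in $H^*(\mcx_{(v(d))})[z,z^{-1}]$, while a factor with vanishing $z$-coefficient can occur only for $\langle D_i,d\rangle\in\mbz_{<0}$, in which case it either sits in the numerator or (for the extended indices, where the corresponding class is the identity on the relevant sector) forces the whole summand to vanish. Hence each nonzero summand lies in $H^*_{orb}(\mcx)[z,z^{-1}]\cdot\chi^{d}$. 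Since $d\in\mbk\subset NE^e(X)$ and $NE^e(X)$ is by construction dual to $Pic^e(X)$ (cf.~\eqref{eq:exseqextN}), and the coordinates $\chi_a$ are dual to the basis $p_1,\dots,p_{r+e}$ of $Pic^e(X)$, the exponent $\chi^{d}=\prod_a\chi_a^{p_a(d)}$ has integer components; using $p_a\in\theta(\mck)$ for $a\le r$ together with the vanishing just noted one checks these exponents are bounded below. Multiplying the series by the inverse of the exponential prefactor cancels it identically, so the result is a formal power series in $\chi_1,\dots,\chi_r$ with coefficients in $H^*_{orb}(\mcx)[z,z^{-1}]$.

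For part (2), grouping the sum over $d\in\mbk$ into the finitely many classes of $\mbk/\mbl\cong\mathrm{Box}(\Sigma)$ and writing $d=d_v+\underline{l}$ with $\underline{l}\in\mbl$, a ratio-test estimate on the numerical coefficient of $\chi^{\underline{l}}$ (only the scalar parts of the linear factors matter for growth) shows that the radius of convergence is positive precisely when $\langle\rho,d\rangle\ge 0$ for all effective $d$, i.e.\ when $\rho\in\mck^e$, and that the series diverges otherwise. By Lemma~\ref{lem:degai} this condition is equivalent to $\overline{\rho}\in\overline{\mck}$ and $\deg(a_i)\le 1$ for $i=m+1,\dots,m+e$, the hypothesis of the statement. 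For the $z$-asymptotics one expands in $z^{-1}$: the term $d=0$ gives $1$, the prefactor contributes $\frac{1}{z}\sum_a\overline{p}_a\log\chi_a+O(z^{-2})$, and every other term contributing at order $z^{-1}$ has total degree $\le 1$, so $\tau(\underline{\chi})=\sum_a\overline{p}_a\log\chi_a$ up to classes of cohomological degree $\le 2$, hence $\tau$ takes values in $H^{\leq 2}_{orb}(\mcx)$. Finally $\chi_a\partial_{\chi_a}\tau=\overline{p}_a+O(\chi)$, so the differential of $\tau$ at the large-volume point is, up to the logarithmic shift, the isomorphism $H^{\leq 2}_{orb}(\mcx)\cong\mbl^\star\otimes\mbq$ of Remark~\ref{rem:hleq2eqle}, whence $\tau$ is a local embedding. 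This part is the crux of the lemma: the ``if and only if'' for convergence and the exact shape of $\tau$ rest on genuine analytic control of the coefficients of the $I$-function, which I would quote from Iritani's analysis rather than reproduce.

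For part (3), which asserts that $\tilde{I}=I\,z^{-\rho}z^{\mu}$ lies in the kernel of $\check{E}$ and of every $\Box^X_{\underline{l}}$, I would apply these operators termwise to the series. After the conjugation by $z^{-\rho}z^{\mu}$ the operator $\msd_i$ acts on the $d$-term by the scalar-plus-nilpotent $\langle D_i,d\rangle z+\overline{D}_i$, so $\prod_{i:l_i<0}\prod_{\nu=0}^{-l_i-1}(\msd_i-\nu z)$ applied to the $d$-term reproduces, up to precisely the finitely many linear factors built into the ratio of products, the coefficient of the $(d+\underline{l})$-term coming from the other half of $\Box^X_{\underline{l}}$; the shift $d\mapsto d+\underline{l}$ is implemented by the monomials $\prod_a\chi_a^{\pm p_a(\underline{l})}$ and the operators $\prod_{i=m+1}^{m+e}\msd_i^{\mp l_i}$, and the two contributions cancel. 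Likewise $\check{E}(\tilde{I})=0$ is the homogeneity equation: after the same conjugation $z^2\partial_z+\sum_{a,i}m_{ia}z\chi_a\partial_{\chi_a}$ acts on the $d$-term through combinations of the $\langle D_i,d\rangle z+\overline{D}_i$ that vanish by the relations $\sum_i a_{ki}m_{ia}=0$ of~\eqref{eq:SigmaEuler} and $\sum_i[D_i]=\rho$ already used in Lemma~\ref{lem:extriv}.
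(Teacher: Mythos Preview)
Your approach is broadly sound and agrees with the paper in spirit for parts (1) and (2): part (1) is unwound ``directly from the definition'', and for part (2) both you and the paper defer the genuine analytic estimate (convergence $\Leftrightarrow$ $\rho\in\mck^e$) to Iritani \cite[Lemma 4.2]{Ir2}. Your additional remarks on the shape of $\tau$ and its differential are correct and more explicit than what the paper writes.

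For part (3) the two routes diverge. The paper does not redo the termwise GKZ calculation. Instead it quotes \cite[Lemma 4.19]{Ir2} (in its equivariant form, followed by the non-equivariant limit $\lambda\to 0$), which gives $\widetilde{\Box}_{\underline{l}}(\tilde I)=0$ and $\check{E}(\tilde I)=0$; it then invokes the factorisation
\[
\widetilde{\Box}_{\underline{l}}=\prod_{i=1}^{e}\chi_{r+i}^{|l_{m+i}|}\cdot\Box^X_{\underline{l}}
\]
of Lemma~\ref{lem:facBox} to deduce $\Box^X_{\underline{l}}(\tilde I)=0$, since the $\chi_{r+i}$ are units on the domain in question. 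This is considerably shorter.

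Your direct calculation can be made to work, but as written it is imprecise at exactly the point where the paper's shortcut pays off. You claim that after conjugation each $\msd_i$ acts on the $d$-term by the scalar-plus-nilpotent $\langle D_i,d\rangle z+\overline D_i$. That is correct for $i\le m$, where $\msd_i=\sum_a m_{ia}\,z\chi_a\partial_{\chi_a}$ is logarithmic; but for $i=m+1,\dots,m+e$ one has $\msd_i=z\,\partial_{\chi_{i-m+r}}$, a genuine derivative, whose action on the $d$-term shifts the exponent of $\chi_{i-m+r}$ rather than being pure multiplication. The bookkeeping for these extended indices---how the powers $\msd_i^{\pm l_i}$ interact with the sector units $\mathbf{1}_{v(d)}$ and with the $\Gamma$-like factors in the $I$-function---is exactly what Lemma~\ref{lem:facBox} packages away. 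If you want to keep the termwise approach, you should separate the logarithmic and non-logarithmic parts of $\Box^X_{\underline{l}}$ explicitly; more economically, you can reduce to Iritani's result for $\widetilde{\Box}_{\underline{l}}$ via Lemma~\ref{lem:facBox} as the paper does.
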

\begin{proof}
The first point follows directly from the definition of the $I$-function. The second point is \cite[Lemma 4.2]{Ir2}. The third point follows from $\tilde{\Box}_{\underline{l}} = \prod_{i=1}^e \chi_{r+i}^{|l_{m+i}|}\cdot\Box^X_{\underline{l}}$ (cf. Lemma \ref{lem:facBox}) and the \cite[lemma 4.19]{Ir2} (Note that Iritani proves this for the equivariant $I$-function. In order to to get the statement one simply has to consider the equivariant limit $\lambda \ra 0$).
\end{proof}

There is the following theorem which compares the $I$-function from above  and the $J$-function which has been introduced in Section \ref{sec:QC}.
\begin{thm}\label{thm:mirthm}
Let $\rho \in \mck^e$, then the $I$-function and the $J$-function coincide up to a coordinate change given by the mirror map $\tau$,i.e.
\[
I(\underline{\chi},z) = J(\tau(\underline{\chi}),z)\, .
\] 
\end{thm}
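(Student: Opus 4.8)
The plan is to deduce the identity from the Givental-style mirror theorem of Coates, Corti, Iritani and Tseng \cite{CCIT}, using the asymptotics of the $I$-function established in Lemma \ref{lem:propI}. First I would set up Givental's symplectic formalism: the genus-zero Gromov--Witten theory of $\mcx$ is encoded by its overruled Lagrangian cone $\mcl_\mcx\subset H^*_{orb}(\mcx)((z^{-1}))$, a cone each of whose tangent spaces $T$ satisfies $zT\subset T$ and $\mcl_\mcx\cap T=zT$, and which is parametrised by the big $J$-function in the sense that $-zJ(\tau,-z)\in\mcl_\mcx$ for all $\tau$ where the quantum product converges; conversely, the ruling of the cone together with the string equation (which forces $J(\tau,z)=1+\tau/z+O(z^{-2})$) implies that any point of $\mcl_\mcx$ of the form $-z+\tau+O(z^{-1})$ is necessarily $-zJ(\tau,-z)$.

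Next I would invoke \cite{CCIT}: for our toric Deligne--Mumford stack $\mcx$ equipped with the $S$-extended stacky fan with $S=Gen(\Sigma)$, the $I$-function lies on the Lagrangian cone, i.e. $-zI(\underline\chi,-z)\in\mcl_\mcx$ for all $\underline\chi$ in its domain of convergence. The hypothesis $\rho\in\mck^e$ is precisely what puts $I$ in the shape needed to exploit this: by Lemma \ref{lem:propI}(2) it ensures that $e^{-\sum_a e^{\overline p_a}\log\chi_a/z}I$ is an honest convergent power series in $\chi_1,\dots,\chi_r$ near the large volume limit, so that $I(\underline\chi,z)=1+\tau(\underline\chi)/z+o(z^{-1})$ and hence $-zI(\underline\chi,-z)=-z+\tau(\underline\chi)+o(1)$, with $\tau(\underline\chi)\in H^{\leq 2}_{orb}(\mcx)$ the mirror map of the statement.

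The conclusion is then formal: $-zI(\underline\chi,-z)$ is a point of $\mcl_\mcx$ whose expansion begins with $-z+\tau(\underline\chi)$, so by the uniqueness recalled above it equals $-zJ(\tau(\underline\chi),-z)$, i.e. $I(\underline\chi,z)=J(\tau(\underline\chi),z)$. As an alternative to the cone argument I could instead match differential systems: Lemma \ref{lem:propI}(3) gives that $\widetilde I=Iz^{-\rho}z^{\mu}$ is annihilated by $\check E$ and by all $\Box^X_{\underline l}$, the fundamental solution $\widetilde L$ of the Givental connection is flat (Section \ref{sec:QC}), and once one knows that these GKZ-type operators also annihilate $\widetilde L(\tau(\underline\chi),z)^{-1}(1)\,z^{-\rho}z^{\mu}$, the equality follows by comparing the two flat sections through their common leading value $1$ at the large volume limit.

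I expect the only real obstacle to be the cone-membership $-zI(\underline\chi,-z)\in\mcl_\mcx$, which is the genuine geometric content taken from \cite{CCIT} (established there by $\mbc^*$-localisation on moduli of stable maps, via the toric GIT presentation of $\mcx$); the alternative differential-equation route hides the same essential content in the identification of the quantum connection with the GKZ operators. Everything else — matching the normalisations of $I$, $J$, the mirror map and the twisting $z^{-\rho}z^{\mu}$ with those of \cite{CCIT}, and locating the domain of convergence via Lemma \ref{lem:propI} — is routine bookkeeping.
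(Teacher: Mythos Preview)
The paper does not actually prove this theorem: it is stated without proof, as a direct citation of the Givental-style mirror theorem of Coates, Corti, Iritani and Tseng \cite{CCIT} (compare the introduction, where Theorem 6.6 is said to be obtained ``using a Givental-style mirror theorem of \cite{CCIT}''). Your proposal is therefore not being compared against a proof in the paper but rather supplies the missing derivation.

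That derivation is correct and is the standard one. The CCIT theorem in its general form asserts cone membership $-zI(\underline\chi,-z)\in\mcl_\mcx$; the statement $I=J\circ\tau$ is the special case obtained when the $I$-function has the asymptotics $1+\tau(\underline\chi)/z+O(z^{-2})$, which is exactly what Lemma \ref{lem:propI}(2) provides under the hypothesis $\rho\in\mck^e$. Your use of the string equation and the ruling property to pin down the unique point of $\mcl_\mcx$ with prescribed $-z+\tau$ part is the right mechanism. The alternative differential-equation route you sketch is also valid but, as you note, ultimately rests on the same geometric input. So nothing is missing; you have simply made explicit what the paper leaves to the reference.
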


We can now identify the two  $\trTLEPlog$-structures  $({_0}\wqma^{log,X}, \nabla, P)$ and $(\mcg^{ss},\nabla^{Giv},P)$.

\begin{prop}\label{prop:isolog}
There exists a small analytic neighborhood $W_X$ of $0$ in $V_X$ such that there is an isomorphism
\[
\theta: ({_0}\wqma^{log,X})_{\mid \mbp^1_z \times W_X} \lra (id_{\mbp^1_z} \times \tau)^* \mcg^{ss}_{\mid \mbp^1_z \times W_X}
\]
of $\trTLEPlog$-structures on $W_X$.
\end{prop}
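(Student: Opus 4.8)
The strategy is to compare both $\trTLEPlog$-structures via their presentations as cyclic $\mcr$-modules together with their canonical sections, and to invoke the mirror theorem (Theorem \ref{thm:mirthm}) to match the flat structures. First I would recall that on the $A$-side the bundle $(\mcg^{ss},\nabla^{Giv},P)$ carries the fundamental solution $\tilde L(\tau,z)=L(\tau,z)z^{-\mu}z^\rho$ producing a frame of flat multivalued sections $s_i=\tilde L(T_i)$, and on the $B$-side the function $\tilde I = I z^{-\rho}z^\mu$ satisfies $\check E(\tilde I)=0$ and $\Box^X_{\underline l}(\tilde I)=0$ for all $\underline l\in\mbl$ by Lemma \ref{lem:propI}(3). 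Since $\qma^{log,\mcx}$ is by definition the cyclic quotient $\mcr_{\mbc_z\times\overline{\mcm}_\mcx^\circ}(\log D_X)/\mci_X$ with $\mci_X$ generated exactly by the $\Box^X_{\underline l}$ and $\check E$, the vector $\tilde I$ (read off its components in the basis $\{T_i\}$) defines a $\mcd$-module morphism from $\qma^{log,\mcx}$ into the bundle of multivalued flat sections of $\mcg^{ss}$ pulled back along the mirror map $\tau$, sending the cyclic generator $1$ to $\tilde I$. Explicitly, the class of a differential operator $\Theta\in\mcr$ maps to $\Theta\cdot\tilde I$; this is well-defined precisely because the generators of $\mci_X$ annihilate $\tilde I$.

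Next I would promote this to an isomorphism of bundles with connection on an analytic neighborhood $W_X$ of $p_\mcx$. The key observation is that both sides restrict at the large volume limit $\underline\chi = 0$ (resp.\ $\tau = \tau(0)$) to the same object: by Proposition \ref{prop:ReseqCoh}, $(\qmclogo^{log,X})_{|z=\underline\chi=0}\cong H^*_{orb}(X,\mbc)$ as algebras, and by Lemma \ref{lem:extriv} the section $1$ together with its images under $z\chi_a\p_{\chi_a}$ generates; on the $A$-side the analogous statement at $\tau(0)$ is that $\mcg^{ss}$ restricted to the limit is the orbifold cohomology ring with $1$ cyclic. The map $\theta$ is then defined on cyclic generators $1\mapsto 1$ (using that $\tilde I = 1 + O(z^{-1})$ by Lemma \ref{lem:propI}(2) and that $I = J\circ\tau$ by Theorem \ref{thm:mirthm}, so $\tilde I$ corresponds precisely to the fundamental solution evaluated at $T_0$ via \eqref{eq:Jfuncterms}). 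Since $\theta$ is a morphism of $\mco$-coherent sheaves which is an isomorphism over the limit point and both are locally free of the same rank $\mu = \mathrm{vol}(Q)$ (Corollary \ref{cor:locfree} and the corresponding statement on the $A$-side), Nakayama's lemma shows $\theta$ is an isomorphism on a Zariski (hence analytic) neighborhood. Compatibility with $\nabla$ is automatic from the construction since $\theta$ intertwines the $\mcr$-actions; compatibility with the logarithmic poles at $z=0$, $z=\infty$ and along $D_X$ follows because $\tilde I$ is expressed in terms of $z^{-\rho}z^\mu$ and convergent powers of the $\chi_a$, matching the pole behaviour encoded in Proposition \ref{prop:LogTrTLEP}.

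The remaining point is the pairing. I would check $\theta^*P^{Giv} = P$ up to a non-zero constant by using flatness: both pairings are $\nabla$-flat and $(-1)^n$-symmetric, and their restrictions to the large volume limit are, by the lemma following Proposition \ref{prop:logpair} and the definition of $P^{Giv}$ in Definition \ref{defn:givconn}, both equal (up to scalar) to the orbifold Poincaré pairing on $H^*_{orb}(\mcx)$. A flat pairing on an irreducible flat bundle is determined by its restriction to one point, so the two pairings agree up to the (constant) discrepancy, which can be absorbed by rescaling $\theta$. Finally the extension to $\mbp^1_z$ (the "tr" part) follows because $\theta$ matches the Deligne-type extensions at $z=\infty$ on both sides: the residue eigenvalues at $z=\infty$ are the degrees of cohomology classes on both sides (Proposition \ref{prop:ExtensionInftyAtZero}(1) and property (5) of the Givental fundamental solution), so $\theta$ extends uniquely over $\{\infty\}\times W_X$.

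\textbf{Main obstacle.} The delicate step is verifying that the morphism built from $\tilde I$ genuinely extends across the boundary divisor $D_X$ (the locus $\chi_1\cdots\chi_r=0$) as a map of \emph{logarithmic} connections and an isomorphism there, rather than only on the open torus $\mcm_\mcx^\circ\setminus D_X$. This requires controlling the monodromy of $\tilde I$ around each $D_a$ and matching it with the nilpotent residue endomorphisms $N_a$ from Lemma \ref{lem:resEndo} and Lemma \ref{lem:DeligneExt}; concretely one must see that the exponential prefactor $e^{\sum_a \overline p_a\log\chi_a/z}$ in the $I$-function produces exactly the shift $\prod_a\chi_a^{N_a}$ used to define the logarithmic extension $\qma^{log,\mcx}$, so that $\theta$ respects the filtrations $(F_\bullet,P_\bullet)$ of Proposition \ref{prop:LogTrTLEP}. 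Once this compatibility of filtrations is established the rest is bookkeeping.
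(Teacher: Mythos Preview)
Your proposal is correct and follows essentially the same approach as the paper: define $\theta$ by sending the cyclic generator $1\mapsto 1=T_0$, verify well-definedness by expanding $1=\sum_i\tilde J_i s_i$ in the flat frame and invoking the mirror theorem together with Lemma \ref{lem:propI}(3), then use Nakayama at the limit point to get an isomorphism of bundles with connection, and finally check that the extensions to $\mbp^1_z$ agree via the filtrations $(F_\bullet,P_\bullet)$. One small sharpening: your sentence ``the residue eigenvalues at $z=\infty$ are the degrees of cohomology classes on both sides \ldots\ so $\theta$ extends uniquely'' is not sufficient on its own, since matching eigenvalues does not determine the logarithmic extension; the paper instead checks directly that under $\gamma$ the subspace $F_X\subset\mce_X$ generating the $B$-side extension at $\underline\chi=0$ is carried to the subspace $\mbc[T_1,\ldots,T_{r+e}]$ generating the $A$-side extension, which is exactly the filtration compatibility you correctly identify as the main obstacle.
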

\begin{proof}
As a first step  we define a morphism of holomorphic vector bundles with meromorphic connections
\begin{align}
\gamma: \left((\qmclogo^{log,X})^{an}_{\mid\mbc_z \times W_X}, \nabla\right) &\lra (id_{\mbc_z} \times \tau)^* \left(\mcg^{ss}_{\mid \mbc_z \times W_X}, \hat{\nabla}^{Giv} \right)\, , \notag \\
1 &\mapsto 1 = T_0\, . \notag
\end{align}
 We set $\tilde{\Box}^X_{\underline{l}} := (id_{\mbc_z }\times \tau)_* \Box^X_{\underline{l}}$ and $\tilde{E} := (id_{\mbc_z} \times \tau)_* \check{E}$. In order to show that the morphism above is well-defined, the following equations have to hold:
\begin{align}
\tilde{\Box}^X_{\underline{l}}(\chi_{1},\ldots,\chi_{r+e},z, \hat{\nabla}^{Giv}_{z \chi_1 \p_{\chi_1}}, \ldots, \hat{\nabla}^{Giv}_{z \chi_r \p_{\chi_r}}, \hat{\nabla}^{Giv}_{z \p_{\chi_{r+1}}}, \ldots, \hat{\nabla}^{Giv}_{z \p_{\chi_{r+e}}}) (1) &= 0 \quad \text{for all} \; \underline{l} \in \mbl\, , \notag \\
\tilde{E}(\chi_{1},\ldots,\chi_{r+e},z, \hat{\nabla}^{Giv}_{z \chi_1 \p_{\chi_1}}, \ldots, \hat{\nabla}^{Giv}_{z \chi_r \p_{\chi_r}}, \hat{\nabla}^{Giv}_{z \p_{\chi_{r+1}}}, \ldots, \hat{\nabla}^{Giv}_{z \p_{\chi_{r+e}}})(1) &= 0\, . \label{eq:mirwelldef}
\end{align}
We are using the presentation $1 = \sum_{i=1}^s \tilde{J}_i s_i$ of the section $1$ on $\mbc^*_z \times W_0$. Since the $s_i$ are flat sections the equations above are equivalent to
\begin{align}
\Box^X_{\underline{l}}(\chi_{1},\ldots,\chi_{r+e},z, {z \chi_1 \p_{\chi_1}}, \ldots, {z \chi_r \p_{\chi_r}}, {z \p_{\chi_{r+1}}}, \ldots, {z \p_{\chi_{r+e}}}) ((id_{\mbc_z \times \tau})^* \tilde{J}_i) ((id_{\mbc_z \times \tau})^* \tilde{J}_i)
=\; &\;0\, , \notag \\
\check{E}(\chi_{1},\ldots,\chi_{r+e},z, {z \chi_1 \p_{\chi_1}}, \ldots, {z \chi_r \p_{\chi_r}}, {z \p_{\chi_{r+1}}}, \ldots, {z \p_{\chi_{r+e}}}) ((id_{\mbc_z \times \tau})^* \tilde{J}_i)
=\; &\;0\, . \notag
\end{align}
But this follows from Theorem \ref{thm:mirthm} and Lemma \ref{lem:propI}. Since the equations \eqref{eq:mirwelldef} hold on $\mbc^*_z \times \times W_X$ the hold on $\mbc_z \times W_X$ by continuity.  In order to show that they are isomorphic it is enough to prove this on the germs at $0$ (since we are allowed to shrink $W_X$ if necessary). By Nakayama's lemma it is even enough to show this on the fiber over $0$. But this is clear since both fibers are isomorphic to $H^*_{orb}(\mcx)$ and the action of $\hat{\nabla}^{Giv}_{z \chi \p_{\chi_i}}$ and $\hat{\nabla}^{Giv}_{z \chi_j}$ resp. $z \chi \p_{\chi_i}$ resp. $z \chi_j$ for $i=1,\ldots ,r$  and $j= r+1,\ldots ,r+e$ generate the fibers at $0$. It remains  to show that this isomorphism extends to an isomorphism of $\trTLEPlog$-structures.

Denote by $D \subset W_X$ the normal-crossing divisor given by $\chi_1\cdot \ldots \cdot \chi_r = 0$. We will show that the extensions to $\{z = \infty\} \times \mbp^1_z \setminus \{0\} \times D$ coincide under the isomorphism $\gamma$. First notice that $\gamma$ gives an identification of local systems $((\qmclogo^{log,X})^{an}_{\mid\mbc^*_z \times W_X})^\nabla \simeq  ((id_{\mbc^*_z} \times \tau)^* \mcg^{ss}_{\mid \mbc^*_z \times W_X})^\nabla$. The extension is then encoded by the $\mbz^{r+1}$-filtrations $(F'_\bullet, P_{\bullet})$ resp. $(\tilde{F}'_\bullet, \tilde{P}_{\bullet})$. Since we already know that the extension over $\mbc^*_z \times D$ coincide we conclude that $P_{\bullet} = \tilde{P}_\bullet$. Hence it is enough to show $F'_{\bullet} = \tilde{F}'_\bullet$. Arguing as in Proposition \ref{prop:LogTrTLEP} it is enough to show that the extensions over $\mbp^1_z \times \{ \underline{\chi} = 0\}$ coincide. But this is clearly the case since the subspace $F_X$ which generates the extension of $\mce_X$ is identified  under $\gamma$ with the subspace $\mbc[T_1,\ldots,\ldots , T_{r+e}]$ which generates the extension of $\mcg^{ss}_{\mid \mbc_z \times \{\underline{\chi} = 0\}}$.

\end{proof}

Using the proposition above we can now deduce an isomorphism of logarithmic Frobenius manifolds.

\begin{thm}
There is a unique germ $Mir: (W_X \times \mbc^{s-(r+e)},0 ) \lra (V,0)$ which identifies the logarithmic Frobenius manifold coming from the big orbifold quantum cohomology (cf. Proposition \ref{prop:logFrobA} to the one coming from the Landau-Ginzburg model (cf. Theorem \ref{thm:logfrobB}). Its restriction to $W_X$ corresponds to the isomorphism $\theta$ of $\trTLEPlog$-structures.
\end{thm}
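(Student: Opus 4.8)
The plan is to invoke the uniqueness part of the unfolding theorem (Theorem~\ref{thm:ReUnfold}) on both sides simultaneously. On the $B$-side, Theorem~\ref{thm:logfrobB} produces a germ of a logarithmic Frobenius manifold on $(V_X \times \mbc^{\mu-r},0)$ obtained by unfolding the $\trTLEPlog$-structure $({_0}\wqma^{log,X},\nabla,P)$ of Proposition~\ref{prop:LogTrTLEP} with the section $\xi = 1 \in F_X$. On the $A$-side, Proposition~\ref{prop:logFrobA} produces a germ of a logarithmic Frobenius manifold on $V'$ by unfolding the $\trTLEPlog$-structure $(\overline{\mcg}^{ss},\hat\nabla^{Giv},P)$ with the section corresponding to $T_0 = 1$. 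The key observation is that Proposition~\ref{prop:isolog} gives an isomorphism $\theta: ({_0}\wqma^{log,X})_{\mid \mbp^1_z \times W_X} \to (id_{\mbp^1_z}\times \tau)^* \mcg^{ss}_{\mid \mbp^1_z \times W_X}$ of $\trTLEPlog$-structures on a small neighborhood $W_X$ of $0$ which, by construction, sends the section $1$ on the left to the section $1 = T_0$ on the right.

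First I would check that the hypotheses (IC), (GC) and (EC) of Theorem~\ref{thm:ReUnfold} hold for $(\overline{\mcg}^{ss},\hat\nabla^{Giv},P)$ with $\xi$ the class of $1$; this is already part of the content of Proposition~\ref{prop:logFrobA}, which cites \cite[Proposition 1.10, Proposition 1.11]{Re1}, so it can be quoted. Then I would pull back the unfolding datum on the $A$-side along $(id \times \tau)$ to $W_X$, observing that $\tau$ is a local embedding (Lemma~\ref{lem:propI}(2)); since the unfolding theorem's conclusion is compatible with restriction to germs and the conditions (IC), (GC), (EC) are pointwise at $0$, they are preserved. Now both the germ of a logarithmic Frobenius manifold obtained from $({_0}\wqma^{log,X},\nabla,P)$ and the one obtained by pulling back from the $A$-side are, via $\theta$, unfoldings of one and the same $\trTLEPlog$-structure on $(W_X,0)$ with the same distinguished section. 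By the uniqueness clause of Theorem~\ref{thm:ReUnfold} there is a unique (up to canonical isomorphism) germ of a logarithmic Frobenius manifold on $(W_X \times \mbc^{s-(r+e)},0)$ together with canonical embeddings and canonical isomorphisms of $\trTLEPlog$-structures; composing these canonical isomorphisms yields the desired germ $Mir: (W_X \times \mbc^{s-(r+e)},0) \to (V,0)$.

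Some care is needed to match the dimensions of the parameter spaces: the $B$-side germ is unfolded to $V_X \times \mbc^{\mu-r}$ while the $A$-side logarithmic Frobenius manifold lives on $V'$. Here one uses that $\mu = \dim_\mbc H^*_{orb}(\mcx) = s+1$ and that the number of ``extra'' unfolding directions needed to reach a full Frobenius manifold equals $\mu$ minus the dimension of the given $\trTLEPlog$-base; since $W_X \subset V_X$ is $(r+e)$-dimensional, the total is $(r+e) + (\mu - (r+e)) = \mu$ on both sides, and the identification $H^*_{orb}(\mcx) \simeq (\qmclogo^{log,X})_{\mid 0}$ of Proposition~\ref{prop:ReseqCoh} makes the two ambient cohomology spaces the same. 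I would spell out that under $\theta$ the residue endomorphism $\mcv$ and the distinguished section correspond, so the eigenvector condition (EC) transports, and that the embedding $j: H^2(X,\mbc)/Pic(X)\hookrightarrow \mbc^r$ used to define $V'$ is compatible with the $\chi$-coordinates on $\mcm_\mcx$ via the mirror map.

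The main obstacle, I expect, is purely bookkeeping: making the identifications of the two parameter spaces and the two ambient vector spaces precise and canonical, so that ``the same $\trTLEPlog$-structure with the same section'' is literally true and the uniqueness statement in Theorem~\ref{thm:ReUnfold} applies verbatim. In particular one must verify that the section $1$ used on the $B$-side (the class of $1 \in {_0}\wqma^{log,X}$, flat for $\nabla^{res,z=\infty}$ by Proposition~\ref{prop:LogTrTLEP}(5)) is carried by $\theta$ exactly to the section $T_0$ used on the $A$-side, and that $\theta$ respects the residue connections along $\{\infty\}\times D$ on the nose — both follow from the explicit normalization $1 \mapsto 1 = T_0$ in the proof of Proposition~\ref{prop:isolog}, but it is worth stating. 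No genuinely new analytic input is required; the theorem is a formal consequence of Proposition~\ref{prop:isolog} together with the uniqueness half of the reconstruction theorem.
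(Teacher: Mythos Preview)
Your proposal is correct and follows exactly the paper's approach: the paper's proof is a single sentence invoking Proposition~\ref{prop:isolog} together with the uniqueness statement in Theorem~\ref{thm:ReUnfold}, and you have simply unpacked what that entails. The bookkeeping you worry about (matching sections, dimensions, residue data) is indeed implicit in those two cited results, and the paper does not spell it out further.
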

\begin{proof}
This follows from Proposition \ref{prop:isolog} and the uniqueness statement in  Theorem \ref{thm:ReUnfold}.
\end{proof}

\section{Crepant resolutions and global $tt^*$-geometry}

In this section we will first recall the notion of a (pure and polarized) variation of TERP-structures. If a TERP-structure is pure and polarized it gives rise to $tt^*$-geometry on the underlying space. We will show that the quantum $\mcd$-module of a toric orbifold $X$ underlies such an variation pure and polarized TERP-structures. Our main result is that if $X$ admits a crepant resolution $Z$ than the pure and polarized TERP-structures glue which gives global $tt^*$-geometry.

\begin{defn}[\protect{\cite[Definition 2.12]{He4}, \cite[Definition 2.1]{HS1}}]
Let $\mcm$ be a complex manifold and $n \in \mbz$. A variation of $TERP$-structures on $\mcm$ of weight $n$ consists of the following set of data
\begin{enumerate}
\item A holomorphic vector bundle $\mch$ on $\mbc_z \times \mcm$ 
\item A $\mbr$-local system $\mcl$ on $\mbc^*_z \times \mcm$, together with an isomorphism
\[
\mcl \otimes_\mbr \mco^{an}_{\mbc^*_z \times \mcm} \lra \mch^{an}_{\mid \mbc^*_z \times \mcm}
\]
such that the induced connection extends to a meromorphic connection $\nabla$ on $\mch$ such that $\nabla$ has a pole of Poincar\'{e} rank 1 along $\{0\} \times \mcm$.
\item A polarization $P: \mcl \otimes \iota^* \mcl \lra i^n \underline{\mbr}_{\mbc^*_z \times \mcm}$, which is $(-1)^n$ symmetric and which induces a non-degenerate pairing 
\[
P: \mch \otimes_{\mbc_z \times \mcm} \iota^* \mch \lra z^n \mco_{\mbc_z \times \mcm}
\]
where non-degenerate means that the induced symmetric pairing $[z^{-w}P]:\mch/z \mch \otimes \mch / z \mch \ra \mco_\mcm$ is non-degenerate.
\end{enumerate}
\end{defn}
We now state the definition of a pure and polarized $TERP$-structure.
\begin{defn}
Let $(\mch,\mcl,P,n)$ be a variation of $TERP$-structures on $\mcm$. Let $\overline{\mcm}$ be the complex manifold with the conjugate complex structure and $\gamma: \mbp^1 \times \mcm\ra \mbp^1 \times \mcm$ be the involution $(z,x) \mapsto (\overline{z}^{-1},x)$. Consider $\overline{\gamma^* \mch}$ which is a holomorphic vector bundle on $(\mbp^1 \setminus \{ 0\}) \times \overline{\mcm}$ . Let $\mco_{\mbp^1} \mcc^{an}_\mcm$ be the subsheaf of $\mcc^{an}_{\mbp^1 \times \mcm}$ consisting of functions which are annihilated by $\p_{\overline{z}}$. Define a locally free $\mco_{\mbp^1}\mcc^{an}_\mcm$-module $\hat{\mch}$ by glueing $\mch$ and $\overline{\gamma^* \mch}$ via the following identification on $\mbc^*_z \times \mcm$:
Let $x \in \mcm$ and $z \in \mbc^*_z$ and define
\begin{align}
c: \mch_{\mid(z,x)} &\lra (\overline{\gamma^* \mch})_{\mid (z,x)}\, , \notag \\
a\quad  &\mapsto \quad \nabla\text{-parallel transport of } \overline{z^{-n} \cdot a}\, .\notag
\end{align}
Then $c$ is an anti-linear involution and identifies $\mch_{\mid \mbc^*_z \times \mcm}$ with $\overline{\gamma^* \mch}_{\mid \mbc^*_z \times \overline{\mcm}}$. The involution $c$ restricts to complex conjugation (with respect to $\mcl$) in the fibres over $S^1 \times \mcm$.
\begin{enumerate}
\item $(\mch, \mcl,P,n)$ is called pure iff $\hat{\mch} = p^* p_* \hat{\mch}$, where $p: \mbp^1 \times \mcm \ra \mcm$.
\item Let $(\mch,\mcl,P,n)$ be pure, then
\begin{align}
h: p_* \hat{\mch} \otimes_{\mcc^{an}_\mcm} p_* \hat{\mch} &\lra \mcc^{an}_\mcm \notag \\
(s,t) &\mapsto z^{-n} P(s,c(t)) \notag
\end{align}
is a hermetian form on $p_* \hat{\mch}$. We call $(\mch,\mcl,P,n)$ a pure and polarized $TERP$-structure if this form is positive definite.
\end{enumerate}
\end{defn}

\begin{thm}\label{thm:indTERP}
The restriction of the quantum $\mcd$-module $\mcg$ of a toric orbifold  to $\mbc_z \times W_X \setminus D_X$ underlies a variation of pure and polarized $TERP$-structures of weight $n$.
\end{thm}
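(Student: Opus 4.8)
The plan is to transport the statement through the mirror isomorphism of Proposition \ref{prop:isolog} and then invoke Sabbah's theorem on tameness. First I would observe that by Proposition \ref{prop:isolog} the $\trTLEPlog$-structure $({_0}\wqma^{log,X},\nabla,P)$ on $W_X$ is isomorphic (after the mirror map $\tau$) to $\mcg^{ss}$, and hence its restriction to $\mbc_z\times(W_X\setminus D_X)$ is isomorphic to the restriction of $(\mcg^{ss},\nabla^{Giv},P)$, which itself is the restriction of the Givental structure $(\mcf^{big},\nabla^{Giv},P)$. Therefore it suffices to exhibit the $\mbr$-local system, the polarization $P$, and the purity/polarization positivity either on the quantum side or on the Landau-Ginzburg side, and then to transport whichever is more convenient. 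The natural strategy is to work on the $B$-side: over $W_X\setminus D_X$ the base lies inside $\mcm_\mcx^\circ$ (the Newton non-degenerate locus), so for each fixed $\underline{\chi}$ the Laurent polynomial $W(\cdot,\underline{\chi})$ is cohomologically tame, and the FL-transformed Gau\ss-Manin system $(\qma)_{|\mbc_z\times(W_X\setminus D_X)}$ is exactly the object studied by Sabbah.

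The key steps, in order, would be: (1) Identify, for $\underline{\chi}$ in $W_X\setminus D_X\subset \mcm_\mcx^\circ$, the restriction of $\qma$ with the FL-transformed Gau\ss-Manin system of the tame Laurent polynomial $W(\cdot,\underline{\chi})$, using Proposition \ref{prop:RSDmodds}(2) and the isomorphism of Brieskorn lattices in Proposition \ref{prop:PropzExt}(2). (2) Equip $\mcg^+$ with its natural $\mbr$-structure: the de Rham cohomology of the fibers carries a rational (hence real) structure coming from the topological cohomology of the Milnor fiber, which is preserved by the Gau\ss-Manin connection, and the FL-transform sends this to a local system $\mcl$ on $\mbc^*_z\times(W_X\setminus D_X)$ together with the required isomorphism $\mcl\otimes\mco^{an}\simeq\mch^{an}$; this is exactly the TERP-structure data assembled in \cite{Sa2} (and recalled in \cite{RS1} for the smooth toric case). (3) Identify the pairing $P$ of Proposition \ref{prop:PropzExt}(3),(4) with the topological intersection pairing on vanishing cycles so that it takes values in $i^n\underline{\mbr}$ and is $(-1)^n$-symmetric; this is part of Sabbah's construction. (4) Invoke Sabbah's theorem \cite{Sa2} (as used in \cite{RS1}): for a cohomologically tame function the resulting TERP-structure at each point of the base is pure and polarized. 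Since tameness holds for every $\underline{\chi}\in\mcm_\mcx^\circ\supset W_X\setminus D_X$, purity and positivity of the hermitian form $h$ hold pointwise over the whole of $W_X\setminus D_X$, which is precisely the assertion. (5) Finally transport this variation of pure polarized TERP-structures across the isomorphism $\theta$ of Proposition \ref{prop:isolog} to the quantum $\mcd$-module $\mcg$, noting that all the structures ($\nabla$, $P$, the $\mbr$-local system, hence purity and the polarization form) are preserved by an isomorphism of $\trTLEPlog$-structures that additionally respects the real structure — and the real structure on the $A$-side is the $K$-theoretic one of Iritani \cite{Ir2}, which matches the $B$-side real structure under $\theta$.

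The main obstacle I expect is step (5), or more precisely the compatibility of the two real structures. An isomorphism of $\trTLEPlog$-structures a priori only matches the holomorphic bundle, connection, and pairing; it need not automatically match the $\mbr$-local systems. One must check that the identification $\theta$ (built from the mirror theorem matching $I$- and $J$-functions) carries the $B$-side rational structure on vanishing cycles to the $A$-side integral structure defined via $K^0(\mcx)$ and the $\widehat\Gamma$-class. For the smooth nef toric case this compatibility is exactly what \cite[Ir2]{Ir2} and \cite{RS1} establish, and the argument there — comparing the asymptotics of flat sections $s_i=\tilde L(T_i)$ with the asymptotics of the Lefschetz thimbles / oscillating integrals, matching them up to the $\widehat\Gamma$-class correction — should go through verbatim in the orbifold setting, since the $I$-function, the mirror map $\tau$, and Iritani's $\widehat\Gamma$-integral structure are all already available for toric Deligne-Mumford stacks. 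So the proof would reduce to citing \cite{Sa2} for pointwise purity-polarization and \cite{Ir2} for the identification of real structures, with Proposition \ref{prop:isolog} doing the gluing; a secondary (minor) technical point is checking that tameness is uniform enough over $W_X\setminus D_X$ that the real local system really extends as a local system and not merely fiberwise, which follows from the constructibility results for $\mcg^+$ over $\Lambda^\circ$ recalled before Proposition \ref{prop:PropLattice}.
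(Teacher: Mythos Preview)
Your steps (1)--(4) are exactly the paper's proof: reduce via the mirror isomorphism $\theta$ of Proposition~\ref{prop:isolog} to $\qmclogo$, identify the underlying $\mcd$-module with $\FL^{loc}_{\mcm^\circ_\mcx}\mch^0 W_+ \mco_{Y\times\mcm^\circ_\mcx}$ through Proposition~\ref{prop:RSDmodds}(2), obtain the real structure from the perverse sheaf ${}^p\mch^0 RW_*\underline{\mbr}_{Y\times\mcm^\circ_\mcx}$ via Riemann--Hilbert (this is how the paper phrases your step (2)), and then cite \cite{Sa1} for the induced real local system on the Fourier--Laplace side and \cite{Sa2} for purity and polarization.

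Your step (5), and the ``main obstacle'' you flag, is a red herring. The theorem only asserts that the quantum $\mcd$-module \emph{underlies some} pure polarized TERP-structure; it does not claim that this real structure coincides with Iritani's $\widehat{\Gamma}$-integral structure. So the paper simply \emph{defines} the real structure on the $A$-side to be the transport of the $B$-side one across $\theta$, and there is nothing further to check. The compatibility of the $B$-side real structure with Iritani's $K$-theoretic one is an interesting separate statement (and is indeed discussed elsewhere in the literature), but it plays no role in proving Theorem~\ref{thm:indTERP} as stated. Drop the last paragraph of your proposal and you have the paper's proof.
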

\begin{proof}
The proof carries over almost word for word from the manifold case in \cite[theorem 5.3]{RS1}. So we just give a sketch of the proof and refer the reader to loc. cit. for details. Using the mirror isomorphism  $\theta: ({_0}\wqma^{log,X})_{\mid \mbp^1_z \times W_X} \ra (id_{\mbp^1_z} \times \tau)^* \mcg^{ss}_{\mid \mbp^1_z \times W_X}$ it is enough to show that $\qmclogo$ underlies a variation of pure and polarized TERP-structures. Notice that the underlying $\mcd$-module ${^\circ}\qma$ is isomorphic to $\FL^{loc}_{\mcm^\circ_\mcx}\mch^0 W_+ \mco_{Y \times \mcm^\circ_\mcx} $ by the description \eqref{eq:LGLaurent} and Proposition \ref{prop:RSDmodds} (2). The Riemann-Hilbert correspondence gives  $DR(\mch^0 W_+ \mco_{Y \times \mcm^\circ_\mcx}) \simeq {^p}\mch^0 RW_* \underline{\mbc}_{Y \times \mcm^\circ_\mcx}$. Therefore $DR(\mch^0 W_+ \mco_{Y \times \mcm^\circ_\mcx})$ carries a real structure  ${^p}\mch^0 RW_* \underline{\mbr}_{Y \times \mcm^\circ_\mcx}$. It follows from \cite[Theorem 2.2]{Sa1} that the local system of flat sections of $({^\circ}\qma,\nabla)$ is equipped with a real structure. That $({^\circ}\qma$ is pure and polarized follows from \cite[Theorem 4.10]{Sa2}.
\end{proof}

The proof of the theorem above shows that variation of pure and polarized TERP-structures exists on a Zariski open subset of the complexified K\"ahler moduli space $\mcm_\mcx$. In the remaining part of the paper we glue the complexified K\"ahler moduli space of a toric orbifold $X$ to the complexified K\"ahler moduli space of a crepant resolution and show that the  corresponding variation of TERP-structures also glue on the common domain of definition. This gives the global $tt^*$ geometry.\\

Let $X$ be a simplical, numerical-effective toric variety with fan $\Sigma_X$ and denote by $\mathbf{\Sigma} = (N,\Sigma_X, \mfa)$ be the canonical stacky fan with corresponding Deligne-Mumford stack $\mcx$. As above we denote by $a_1 ,\ldots, a_{m+e}$ the images of the standard generator $e_i$ under the map $\mfa: \mbz^{m+e} \ra N$. Recall that $a_1,\ldots, a_m$ are the primitive generators of the $1$-dimensional cones of $\Sigma_X$ and that $Gen(\Sigma_X) = \{a_{m+1}, \ldots, a_{m+e}\}$ (cf.Section \ref{subsec:Exstacky}).  Assume that there exists a crepant toric resolution$\pi: Z \ra X$ of $X$. We denote by $\Sigma_Z$ the corresponding fan.  The rays of $\Sigma_Z$ are denoted by $\rho_1, \ldots , \rho_{m}, \rho_{m+1}, \ldots ,\rho_{t}$ and the primitive integral generators by $b_1, \ldots, b_{m},b_{m+1}, \ldots ,b_t$, where we can assume that $a_i = b_i$ for $i=1,\ldots,m$ since $Z$ is a resolution of $X$.
The following lemma is well-known, but the authors could not find a suitable reference.

\begin{lem}
We have the following equivalence
\[
\pi: Z \lra X \quad \text{is crepant} \qquad \Leftrightarrow \qquad b_{m+1}, \ldots , b_s \in \; \partial \textup{conv}(a_1,\ldots,a_m)
\]
where $\textup{conv}(a_1,\ldots, a_m)$ is the convex hull of $a_1,\ldots, a_m$.
\end{lem}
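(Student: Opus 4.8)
The plan is to convert the crepancy condition into the single requirement that the piecewise linear function $\varphi$ of \eqref{def:varphi} takes the value $1$ on all the new ray generators, and then to identify the locus $\{\varphi = 1\}$ with $\partial\,\textup{conv}(a_1,\dots,a_m)$ by an elementary convex geometry argument in which the numerical effectiveness of $X$ is used only to guarantee convexity of $\varphi$.

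In step one I would carry out the toric discrepancy bookkeeping. Since $\pi: Z\to X$ is a toric resolution and $X$ is projective, the fan $\Sigma_Z$ is a refinement of the complete fan $\Sigma_X$ inside $N_\mbr$, with the same support and with $a_i=b_i$ for $i\le m$. The $\mbq$-divisor $-K_X=\sum_{i=1}^m D_i$ corresponds to the piecewise linear function $\varphi$ of \eqref{def:varphi}, which is linear on each cone of $\Sigma_X$ and satisfies $\varphi(a_i)=1$; likewise $-K_Z=\sum_{\rho\in\Sigma_Z(1)}D_\rho$ corresponds to the piecewise linear function which is linear on the cones of $\Sigma_Z$ and takes the value $1$ on every primitive generator $b_j$. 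As $\pi^*(-K_X)$ is represented by the very same function $\varphi$, now regarded as piecewise linear on the refinement $\Sigma_Z$, one has $K_Z=\pi^*K_X$ — equivalently, all discrepancies of the exceptional divisors vanish — if and only if $\varphi(b_j)=1$ for every new generator $b_j$, $j=m+1,\dots,s$ (for $j\le m$ this holds by the definition of $\varphi$).

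In step two I would identify $\{v\in N_\mbr\mid\varphi(v)=1\}$ with $\partial\,\textup{conv}(a_1,\dots,a_m)$. Write $Q:=\textup{conv}(a_1,\dots,a_m)$. For $v$ in a maximal cone $\sigma$, say $v=\sum_{a_i\in\sigma}t_ia_i$ with $t_i\ge 0$, linearity of $\varphi$ on $\sigma$ together with $\varphi(a_i)=1$ gives $\varphi(v)=\sum_i t_i$; in particular $\varphi\ge 0$ on $N_\mbr$ with $\varphi(v)=0$ only for $v=0$, and $\varphi$ is positively homogeneous of degree one. Because $\Sigma_X$ is complete the $a_i$ positively span $N_\mbr$, hence lie in no closed half-space bounded by a hyperplane through the origin, so $0\in\textup{int}\,Q$. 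Since $X$ is numerically effective, $\varphi$ is convex (as recalled after \eqref{def:varphi}), so $\{\varphi\le 1\}$ is a convex body with $0$ in its interior; convexity and $\varphi(a_i)=1$ give $Q\subseteq\{\varphi\le 1\}$, and conversely if $\varphi(v)=\sum_i t_i\le 1$ then $v=\sum_i t_ia_i+(1-\sum_i t_i)\cdot 0$ exhibits $v$ as a convex combination of $0$ and the $a_i$, whence $v\in Q$; thus $\{\varphi\le 1\}=Q$. Finally, if $\varphi(v)=1$ then homogeneity gives $\varphi((1+\epsilon)v)=1+\epsilon>1$ for $\epsilon>0$, so $v$ is approximated from outside $Q$ and lies in $\partial Q$, while continuity of $\varphi$ shows $\{\varphi<1\}\subseteq\textup{int}\,Q$; since $Q$ is the disjoint union of its interior and its boundary, this forces $\{\varphi=1\}=\partial Q$.

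Combining the two steps yields the equivalence: $\pi$ is crepant $\Leftrightarrow\ \varphi(b_j)=1$ for all $j=m+1,\dots,s\ \Leftrightarrow\ b_j\in\partial\,\textup{conv}(a_1,\dots,a_m)$ for all those $j$, which is the assertion. The only point that requires care is the discrepancy computation of step one — namely the assertions that the support function of $-K_X$ pulls back to itself under the refinement $\Sigma_Z\to\Sigma_X$ and that crepancy is exactly the condition $\varphi\equiv 1$ on the new generators; the rest is routine, the convexity of $\varphi$ being the only place the nef hypothesis is invoked.
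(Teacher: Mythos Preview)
Your proof is correct and follows essentially the same approach as the paper: both compute the discrepancies via the support function of $-K_X$ (your $\varphi$) to reduce crepancy to the condition $\varphi(b_j)=1$, and both use the nef hypothesis to identify this locus with $\partial\,\textup{conv}(a_1,\dots,a_m)$. The only cosmetic difference is in step two: the paper argues cone-by-cone that $d_k=0$ iff $b_k$ lies in the simplex $\textup{conv}\{a_i:a_i\in\sigma(b_k)\}$ and then invokes nef to write $\partial Q=\bigcup_\sigma\textup{conv}\{a_i\in\sigma\}$, whereas you give the cleaner global characterization $\{\varphi\le 1\}=Q$ and $\{\varphi=1\}=\partial Q$ directly from convexity and homogeneity of $\varphi$.
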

\begin{proof}
The exceptional divisor of $\pi$ is the divisor $\sum_{i=m+1}^s D_i$, where $D_i$ is the torus invariant divisor corresponding to $\rho_i$.  We write
\[
K_Z = \pi^* K_X + \sum_{i=m+1}^{s} d_i D_i\, 
\]
where the $d_i$ are the discrepancies of $\pi$, i.e. $\pi$ is crepant if $d_i=0$ for all $i \in\{m+1, \ldots , s\}$.  Denote by $\psi_{K_X}: N_\mbq  \lra \mbq$ the piece-wise linear function corresponding to the $\mbq$-Cartier divisor $K_X$.  The pullback of $K_X$ along $\pi$ is represented by the same piece-wise linear function, i.e. $\psi_{\pi^* K_X} = \psi_{K_X}$. Now fix some $k \in \{m+1, \ldots , s\}$. Since $\Sigma_X$ is a complete fan there is a unique minimal cone $\sigma(b_k) \in\Sigma_X(k)$ containing $b_k$. Since the fan $\Sigma_X$ is simplicial we can write uniquely
\[
b_k = \sum_{b_i \in \sigma(b_k)} \kappa_{i} a_{i}\, .
\]
We have $\psi_{\pi^*(K_X)}(b_k)= \sum_{a_i \in \sigma(b_l))} \kappa_{i}$, hence for the discrepancy $d_l$ we get
\[
d_k = \sum_{a_i \in G(\sigma(b_k))} \kappa_{i} -1.
\]
This shows $b_k$ lies in the convex hull of $\{a_i : a_i \in \sigma(b_k)\}$ if and only if $d_l = 0$. Since we assumed $X$ to be nef we have $\partial \textup{conv}(a_1,\ldots,a_m) = \bigcup_{\sigma \in \Sigma_X} \textup{conv}\{a_i \mid a_i \in \sigma\}$, which shows the claim.
\end{proof}
The crepantness of $\pi$ puts several restrictions on $X$.
\begin{lem}
Assume that $\pi: Z \ra X$ is a crepant resolution, then 
\begin{enumerate}
\item $X$ is an $SL$-orbifold.
\item $Gen(\Sigma_X) = \{b_{m+1}, \ldots , b_{m+e}\}$.
\end{enumerate}
\end{lem}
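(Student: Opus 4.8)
The statement has two parts, and both follow by combining the previous lemma (which characterizes crepancy by the condition $b_{m+1},\ldots,b_s\in\partial\mathrm{conv}(a_1,\ldots,a_m)$) with elementary convex geometry of the fan. First I would unwind what ``$SL$-orbifold'' means in this context: $\mcx$ is an $SL$-orbifold precisely when every local isotropy group acts with trivial determinant, equivalently when all the ages $i_v=\sum r_i$ attached to box elements $v\in \mathrm{Box}(\Sigma_X)$ are integers, equivalently (in the toric language of this paper) when $\deg(a)\in\mbz$ for every $a\in N_{\Sigma_X}$, or at least for all the relevant generators. The key point is that for a box element $v=\sum_{a_i\in\sigma(v)} r_i a_i$ with $0\le r_i<1$, the age $i_v$ is an integer iff $v$ lies on the subgraph/hyperplane-union $\bigcup_{\sigma}\mathrm{conv}\{a_i\mid a_i\in\sigma\}=\partial\mathrm{conv}(a_1,\ldots,a_m)$, since $\deg(v)=\sum r_i$ and $v$ lies in that convex hull boundary iff $\sum r_i=1$ (using that $X$ is nef, so the piecewise-linear function $\varphi$ with $\varphi(a_i)=1$ is convex and $\partial\mathrm{conv}(a_1,\ldots,a_m)=\{\varphi=1\}$ restricted to $|\Sigma_X|$).

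**Step 1 (part (1)).** Every element of $\mathrm{Box}(\Sigma_X)$ can be written, after subtracting an integral lattice combination of the $a_i$ in its minimal cone, as a $\mbz_{\ge 0}$-combination of generators in $\mathrm{Gen}(\Sigma_X)=\{a_{m+1},\ldots,a_{m+e}\}$; so it suffices to check integrality of the age for the generators $a_j$, $j=m+1,\ldots,m+e$ themselves, i.e. $\deg(a_j)\in\mbz$. By Lemma~\ref{lem:degai} we already have $\deg(a_j)\le 1$, and $\deg(a_j)>0$ since $a_j\ne 0$. On the other hand the previous lemma, applied with $Z\to X$ crepant, tells us $b_j\in\partial\mathrm{conv}(a_1,\ldots,a_m)$ for the resolution generators; but among the $b$'s the ones that are themselves box elements of $\Sigma_X$ are exactly the generators of box cones, and for those the displayed computation in the previous lemma's proof gives discrepancy $d_j=\sum_{a_i\in\sigma(b_j)}\kappa_i-1=0$, i.e. $\deg(a_j)=\sum\kappa_i=1$. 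Hence $\deg(a_j)=1\in\mbz$ for all $j$, so all ages are integral and $\mcx$ is an $SL$-orbifold.

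**Step 2 (part (2)).** The inclusion $\{b_{m+1},\ldots,b_{m+e}\}\subset\mathrm{Gen}(\Sigma_X)$ follows because each such $b_j$ is primitive in $N$ (it is a primitive ray generator of $\Sigma_Z$), lies in $|\Sigma_X|$, and by Step 1 has $\deg(b_j)=1$; a lattice point $v\in|\Sigma_X|$ with $\deg(v)=1$ lies in $\partial\mathrm{conv}(a_1,\ldots,a_m)$ and hence in the half-open box of its minimal cone (after translating), so it is a box element, and being primitive it cannot be a proper semigroup sum of others, so $b_j\in\mathrm{Gen}(\sigma(b_j))\subset\mathrm{Gen}(\Sigma_X)$. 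For the reverse inclusion, take $a\in\mathrm{Gen}(\sigma)$ for some $\sigma\in\Sigma_X$; since $\deg(a)=1$ by Step 1, the ray $\mbq_{\ge0}a$ together with the other $a_i\in\sigma$ subdivides $\sigma$, and iterating we see that $\mbq_{\ge0}a$ is a ray of the crepant resolution fan $\Sigma_Z$ (any crepant toric resolution is obtained by triangulating using exactly the lattice points of $\partial\mathrm{conv}$ lying in $|\Sigma_X|$, and the primitive ones among these are precisely the $\mathrm{Gen}$'s); since $a$ is primitive it equals some $b_j$ with $j\ge m+1$. Hence $\mathrm{Gen}(\Sigma_X)=\{b_{m+1},\ldots,b_{m+e}\}$, and in particular $e=s-m$.

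**Main obstacle.** The delicate point is the reverse inclusion in Step 2: one must know that a crepant resolution subdivides using \emph{all} the lattice points of $|\Sigma_X|\cap\partial\mathrm{conv}(a_1,\ldots,a_m)$ that are not in the cone skeleton, and that the primitive such points are exactly the elements of $\mathrm{Gen}(\Sigma_X)$; equivalently, that there are no generators $a\in\mathrm{Gen}(\sigma)$ with $\deg(a)<1$ once $Z\to X$ is crepant. This is exactly where crepancy is used essentially (a non-crepant resolution could introduce discrepancy, i.e. $\deg<1$ points), and is handled precisely by invoking the previous lemma in both directions: crepant $\Leftrightarrow$ all new generators lie on $\partial\mathrm{conv}$, hence have degree $1$, together with Lemma~\ref{lem:degai} which already forced $\deg(a_j)\le 1$ for the $\mathrm{Gen}(\Sigma_X)$ coming from the canonical stacky fan. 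The rest is bookkeeping with the half-open parallelotope description of box elements.
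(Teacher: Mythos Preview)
Your argument for part (1) is circular. You want to show $\deg(a_j)=1$ for the elements $a_{m+1},\ldots,a_{m+e}\in\mathrm{Gen}(\Sigma_X)$, and you try to do this by invoking the previous lemma, which says crepancy forces $b_{m+1},\ldots,b_s\in\partial\mathrm{conv}(a_1,\ldots,a_m)$. But that statement concerns the \emph{resolution ray generators} $b_k$, not the \emph{box generators} $a_j$. Before this lemma there is no link between the two sets beyond $a_i=b_i$ for $i\le m$; the identification $\{a_{m+1},\ldots,a_{m+e}\}=\{b_{m+1},\ldots,b_s\}$ is exactly part (2), which you are still trying to prove. Your appeal to Lemma~\ref{lem:degai} does not help either: that lemma gives $\deg(a_j)\le 1$ only under the hypothesis $\rho\in\mck^e$, which is not assumed here.

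The paper sidesteps the circularity entirely. For (1) it does not reduce to the generators at all: given any $c\in N$, one passes to the minimal cone $\sigma'(c)$ of the \emph{smooth} fan $\Sigma_Z$ and writes $c=\sum_{b_i\in\sigma'(c)}\kappa'_i b_i$ with $\kappa'_i\in\mbn$. Substituting $b_i=\sum_j\kappa_{ji}a_j$ and using $\sum_j\kappa_{ji}=1$ (crepancy, from the previous lemma) gives $\deg(c)=\sum_i\kappa'_i\in\mbn$ immediately. This uses only smoothness of $\Sigma_Z$ and the crepancy characterisation for the $b_i$.

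For the reverse inclusion in (2) you assert that ``any crepant toric resolution is obtained by triangulating using exactly the lattice points of $\partial\mathrm{conv}$'', so every $c\in\mathrm{Gen}(\Sigma_X)$ is automatically a ray of $\Sigma_Z$. This is precisely what needs to be proved, and your ``Main obstacle'' paragraph does not actually supply an argument. The paper's reasoning is again to use regularity of $\Sigma_Z$: write $c=\sum_{b_i\in\sigma'}\kappa_i b_i$ with $\kappa_i\in\mbn$; then the degree formula from (1) gives $\deg(c)=\sum_i\kappa_i$. Since $c$ is primitive in $\sigma\cap N$ and each $b_i$ lies in $\sigma$, having $\sum_i\kappa_i\ge 2$ would exhibit $c$ as a nontrivial semigroup sum, a contradiction. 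Hence $\deg(c)=1$, so $c=b_{i_0}$ for a single $i_0$, and $i_0>m$ since box elements are never ray generators of $\Sigma_X$.

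In short, the organising idea you are missing is: use the smoothness of $\Sigma_Z$ to express arbitrary lattice points as $\mbn$-combinations of the $b_i$, and read off both the integrality of $\deg$ and the identification of $\mathrm{Gen}(\Sigma_X)$ from the resulting degree formula $\deg(c)=\sum\kappa'_i$.
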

\begin{proof}$ $\\[-1em]
\begin{enumerate}
\item Let $c \in N$ be arbitrary and let $\sigma(c)$ be the unique minimal cone of $\Sigma_X$ containing $c$. As above we can write uniquely $c = \sum_{a_j \in \sigma(c)} \kappa_{j} a_{j}$. The claim is equivalent to the fact that $deg(c) := \sum _{a_j \in \sigma(c)} a_{j} \in \mbn$. Since $\Sigma_Z$ is a subdivision of $\Sigma_X$, hence also complete, we can find a unique minimal cone $\sigma'(c)\in \Sigma_Z(k')$ containing $c$ and with $\sigma'(c) \subset \sigma(c)$. Because $\Sigma_Z$ is regular we can uniquely write $c = \sum_{b_i \in \sigma(c') }  \kappa'_{i} b_{i}$ with $\kappa'_i \in \mbn$.  Hence we have
\[
c = \sum_{b_i \in \sigma'(c)} \kappa'_i b_i = \sum_{b_i \in \sigma'(c)} \kappa'_i (\sum_{a_j \in \sigma(b_i)}\kappa_{ji} a_j)\, .
\]
Because the $\sigma(c)$ was chosen to be minimal and because of the lemma above, this gives
\[
deg(c) = \sum_{b_i \in \sigma'(c)} \kappa'_i (\sum_{a_j \in \sigma(b_i)}\kappa_{ji}) = \sum_{b_i \in \sigma'(c)} a'_i  \cdot 1  \in \mbn\, .
\]
\item Let $\sigma$ be a cone of $\Sigma_X$. First notice that the degree $deg(c)$ of an element $c$ is additive inside a fixed cone, i.e. for $c,c' \in \sigma$ we have $deg (c+c') = deg(c) + deg(c')$.  Because of the first point this shows that $\{b_{m+1}, \ldots , b_{m+e}\} \subset Gen(\Sigma_X)$, since their degree is minimal. Now assume that $c \in Gen(\Sigma_X)$. Because $\Sigma_Z$ is a regular fan, there exists a cone $\sigma' \in \Sigma_Z$ such that $c = \sum_{b_i \in \sigma'} \kappa_i b_i$ with $\kappa_i \in \mbn$. Since $deg(c) =1$, there exists is exactly one $i_0$ with $a_{i_0} = 1$. Since $c \in Gen(\Sigma_X)$, we conclude that $c = b_{i_0}$ for some $i_0 \in \{m+1, \ldots ,m+e\}$.
\end{enumerate}
\end{proof}

We get the following statement from $\deg(a_i) =1 $ for $i =m+1,\ldots, m+e$.
\begin{cor}
The orbifold cohomology $H^*_{orb}(X)$ is $H^2_{orb}$-generated.
\end{cor}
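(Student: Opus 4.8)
The plan is to read the statement off the explicit presentation of the orbifold cohomology ring given in Lemma \ref{lem:orbcoho}, once one knows that \emph{every} algebra generator appearing there has age $1$. First I would record that, by part $(2)$ of the preceding lemma, the canonical $S$-extended stacky fan of $X$ uses $S=Gen(\Sigma_X)=\{b_{m+1},\ldots,b_{m+e}\}$, so that $\mcg=\{a_1,\ldots,a_m\}\cup Gen(\Sigma_X)$ has exactly $n=m+e$ elements and Lemma \ref{lem:orbcoho} applies verbatim. It yields an isomorphism of graded $\mbc$-algebras
\[
H^*_{orb}(X)\;\simeq\;\frac{\mbc[\mfd_1,\ldots,\mfd_n]}{\mcj(\Sigma_X)+\mck(\Sigma_X)+\langle\prod_{i\in I}\mfd_i\mid I\in\mcg\mcp\rangle}
\]
sending $\mfd_i\mapsto\chi^{a_i}$, with $\deg(\mfd_i)=\deg(a_i)$; in particular $H^*_{orb}(X)$ is generated as a $\mbc$-algebra by the classes $\chi^{a_1},\ldots,\chi^{a_n}$.

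The core step is then to check that $\deg(a_i)=1$ for every $i\in\{1,\ldots,n\}$, so that all these generators lie in the degree-$1$ piece (i.e.\ the cohomological degree $2$ piece) $H^2_{orb}(X)$ — here I would invoke Remark \ref{rem:hleq2eqle} to make the identification of the $\deg=1$ part of the ring above with $H^2_{orb}(X)$ precise. For $i\le m$ this is immediate: $a_i$ is the primitive generator of the ray $\rho_i$, its minimal cone is $\rho_i$ itself, and $a_i=1\cdot a_i$, whence $\deg(a_i)=1$. For $m<i\le m+e$ the element $a_i=b_i$ is a box generator; writing $b_i=\sum_{a_j\in\sigma(b_i)}\kappa_j a_j$ in its minimal cone of $\Sigma_X$, the discrepancy computation in the lemma characterizing crepant toric resolutions reads $d_i=\sum_j\kappa_j-1$, and crepantness of $\pi$ forces $d_i=0$, hence $\sum_j\kappa_j=\deg(a_i)=1$. (This is exactly the equality $\deg(a_i)=1$ for $i=m+1,\ldots,m+e$ recorded just before the statement, which in turn rests on $X$ being an $SL$-orbifold, so that $\deg$ is $\mbn$-valued.)

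Finally I would conclude: since the finitely many algebra generators $\chi^{a_1},\ldots,\chi^{a_n}$ of $H^*_{orb}(X)$ all lie in $H^2_{orb}(X)$, the subalgebra they generate — which is all of $H^*_{orb}(X)$ — is contained in the subalgebra generated by $H^2_{orb}(X)$, so $H^*_{orb}(X)$ is $H^2_{orb}$-generated; one may therefore take $H^{gen}_{orb}(X)=H^2_{orb}(X)$ in the notation of Section \ref{sec:QC}. There is no genuine obstacle here: the whole content is carried by Lemma \ref{lem:orbcoho} together with the two structural lemmas of this section, and the only point requiring a word of care is the bookkeeping between the age grading $\deg$ and the cohomological degree, namely that the box generators $\chi^{a_i}$ for $i>m$, having age exactly $1$, contribute to $H^2_{orb}(X)$ rather than to a higher-degree twisted sector.
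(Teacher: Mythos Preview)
Your proposal is correct and follows exactly the approach the paper takes: the paper's entire justification is the single sentence preceding the corollary, ``We get the following statement from $\deg(a_i)=1$ for $i=m+1,\ldots,m+e$,'' which combined with Lemma~\ref{lem:orbcoho} and the trivial $\deg(a_i)=1$ for $i\le m$ is precisely your argument. Your write-up simply unpacks the details (the discrepancy computation, the grading bookkeeping) that the paper leaves implicit.
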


The lemma above shows that $a_i =b_i$ for $i=1,\ldots ,m+e$. Consider the sequence
\begin{equation}\label{eq:exse5}
0 \lra \mbl \lra  \mbz^{m+e} \lra N \lra 0\, .
\end{equation}
Since $Z$ is smooth, we get the exact sequence
\[
0 \lra N^\star \lra (\mbz^{m+e})^\star \simeq PL(\Sigma_Z) \lra \mbl^\star \simeq Pic(Z) \lra 0
\]
when we apply $Hom(-,\mbz)$ to sequence \eqref{eq:exse5}.\\

We get the following commutative diagram with exact rows:
\[
\xymatrix{0 \ar[r] & N^* \ar[r] \ar@{=}[d]& PL(\Sigma_X) \ar[r] \ar@{^{(}->}[d]^\Theta & Pic(X) \ar[r] \ar@{^{(}->}[d]^\theta & 0 \\
0 \ar[r] & N^* \ar[r] \ar@{=}[d]& PL(\Sigma_X^e) \ar[r] \ar@{^{(}->}[d] & Pic^e(X) \ar[r] \ar@{^{(}->}[d] & 0 \\
0 \ar[r] & N^* \ar[r] & PL(\Sigma_Z) \ar[r] & Pic(Z) \ar[r] & 0}
\]

The image of the K\"ahler cone $\overline{\mck}_X$ under the embedding $Pic(X) \otimes \mbq \overset{\theta}{\lra} Pic^e(X)\otimes \mbq  \simeq Pic(Z) \otimes \mbq$, which we denote by $\mck_X$ is a face of the K\"ahler cone $\mck_Z$ by \cite[Theorem 2.5]{OP}. We need the following lemma
\begin{lem}
The images of $D_i \in (\mbz^{m+e})^\star \simeq PL(\Sigma_Z)$ for $i=m+1,\ldots, m+e$ do not lie in $\mck_Z$.
\end{lem}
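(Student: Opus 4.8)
The plan is to show, for each $i\in\{m+1,\dots,m+e\}$, that the class $[D_i]$ is represented by a piece-wise linear support function on $\Sigma_Z$ that fails to be convex; since $\mck_Z$ is (the $\mbq_{\ge 0}$-span of) the cone of convex piece-wise linear functions, this gives $[D_i]\notin\mck_Z$. Geometrically this reflects the fact that $[D_i]$ is the class of the $\pi$-exceptional prime toric divisor attached to the ray $\rho_i$, which cannot be nef. I would start from the observation (from the previous lemmas) that $b_i=a_i$ belongs to $Gen(\Sigma_X)$ and is therefore \emph{not} a ray of $\Sigma_X$. Let $\sigma=\sigma(b_i)\in\Sigma_X$ be the minimal cone of $\Sigma_X$ containing $b_i$; since $\Sigma_X$ is simplicial and $b_i$ is not a ray of $\Sigma_X$, $\sigma$ has rays $a_{j_1},\dots,a_{j_p}$ with $p\ge 2$, and $b_i$ lies in the relative interior of $\sigma$, hence
\[
b_i=\sum_{l=1}^{p}\kappa_l\,a_{j_l}\qquad\text{with all }\kappa_l>0
\]
(in fact $\sum_l\kappa_l=\deg(a_i)=1$, but only positivity of the $\kappa_l$ will be used).

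Next I would make the identification of $[D_i]$ with a support function precise. Since $Z$ is smooth, $\Theta\colon PL(\Sigma_Z)\xrightarrow{\ \sim\ }(\mbz^{n})^\star$ is an isomorphism, and by the very definition of $\Theta$ (cf. \eqref{def:Theta} and the description of the $[\overline{D}_k]$ in Section \ref{sec:some-toric-facts}) the preimage $\varphi_i:=\Theta^{-1}(D_i)\in PL(\Sigma_Z)$ is the support function with $\varphi_i(b_k)=\delta_{ik}$ for $k=1,\dots,n$, and $[D_i]$ is its class in $Pic(Z)=PL(\Sigma_Z)/N^\star$; this is consistent with $\rho=-K_Z=\sum_{k=1}^{n}[D_k]$. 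If $[D_i]$ lay in $\mck_Z$, then $\varphi_i$ would (after adding a suitable linear function, which does not affect convexity) be a convex piece-wise linear function; being also positively homogeneous of degree one, it would be subadditive, so that
\[
1=\varphi_i(b_i)=\varphi_i\!\Big(\sum_{l=1}^{p}\kappa_l\,a_{j_l}\Big)\ \le\ \sum_{l=1}^{p}\kappa_l\,\varphi_i(a_{j_l})\ =\ 0,
\]
a contradiction. Hence $\varphi_i$ is not convex and $[D_i]\notin\mck_Z$, which is the assertion.

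The argument itself is a single convexity inequality, so the only point requiring care — and the one I would spell out carefully — is the bookkeeping of sign conventions: one must check that, in the normalization fixed earlier in the paper (in which the anticanonical class $-K_X=\sum_k[\overline{D}_k]$ is represented by the piece-wise linear function identically equal to $1$ on the rays, and in which $CPL$ denotes the cone of \emph{convex} functions in the ordinary analytic sense), the element $D_i\in(\mbz^n)^\star$ really corresponds to the support function equal to $1$ at $b_i$ and $0$ at all other rays of $\Sigma_Z$, and that nef classes correspond to convex — not superadditive — support functions. With the opposite convention one would instead represent $[D_i]$ by the function equal to $-1$ at $b_i$, and the same inequality, read with the reversed sense of convexity, again yields the contradiction; so the statement is insensitive to the convention, but the intermediate steps are not, and this is where I expect the bulk of the verification to lie.
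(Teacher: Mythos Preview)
Your proof is correct and follows essentially the same route as the paper: both identify $D_i$ with the support function $\varphi_i\in PL(\Sigma_Z)$ having $\varphi_i(b_k)=\delta_{ik}$, write $b_i=\sum_l\kappa_l a_{j_l}$ with $a_{j_l}$ rays of $\Sigma_X$ (hence $j_l\le m$), and observe that the resulting inequality $1=\varphi_i(b_i)>\sum_l\kappa_l\varphi_i(a_{j_l})=0$ violates convexity, so $\varphi_i\notin CPL(\Sigma_Z)$; since adding a linear function does not affect convexity (equivalently, $N^\star\subset CPL(\Sigma_Z)$), no representative of $[D_i]$ is convex and $[D_i]\notin\mck_Z$. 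Your discussion of sign conventions is more explicit than the paper's, but the argument is the same single convexity test.
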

\begin{proof}
The element $D_i$ seen as a piece-wise linear function on $N$ is given by $D_i(b_j) = \delta_{ij}$. We have $b_i = \sum_{a_j \in \sigma(b_i)} \kappa_j a_j$ wher $\sigma(b_i)$ is the minimal cone in $\Sigma_X$ containing $b_i$. Therefore $1 = D_i(b_i) = D_i(\sum_{b_j \in \sigma(b_i)} \kappa_j b_j) > \sum_{b_j \in \sigma(b_i)} \kappa_j D_i(a_j) = 0$ which shows that $D_i \not \in CPL(\Sigma_Z)$ for $i=m+1,\ldots,m+e$. Since we have $N^\star \subset CPL(\Sigma_Z)$ we see that $[D_i] \in \mck_Z$.
\end{proof} 

The lemma above shows that we get two $r+e$ dimensional cones in $Pic(Z) \otimes \mbq$, namely $\mck_Z$ and $\mck_X^e$  which intersect along the face $\mck_X$. Now consider the lattice $Pic^e(X)$ inside $Pic(Z) \otimes \mbq$. We will choose 
two different $\mbz$-bases for $Pic^e(X)$. The first one is $p_1,\ldots , p_{r+e}$ with the property that $p_1, \ldots , p_r$ is a $\mbz$-basis of the image 
of $\theta$ and $[D_{m+i}] = p_{r+i}$ for $i =1,\ldots ,e$.
The second basis $q_1, \ldots, q_{r+e}$ is chosen such that 
\begin{enumerate}
\item $p_i = q_i$ for $i=1,\ldots,r$,
\item $q_i$ lies in $\mck_Z$ for $i=1,\ldots, r+e$.
\end{enumerate}
Denote the cones generated by $p_1,\ldots, p_{r+e}$ resp. $q_1,\ldots,q_{r+e}$ by $C_X$ resp. $C_Z$.
Let $\Sigma_\mcm$ be the fan (with respect to the lattice $Pic^e(X)$) generated by the cones $C_X$ and $C_Z$ together with its faces. The \emph{global K\"ahler moduli space} $\mcm$ is the smooth toric variety corresponding to the fan $\Sigma_\mcm$. This space is covered by two charts $\mcm_\mcx\simeq \mbc^{r+e}$ resp. $\mcm_Z\simeq \mbc^{r+e}$ corresponding to the cones $C_X$ resp. $C_Z$.\\

If we apply the results of Section \ref{sec:LG} to the toric orbifold $X$ resp. toric manifold $Z$ we get the following isomorphisms of variations of (pure and polarized) $TERP$-structures from Proposition \ref{prop:isolog} and Theorem \ref{thm:indTERP}:
\begin{align}
({_0}\qma^{log,Z})_{\mid \mbc_z \times (W_Z\setminus D_Z)} \lra (id_{\mbc_z} \times \tau_Z)^* \mcg^{ss}_{Z\mid \mbc_z \times (W_Z \setminus D_Z)}\, ,\notag \\
({_0}\qma^{log,\mcx})_{\mid \mbc_z \times (W_\mcx\setminus D_\mcx)} \lra (id_{\mbc_z} \times \tau_\mcx)^* \mcg^{ss}_{\mcx\mid \mbc_z \times (W_\mcx \setminus D_\mcx)}\notag
\end{align}
where $W_Z\subset \mcm_Z^{an}$ resp. $W_\mcx \subset \mcm_\mcx^{an}$ are analytic neighborhoods of $p_Z$ resp. $p_\mcx$.

\begin{rem}
Notice that there is a small caveat here. We have choosen the basis $q_1,\ldots, q_{r+e}$ as a $\mbz$-basis of $Pic^e(X)\subset Pic(Z)$. In order to apply the results of Section \ref{sec:LG} and \ref{sec:QC} in the case $\mcx = Z$ we should choose a basis of $Pic(Z)$ instead of a basis which only generates a sublattice (of finite index). Notice that this requirement is actually not need and was only inserted for the ease of exposition.
\end{rem}

\begin{thm}
There exists a variation of TERP-structures ${_0}\qma$ on the global K\"ahler moduli space $\mcm$ and analytic neighborhoods $W_Z, W_\mcx \subset \mcm^{an}$  of the large volume limits $p_Z, p_\mcx$ such that
\begin{align}
({_0}\qma)_{\mid \mbc_z \times (W_Z \setminus D_Z)} \simeq (id_{\mbc_z} \times \tau_Z)^*\mcg^{ss}_{Z\mid \mbc_z \times(W_Z \setminus D_Z)}\, ,\notag \\
({_0}\qma)_{\mid \mbc_z \times (W_\mcx \setminus D_\mcx)} \simeq (id_{\mbc_z} \times \tau_\mcx)^*\mcg^{ss}_{\mcx\mid \mbc_z \times(W_\mcx \setminus D_\mcx)}\, .\notag 
\end{align}
\end{thm}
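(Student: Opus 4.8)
The plan is to construct the global variation of TERP-structures by glueing the two local variations ${_0}\qma^{log,\mcx}$ and ${_0}\qma^{log,Z}$ over the toric variety $\mcm$ and then invoking rigidity of the pure polarized TERP-structure on the overlap. First I would observe that the global K\"ahler moduli space $\mcm$ is covered by the two affine charts $\mcm_\mcx \simeq \mbc^{r+e}$ and $\mcm_Z \simeq \mbc^{r+e}$ corresponding to the maximal cones $C_X$ and $C_Z$, and that these charts overlap precisely along the open torus orbit attached to the common face $\mck_X = \langle p_1,\dots,p_r\rangle$. On each chart, Theorem \ref{thm:latcoh} and Corollary \ref{cor:locfree} give a locally free $\mco_{\mbc_z\times\mcu_\mcx}$- (resp. $\mco_{\mbc_z\times\mcu_Z}$-) module $\qmclogo^{log,\mcx}$ (resp. $\qmclogo^{log,Z}$) which, by Proposition \ref{prop:PropzExt} and the description \eqref{eq:LGLaurent}, is isomorphic over the common Newton-nondegenerate locus to $\FL^{loc}$ of the Gau\ss-Manin system of the \emph{same} family of Laurent polynomials $W$ on $\mcy \simeq Y \times (\mbc^*)^{r+e}$ — the two ``GKZ-type'' presentations differ only by the two choices of $\mbz$-bases $p_\bullet$ resp. $q_\bullet$ of $Pic^e(X)$, i.e.\ by a monomial change of the coordinates $\chi_a$. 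Hence on $\mbc_z\times(\mcm_\mcx\cap\mcm_Z)^\circ$ there is a canonical identification of the underlying $\mcd$-modules, and it is along this overlap that the two lattices are to be glued.

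The main step is to check that this identification respects \emph{all} the extra structure: the lattice (the $\mcr$-module structure, hence the pole of type one along $z=0$), the pairing $P$ from Proposition \ref{prop:PropzExt}(3)--(4), and the real/Betti structure. For the lattice and pairing this is formal once one remarks that the $\mcr_{\mbc_z\times\mcm}$-module structure and the pairing $P$ on $\FL^{loc}\mch^0 W_+\mco_{\mcy}$ are intrinsic to the family $W$ (the Brieskorn-lattice description of Proposition \ref{prop:PropzExt}(2) is coordinate-free), so the coordinate change $p_\bullet\leftrightarrow q_\bullet$ automatically carries one to the other. For the real structure one argues as in the proof of Theorem \ref{thm:indTERP}: the Riemann--Hilbert partner ${}^p\mch^0 RW_*\underline{\mbr}_{\mcy}$ and Sabbah's purity/polarizability theorem \cite[Theorem 4.10]{Sa2} are again attached to $W$ itself, independently of the chart, so the two local variations of pure polarized TERP-structures agree on the overlap. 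Then I would glue to obtain a holomorphic bundle ${_0}\qma$ on $\mbc_z\times\mcm$ with connection, pairing and local system; by the glueing lemma for coherent sheaves this is well-defined, and its restriction to each of $\mcm_\mcx^\circ$, $\mcm_Z^\circ$ is the corresponding local TERP-variation. Purity and polarizability, being fibrewise conditions, hold on the (Zariski open, by Theorem \ref{thm:indTERP}) locus where they hold on one of the two charts, and in particular near $p_\mcx$ and $p_Z$.

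Finally, combining the mirror isomorphism of Proposition \ref{prop:isolog} for $X$ and its analogue for $Z$ with the fact that $({_0}\qma)_{\mid\mbc_z\times W_\mcx} = ({_0}\qma^{log,\mcx})_{\mid\mbc_z\times W_\mcx}$ and $({_0}\qma)_{\mid\mbc_z\times W_Z} = ({_0}\qma^{log,Z})_{\mid\mbc_z\times W_Z}$ by construction, one reads off the two asserted isomorphisms
\[
({_0}\qma)_{\mid \mbc_z \times (W_Z \setminus D_Z)} \simeq (id_{\mbc_z} \times \tau_Z)^*\mcg^{ss}_{Z\mid \mbc_z \times(W_Z \setminus D_Z)}\, , \qquad ({_0}\qma)_{\mid \mbc_z \times (W_\mcx \setminus D_\mcx)} \simeq (id_{\mbc_z} \times \tau_\mcx)^*\mcg^{ss}_{\mcx\mid \mbc_z \times(W_\mcx \setminus D_\mcx)}\, .
\]
The point I expect to be the genuine obstacle is the \emph{compatibility of the Betti/real structures} across the overlap: one must be sure that the real local system ${}^p\mch^0 RW_*\underline{\mbr}$ produced on the $\mcx$-chart and the one produced on the $Z$-chart really are identified by the monomial coordinate change — equivalently that the topological direct image of $\underline{\mbr}_{\mcy}$ along $W$ does not secretly depend on the chosen compactification/coordinates. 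This is morally clear because $W$ is one fixed map defined on the single space $\mcy$ glued from the two charts, but making it precise requires care with the partial compactifications $\overline{\mcm}_\mcx$, $\overline{\mcm}_Z$ and with the tame/Newton-nondegenerate loci $\mcm_\mcx^\circ$, $\mcm_Z^\circ$, checking that their union is exactly the locus on which the glued object is defined; everything else is a routine propagation of the arguments already carried out for a single toric orbifold in Sections \ref{sec:LG}--\ref{sec:QC} and in \cite{RS1}.
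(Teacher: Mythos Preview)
Your proposal is correct and rests on the same observation as the paper's proof: on the overlap $\mcm_\mcx\cap\mcm_Z$ the two local objects come from the \emph{same} family of Laurent polynomials, because the exact sequence $0\to\mbl\to\mbz^{m+e}\to N\to 0$ is identical for the extended stacky fan $\Sigma_X^e$ and for the (ordinary) fan $\Sigma_Z$ --- the lemma $a_i=b_i$ for all $i$ is exactly what makes this work. The paper dispatches the theorem in one sentence from this: since the sequences coincide, ${_0}\qma^{Z}={_0}\qma^{\mcx}$ on the common torus, so there is nothing to glue and no compatibility to check.

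Your concern about the real structure is therefore unnecessary. The Betti sheaf ${}^p\mch^0 RW_*\underline{\mbr}$ is attached to the single map $W$ on the single space $\mcy$; the two ``charts'' are just two coordinate systems on the same base torus, and the $p_\bullet\leftrightarrow q_\bullet$ change is a monomial automorphism under which the whole package (lattice, pairing, real local system) is transported tautologically. What you flagged as the ``genuine obstacle'' dissolves once you phrase the construction intrinsically rather than as a glueing problem; the only content left is Proposition~\ref{prop:isolog} applied separately near $p_\mcx$ and $p_Z$, which you already invoke.
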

\begin{proof}
The proof follows from the fact that ${_0}\qma^{Z} =  {_0}\qma^{\mcx}$ since the sequences $\eqref{eq:exse5}$ are equal for the fan $\Sigma_Z$ and the extended stacky fan $\Sigma^e_X$. 
\end{proof}

\bibliographystyle{amsalpha} 
\bibliography{logdegLG}

\def\cprime{$'$}
\providecommand{\bysame}{\leavevmode\hbox to3em{\hrulefill}\thinspace}
\providecommand{\MR}{\relax\ifhmode\unskip\space\fi MR }
\providecommand{\MRhref}[2]{%
  \href{http://www.ams.org/mathscinet-getitem?mr=#1}{#2}
}
\providecommand{\href}[2]{#2}
\begin{thebibliography}{LLQW14}

\bibitem[BCS05]{BCS}
Lev~A. Borisov, Linda Chen, and Gregory~G. Smith, \emph{The orbifold {C}how
  ring of toric {D}eligne-{M}umford stacks}, J. Amer. Math. Soc. \textbf{18}
  (2005), no.~1, 193--215 (electronic). \MR{2114820 (2006a:14091)}

\bibitem[BG08]{MR2411404}
Jim Bryan and Amin Gholampour, \emph{Root systems and the quantum cohomology of
  {$ADE$} resolutions}, Algebra Number Theory \textbf{2} (2008), no.~4,
  369--390. \MR{2411404}

\bibitem[BG09a]{MR2518631}
\bysame, \emph{Hurwitz-{H}odge integrals, the {$E_6$} and {$D_4$} root systems,
  and the crepant resolution conjecture}, Adv. Math. \textbf{221} (2009),
  no.~4, 1047--1068. \MR{2518631}

\bibitem[BG09b]{MR2551767}
\bysame, \emph{The quantum {M}c{K}ay correspondence for polyhedral
  singularities}, Invent. Math. \textbf{178} (2009), no.~3, 655--681.
  \MR{2551767}

\bibitem[BG09c]{MR2483931}
Jim Bryan and Tom Graber, \emph{The crepant resolution conjecture}, Algebraic
  geometry---{S}eattle 2005. {P}art 1, Proc. Sympos. Pure Math., vol.~80, Amer.
  Math. Soc., Providence, RI, 2009, pp.~23--42. \MR{2483931}

\bibitem[BGP08]{MR2357679}
Jim Bryan, Tom Graber, and Rahul Pandharipande, \emph{The orbifold quantum
  cohomology of {$\Bbb C^2/Z_3$} and {H}urwitz-{H}odge integrals}, J. Algebraic
  Geom. \textbf{17} (2008), no.~1, 1--28. \MR{2357679}

\bibitem[BMP09]{MR2541935}
S.~Boissi{\`e}re, E.~Mann, and F.~Perroni, \emph{The cohomological crepant
  resolution conjecture for {$\Bbb P(1,3,4,4)$}}, Internat. J. Math.
  \textbf{20} (2009), no.~6, 791--801. \MR{2541935}

\bibitem[BMP11]{MR2772168}
Samuel Boissi{\`e}re, {\'E}tienne Mann, and Fabio Perroni, \emph{Computing
  certain {G}romov-{W}itten invariants of the crepant resolution of {$\Bbb
  P(1,3,4,4)$}}, Nagoya Math. J. \textbf{201} (2011), 1--22. \MR{2772168}

\bibitem[CCIT15]{CCIT}
Tom Coates, Alessio Corti, Hiroshi Iritani, and Hsian-Hua Tseng, \emph{A mirror
  theorem for toric stacks}, Compos. Math. \textbf{151} (2015), no.~10,
  1878--1912. \MR{3414388}

\bibitem[CIT09]{MR2529944}
Tom Coates, Hiroshi Iritani, and Hsian-Hua Tseng, \emph{Wall-crossings in toric
  {G}romov-{W}itten theory. {I}. {C}repant examples}, Geom. Topol. \textbf{13}
  (2009), no.~5, 2675--2744. \MR{2529944}

\bibitem[CLLZ14]{MR3202006}
BoHui Chen, AnMin Li, XiaoBin Li, and GuoSong Zhao, \emph{Ruan's conjecture on
  singular symplectic flops of mixed type}, Sci. China Math. \textbf{57}
  (2014), no.~6, 1121--1148. \MR{3202006}

\bibitem[CLZZ09]{MR2553561}
Bohui Chen, An-Min Li, Qi~Zhang, and Guosong Zhao, \emph{Singular symplectic
  flops and {R}uan cohomology}, Topology \textbf{48} (2009), no.~1, 1--22.
  \MR{2553561}

\bibitem[Coa09]{MR2486673}
Tom Coates, \emph{On the crepant resolution conjecture in the local case},
  Comm. Math. Phys. \textbf{287} (2009), no.~3, 1071--1108. \MR{2486673}

\bibitem[CR13]{MR3112518}
Tom Coates and Yongbin Ruan, \emph{Quantum cohomology and crepant resolutions:
  a conjecture}, Ann. Inst. Fourier (Grenoble) \textbf{63} (2013), no.~2,
  431--478. \MR{3112518}

\bibitem[DM13]{Douai-Mann}
Antoine Douai and Etienne Mann, \emph{The small quantum cohomology of a
  weighted projective space, a mirror {$D$}-module and their classical limits},
  Geom. Dedicata \textbf{164} (2013), 187--226. \MR{3054624}

\bibitem[Her02]{He3}
Claus Hertling, \emph{Frobenius manifolds and moduli spaces for singularities},
  Cambridge Tracts in Mathematics, vol. 151, Cambridge University Press,
  Cambridge, 2002.

\bibitem[Her03]{He4}
\bysame, \emph{{$tt\sp *$} geometry, {F}robenius manifolds, their connections,
  and the construction for singularities}, J. Reine Angew. Math. \textbf{555}
  (2003), 77--161.

\bibitem[HS10]{HS1}
Claus Hertling and Christian Sevenheck, \emph{Limits of families of {B}rieskorn
  lattices and compactified classifying spaces}, Adv. Math. \textbf{223}
  (2010), no.~4, 1155--1224. \MR{2581368}

\bibitem[Iri09]{Ir2}
Hiroshi Iritani, \emph{{A}n integral structure in quantum cohomology and mirror
  symmetry for toric orbifolds}, Adv. Math. \textbf{222} (2009), no.~3,
  1016--1079.

\bibitem[{Iwa}08]{2008PhDT.........4I}
Y.~{Iwao}, \emph{{Invariance of Gromov-Witten theory under a simple flop}},
  Ph.D. thesis, The University of Utah, 2008.

\bibitem[Jia08]{Jext}
Yunfeng Jiang, \emph{The orbifold cohomology ring of simplicial toric stack
  bundles}, Illinois J. Math. \textbf{52} (2008), no.~2, 493--514. \MR{2524648
  (2011c:14059)}

\bibitem[JT08]{JT}
Yunfeng Jiang and Hsian-Hua Tseng, \emph{Note on orbifold {C}how ring of
  semi-projective toric {D}eligne-{M}umford stacks}, Comm. Anal. Geom.
  \textbf{16} (2008), no.~1, 231--250. \MR{2411474 (2009h:14092)}

\bibitem[LLQW14]{2014arXiv1401.7097L}
Y.-P. {Lee}, H.-W. {Lin}, F.~{Qu}, and C.-L. {Wang}, \emph{{Invariance of
  quantum rings under ordinary flops: III}}, ArXiv e-prints (2014).

\bibitem[LLW11]{2011arXiv1109.5540L}
Y.-P. {Lee}, H.-W. {Lin}, and C.-L. {Wang}, \emph{{Invariance of Quantum Rings
  under Ordinary Flops I: Quantum corrections and reduction to local models}},
  ArXiv e-prints (2011).

\bibitem[LLW13]{2013arXiv1311.5725L}
\bysame, \emph{{Invariance of Quantum Rings under Ordinary Flops II: A quantum
  Leray--Hirsch theorem}}, ArXiv e-prints (2013).

\bibitem[OP91]{OP}
Tadao Oda and Hye~Sook Park, \emph{Linear gale transforms and
  {G}elfand-{K}apranov-{Z}elevinskij decompositions}, Tohoku Math. J. (2)
  \textbf{43} (1991), no.~3, 375--399.

\bibitem[Per07]{MR2360646}
Fabio Perroni, \emph{Chen-{R}uan cohomology of {$ADE$} singularities},
  Internat. J. Math. \textbf{18} (2007), no.~9, 1009--1059. \MR{2360646}

\bibitem[Pha79]{Ph1}
Fr{\'e}d{\'e}ric Pham, \emph{Singularit\'es des syst\`emes diff\'erentiels de
  {G}auss-{M}anin}, Progress in Mathematics, vol.~2, Birkh\"auser Boston,
  Mass., 1979, With contributions by Lo Kam Chan, Philippe Maisonobe and
  Jean-{\'E}tienne Rombaldi.

\bibitem[Rei09]{Re1}
Thomas Reichelt, \emph{A construction of {F}robenius manifolds with logarithmic
  poles and applications}, Comm. Math. Phys. \textbf{287} (2009), no.~3,
  1145--1187.

\bibitem[RS10]{RS1}
Thomas Reichelt and Christian Sevenheck, \emph{Logarithmic {F}robenius
  manifolds, hypergeometric systems and quantum $\mathcal{D}$-modules},
  Preprint math.AG/1010.2118, to appear in ``Journal of Algebraic Geometry'',
  2010.

\bibitem[RS12]{RS2}
T.~{Reichelt} and C.~{Sevenheck}, \emph{{Non-affine Landau-Ginzburg models and
  intersection cohomology}}, ArXiv e-prints (2012).

\bibitem[Rua06]{MR2234886}
Yongbin Ruan, \emph{The cohomology ring of crepant resolutions of orbifolds},
  Gromov-{W}itten theory of spin curves and orbifolds, Contemp. Math., vol.
  403, Amer. Math. Soc., Providence, RI, 2006, pp.~117--126. \MR{2234886}

\bibitem[Sab97]{Sa1}
Claude Sabbah, \emph{Monodromy at infinity and {F}ourier transform}, Publ. Res.
  Inst. Math. Sci. \textbf{33} (1997), no.~4, 643--685.

\bibitem[Sab08]{Sa2}
\bysame, \emph{{F}ourier-{L}aplace transform of a variation of polarized
  complex {H}odge structure.}, J. Reine Angew. Math. \textbf{621} (2008),
  123--158.

\bibitem[TW12]{TW}
H.-H. {Tseng} and D.~{Wang}, \emph{{Seidel Representations and Quantum
  Cohomology of Toric Orbifolds}}, ArXiv e-prints (2012).

\end{thebibliography}

\end{document}